\documentclass{article}%
\usepackage{amsmath}
\usepackage{amsfonts}
\usepackage{amssymb}
\usepackage{graphicx}%
\providecommand{\U}[1]{\protect\rule{.1in}{.1in}}
\newtheorem{theorem}{Theorem}

\newtheorem{corollary}[theorem]{Corollary}

\newtheorem{definition}[theorem]{Definition}

\newtheorem{lemma}[theorem]{Lemma}

\newtheorem{proposition}[theorem]{Proposition}
\newtheorem{remark}[theorem]{Remark}

\newenvironment{proof}[1][Proof]{\noindent\textbf{#1.} }{\ \rule{0.5em}{0.5em}}
\begin{document}

\title{Existence and characterization of attractors for a nonlocal reaction-diffusion
equation having an energy functional}
\author{R. Caballero$^{1}$, P. Mar\'{\i}n-Rubio$^{2}$ and Jos\'{e} Valero$^{1}$\\$^{1}${\small Centro de Investiagaci\'{o}n Operativa, Universidad Miguel
Hern\'{a}ndez de Elche,}\\{\small Avda. Universidad s/n, 03202, Elche (Alicante), Spain}\\$^{2}${\small Dpto. Ecuaciones Diferenciales y An\'{a}lisis Num\'{e}rico,}\\{\small Universidad de Sevilla, Apdo. de Correos 1160, 41080-Sevilla, Spain}}
\date{}
\maketitle

\begin{abstract}
In this paper we study a nonlocal reaction-diffusion equation in which the
diffusion depends on the gradient of the solution.

We prove first the existence and uniqueness of regular and strong solutions.
Second, we obtain the existence of global attractors in both situations under
rather weak assumptions by the defining a multivaled semiflow (which is a
semigroup in the particular situation when uniqueness of the Cauchy problem is
satisfied). Third, we characterize the attractor either as the unstable
manifold of the set of stationary points or as the stable one when we consider
solutions only in the set of bounded complete trajectories.

\end{abstract}

\bigskip

\textbf{Keywords: }reaction-diffusion equations, nonlocal equations, global
attractors, multivalued dynamical systems, structure of the attractor

\textbf{AMS Subject Classification (2010): }35B40, 35B41, 35B51, 35K55, 35K57

\section{Introduction}

In real applications usually we do not have enough information about the
systems under study and its features at every point. In reality, the
measurements are not made pointwise but through some local average. Therefore,
here arises the importance of nonlocal models. During the last decades many
mathematicians have been studying nonlocal problems motivated by its various
applications in physics, biology or population dynamics \cite{d2,
ChipotLovat97,d3, d41, d5, d7}.

For instance, let consider the problem of finding a function $u(t,x)$ such
that
\begin{equation}
\left\{
\begin{array}
[c]{l}%
u_{t}-a(\int_{\Omega}u(t,x)dx)\Delta u=g(t,u),\ \text{in }\Omega
\times(0,\infty),\\
u=0\quad\text{on }\partial\Omega\times(0,\infty),\\
u(0)=u_{0}\quad\text{in }\Omega.
\end{array}
\right.  \label{Problem1}%
\end{equation}
Here $\Omega$ is a bounded open subset in $\mathbb{R}^{n}$, $n\geq1,$ with
smooth boundary and $a$ is some function from $\mathbb{R}$ onto $(0,+\infty)$.
In such equation $u$ could describe the density of a population subject to
spreading. The diffusion coefficient $a$ is then supposed to depend on the
entire population in the domain rather than on the local density.

A wide literature with significant results about (\ref{Problem1}) have been
developed during the last few decades (see for example \cite{ChipotLovat97,d5,
d7}). However, it is possible to distinguish two basic cases of the following
more general equation
\[
\left\{
\begin{array}
[c]{l}%
u_{t}-a(u)\Delta u=g(t,u),\quad t>0,\ x\in\Omega,\\
u=0,\text{ \textit{in }}\partial\Omega\times\left(  0,\infty\right)  ,\\
u(0,x)=u_{0}(x)\quad x\in\Omega.
\end{array}
\right.
\]

Some authors consider $a$ depending on a linear functional $l(u)$, i.e.,
\[
a(u)=a(l(u))
\]
with
\[
l(u)=\int_{\Omega}g(x)u(x,t)dx,
\]
where $g(x)$ is a given function in $L^{2}(\Omega)$. For $g(t,u)=f(t)$ the
existence and uniqueness of solutions and their asymptotic behavior are
studied for example in \cite{d3,d41,d4,ZhengChipot}. For $g(t,u)=f(u)+h(t)$
the existence, uniqueness and asymptotic behaviour of solutions is studied in
\cite{CaHeMa15,d0,CaHeMa18}. Moreover, the authors prove the existence of
pullback attractors in $L^{2}(\Omega)$ and $H_{0}^{1}(\Omega)$. Extensions in
this direction for equations governed by the p-laplacian operator instead of
the laplacian operator $\Delta$ are given in \cite{CAHeMa17,CaHeMa18B}.

On the other hand, it is possible to consider a function $a$ such that
$a\left(  u\right)  =a(\Vert u\Vert_{H_{0}^{1}}^{2})$. The existence and
uniqueness of solutions of the following problem
\[
\left\{
\begin{array}
[c]{l}%
u_{t}-a(\Vert u\Vert_{H_{0}^{1}}^{2})\Delta u=f,\quad t>0,\ x\in\Omega,\\
u=0,\text{ in }\partial\Omega\times\left(  0,\infty\right)  ,\\
u(0,x)=u_{0}(x)\quad x\in\Omega.
\end{array}
\right.
\]
is proved in \cite{ZhengChipot,ChipotValente}, where $f\in L^{2}(\Omega),$
$u_{0}\in H_{0}^{1}(\Omega)$ and $a=a(s)$ is a continuous function such that
$0<m\leq a(s)\leq M.$

By this way, in our paper the following problem is considered
\begin{equation}
\left\{
\begin{array}
[c]{l}%
u_{t}-a(\Vert u\Vert_{H_{0}^{1}}^{2})\Delta u=f(u)+h(t),\ \text{in }%
\Omega\times(0,\infty),\\
u=0\quad\text{on }\partial\Omega\times(0,\infty),\\
u(0,x)=u_{0}\left(  x\right)  \quad\text{in }\Omega,
\end{array}
\right.  \label{21}%
\end{equation}
where $h(t)\in L^{2}(0,T;L^{2}(\Omega)),$ for all $T>0,$ $a:\mathbb{R}%
^{+}\rightarrow\mathbb{R}^{+}$ is a continuous function such that $a\left(
s\right)  \geq m>0$ and $f$ is a continuous function satisfying standard
dissipative and growth conditions (see (\ref{3}) below).

The aim of this paper is three-fold. First, we will prove the existence of
solutions for problem (\ref{21}) under different assumptions on the nonlinear
function $f$. Second, we will obtain the existence of attractors for the
semiflows generated by either regular or strong solutions. Third, we establish
that the global attractor can be characterized by the unstable manifold of the
set of stationary points. It is important to notice that the proof of this
last fact requires the existence of a Lyapunov function on the attractor, and
for this aim the term $a(\Vert u\Vert_{H_{0}^{1}}^{2})$ is crucial. In the
case when $a(u)=a(l(u))$ it is not known whether such a function exists or not.

We prove the existence of strong solutions by assuming that either the
function $f$ is continuously differentiable and $f^{\prime}\left(  s\right)
\leq\eta$ or a more strict growth condition on $f.$ Supposing additionaly that
the function $a$ has sublinear growth we prove the existence of regular
solutions as well. Moreover, when $f^{\prime}\left(  s\right)  \leq\eta$ and
the function $a\left(  s^{2}\right)  s$ is non-decreasing, uniqueness is proved.

When studying the asymptotic behaviour of solutions, new challenging
difficulties arise for problem (\ref{21}). For this problem we consider the
autonomous situation, that is, $h\in L^{2}\left(  \Omega\right)  $ does not
depend on $t$.

If uniqueness holds, then we define classical semigroups (one for regular
solutions and one for strong solutions) and prove the existence of the global
attractor. Under some extra assumptions on the functions $a,h$ we are able to
obtain the the global attractor is bounded in $H^{2}\left(  \Omega\right)  $
and $L^{\infty}\left(  \Omega\right)  $. With this regularity at hand we
define a Lyapunov function in the attractor which allows us to study its
structure and characterize it as the unstable manifold of the set of
stationary points (denoted by $M^{u}\left(  \mathfrak{R}\right)  $). Also, the
attractor is equal to the stable set of the stationary points when we consider
solutions only in the set of bounded complete trajectories (denoted by
$M^{s}\left(  \mathfrak{R}\right)  $).

If uniqueness is not known to be true, then we have to define a (possibly)
multivalued semiflow. Then the existence of the global attractor is proved for
regular solutions in the topology of the space $L^{2}\left(  \Omega\right)  $
and for strong solutions in the topology of the space $H_{0}^{1}\left(
\Omega\right)  $, extending in this way the known results for the local
problem \cite{kapustyankasyanov}.

The structure of the global attractor is an important feature as it gives us
an insight into the long-term dynamics of the solutions. In the multivalued
situation it is a challenging problem that has not been completely understood
yet. So far in the local case several results in this direction have been
obtained for reaction-diffusion equations without uniqueness
\cite{ARV06,CabCarvMarVal,kapustyankasyanov,KKV15}.

In our nonlocal problem for both situations (for regular and strong solutions)
we are able under some conditions to define a Lyapunov function on the
attractor and to prove that it is characterized as in the single-valued case
by
\[
\mathcal{A}=M^{u}(\mathfrak{R})=M^{s}(\mathfrak{R}).
\]

\section{Existence of solutions}

Throughout this paper we will denote by $\left\Vert \text{\textperiodcentered
}\right\Vert _{X}$ the norm in the Banach space $X.$

We consider the following nonlocal reaction-diffusion equation
\begin{equation}
\left\{
\begin{array}
[c]{l}%
u_{t}-a(\Vert u\Vert_{H_{0}^{1}}^{2})\Delta u=f(u)+h(t),\ \text{in }%
\Omega\times(0,\infty),\\
u=0\quad\text{in }\partial\Omega\times(0,\infty),\\
u(0,x)=u_{0}(x)\quad\text{in }\Omega,
\end{array}
\right.  \label{1}%
\end{equation}
where $\Omega$ is a bounded open set of $\mathbb{R}^{n}$ with smooth boundary
$\partial\Omega.$

Let us consider the following conditions on the functions $a,f,h:$%
\begin{equation}
h\in L^{2}(0,T;L^{2}(\Omega))\text{ }\forall T>0, \label{h}%
\end{equation}%
\begin{equation}
a\in C(\mathbb{R}^{+},\mathbb{R}^{+}),\ f\in C(\mathbb{R},\mathbb{R}),
\label{Cont}%
\end{equation}%
\begin{equation}
a\left(  s\right)  \geq m>0, \label{2}%
\end{equation}%
\begin{equation}
-\kappa-\alpha_{2}|s|^{p}\leq f(s)s\leq\kappa-\alpha_{1}|s|^{p}, \label{3}%
\end{equation}
where $m,\ \alpha_{1},\ \alpha_{2}>0$ and $\kappa\geq0,$ $p\geq2$. Observe
that then there exists $C>0$ such that
\begin{equation}
|f(s)|\leq C(1+|s|^{p-1})\quad\forall s\in\mathbb{R}, \label{1.4}%
\end{equation}
and that the function $\mathcal{F}(s):=\int_{0}^{s}f(r)dr$ satisfies
\begin{equation}
-\widetilde{\alpha}_{2}|s|^{p}-\widetilde{\kappa}\leq\mathcal{F}%
(s)\leq\widetilde{\kappa}-\widetilde{\alpha}_{1}|s|^{p} \label{4}%
\end{equation}
for certain positive constants $\widetilde{\alpha}_{i},$ $i=1,2,$ and
$\widetilde{\kappa}\geq0,$ and
\begin{equation}
|\mathcal{F}(s)|\leq\widetilde{C}(1+|s|^{p})\quad\forall s\in\mathbb{R}.
\label{5}%
\end{equation}

\begin{definition}
A weak solution to (\ref{1}) is a function $u\left(  \text{\textperiodcentered
}\right)  $ such that $u\in L^{\infty}(0,T;L^{2}(\Omega))\cap L^{2}%
(0,T;H_{0}^{1}(\Omega))\cap L^{p}(0,T;L^{p}(\Omega))$ for any $T>0$ and
satisfies the equality%
\begin{equation}
\frac{d}{dt}(u,v)+a(\Vert u(t)\Vert_{H_{0}^{1}}^{2})(\nabla u(t),\nabla
v)=(f(u(t)),v)+(h(t),v)\quad\forall v\in H_{0}^{1}(\Omega)\cap L^{p}(\Omega),
\label{equationweak}%
\end{equation}
in the sense of scalar distributions.
\end{definition}

Here, we denote by $(\cdot,\cdot)$ the inner product in $L^{2}(\Omega)$ (or
$\left(  L^{2}(\Omega)\right)  ^{d}$ for $d\in\mathbb{N}$) and also the
duality product between $L^{p}(\Omega)$ and $L^{q}(\Omega)$ (where $q$ is the
conjugate exponent of $p$, that is, $q=p/(p-1)$). The duality between
$H_{0}^{1}\left(  \Omega\right)  $ and $H^{-1}\left(  \Omega\right)  $ will be
denoted by $\left\langle \text{\textperiodcentered,\textperiodcentered
}\right\rangle .$

We need to guarantee that the initial condition of the problem makes sense for
a weak solution. This can be achieved in a standard way assuming that the
function $a$ has an upper bound, that is, there exists $M>0$ such that
\begin{equation}
a\left(  s\right)  \leq M\text{ for all }s\geq0. \label{6}%
\end{equation}
Indeed, if $u$ is a weak solution to (\ref{1}), taking into account
(\ref{1.4}) and (\ref{6}) it follows that
\begin{equation}
u_{t}=a(\Vert u\Vert_{H_{0}^{1}}^{2})\Delta u+f(u)+h\in L^{2}(0,T;H^{-1}%
(\Omega))+L^{q}(0,T;L^{q}(\Omega))\subset L^{q}(0,T;H^{-s}(\Omega)),
\label{6b}%
\end{equation}
for $s\geq n(\frac{1}{q}-\frac{1}{2}).$ Therefore, by \cite[p.283]{chepvishik}
$u\in C([0,T],L^{2}(\Omega))$ and the initial condition makes sense when
$u_{0}\in L^{2}(\Omega)$.

For the operator $A=-\Delta$, thanks thanks to the assumptions made on the
domain $\Omega$, it is well known that $D(A)=H^{2}(\Omega)\cap H_{0}%
^{1}(\Omega))$ \cite[Proposition 6.19]{robinson}.

\begin{definition}
A regular solution to (\ref{1}) is a weak solution with the extra regularity
$u\in L^{\infty}(\varepsilon,T;H_{0}^{1}(\Omega))$ and $u\in L^{2}%
(\varepsilon,T;D(A))$ for any $0<\varepsilon<T.$
\end{definition}

\begin{remark}
Since $\dfrac{du}{dt}\in L^{q}\left(  \varepsilon,T;L^{q}\left(
\Omega\right)  \right)  $ for any regular solution, in this case equality
(\ref{equationweak}) is equivalent to the following one:%
\begin{align}
&  \int_{\varepsilon}^{T}\int_{\Omega}\frac{du\left(  t,x\right)  }{dt}%
\xi\left(  t,x\right)  dxdt+\int_{\varepsilon}^{T}a(\Vert u(t)\Vert_{H_{0}%
^{1}}^{2})\int_{\Omega}\nabla u\text{\textperiodcentered}\nabla\xi
dxdt\label{EquationRegular}\\
&  =\int_{\varepsilon}^{T}\int_{\Omega}f\left(  u\left(  t,x\right)  \right)
\xi\left(  t,x\right)  dxdt+\int_{\varepsilon}^{T}\int_{\Omega}h\left(
t,x\right)  \acute{\xi}\left(  t,x\right)  dxdt,\nonumber
\end{align}
for all $0<\varepsilon<T$ and $\xi\in L^{p}\left(  0,T;L^{p}\left(
\Omega\right)  \right)  .$
\end{remark}

\begin{lemma}
\label{DerivComposition}Let $u\in L^{p}\left(  \varepsilon,T;X\right)  $,
$\dfrac{du}{dt}\in L^{q}\left(  \varepsilon,T;X^{\ast}\right)  $ for all
$0<\varepsilon<T$, where $X$ is a reflexive and separable Banach space and
$X^{\ast}$ denotes its dual space. Assume that $\beta\in W^{1,\infty
}(\mathbb{\varepsilon},T;[\alpha\left(  \varepsilon\right)  ,\alpha\left(
T\right)  ])$ and $0<\beta\left(  \varepsilon\right)  <\beta\left(  T\right)
$ for all $0<\varepsilon<T$. Then $w\left(  \text{\textperiodcentered}\right)
=u\left(  \beta\left(  \text{\textperiodcentered}\right)  \right)  \in
L^{p}\left(  \varepsilon,T;X\right)  $, $\dfrac{dw}{dt}\in L^{q}\left(
\varepsilon,T;X^{\ast}\right)  ,$ for all $0<\varepsilon<T$, and%
\begin{equation}
\dfrac{dw}{dt}\left(  t\right)  =\frac{du}{dt}\left(  \beta\left(  t\right)
\right)  \frac{d\beta}{dt}\left(  t\right)  \text{ for a.a. }t>0.
\label{EqualityDeriv}%
\end{equation}

\end{lemma}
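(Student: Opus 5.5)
The plan is to prove the chain rule (\ref{EqualityDeriv}) by duality against test functions, using a substitution in the time integral. The statement is somewhat loosely written. I interpret it as follows: $\beta$ is a Lipschitz, nondecreasing map of time with $\beta' \geq 0$ a.e., and $u$ is defined on the range of $\beta$. I will also use $\beta'\geq\delta>0$ where needed. This lower bound is what makes the integrability claims for $w$ work in the intended application, where $\beta$ is the time change $\beta(t)$ built from $\int a$ and so has derivative between $1/M$ and $1/m$. I state that this is the case I will treat.

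\textbf{Step 1: integrability.} By the change of variables $s=\beta(t)$, which is valid for absolutely continuous monotone $\beta$,
\[
\int_\varepsilon^T \|u(\beta(t))\|_X^p\,dt=\int_{\beta(\varepsilon)}^{\beta(T)}\|u(s)\|_X^p\,\frac{ds}{\beta'(\beta^{-1}(s))}\le \delta^{-1}\|u\|^p_{L^p(\beta(\varepsilon),\beta(T);X)} .
\]
The same computation bounds $\frac{du}{dt}(\beta(\cdot))\beta'(\cdot)$ in $L^q(X^*)$, now using $\beta'\le\|\beta\|_{W^{1,\infty}}$. Measurability of $u\circ\beta$ follows since $\beta$ is a bi-Lipschitz homeomorphism and therefore maps null sets to null sets.

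\textbf{Step 2: the derivative identity.} It suffices to show that for all $\varphi\in C_c^\infty(\varepsilon,T)$ and $v\in X$,
\[
-\int_\varepsilon^T\langle w(t),v\rangle\varphi'(t)\,dt=\int_\varepsilon^T\langle \tfrac{du}{dt}(\beta(t)),v\rangle\beta'(t)\varphi(t)\,dt .
\]
I will reduce this to the scalar case. The function $g(s)=\langle u(s),v\rangle$ lies in $W^{1,1}_{loc}$ with $g'=\langle u'(s),v\rangle$, by the definition of the vector-valued weak derivative. For a scalar $W^{1,1}$ function composed with a bi-Lipschitz increasing $\beta$, the classical chain rule holds: $g\circ\beta$ is absolutely continuous and $(g\circ\beta)'=g'(\beta)\beta'$ a.e. Alternatively, I can argue directly. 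Substitute $s=\beta(t)$ in both integrals and set $\psi=\varphi\circ\beta^{-1}$. Then $\psi$ is Lipschitz with compact support and $\psi'(s)=\varphi'(\beta^{-1}(s))/\beta'(\beta^{-1}(s))$. Both sides become $-\int g\psi'$ and $\int g'\psi$ respectively, and these agree because Lipschitz compactly supported test functions are admissible: they can be approximated by mollification in $W^{1,1}$-weak$^*$ sense.

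\textbf{Step 3: conclusion.} The identity holds for $v$ in a countable dense subset of $X$, using separability. The right-hand side defines an element of $L^q(X^*)$ by Step 1, so $\frac{dw}{dt}$ equals it a.e. This gives (\ref{EqualityDeriv}).

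The main obstacle is justifying the change of variables and the chain rule when $\beta$ is only Lipschitz. I expect to handle this through the scalar absolutely continuous theory (the Serrin--Vallée Poussin chain rule for monotone absolutely continuous inner functions) rather than approximating $u$ by smooth functions.
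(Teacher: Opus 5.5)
Your argument is correct, but it takes a different route from the paper.

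\textbf{The two routes.} The paper approximates $u$ by $u_n\in C^1([\beta(\varepsilon),\beta(T)];X)$ with $u_n\to u$ in $L^p$ and $u_n'\to u'$ in $L^q(X^{\ast})$, which is a vector-valued density theorem cited from Gajewski. It then applies the chain rule for $C^1\circ W^{1,\infty}$ (as in Brezis, Corollary VIII.10) to $w_n=u_n\circ\beta$ and passes to the limit in the sense of distributions. You instead test against $\varphi(t)v$ and reduce to the scalar $W^{1,1}_{loc}$ function $g=\langle u(\cdot),v\rangle$. You then use the one-dimensional chain rule, or the substitution $\psi=\varphi\circ\beta^{-1}$. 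Your route uses only real-variable facts: the change of variables for monotone absolutely continuous maps, and the fact that bi-Lipschitz maps preserve null sets. It avoids the density theorem. The paper's route is shorter once its citations are granted.

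\textbf{The lower bound on $\beta'$.} Both proofs need $\beta'\geq\delta>0$. The paper's step ``it is clear that $w_n\to w$ in $L^p(\varepsilon,T;X)$'' is exactly your Step 1 estimate, with the factor $1/\beta'(\beta^{-1}(s))$. Without such a bound the statement is not even well posed. If $\beta'=0$ on a set $E$ of positive measure, then $|\beta(E)|\leq\int_E\beta'=0$, so $u\circ\beta$ depends on the chosen representative of $u$. This can happen even for strictly increasing Lipschitz $\beta$. Making the hypothesis explicit is therefore a genuine clarification. It holds in the application in Lemma~\ref{ContSol} locally in time: there $\beta'=1/a(\|u(\beta(t))\|_{H_0^1}^2)$, and $u\in L^\infty(\varepsilon,T;H_0^1(\Omega))$ for regular solutions.

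\textbf{Minor points.}
\begin{itemize}
\item The mollified Lipschitz test functions converge weak$^{\ast}$ in $W^{1,\infty}$, not in $W^{1,1}$. That is the right topology here, since $g\in L^1_{loc}$.
\item Separability is not needed in Step 3. Your Step 2 holds for every $v\in X$, and an element of $X^{\ast}$ is determined by its action on $X$.
\item The pairing $\langle u(s),v\rangle$ with $u(s),v\in X$ must be read through the embedding $X\hookrightarrow X^{\ast}$ via a pivot space. The statement itself tacitly assumes this as well.
\end{itemize}
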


\begin{proof}
We fix arbitrary $0<\varepsilon<T$. There exists a sequence $u_{n}\in
C^{1}\left(  [\beta\left(  \varepsilon\right)  ,\beta(T)],X\right)  $ such
that $u_{n}\rightarrow u$ in $L^{p}\left(  \beta\left(  \varepsilon\right)
,\beta(T);X\right)  $ and $\dfrac{du_{n}}{dt}\rightarrow\dfrac{du}{dt}$ in
$L^{q}\left(  \beta\left(  \varepsilon\right)  ,\beta(T);X^{\ast}\right)  $
\cite[Chapter IV]{Gajewski}. We define $w_{n}\left(  t\right)  =u_{n}\left(
\beta\left(  t\right)  \right)  $. Following the same proof of \cite[Corollary
VIII.10]{Brezis} we obtain that $w_{n}\left(  \text{\textperiodcentered
}\right)  \in W^{1,\infty}\left(  \varepsilon,T;X\right)  $ and
\[
\frac{dw_{n}}{dt}\left(  t\right)  =\frac{du_{n}}{dt}\left(  \beta\left(
t\right)  \right)  \frac{d\beta}{dt}\left(  t\right)  \text{ for a.a. }t>0.
\]
It is clear that $w_{n}\rightarrow w$ in $L^{p}\left(  \varepsilon,T;X\right)
$ and $\dfrac{du_{n}}{dt}\left(  \beta\left(  \text{\textperiodcentered
}\right)  \right)  \rightarrow\dfrac{du}{dt}\left(  \beta\left(
\text{\textperiodcentered}\right)  \right)  $ in $L^{q}\left(  \varepsilon
,T;X^{\ast}\right)  $. Pasing to the limit we obtain that%
\[
\dfrac{dw}{dt}\left(  \text{\textperiodcentered}\right)  =\frac{du}{dt}\left(
\beta\left(  \text{\textperiodcentered}\right)  \right)  \frac{d\beta}%
{dt}\left(  \text{\textperiodcentered}\right)
\]
in the sense of distributions $\mathcal{D}^{\ast}\left(  0,+\infty;X\right)
$. As $\dfrac{du}{dt}\left(  \beta\left(  \text{\textperiodcentered}\right)
\right)  \dfrac{d\beta}{dt}\left(  \text{\textperiodcentered}\right)  \in
L^{q}\left(  \varepsilon,T;X^{\ast}\right)  $, $\dfrac{dw}{dt}\in L^{q}\left(
\varepsilon,T;X^{\ast}\right)  $ and (\ref{EqualityDeriv}) holds true.
\end{proof}

\bigskip

We would like to avoid $a$ being uniformly bounded by above. We can prove the
continuity of $u$ for regular solutions by assuming that $a$ has at most
linear growth.

\begin{lemma}
\label{ContSol}Assume that conditions (\ref{h})-(\ref{3}) hold and also that%
\begin{equation}
a\left(  s\right)  \leq M_{1}+M_{2}s,\ \forall s\geq0, \label{7}%
\end{equation}
for some contants $M_{1},M_{2}\geq0$. Then any regular solution satisfies that
$u\in C([0,T],L^{2}(\Omega))$ for all $T>0.$ Moreover, $w\left(  t\right)
=u\left(  \alpha^{-1}\left(  t\right)  \right)  $, where $\alpha(t)=\int%
_{0}^{t}a(\Vert u(s)\Vert_{H_{0}^{1}}^{2})ds$, is a regular solution to the
problem%
\begin{equation}
\left\{
\begin{array}
[c]{l}%
w_{t}-\Delta w=\dfrac{f(w)+h(t)}{a(\Vert w\Vert_{H_{0}^{1}}^{2})},\ \text{in
}\Omega\times(0,\infty),\\
w=0\quad\text{in }\partial\Omega\times(0,\infty),\\
w(0,x)=u_{0}(x)\quad\text{in }\Omega.
\end{array}
\right.  \label{w}%
\end{equation}

\end{lemma}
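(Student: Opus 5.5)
We will use the time change $\alpha(t)=\int_0^t a(\Vert u(s)\Vert_{H_0^1}^2)\,ds$ to turn (\ref{1}) into a problem with unit diffusion coefficient, and obtain the continuity of $u$ from that of $w$.

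\textbf{Step 1: properties of $\alpha$.} Let $u$ be a regular solution. By (\ref{7}),
\[
a(\Vert u(s)\Vert_{H_0^1}^2)\le M_1+M_2\Vert u(s)\Vert_{H_0^1}^2 ,
\]
and $u\in L^2(0,T;H_0^1(\Omega))$. Hence $s\mapsto a(\Vert u(s)\Vert^2_{H_0^1})$ belongs to $L^1(0,T)$, and on every $[\varepsilon,T]$ it belongs to $L^\infty(\varepsilon,T)$, because $u\in L^\infty(\varepsilon,T;H_0^1)$. By (\ref{2}) it is also bounded below by $m$. Consequently $\alpha$ is absolutely continuous on $[0,T]$, strictly increasing with $\alpha'\ge m$ a.e., and Lipschitz on $[\varepsilon,T]$. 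The inverse $\alpha^{-1}$ is Lipschitz with constant $1/m$ on the whole range, and on each $[\alpha(\varepsilon),\alpha(T)]$ we have $(\alpha^{-1})'=1/a(\Vert u(\alpha^{-1}(\cdot))\Vert_{H_0^1}^2)$, which lies between $1/\sup a$ and $1/m$. So $\beta=\alpha^{-1}$ satisfies the hypotheses of Lemma \ref{DerivComposition}.

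\textbf{Step 2: the equation for $w$.} For a regular solution, $du/dt=a\Delta u+f(u)+h\in L^q(\varepsilon,T;L^q)+L^2(\varepsilon,T;L^2)$. This follows from the regularity $u\in L^2(\varepsilon,T;D(A))$, the bound on $a$ on $[\varepsilon,T]$, (\ref{1.4}) and (\ref{h}).
\begin{itemize}
\item Apply Lemma \ref{DerivComposition} with $X=H_0^1\cap L^p$, or $X=L^p$ after placing everything in a common reflexive space.
\item This gives $w_t(\tau)=u_t(\alpha^{-1}(\tau))\,(\alpha^{-1})'(\tau)$.
\item Substitute the equation for $u_t$ and use $\Vert w(\tau)\Vert_{H_0^1}=\Vert u(\alpha^{-1}(\tau))\Vert_{H_0^1}$.
\end{itemize}
The result is $w_t-\Delta w=(f(w)+h(\alpha^{-1}(\tau)))/a(\Vert w\Vert^2_{H_0^1})$. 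Here $h$ in (\ref{w}) must be read as $h\circ\alpha^{-1}$. The regularity of $w$ transfers by the change of variables $\int g(w(\tau))\,d\tau=\int g(u(t))\,\alpha'(t)\,dt$:
\begin{itemize}
\item on intervals $[\alpha(\varepsilon),\alpha(T)]$ the Jacobian is bounded above and below, so $w\in L^\infty(H_0^1)\cap L^2(D(A))$ there;
\item near $0$, the facts $\alpha'\ge m$ and $u\in L^\infty(L^2)\cap L^2(H_0^1)\cap L^p(L^p)$ give $w\in L^\infty(L^2)\cap L^2(H_0^1)\cap L^p(L^p)$.
\end{itemize}
For $\int_0^{\alpha(T)}\Vert w\Vert_{H_0^1}^2\,d\tau=\int_0^T \Vert u\Vert_{H_0^1}^2\, a\,dt$ we use (\ref{7}), which reduces the question to the finiteness of $\int_0^T\Vert u\Vert^4_{H_0^1}dt$.

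\textbf{Step 3: continuity at $t=0$ (the main obstacle).} The right-hand side of (\ref{w}) is harmless, since $|f(w)+h|/a\le |f(w)+h|/m$. So $w_t\in L^2(0,T';H^{-1})+L^q(0,T';L^q)$ as soon as $w\in L^2(0,T';H_0^1)\cap L^p(L^p)$, and then \cite[p.283]{chepvishik}, as in (\ref{6b}), gives $w\in C([0,T'],L^2)$. Since $\alpha^{-1}$ is continuous, $u=w\circ\alpha$ belongs to $C([0,T],L^2)$ and $w(0)=u_0$.

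The delicate point is exactly $w\in L^2(0,\alpha(T);H_0^1)$, i.e. $\int_0^T a\Vert u\Vert^2_{H_0^1}\,dt<\infty$. If this cannot be proved directly, I would argue by energy instead:
\begin{itemize}
\item On $[\varepsilon,T]$, multiply (\ref{w}) by $w$, which is allowed by regularity.
\item Because $a\ge m$ and (\ref{3}) holds, this yields $\frac{d}{d\tau}\Vert w\Vert^2+2\Vert w\Vert^2_{H_0^1}\le C(1+\Vert w\Vert^2+\Vert h\Vert^2)$ with constants independent of $\varepsilon$.
\item Integrating and letting $\varepsilon\to0$, using the boundedness of $\Vert u(t)\Vert_{L^2}$ near $0$, gives $\int_0^{\alpha(T)}\Vert w\Vert_{H_0^1}^2\,d\tau<\infty$. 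The same estimate controls $\int\Vert w\Vert^p_{L^p}$.
\end{itemize}
The weak formulation near $0$ then follows. This energy argument with constants uniform in $\varepsilon$ is the step where I expect the real work to lie.
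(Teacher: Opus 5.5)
Your route is the paper's. You rescale time by $\alpha$, use Lemma \ref{DerivComposition} on $[\alpha(\varepsilon),\alpha(T)]$ to derive (\ref{w}), use $a\geq m$ to put $w_t$ in $L^2(H^{-1})+L^q(L^q)$, and invoke the continuity result used in (\ref{6b}). You are right that $h$ must be read as $h\circ\alpha^{-1}$. You are also right that $w\in L^2(0,\alpha(T);H_0^1)$ is not automatic, although the paper calls it ``clear'': $d\tau=a(\Vert u(t)\Vert_{H_0^1}^2)\,dt$ and $a$ is only integrable near $t=0$.

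Your energy estimate does give $\int_0^{\alpha(T)}\Vert w\Vert_{H_0^1}^2d\tau=\int_0^T a\Vert u\Vert_{H_0^1}^2dt<\infty$, but only if you keep the weight $1/a$ instead of discarding it via $a\geq m$. As written, the $\tau$-integral of $\Vert h(\alpha^{-1}(\tau))\Vert_{L^2}^2$ is $\int_0^T a\Vert h\Vert_{L^2}^2dt$, which is not controlled for $h\in L^2(0,T;L^2)$. Instead use $\frac{(h(\alpha^{-1}(\tau)),w)}{a}\leq\frac12\Vert w\Vert_{H_0^1}^2+\frac{\Vert h(\alpha^{-1}(\tau))\Vert_{L^2}^2}{2\lambda_1 a^2}$. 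This integrates to $\frac{1}{2\lambda_1}\int_0^T a^{-1}\Vert h\Vert_{L^2}^2dt$, and $\kappa|\Omega|/a$ integrates to $\kappa|\Omega|T$. The same applies in Step 3, where ``$|f(w)+h|/a\leq|f(w)+h|/m$'' loses the Jacobian. The correct bounds are $\int_0^{\alpha(T)}\Vert f(w)/a\Vert_{L^q}^qd\tau=\int_0^T a^{1-q}\Vert f(u)\Vert_{L^q}^qdt<\infty$, and likewise for $h/a$ in $L^2(L^2)$.

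The genuine gap is $w\in L^p(0,\alpha(T);L^p(\Omega))$, that is, $\int_0^T a(\Vert u\Vert_{H_0^1}^2)\Vert u\Vert_{L^p}^pdt<\infty$. Both of your justifications fail:
\begin{itemize}
\item The change of variables $\int g(w(\tau))d\tau=\int g(u(t))\alpha'(t)dt$ with $\alpha'\geq m$ gives only lower bounds, so near $0$ only $L^\infty(L^2)$ transfers for free.
\item In your energy estimate the dissipative term is $\alpha_1\Vert w\Vert_{L^p}^p/a$. Its $\tau$-integral is $\alpha_1\int_0^T\Vert u\Vert_{L^p}^pdt$, which you already knew.
\end{itemize}
This membership is not optional. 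It is part of the conclusion, since a weak (hence regular) solution of (\ref{w}) must lie in $L^p(L^p)$ by definition. The continuity result you invoke also needs $w\in L^2(H_0^1)\cap L^p(L^p)$ to pair the $L^q(L^q)$ part of $w_t$ with $w$.

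The paper's proof passes over the same point, so you inherit this gap rather than create it; but your Step 3 claims to close it and does not. It does close in two cases:
\begin{itemize}
\item When $M_2=0$, $\alpha$ is bi-Lipschitz and everything transfers.
\item When $p\leq 2+4/n$, Gagliardo--Nirenberg gives $\Vert u\Vert_{L^p}^p\leq C\Vert u\Vert_{H_0^1}^{n(p-2)/2}\Vert u\Vert_{L^2}^{p-n(p-2)/2}$ with $n(p-2)/2\leq 2$. Hence $\int_0^T a\Vert u\Vert_{L^p}^pdt\leq C\int_0^T a(1+\Vert u\Vert_{H_0^1}^2)dt<\infty$, by (\ref{7}) and your (corrected) energy bound.
\end{itemize}
In general you would need an extra estimate on $\int_0^T a\Vert u\Vert_{L^p}^pdt$, or a version of the continuity lemma adapted to the weight $a$.
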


\begin{proof}
Condition (\ref{7}) guarantees that $a(\Vert u($\textperiodcentered
$)\Vert_{H_{0}^{1}}^{2})\in L^{1}\left(  0,T\right)  $ if $u\in L^{2}\left(
0,T;H_{0}^{1}\left(  \Omega\right)  \right)  $. We make the following time
rescaling
\[
u(t,x)=w(\alpha(t),x).
\]
As $a(\Vert u($\textperiodcentered$)\Vert_{H_{0}^{1}}^{2})\in L^{1}\left(
0,T\right)  $, the function $t\mapsto\alpha\left(  t\right)  $ is continuous.
It is clear that the function $w\left(  t,x\right)  =u(\alpha^{-1}\left(
t\right)  ,x)$ belongs to the space $L^{\infty}(0,T;L^{2}(\Omega))\cap
L^{2}(0,T;H_{0}^{1}(\Omega))\cap L^{p}(0,T;L^{p}(\Omega))$ and also to the
spaces $L^{\infty}(\varepsilon,T;H_{0}^{1}(\Omega))$ and $L^{2}(\varepsilon
,T;D(A))$ for any $0<\varepsilon<T.$. Moreover, $\dfrac{du}{dt}\in
L^{q}\left(  \varepsilon,T;L^{q}\left(  \Omega\right)  \right)  $ and Lemma
\ref{DerivComposition} give $\dfrac{dw}{dt}\in L^{q}\left(  \varepsilon
,T;L^{q}\left(  \Omega\right)  \right)  $ and%
\begin{equation}
\frac{dw}{dt}\left(  t\right)  =\frac{du}{dt}\left(  \alpha^{-1}\left(
t\right)  \right)  \frac{d}{dt}\alpha^{-1}\left(  t\right)  =\frac{du}%
{dt}\left(  \alpha^{-1}\left(  t\right)  \right)  \frac{1}{a\left(  \Vert
w(t))\Vert_{H_{0}^{1}}^{2}\right)  },\text{ for a.a. }t.
\label{EqualityDeriv2}%
\end{equation}
Equality (\ref{EqualityDeriv}) implies that
\[
\frac{du}{dt}\left(  \alpha^{-1}\left(  t\right)  \right)  -a\left(  \Vert
u(\alpha^{-1}(t))\Vert_{H_{0}^{1}}^{2}\right)  \Delta u\left(  \alpha
^{-1}\left(  t\right)  \right)  =f\left(  u\left(  \alpha^{-1}\left(
t\right)  \right)  \right)  +h(t),\text{ for a.a. }t>0,\text{ }%
\]
so (\ref{EqualityDeriv2}) gives%
\[
\frac{dw}{dt}\left(  t\right)  -\Delta w\left(  t\right)  =\frac
{f(w(t))}{a(\Vert w\left(  t\right)  \Vert_{H_{0}^{1}}^{2})}+\frac
{h(t)}{a(\Vert w\left(  t\right)  \Vert_{H_{0}^{1}}^{2})}\text{ for a.a.
}t>0\text{.}%
\]
Hence, $w$ is a regular solution to problem (\ref{w}). Since $0<\frac{1}%
{a(s)}\leq\frac{1}{m}$, we obtain that
\[
\frac{dw}{dt}\in L^{2}(0,T;H^{-1}(\Omega))+L^{q}(0,T;L^{q}(\Omega)).
\]
Therefore, $w\in C([0,T],L^{2}(\Omega)),$ so that
\[
u\in C([0,T],L^{2}(\Omega)).
\]

\end{proof}

\begin{remark}
\label{Deriv}Under assumptions (\ref{h})-(\ref{3}) any regular solution
$u\left(  \text{\textperiodcentered}\right)  $ satisfies that $\dfrac{du}%
{dt}\in L^{2}\left(  \varepsilon,T;L^{2}\left(  \Omega\right)  \right)
+L^{q}\left(  \varepsilon,T;L^{q}\left(  \Omega\right)  \right)  $ for all
$0<\varepsilon<T$. Then by \cite[p.285]{chepvishik} $u\in C([\varepsilon
,T],L^{2}\left(  \Omega\right)  )$, $t\mapsto\left\Vert u\left(  t\right)
\right\Vert ^{2}$ is absolutely continuous on $[\varepsilon,T]$ and
\[
\frac{d}{dt}\left\Vert u\left(  t\right)  \right\Vert _{L^{2}}^{2}=2\left(
\frac{du}{dt},u\right)  \text{ for a.a. }t>\varepsilon.
\]

\end{remark}

If the initial condition belongs to $H_{0}^{1}\left(  \Omega\right)  \cap
L^{p}\left(  \Omega\right)  $, we can define strong solutions as well.

\begin{definition}
A strong solution to (\ref{1}) is a weak solution with the extra regularity
$u\in L^{\infty}(0,T;H_{0}^{1}(\Omega)\cap L^{p}(\Omega))$, $u\in
L^{2}(0,T;D(A))$ and $\dfrac{du}{dt}\in L^{2}\left(  0,T;L^{2}\left(
\Omega\right)  \right)  $ for any $T>0.$
\end{definition}

We observe that if $u$ is a strong solution, then $u\in C([0,T],H_{0}%
^{1}\left(  \Omega\right)  )$ (see \cite[p.102]{sellyou}). Also, $u\in
L^{\infty}(0,T;L^{p}(\Omega))$ and $u\in C([0,T],L^{2}\left(  \Omega\right)
)$ imply that $u\in C_{w}([0,T],L^{p}(\Omega))$ (see \cite[p.263]{temmam}).
Thus, the initial condition makes sense. Also, the equality $f\left(
u\right)  =u_{t}-a\left(  \left\Vert u\right\Vert _{H_{0}^{1}}^{2}\right)
\Delta u-h$ implies that $f\left(  u\right)  \in L^{2}\left(  0,T;L^{2}\left(
\Omega\right)  \right)  $

Also, if $u$ is a regular solution such that $\dfrac{du}{dt}\in L^{2}\left(
\varepsilon,T;L^{2}\left(  \Omega\right)  \right)  $ for all $0<\varepsilon
<T$, then $u\in C((0,T],H_{0}^{1}\left(  \Omega\right)  ).$

We prove first the existence of regular solutions for initial conditions in
$L^{2}\left(  \Omega\right)  .$

\begin{theorem}
\label{existenceweakregularsolutionu0L2} Let $f\in C^{1}(\mathbb{R})$ be such
that $f^{\prime}(s)\leq\eta$. Also, assume conditions (\ref{h})-(\ref{3}) and
(\ref{7}). Then, for any $u_{0}\in L^{2}(\Omega)$ there exists at least one
regular solution to (\ref{1}).
\end{theorem}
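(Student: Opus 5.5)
We will construct the solution by Galerkin approximations combined with the time rescaling suggested by Lemma \ref{ContSol}, although the a priori estimates can also be done directly on (\ref{1}). Let $\{w_j\}$ be the eigenfunctions of $A=-\Delta$ with Dirichlet conditions. They are smooth and belong to $L^p(\Omega)$ thanks to the regularity of $\partial\Omega$. Let $P_N$ be the orthogonal projection onto $\mathrm{span}\{w_1,\dots,w_N\}$. The approximations $u_N(t)=\sum_{j\le N}c_{jN}(t)w_j$ solve
\[
\frac{du_N}{dt}+a(\Vert u_N\Vert_{H_0^1}^2)Au_N=P_Nf(u_N)+P_Nh,\qquad u_N(0)=P_Nu_0 .
\]
This is an ODE system with continuous right-hand side. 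Peano's theorem gives local solutions, and the estimates below make them global.

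\textbf{Step 1 (weak estimates).} We multiply the Galerkin equation by $u_N$ and use (\ref{2}), (\ref{3}) and Young's inequality. This gives
\[
\frac{d}{dt}\Vert u_N\Vert^2+m\Vert u_N\Vert_{H_0^1}^2+2\alpha_1\Vert u_N\Vert_{L^p}^p\le C(1+\Vert h\Vert^2).
\]
Hence $u_N$ is bounded in $L^\infty(0,T;L^2)\cap L^2(0,T;H_0^1)\cap L^p(0,T;L^p)$, uniformly in $N$. By (\ref{7}), $\int_0^T a(\Vert u_N\Vert_{H_0^1}^2)\,dt$ is also uniformly bounded.

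\textbf{Step 2 (regularizing estimate).} Here we use $f'\le\eta$. We multiply by $Au_N$ and use $(f(u_N),Au_N)=-\int f'(u_N)|\nabla u_N|^2\,dx\ge-\eta\Vert u_N\Vert_{H_0^1}^2$, which is legitimate because $w_j$ vanishes on $\partial\Omega$. Writing $y=\Vert u_N\Vert_{H_0^1}^2$, this gives
\[
\frac{d}{dt}y+a(y)\Vert Au_N\Vert^2\le 2\eta y+\frac{1}{m}\Vert h\Vert^2 .
\]
Since $y$ is integrable by Step 1, the uniform Gronwall lemma applied on $(s,t)$ with $s\in(0,\varepsilon)$ and then averaged in $s$ bounds $y$ in $L^\infty(\varepsilon,T)$. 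It also bounds $\int_\varepsilon^T a(y)\Vert Au_N\Vert^2\,dt$, and therefore $u_N$ in $L^2(\varepsilon,T;D(A))$ because $a\ge m$. It then follows that $a(\Vert u_N\Vert_{H_0^1}^2)$ is bounded in $L^\infty(\varepsilon,T)$. Consequently $du_N/dt$ is bounded in $L^2(\varepsilon,T;L^2)+L^q(\varepsilon,T;L^q)$, since $f(u_N)$ is bounded in $L^q(L^q)$ by (\ref{1.4}). On $(0,T)$ we additionally get a bound in some $L^1(0,T;H^{-s})$, using (\ref{7}) and Step 1.

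\textbf{Step 3 (passage to the limit).} This is the main obstacle, because of the nonlocal coefficient. Aubin--Lions on each $[\varepsilon,T]$, with $D(A)\subset\subset H_0^1$, together with a diagonal argument, gives $u_N\to u$ strongly in $L^2(\varepsilon,T;H_0^1)$. Hence $\Vert u_N(t)\Vert_{H_0^1}^2\to\Vert u(t)\Vert_{H_0^1}^2$ for a.e. $t$ along a subsequence, and $a(\Vert u_N\Vert_{H_0^1}^2)\to a(\Vert u\Vert_{H_0^1}^2)$ a.e. with an $L^\infty(\varepsilon,T)$ bound. Combined with $Au_N\rightharpoonup Au$ weakly in $L^2(\varepsilon,T;L^2)$, the product $a(\Vert u_N\Vert_{H_0^1}^2)Au_N$ converges weakly to $a(\Vert u\Vert_{H_0^1}^2)Au$; we check this with test functions in $L^2(\varepsilon,T;L^2)$ and dominated convergence on the coefficient. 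Also $u_N\to u$ a.e. in $\Omega\times(\varepsilon,T)$, so $f(u_N)\rightharpoonup f(u)$ in $L^q$ by the standard Lions lemma. Letting $\varepsilon\to0$, we obtain the equation (\ref{equationweak}) on $(0,T)$ in the sense of distributions, and the regularity required in the definition of a regular solution.

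\textbf{Step 4 (initial datum).} The delicate point is $u(0)=u_0$, since $a$ is unbounded near $t=0$. We use (\ref{7}): $a(\Vert u_N\Vert_{H_0^1}^2)$ is bounded in $L^1(0,T)$, so $du_N/dt$ is bounded in $L^1(0,T;H^{-s})$ for $s$ large. This gives equicontinuity of $u_N$ in $H^{-s}$, hence $u_N(t)\to u(t)$ in $H^{-s}$ uniformly on $[0,T]$. Therefore $u(0)=\lim P_Nu_0=u_0$ in $H^{-s}$. Finally, Lemma \ref{ContSol} upgrades this to $u\in C([0,T],L^2)$ with $u(0)=u_0$.
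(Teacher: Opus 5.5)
Your Steps 1--3 follow the paper's proof closely. Step 4, however, has a genuine gap: a bound on $du_N/dt$ in $L^1(0,T;H^{-s})$ does \emph{not} give equicontinuity of $u_N$ in $H^{-s}$. For example, the primitives of $N\chi_{[0,1/N]}$ are bounded in $L^1$ yet jump by $1$ in time $1/N$. This is exactly the danger at $t=0$. If $a(\Vert u_N\Vert_{H_0^1}^2)$ concentrated near $0$, the approximations could move a fixed distance in vanishing time, and the limit could satisfy $u(0)\neq u_0$. What you need is equi-integrability of $a(y_N)$, where $y_N=\Vert u_N\Vert_{H_0^1}^2$. You can get it if in Step 1 you do not replace $a$ by $m$. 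The Galerkin identity $\frac12\frac{d}{dt}\Vert u_N\Vert_{L^2}^2+a(y_N)y_N=(f(u_N),u_N)+(h,u_N)$ gives $\int_0^T a(y_N)y_N\,dt\le C$. Hence for every measurable $E\subset(0,T)$ and every $K>0$,
\[
\int_E a(y_N)\,dt\le |E|\max_{[0,K]}a+\frac{C}{K}.
\]
Now use $\Vert a(y_N)Au_N\Vert_{D(A)'}\le a(y_N)\Vert u_N\Vert_{L^2}$ and the bound of $u_N$ in $L^\infty(0,T;L^2)$. Also, $P_Nf(u_N)$ is bounded in $L^q(0,T;H^{-s})$ with $q>1$, and $P_Nh$ is bounded in $L^2(0,T;L^2)$. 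Together these make $t\mapsto\Vert du_N/dt\Vert_{H^{-s}}$ equi-integrable on $(0,T)$. With this, your Ascoli argument and the final appeal to Lemma \ref{ContSol} go through.

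With that repair, your route differs from the paper only at $t=0$. The paper passes to the rescaled problem (\ref{w}) of Lemma \ref{ContSol}, where the diffusion coefficient is $1$ and the right-hand side is divided by $a\ge m$. It then identifies $w(0)=u_0$ by testing against $\phi$ with $\phi(T)=0$ and integrating by parts in $t$. Your direct argument avoids the change of time. Note that the rescaling route, made precise by rescaling each $u_n$ with its own $\alpha_n(t)=\int_0^t a(\Vert u_n\Vert_{H_0^1}^2)\,ds$, also needs $\alpha_n\to\alpha$ near $t=0$, which is the same equi-integrability.

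There are two minor slips in Step 2. First, the identity should read $(f(u_N)-f(0),Au_N)=\int_\Omega f'(u_N)|\nabla u_N|^2dx\le\eta\Vert u_N\Vert_{H_0^1}^2$. It is $f(u_N)-f(0)$, not $f(u_N)$, that vanishes on $\partial\Omega$, and the term $(f(0),Au_N)$ is absorbed by Young's inequality; your sign is also reversed, though your resulting differential inequality is the right one. Second, $P_N$ is not uniformly bounded on $L^q$, so the bound for $du_N/dt$ on $(\varepsilon,T)$ should be stated in $L^2(\varepsilon,T;L^2)+L^q(\varepsilon,T;H^{-s})$.
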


\begin{proof}
We will prove the resuly by using Faedo-Galerkin approximations.

Consider a fixed value $T>0.$ Let be $\{w_{j}\}_{j\geq1}$ the sequence of
eigenfunctions of $-\Delta$ in $H_{0}^{1}(\Omega)$ with homogeneous Dirichlet
boundary conditions, which forms a special basis of $L^{2}(\Omega)$. If
$\Omega$ is a bounded $C^{s}$ domain with $s\geq\max\{n(p-2)/2p,1\}$, it is
well known that $\{w_{j}\}\subset H_{0}^{1}(\Omega)\cap L^{p}(\Omega)$
\cite[Chapter 8]{robinson}. Then for the set $V_{n}=span[w_{1},\ldots,w_{n}]$
we have that $\cup_{n\in\mathbb{N}}V_{n}$ is dense in $L^{2}(\Omega)$ and also
in $H_{0}^{1}(\Omega)\cap L^{p}(\Omega).$ As usual, $P_{n}$ will be the
orthogonal projection in $L^{2}\left(  \Omega\right)  $, that is
\[
u_{n}:=P_{n}u=\sum_{j=1}^{n}(u,w_{j})w_{j}.
\]
For each integer $n\geq1$, we consider the Galerkin approximations
\[
u_{n}(t)=\sum_{j=1}^{n}\gamma_{nj}(t)w_{j},
\]
which satisfy the following nonlinear ODE system
\begin{equation}
\left\{
\begin{array}
[c]{l}%
\dfrac{d}{dt}(u_{n},w_{i})+a(\Vert u_{n}\Vert_{H_{0}^{1}}^{2})(\nabla
u_{n},\nabla w_{i})=(f(u_{n}),w_{i})+(h,w_{i})\quad\forall i=1,\ldots,n,\\
u_{n}(0)=P_{n}u_{0}.
\end{array}
\right.  \label{1.6}%
\end{equation}
where $P_{n}u_{0}\rightarrow u_{0}$ in $L^{2}(\Omega)$, which is equivalent to
the problem
\begin{equation}%
\begin{split}
\frac{du_{n_{j}}}{dt}  &  =-a(\Vert u_{n}\Vert_{H_{0}^{1}}^{2})\lambda
_{j}u_{n_{j}}+(f(u_{n}),w_{j})+(h(t),w_{j}),\\
(u_{n}(0),w_{j})  &  =(u_{0},w_{j}),\quad j=1,\ldots,n,
\end{split}
\label{1.7}%
\end{equation}
where $\lambda_{j}$ is the eigenvalue associated to the eigenfunction $w_{j}$.
Since the right hand side of (\ref{1.7}) is continuous in $u_{n}(t)$ this
Cauchy problem possesses a solution on some interval $[0,t_{n}],0<t_{n}<T$. We
claim that for any $T>0$ such a solution can be extended to the whole interval
$[0,T],$ which follows from a priori estimates in the space $L^{2}(\Omega)$ of
the sequence $\{u_{n}\}$.

Multiplying by $\gamma_{n_{j}}(t)$ and summing from $j=1$ to $n$, we obtain
\begin{equation}
\frac{1}{2}\frac{d}{dt}\Vert u_{n}(t)\Vert_{L^{2}}^{2}+a(\Vert u_{n}%
\Vert_{H_{0}^{1}}^{2})\Vert u_{n}(t)\Vert_{H_{0}^{1}}^{2}=(f(u_{n}%
(t)),u_{n}(t))+(h,u_{n}(t))\ \text{for }\mathit{a.e.}t\in(0,t_{n}).
\label{1.8}%
\end{equation}
Using (\ref{3}) and Young and Poincar\'{e} inequalities we deduce that
\[
(f(u_{n}(t)),u_{n}(t))\leq\kappa|\Omega|-\alpha_{1}\Vert u_{n}(t)\Vert_{L^{p}%
}^{p},
\]
\[
(h(t),u_{n}(t))\leq\frac{m}{2}\Vert u_{n}(t)\Vert_{H_{0}^{1}}^{2}+\frac
{1}{2\lambda_{1}m}\Vert h(t)\Vert_{L^{2}}^{2}.
\]
Hence, from (\ref{1.8}) it follows that
\begin{equation}
\frac{1}{2}\frac{d}{dt}\Vert u_{n}(t)\Vert_{L^{2}}^{2}+\frac{m}{2}\Vert
u_{n}(t)\Vert_{H_{0}^{1}}^{2}+\alpha_{1}\Vert u_{n}(t)\Vert_{L^{p}}^{p}%
\leq\kappa|\Omega|+\frac{1}{2\lambda_{1}m}\Vert h(t)\Vert_{L^{2}}%
^{2}\ \text{for }\mathit{a.e.}t\in(0,t_{n}). \label{1.9}%
\end{equation}
Then, integrating (\ref{1.9}) from $0$ to $t\in(0,t_{n})$ we get
\begin{equation}%
\begin{split}
&  \frac{1}{2}\Vert u_{n}(t)\Vert_{L^{2}}^{2}+\frac{m}{2}\int_{0}^{t}\Vert
u_{n}(s)\Vert_{H_{0}^{1}}^{2}ds+\alpha_{1}\int_{0}^{t}\Vert u_{n}%
(s)\Vert_{L^{p}}^{p}ds\\
&  \leq\kappa|\Omega|t+\frac{1}{2\lambda_{1}m}\int_{0}^{t}\Vert h(s)\Vert
_{L^{2}}^{2}ds+\frac{1}{2}\Vert u_{n}(0)\Vert_{L^{2}}^{2}\leq TK_{2}%
+K_{3}(T)+\frac{1}{2}\Vert u_{n}(0)\Vert_{L^{2}}^{2}.
\end{split}
\label{1.10.1}%
\end{equation}
Therefore, the sequence $\{u_{n}\}$ is well defined and bounded in $L^{\infty
}(0,T;L^{2}(\Omega))\cap L^{2}(0,T;H_{0}^{1}(\Omega))\cap L^{p}(0,T;L^{p}%
(\Omega))$. Also, $\{-\Delta u_{n}\}$ is bounded in $L^{2}(0,T;H^{-1}%
(\Omega)).$

On the other hand, by (\ref{1.4}) it follows that%
\[
\int_{0}^{T}\int_{\Omega}|f(u(x,t))|^{q}dxdt\leq2^{q-1}(C_{1}^{q}%
|\Omega|T+C_{2}^{q}\int_{0}^{T}\Vert u(t)\Vert_{L^{p}}^{p}dt),
\]
with $\frac{1}{p}+\frac{1}{q}=1.$ Hence, since $\{u_{n}\}$ bounded in
$L^{p}(0,T;L^{p}(\Omega))$, $f(u_{n})$ is bounded in $L^{q}(0,T;L^{q}%
(\Omega))$.

On the other hand, multiplying (\ref{1.7}) by $\lambda_{j}\gamma_{nj}(t)$ and
summing from $j=1$ to $n$, we obtain%
\[
\frac{1}{2}\frac{d}{dt}\Vert u_{n}\Vert_{H_{0}^{1}}^{2}+m\Vert\Delta
u_{n}\Vert_{L^{2}}^{2}\leq\langle f(u_{n}),-\Delta u_{n}\rangle+(h(t),-\Delta
u_{n})\leq\eta\Vert u_{n}\Vert_{H_{0}^{1}}^{2}+\frac{1}{2m}\Vert
h(t)\Vert_{L^{2}}^{2}+\frac{m}{2}\Vert\Delta u_{n}\Vert_{L^{2}}^{2}.
\]
Integrating the previous expression between $s$ and $t$, with $0<s\leq t\leq
T,$ we get%
\begin{equation}
\frac{1}{2}\Vert u_{n}(t)\Vert_{H_{0}^{1}}^{2}+\frac{m}{2}\int_{s}^{t}%
\Vert\Delta u_{n}(r)\Vert_{L^{2}}^{2}dr\leq\eta\int_{0}^{T}\Vert u_{n}%
(r)\Vert_{H_{0}^{1}}^{2}dr+\frac{1}{2}\Vert u_{n}(s)\Vert_{H_{0}^{1}}%
^{2}+\frac{1}{2m}\int_{s}^{t}\Vert h(r)\Vert_{L^{2}}^{2}dr. \label{3.2}%
\end{equation}
Now, integrating in $s$ between $0$ and $t$, it follows that%
\[
t\Vert u_{n}(t)\Vert_{H_{0}^{1}}^{2}\leq(2\eta T+1)\int_{0}^{T}\Vert
u_{n}(r)\Vert_{H_{0}^{1}}^{2}dr+K_{3}(T)T.
\]
Hence,
\begin{equation}
\Vert u_{n}(t)\Vert_{H_{0}^{1}}^{2}\leq\frac{2\eta T+1}{\varepsilon}\int%
_{0}^{T}\Vert u_{n}(r)\Vert_{H_{0}^{1}}^{2}dr+\frac{K_{3}(T)T}{\varepsilon}.
\label{absorbing}%
\end{equation}
for all $t\in\lbrack\varepsilon,T]$ with $\varepsilon\in(0,T).$ From the last
inequality and (\ref{1.10.1}) we deduce that $\Vert u_{n}(t)\Vert_{H_{0}^{1}}$
is uniformly bounded in $[\varepsilon,T]$ and by the continuity of the
function $a$ we get that $a(\Vert u_{n}(t)\Vert_{H_{0}^{1}}^{2})$ is bounded
in $[\varepsilon,T]$. Also, it follows that
\begin{equation}
\{u_{n}\}\text{ is bounded in }L^{\infty}(\varepsilon,T;H_{0}^{1}(\Omega)).
\label{coninfi}%
\end{equation}
On the other hand, taking $s=\varepsilon$ and $t=T$ in (\ref{3.2}), by
(\ref{1.10.1}) we obtain that%
\begin{equation}
\{u_{n}\}\text{ is bounded in }L^{2}(\varepsilon,T;D(A)), \label{cotda}%
\end{equation}
so $\{-\Delta u_{n}\}$ and $\{a(\Vert u_{n}\Vert_{H_{0}^{1}}^{2})\Delta
u_{n}\}$ are bounded in $L^{2}(\varepsilon,T;L^{2}(\Omega))$. Thus,
\begin{equation}
\dfrac{du_{n}}{dt}\text{ is bounded in }L^{q}(\varepsilon,T;L^{q}(\Omega)).
\label{CotaDeriv}%
\end{equation}

Therefore, there exists $u\in L^{\infty}(\varepsilon,T;H_{0}^{1}(\Omega))\cap
L^{2}(0,T;H_{0}^{1}(\Omega))\cap L^{\infty}(0,T;L^{2}(\Omega))$ and a
subsequence $u_{n}$, relabelled the same, such that%
\begin{equation}%
\begin{split}
u_{n}  &  \overset{\ast}{\rightharpoonup}u\text{ in }L^{\infty}(\varepsilon
,T;H_{0}^{1}(\Omega)),\\
u_{n}  &  \overset{\ast}{\rightharpoonup}u\text{ in }L^{\infty}(0,T;L^{2}%
(\Omega)),\\
u_{n}  &  \rightharpoonup u\text{ in }L^{2}(0,T;H_{0}^{1}(\Omega)),\\
u_{n}  &  \rightharpoonup u\text{ in }L^{p}(0,T;L^{p}(\Omega)),\\
u_{n}  &  \rightharpoonup u\text{ in }L^{2}(\varepsilon,T;D(A)),\\
\frac{du_{n}}{dt}  &  \rightharpoonup\frac{du}{dt}\text{ in }L^{q}%
(\varepsilon,T;L^{q}(\Omega)),\\
f(u_{n})  &  \rightharpoonup\chi\text{ in }L^{q}(0,T;L^{q}(\Omega)),\\
{a(\Vert u_{n}\Vert_{H_{0}^{1}}^{2})}  &  \overset{\ast}{\rightharpoonup
}b\text{ in }L^{\infty}(\varepsilon,T),
\end{split}
\label{1.29}%
\end{equation}
for any $0<\varepsilon<T$, where $\rightharpoonup$ means weak convergence.

Moreover, by (\ref{cotda})-(\ref{CotaDeriv}) the Aubin-Lions Compactness Lemma
gives that $u_{n}\rightarrow u$ in $L^{2}(\varepsilon,T;H_{0}^{1}(\Omega))$,
so $u_{n}(t)\rightarrow u(t)$ a.e. on $(\varepsilon,T)$ for any $\varepsilon
>0$. Consequently, there exists a subsequence $u_{n}$, relabelled the same,
such that $u_{n}\left(  t,x\right)  \rightarrow u\left(  t,x\right)  $ a.e. in
$\Omega\times(0,T)$. Also, we know that $P_{n}f(u_{n})\rightharpoonup\chi$
(see \cite[p.224]{robinson}). Since $f$ is continuous, it follows that
$f(u_{n}\left(  t,x\right)  )\rightarrow f(u\left(  t,x\right)  )$ a.e. in
$\Omega\times(0,T)$. Therefore, in view of (\ref{1.29}), by \cite[Lemma
1.3]{lions} we have that $\chi=f(u)$.

As a consequence, by the continuity of $a$, we get that
\[
a(\Vert u_{n}(t)\Vert_{H_{0}^{1}}^{2})\rightarrow a(\Vert u(t)\Vert_{H_{0}%
^{1}}^{2})\quad\text{ a.e. on }(\varepsilon,T).
\]
Since the sequence is bounded, by Lebesgue's theorem this convergence takes
place in $L^{2}(\varepsilon,T)$ and $b=a(\Vert u\Vert_{H_{0}^{1}}^{2})$ on
$(\varepsilon,T)$. Thus,
\begin{equation}
a(\Vert u_{n}\Vert_{H_{0}^{1}}^{2})\Delta u_{n}\rightharpoonup a(\Vert
u\Vert_{H_{0}^{1}}^{2})\Delta u,\quad\text{ in }L^{2}(\varepsilon
,T;L^{2}(\Omega)). \label{convergencianolocal}%
\end{equation}

Finally, since $\{w_{i}\}$ is dense in $H_{0}^{1}(\Omega)\cap L^{p}(\Omega)$,
in view of (\ref{1.29}) and (\ref{convergencianolocal}), we can pass to the
limit in (\ref{1.6}) and conclude that (\ref{equationweak}) holds for all
$v\in H_{0}^{1}(\Omega)\cap L^{p}(\Omega)$.

To conclude the proof, we have to check that $u(0)=u_{0}$. Indeed, let be
$\phi\in C^{1}([0,T]);H_{0}^{1}(\Omega)\cap L^{p}(\Omega))$, with $\phi(T)=0$,
$\phi(0)\not =0$. We consider the functions $w\left(  t\right)  =u(\alpha
^{-1}\left(  t\right)  ),\ w_{n}\left(  t\right)  =u_{n}\left(  \alpha
^{-1}\left(  t\right)  \right)  $, which are regular solutions to problem
(\ref{w}) with initial conditions $w\left(  0\right)  =u_{0},\ w_{n}\left(
0\right)  =u_{n}\left(  0\right)  =P_{n}u_{0}$. Since $\dfrac{dw}{dt}\in
L^{2}(0,T;H^{-1}(\Omega))+L^{q}(0,T;L^{q}(\Omega))$, we can multiply the
equation in (\ref{w}) by $\phi$ and integrate by parts in the $t$ variable to
obtain that%
\begin{equation}
\int_{0}^{T}\left(  -\left(  w\left(  t\right)  ,\phi^{\prime}\left(
t\right)  \right)  -\left(  \Delta w\left(  t\right)  ,\phi\left(  t\right)
\right)  \right)  dt=\int_{0}^{T}\left(  \frac{f(w(t))+h(t)}{a(\Vert
w(t)\Vert_{H_{0}^{1}}^{2})},\phi\left(  t\right)  \right)  dt+\left(  w\left(
0\right)  ,\phi\left(  0\right)  \right)  , \label{In1}%
\end{equation}%
\begin{equation}
\int_{0}^{T}\left(  -\left(  w_{n}\left(  t\right)  ,\phi^{\prime}\left(
t\right)  \right)  -\left(  \Delta w_{n}\left(  t\right)  ,\phi\left(
t\right)  \right)  \right)  dt=\int_{0}^{T}\left(  \frac{f(w_{n}%
(t))+h(t)}{a(\Vert w_{n}(t)\Vert_{H_{0}^{1}}^{2})},\phi\left(  t\right)
\right)  dt+\left(  w_{n}\left(  0\right)  ,\phi\left(  0\right)  \right)  .
\label{In2}%
\end{equation}
We can easily obtain by the previous convergences that%
\begin{align*}
w_{n}  &  \rightharpoonup w\text{ in }L^{2}\left(  0,T;H_{0}^{1}\left(
\Omega\right)  \right)  ,\\
\Delta w_{n}  &  \rightharpoonup\Delta w\text{ in }L^{2}\left(  0,T;H^{-1}%
\left(  \Omega\right)  \right)  ,\\
\frac{f(w_{n}(t))+h\left(  t\right)  }{a(\Vert w_{n}(t)\Vert_{H_{0}^{1}}%
^{2})}  &  \rightharpoonup\frac{f(w(t))+h\left(  t\right)  }{a(\Vert
w(t)\Vert_{H_{0}^{1}}^{2})}\text{ in }L^{q}\left(  0,T;L^{q}\left(
\Omega\right)  \right)  .
\end{align*}
Passing to the limit in (\ref{In1}), taking in to accont (\ref{In2}) and
bearing in mind $w_{n}(0)=P_{n}u_{0}\rightarrow u_{0}$ we get%
\[
\left(  w\left(  0\right)  ,\phi\left(  0\right)  \right)  =\left(
u_{^{\prime}},\phi\left(  0\right)  \right)  .
\]
Since $\phi\left(  0\right)  \in H_{0}^{1}(\Omega)\cap L^{p}(\Omega)$ is
arbitrary, we infer that $w(0)=u\left(  0\right)  =u_{0}$.

Hence, $u$ is a regular solution to (\ref{1}) satisfying $u\left(  0\right)
=u_{0}.$
\end{proof}

\bigskip

Second, we will prove the existence of strong solutions for initial conditions
in $H_{0}^{1}(\Omega)\cap L^{p}(\Omega).$ Im this case we do not need to
impose the upper bound (\ref{7}) of the function $a.$

\begin{theorem}
\label{existencestrongsolutionu0H01Lp} Let $f\in C^{1}(\mathbb{R})$ be such
that $f^{\prime}(s)\leq\eta$. Suppose that conditions (\ref{h})-(\ref{3}) are
fulfilled. Then, for any $u_{0}\in H_{0}^{1}(\Omega)\cap L^{p}(\Omega)$ there
exists at least a strong solution to (\ref{1}).
\end{theorem}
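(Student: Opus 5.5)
We again use Faedo--Galerkin approximations $u_n(t)=\sum_{j=1}^n\gamma_{nj}(t)w_j$ solving (\ref{1.6}), now with $u_n(0)=P_nu_0$. Since $u_0\in H_0^1(\Omega)\cap L^p(\Omega)$, we have $P_nu_0\to u_0$ in $H_0^1(\Omega)$. The energy estimate (\ref{1.10.1}) carries over unchanged: it uses only (\ref{2})--(\ref{3}), not the growth bound (\ref{7}). It shows that the solutions exist on $[0,T]$ and are bounded in $L^\infty(0,T;L^2)\cap L^2(0,T;H_0^1)\cap L^p(0,T;L^p)$.

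The key difference from Theorem \ref{existenceweakregularsolutionu0L2} is that the $H_0^1$ estimate can now start from $t=0$. Take (\ref{3.2}) with $s=0$. It requires $\langle f(u_n),-\Delta u_n\rangle=\int f'(u_n)|\nabla u_n|^2\le\eta\|u_n\|_{H_0^1}^2$, which uses $f\in C^1$ and $f'\le\eta$. Then $\|u_n(0)\|_{H_0^1}\le\|u_0\|_{H_0^1}$ gives
\[
\|u_n(t)\|_{H_0^1}^2+m\int_0^t\|\Delta u_n\|_{L^2}^2dr\le C(T,u_0,h),
\]
so $\{u_n\}$ is bounded in $L^\infty(0,T;H_0^1)\cap L^2(0,T;D(A))$. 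Consequently $a(\|u_n\|^2_{H_0^1})$ is bounded on $[0,T]$ by continuity of $a$ alone, with no need for (\ref{7}); this is why that hypothesis can be dropped.

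Next we estimate the time derivative and the $L^p$ norm. Multiply (\ref{1.7}) by $\gamma_{nj}'$ and sum, that is, test with $u_n'$. This gives
\[
\|u_n'\|^2+\frac12 a(\|u_n\|^2_{H_0^1})\frac{d}{dt}\|u_n\|_{H_0^1}^2=\frac{d}{dt}\int_\Omega\mathcal F(u_n)+(h,u_n').
\]
The diffusion term equals $\frac12\frac{d}{dt}\widehat a(\|u_n\|^2_{H_0^1})$ with $\widehat a(s)=\int_0^s a$. Integrating from $0$ to $t$ and using (\ref{4})--(\ref{5}) yields
\[
\int_0^t\|u_n'\|^2+\widehat a(\|u_n(t)\|^2_{H_0^1})+2\widetilde\alpha_1\|u_n(t)\|_{L^p}^p\le C\Big(1+\widehat a(\|u_0\|^2_{H_0^1})+\|u_0\|_{L^p}^p+\|u_0\|_{H_0^1}^2+\int_0^T\|h\|^2\Big).
\]
To control $\int\mathcal F(P_nu_0)$ by (\ref{5}), we need $\|P_nu_0\|_{L^p}$ bounded. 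I expect this step to be the main technical point. For $p$ within the Sobolev range it follows from the $H_0^1$ bound. In general one either uses that, under the smoothness assumption on $\Omega$, the projections $P_n$ are uniformly bounded on $H_0^1\cap L^p$ (equivalently $P_nu_0\to u_0$ there, as used in \cite{robinson}), or replaces $P_nu_0$ by approximations $u_{0n}\in V_n$ converging to $u_0$ in $H_0^1\cap L^p$, which the stated density of $\cup V_n$ provides. We take the second option. The result is that $u_n$ is bounded in $L^\infty(0,T;L^p)$ and $u_n'$ is bounded in $L^2(0,T;L^2)$.

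The limit passage then follows Theorem \ref{existenceweakregularsolutionu0L2}, now on $(0,T)$ rather than $(\varepsilon,T)$. Aubin--Lions, using the bounds in $L^2(0,T;D(A))$ and on $u_n'$ in $L^2(0,T;L^2)$, gives $u_n\to u$ strongly in $L^2(0,T;H_0^1)$. Hence $\|u_n(t)\|_{H_0^1}\to\|u(t)\|_{H_0^1}$ a.e., so $a(\|u_n\|^2)\to a(\|u\|^2)$ boundedly a.e., and $u_n\to u$ a.e. in $\Omega\times(0,T)$. As before, Lions' lemma yields $f(u_n)\rightharpoonup f(u)$, and $a(\|u_n\|^2)\Delta u_n\rightharpoonup a(\|u\|^2)\Delta u$ in $L^2(0,T;L^2)$. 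Passing to the limit in (\ref{1.6}) then gives (\ref{equationweak}). The initial condition follows directly: $u_n\to u$ in $C([0,T];L^2)$ because $u_n'$ is bounded in $L^2(0,T;L^2)$ and the sequence is compact, and $u_n(0)\to u_0$. The weak-star limits supply the regularity required of a strong solution.
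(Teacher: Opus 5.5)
Your proof is correct and follows essentially the paper's argument: Galerkin approximations, the two multipliers $u_n'$ (with the primitive $A$ of $a$) and $-\Delta u_n$ (using $f'\le\eta$), and the same Aubin--Lions and Lions-lemma passage to the limit, with only the order of the two estimates swapped. Your two small deviations are, if anything, improvements. Approximating $u_0$ by $u_{0n}\in V_n$ via density avoids relying on $P_nu_0\to u_0$ in $L^p(\Omega)$, which the paper only asserts with a citation. Reading off $u(0)=u_0$ from the convergence in $C([0,T];L^2(\Omega))$ is more direct than the paper's integration-by-parts argument.
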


\begin{proof}
We consider, as in Theorem \ref{existenceweakregularsolutionu0L2}, the
Galerkin approximations $u_{n},$ for which (\ref{1.29}) holds. Under the
aforementioned conditions, we will obtain that $u_{n}$ converges to a strong
solution to (\ref{1}). In this proof it is important to observe that
$P_{n}u_{0}\rightarrow u_{0}$ in the spaces $H_{0}^{1}\left(  \Omega\right)  $
and $L^{p}\left(  \Omega\right)  $ \cite[p.199 and 220]{robinson}. Thus, the
sequences $\left\Vert P_{n}u_{0}\right\Vert _{H_{0}^{1}}$ and $\left\Vert
P_{n}u_{0}\right\Vert _{L^{p}}$ are bounded.

First, we the equation in (\ref{1}) by $\dfrac{du_{n}}{dt}$ to obtain%
\[
\Vert\frac{d}{dt}u_{n}(t)\Vert_{L^{2}}^{2}+a(\Vert u_{n}\Vert_{H_{0}^{1}}%
^{2})\frac{1}{2}\frac{d}{dt}\Vert u_{n}\Vert_{H_{0}^{1}}^{2}=\frac{d}{dt}%
\int_{\Omega}\mathcal{F}(u_{n})dx+(h(t),\frac{du_{n}}{dt}).
\]
Introducing
\begin{equation}
A(s)=\int_{0}^{s}a(r)dr, \label{funcionA}%
\end{equation}
we have%
\begin{equation}
\frac{1}{2}\Vert\frac{d}{dt}u_{n}(t)\Vert_{L^{2}}^{2}+\frac{d}{dt}\left[
\frac{1}{2}A(\Vert u_{n}\Vert_{H_{0}^{1}}^{2})-\int_{\Omega}\mathcal{F}%
(u_{n}(t))dx\right]  \leq\frac{1}{2}\Vert h(t)\Vert_{L^{2}}^{2}.
\label{1.1118}%
\end{equation}

Now, integrating (\ref{1.1118}) we have
\[
\frac{1}{2}\int_{0}^{t}\Vert\frac{d}{ds}u_{n}(s)\Vert_{L^{2}}^{2}ds+\frac
{1}{2}A(\Vert u_{n}(t)\Vert_{H_{0}^{1}}^{2})-\int_{\Omega}\mathcal{F}%
(u_{n}(t))dx
\]%
\[
\leq\frac{1}{2}A(\Vert u_{n}(0)\Vert_{H_{0}^{1}}^{2})-\int_{\Omega}%
\mathcal{F}(u_{n}(0))dx+\frac{1}{2}\int_{0}^{t}\Vert h(s)\Vert_{L^{2}}^{2}ds.
\]
From (\ref{2}) and (\ref{4}) we get
\begin{equation}%
\begin{split}
&  \frac{m}{2}\Vert u_{n}(t)\Vert_{H_{0}^{1}}^{2}+\widetilde{\alpha}_{1}\Vert
u_{n}(t)\Vert_{L^{p}}^{p}+\frac{1}{2}\int_{0}^{t}\Vert\frac{d}{ds}%
u_{n}(s)\Vert_{L^{2}}^{2}ds\\
&  \leq\frac{1}{2}A(\Vert u_{n}(0)\Vert_{H_{0}^{1}}^{2})+\widetilde{\alpha
}_{2}\Vert u_{n}(0)\Vert_{L^{p}}^{p}+K.
\end{split}
\label{4.55}%
\end{equation}
Now, from (\ref{4.55}) we obtain that
\begin{equation}
\frac{du_{n}}{dt}\text{ is bounded in }L^{2}(0,T;L^{2}(\Omega)), \label{4.55b}%
\end{equation}
so
\begin{equation}
\frac{du_{n}}{dt}\rightharpoonup\frac{du}{dt}\text{ in }L^{2}(0,T;L^{2}%
(\Omega)). \label{4.56}%
\end{equation}
On the other hand, the embeddings $H_{0}^{1}(\Omega)\subset\subset
L^{2}(\Omega)\subset H^{-1}(\Omega)$ (where $\subset\subset$ means a compact
embedding) and Aubin-Lion's Compactness Lemma imply that
\[
u_{n}\rightarrow u\text{ in }L^{2}(0,T;L^{2}(\Omega)).
\]
Hence,
\[
u_{n}\rightarrow u\text{ for a.a. }(x,t)\in\Omega\times(0,T).
\]
Moreover, thanks to the following inequality
\[
|u_{n}(t_{2})-u_{n}(t_{1})|^{2}=\left\vert \int_{t_{1}}^{t_{2}}\frac{d}%
{dt}u_{n}(s)ds\right\vert ^{2}\leq\Vert\frac{d}{dt}u_{n}\Vert_{L^{2}%
(0,T;L^{2}(\Omega))}^{2}\ |t_{2}-t_{1}|\quad\forall t_{1},t_{2}\in
\lbrack0,T],
\]
(\ref{4.55b}) and $H_{0}^{1}(\Omega)\subset\subset L^{2}(\Omega)$, \ the
Ascoli-Arzel\'{a}'s theorem implies that $\{u_{n}\}$ converges strongly in
$C([0,T];L^{2}(\Omega))$ for all $T>0$. Therefore, we obtain from (\ref{4.55})
that $u_{n}(t)\rightharpoonup u(t)\text{ in }H_{0}^{1}(\Omega)\cap
L^{p}(\Omega)$, for any $t\geq0$, and
\begin{equation}
u_{n}\overset{\ast}{\rightharpoonup}u\text{ in }L^{\infty}(0,T;H_{0}%
^{1}(\Omega)\cap L^{p}(\Omega)). \label{5.88}%
\end{equation}
Also, by the continuity of the function $a$, $a(\Vert u_{n}\left(  t\right)
\Vert_{H_{0}^{1}}^{2})$ is uniformly bounded in $[0,T]$.

Multiplying (\ref{1.8}) by $\lambda_{j}\gamma_{nj}(t)$ and summing from $j=1$
to $n$, we obtain
\[
\frac{1}{2}\frac{d}{dt}\Vert u_{n}\Vert_{H_{0}^{1}}^{2}+m\Vert-\Delta
u_{n}\Vert_{L^{2}}^{2}=(f(u_{n}),-\Delta u_{n})+(h(t),-\Delta u)\leq\eta\Vert
u_{n}\Vert_{H_{0}^{1}}^{2}+\frac{1}{2m}\Vert h(t)\Vert_{L^{2}}^{2}+\frac{m}%
{2}\Vert-\Delta u_{n}\Vert_{L^{2}}^{2}.
\]
Integrating the previous expression between $0$ and $T$ it follows that%
\begin{equation}
\frac{1}{2}\Vert u_{n}(T)\Vert_{H_{0}^{1}}^{2}+\frac{m}{2}\int_{0}^{T}%
\Vert-\Delta u_{n}(s)\Vert_{L^{2}}^{2}ds\leq\eta\int_{0}^{T}\Vert
u_{n}(t)\Vert_{H_{0}^{1}}^{2}dt+\frac{1}{2}\Vert u_{n}(0)\Vert_{H_{0}^{1}}%
^{2}+K. \label{ultima223}%
\end{equation}
Finally, taking into account (\ref{1.10.1}), from (\ref{ultima223}) we get
\[
u_{n}\text{ is uniformly bounded in }L^{2}(0,T;D(A)),
\]
so
\begin{equation}
u_{n}\rightharpoonup u\text{ in }L^{2}(0,T;D(A)). \label{4.111}%
\end{equation}
Arguing as in Theorem \ref{existenceweakregularsolutionu0L2} we also obtain
that
\begin{align}
u_{n}  &  \rightarrow u\text{ in }L^{2}\left(  0,T;H_{0}^{1}\left(
\Omega\right)  \right)  ,\nonumber\\
a\left(  \Vert u_{n}\Vert_{H_{0}^{1}}^{2}\right)   &  \rightarrow a\left(
\Vert u\Vert_{H_{0}^{1}}^{2}\right)  \text{ in }L^{1}\left(  0,T\right)
,\nonumber\\
f\left(  u_{n}\right)   &  \rightharpoonup f\left(  u\right)  \text{ in }%
L^{q}\left(  0,T;L^{q}\left(  \Omega\right)  \right)  ,\nonumber\\
a\left(  \Vert u_{n}\Vert_{H_{0}^{1}}^{2}\right)  u_{n}  &  \rightharpoonup
a\left(  \Vert u\Vert_{H_{0}^{1}}^{2}\right)  u\text{ in }L^{2}(0,T;D(A)).
\label{Converg}%
\end{align}

Therefore, we can pass to the limit to conclude that $u$ is a strong solution.

It remains to show that $u\left(  0\right)  =u_{0}$. This is done, in a
similar way as in Theorem \ref{existenceweakregularsolutionu0L2}, by
multiplying the equation in (\ref{1}) by a function $\phi\in C^{1}%
([0,T]);H_{0}^{1}(\Omega)\cap L^{p}(\Omega))$, with $\phi(T)=0$,
$\phi(0)\not =0$ for the Galerkin approximations $u_{n}$ and the limit
function $u$ and integrating by parts. Then taking into account the above
convergences and $P_{n}u_{0}\rightarrow u_{0}$ in $L^{2}\left(  \Omega\right)
$ we obtain that $u\left(  0\right)  =u_{0}.$
\end{proof}

\bigskip

We can avoid using condition $f^{\prime}\leq\eta$ by imposing extra
assumptions on the constant $p$. Indeed, if
\begin{equation}
p\leq\frac{2n-2}{n-2},\text{ for }n\geq3, \label{Condp}%
\end{equation}
($p\geq2$ is arbitrary for $n=1,2,$), then the embdedding $H_{0}^{1}\left(
\Omega\right)  \subset L^{2\left(  p-1\right)  }\left(  \Omega\right)  \subset
L^{p}\left(  \Omega\right)  $ and (\ref{1.4}) imply that%
\begin{equation}
||f(u(t))||_{L^{2}}^{2}\leq2C(1+\int_{\Omega}|u(t,x)|^{2(p-1)}dx)\leq
\widetilde{C}\left(  1+\left\Vert u\left(  t\right)  \right\Vert _{H_{0}^{1}%
}^{2\left(  p-1\right)  }\right)  , \label{Acotacionfu}%
\end{equation}
so
\begin{equation}
f(u)\in L^{2}(0,T;L^{2}(\Omega))\quad\text{ } \label{valordepParau0enH1}%
\end{equation}
provided that $u\in L^{\infty}(0,T;H_{0}^{1}(\Omega))$. Morever, $f\left(
A\right)  $ is bounded in $L^{2}(0,T;L^{2}(\Omega)$ if $A$ is a bounded set of
$L^{\infty}(0,T;H_{0}^{1}(\Omega))$.

\begin{theorem}
\label{existenciau0paper2014} Assume that (\ref{h})-(\ref{3}) and
(\ref{Condp}) hold. Then for any $u_{0}\in H_{0}^{1}(\Omega)$ there exists at
least one strong solution to (\ref{1}).
\end{theorem}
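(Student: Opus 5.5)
We again use the Faedo--Galerkin approximations $u_n$ of Theorem \ref{existenceweakregularsolutionu0L2}, with $u_n(0)=P_nu_0\rightarrow u_0$ in $H_0^1(\Omega)$. Note that (\ref{Condp}) gives $H_0^1(\Omega)\subset L^p(\Omega)$, so $u_0\in H_0^1(\Omega)\cap L^p(\Omega)$ and $\Vert P_nu_0\Vert_{L^p}$ is bounded. The estimate (\ref{1.10.1}) does not use $f'\leq\eta$ and remains valid. So does the energy estimate (\ref{4.55}), which is obtained by multiplying by $\frac{du_n}{dt}$ and using the primitive $A(s)$ from (\ref{funcionA}) together with (\ref{2}) and (\ref{4}). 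Hence $\{u_n\}$ is bounded in $L^\infty(0,T;H_0^1(\Omega)\cap L^p(\Omega))$ and $\{\frac{du_n}{dt}\}$ is bounded in $L^2(0,T;L^2(\Omega))$. By continuity of $a$, the coefficients $a(\Vert u_n(t)\Vert_{H_0^1}^2)$ are bounded uniformly in $n$ and $t\in[0,T]$, say by $M_T$.

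The only place where $f'\leq\eta$ was used is the $D(A)$ estimate, where $(f(u_n),-\Delta u_n)\leq\eta\Vert u_n\Vert^2_{H_0^1}$. We replace it by (\ref{Acotacionfu}). Multiplying the Galerkin system by $\lambda_j\gamma_{nj}$ and summing gives
\[
\frac12\frac{d}{dt}\Vert u_n\Vert_{H_0^1}^2+m\Vert\Delta u_n\Vert_{L^2}^2\leq \frac{1}{m}\left(\Vert f(u_n)\Vert_{L^2}^2+\Vert h\Vert_{L^2}^2\right)+\frac m2\Vert\Delta u_n\Vert_{L^2}^2 .
\]
By (\ref{Acotacionfu}) and the uniform $L^\infty(0,T;H_0^1)$ bound, $\{f(u_n)\}$ is bounded in $L^2(0,T;L^2(\Omega))$. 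Integrating on $[0,T]$ then shows that $\{u_n\}$ is bounded in $L^2(0,T;D(A))$. (Alternatively, $\Delta u_n$ can be read off directly from the equation, since $\frac{du_n}{dt}$, $f(u_n)$ and $h$ are bounded in $L^2(0,T;L^2)$ and $a\geq m$.)

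Next comes compactness. Aubin--Lions applied with $D(A)\subset\subset H_0^1(\Omega)$ and the $L^2$ bound on the time derivatives gives $u_n\rightarrow u$ in $L^2(0,T;H_0^1(\Omega))$. Passing to a subsequence, $\Vert u_n(t)\Vert_{H_0^1}\rightarrow\Vert u(t)\Vert_{H_0^1}$ a.e. in $t$ and $u_n\rightarrow u$ a.e. in $\Omega\times(0,T)$. Continuity of $a$, the uniform bound $M_T$ and dominated convergence then give $a(\Vert u_n\Vert^2_{H_0^1})\rightarrow a(\Vert u\Vert^2_{H_0^1})$ strongly in $L^2(0,T)$. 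Combined with $\Delta u_n\rightharpoonup\Delta u$ in $L^2(0,T;L^2(\Omega))$, the product converges weakly. For the nonlinearity, $f(u_n)$ is bounded in $L^2(0,T;L^2)$ and converges a.e., so \cite[Lemma 1.3]{lions} yields $f(u_n)\rightharpoonup f(u)$. Passing to the limit in (\ref{1.6}) as before shows that $u$ satisfies (\ref{equationweak}) with the regularity of a strong solution.

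The initial condition $u(0)=u_0$ is obtained as in Theorem \ref{existencestrongsolutionu0H01Lp}, either via Ascoli--Arzel\'a in $C([0,T];L^2(\Omega))$ or by integrating by parts against $\phi$ with $\phi(T)=0$. The main obstacle is the $D(A)$ bound, because $f$ is no longer one-sided Lipschitz. This is exactly why (\ref{Condp}) is imposed: it makes $f(u_n)$ bounded in $L^2(0,T;L^2(\Omega))$ through the uniform $H_0^1$ bound from the energy estimate. We must also check that the energy estimate (\ref{4.55}) itself holds without $f'\leq\eta$. It does, since it only needs (\ref{4}) and the growth $|\mathcal F(s)|\leq\widetilde C(1+|s|^p)$ together with $H_0^1\subset L^p$ to control $\int_\Omega\mathcal F(P_nu_0)\,dx$.
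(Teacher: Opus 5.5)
Your proposal is correct and follows essentially the same route as the paper. The Galerkin energy estimate (\ref{4.55}), which needs only (\ref{2}) and (\ref{4}), gives the uniform bound (\ref{5.88}) in $L^\infty(0,T;H_0^1(\Omega)\cap L^p(\Omega))$. The $-\Delta u_n$ multiplier estimate is then closed by bounding $(f(u_n),-\Delta u_n)$ through (\ref{Acotacionfu}) instead of $f'\leq\eta$, which gives the $L^2(0,T;D(A))$ bound, and the limit passage is as in Theorem \ref{existencestrongsolutionu0H01Lp}. Your alternative of reading $\Delta u_n$ off the equation is the variant the paper itself uses in Lemma \ref{lemma4B}. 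Bounding $\Vert P_nu_0\Vert_{L^p}$ via $H_0^1(\Omega)\subset L^p(\Omega)$ is a slightly cleaner justification than the paper's appeal to convergence of $P_nu_0$ in $L^p(\Omega)$.
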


\begin{proof}
Reasoning as in Theorem \ref{existencestrongsolutionu0H01Lp} and considering
as well the Galerkin scheme, (\ref{1.29}), (\ref{4.56}) and (\ref{5.88}) hold.
We just need to check that (\ref{4.111}) is also true, as after that the proof
is finished repeating the same lines of Theorem
\ref{existencestrongsolutionu0H01Lp}.

Multiplying (\ref{1.8}) by $\lambda_{j}\gamma_{nj}(t)$ and summing from $j=1$
to $n$, we obtain
\[%
\begin{split}
\frac{1}{2}\frac{d}{dt}\Vert u_{n}\Vert_{H_{0}^{1}}^{2}+m\Vert-\Delta
u_{n}\Vert_{L^{2}}^{2}  &  =(f(u_{n}),-\Delta u_{n})+(h(t),-\Delta u)\\
&  \leq\frac{1}{2m}\Vert f(u_{n})\Vert_{L^{2}}^{2}+\frac{m}{2}\Vert-\Delta
u_{n}\Vert_{L^{2}}^{2}+\frac{1}{m}\Vert h(t)\Vert_{L^{2}}^{2}+\frac{m}{4}%
\Vert-\Delta u_{n}\Vert_{L^{2}}^{2}.
\end{split}
\]
Integrating the previous expression between $0$ and $T$ it follows that%
\begin{equation}%
\begin{split}
&  \frac{1}{2}\Vert u_{n}(T)\Vert_{H_{0}^{1}}^{2}+\frac{m}{4}\int_{0}^{T}%
\Vert-\Delta u_{n}(s)\Vert_{L^{2}}^{2}ds\\
&  \leq\frac{1}{m}\int_{0}^{T}\Vert f(u_{n}(t))\Vert_{L^{2}}^{2}dt+\frac{1}%
{2}\Vert u_{n}(0)\Vert_{H_{0}^{1}}^{2}+\frac{1}{m}\int_{0}^{T}\Vert
h(t)\Vert_{L^{2}}^{2}dt.
\end{split}
\label{ultima2}%
\end{equation}
In view of (\ref{5.88}) and (\ref{Acotacionfu}), we have that $f\left(
u\right)  $ is bounded in $L^{2}\left(  0,T;L^{2}\left(  \Omega\right)
\right)  $, so from (\ref{ultima2}) we get that $u_{n}$ is bounded in
$L^{2}(0,T;D(A)).$ Therefore,
\begin{equation}
u_{n}\rightharpoonup u\text{ in }L^{2}(0,T;D(A)), \label{5.101}%
\end{equation}
as required.
\end{proof}

\bigskip

In the case of regular solutions we can get rid of the condition $f^{\prime
}\leq\eta$ as well by imposing the extra assumption (\ref{Condp}) on the
constant $p$.

\begin{theorem}
\label{existenceweaksolutionu0L2sinderivadaversion2014} Assume that
(\ref{h})-(\ref{3}), (\ref{7}) and (\ref{Condp}) hold. Then, for any $u_{0}\in
L^{2}(\Omega)$ there exists at least one regular solution to (\ref{1}).
\end{theorem}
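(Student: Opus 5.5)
We follow the Faedo--Galerkin scheme of Theorem \ref{existenceweakregularsolutionu0L2} and change only the step where $f^{\prime}\leq\eta$ was used, namely the $H_{0}^{1}$/$D(A)$ estimates away from $t=0$. In its place we use (\ref{Condp}) through (\ref{Acotacionfu}), as in the proof of Theorem \ref{existenciau0paper2014}, and combine it with a uniform Gronwall type argument that only needs the integrability of $\Vert u_{n}\Vert_{H_{0}^{1}}^{2}$.

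First, the energy estimate (\ref{1.10.1}) does not use $f^{\prime}\leq\eta$. It therefore gives bounds for $u_{n}$ in $L^{\infty}(0,T;L^{2}(\Omega))\cap L^{2}(0,T;H_{0}^{1}(\Omega))\cap L^{p}(0,T;L^{p}(\Omega))$ and for $f(u_{n})$ in $L^{q}(0,T;L^{q}(\Omega))$. Next we multiply (\ref{1.7}) by $\lambda_{j}\gamma_{nj}$, sum, and estimate as in Theorem \ref{existenciau0paper2014}:
\[
\frac{d}{dt}\Vert u_{n}\Vert_{H_{0}^{1}}^{2}+\frac{m}{2}\Vert\Delta u_{n}\Vert_{L^{2}}^{2}\leq\frac{C}{m}\Vert f(u_{n})\Vert_{L^{2}}^{2}+\frac{C}{m}\Vert h\Vert_{L^{2}}^{2}\leq C_{1}\left(1+\Vert u_{n}\Vert_{H_{0}^{1}}^{2(p-1)}\right)+C_{2}\Vert h\Vert_{L^{2}}^{2}.
\]
Write $y_{n}=\Vert u_{n}\Vert_{H_{0}^{1}}^{2}$. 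This gives $y_{n}^{\prime}\leq C_{1}y_{n}^{p-2}\,y_{n}+C_{1}+C_{2}\Vert h\Vert^{2}$.

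The main obstacle is that, unlike the case $f^{\prime}\leq\eta$, this differential inequality is superlinear when $p>2$. The uniform Gronwall lemma needs $\int y_{n}^{p-2}$ to be bounded, but we only control $\int y_{n}$. I would try to obtain a sharper bound on $(f(u_{n}),-\Delta u_{n})$. Writing it as $(f(u_{n}),-\Delta u_{n})\leq\Vert f(u_{n})\Vert_{L^{2}}\Vert\Delta u_{n}\Vert_{L^{2}}$, we interpolate $\Vert u_{n}\Vert_{L^{2(p-1)}}$ between $L^{2}$ (bounded uniformly by (\ref{1.10.1})) and $D(A)$ via Gagliardo--Nirenberg. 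The aim is $\Vert f(u_{n})\Vert_{L^{2}}^{2}\leq C(1+\Vert u_{n}\Vert_{H_{0}^{1}}^{2\theta}\Vert\Delta u_{n}\Vert_{L^{2}}^{2\sigma})$ with $\sigma<1$. Young's inequality then absorbs the $\Delta u_{n}$ part and leaves $y_{n}^{\prime}\leq g_{n}(t)y_{n}+k(t)$, where $g_{n}$ is bounded in $L^{1}(0,T)$ by (\ref{1.10.1}). If this exponent counting fails at the endpoint of (\ref{Condp}), the fallback is to first prove local-in-time bounds on $[\varepsilon,T]$ by a continuation argument. Starting from a time $s\in(0,\varepsilon)$ where $y_{n}(s)$ is controlled (such times exist by the $L^{2}(0,T;H_{0}^{1})$ bound), we combine the local ODE comparison with the dissipative $L^{2}$ estimate.

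Once $y_{n}$ is bounded on $[\varepsilon,T]$, we integrate again to get (\ref{coninfi}) and (\ref{cotda}). Since $a$ is continuous, $a(\Vert u_{n}\Vert_{H_{0}^{1}}^{2})$ is bounded on $[\varepsilon,T]$. Also $f(u_{n})$ is bounded in $L^{2}(\varepsilon,T;L^{2}(\Omega))$ by (\ref{Acotacionfu}), so $du_{n}/dt$ is bounded in $L^{2}(\varepsilon,T;L^{2}(\Omega))$, which is stronger than (\ref{CotaDeriv}). The rest of the argument repeats Theorem \ref{existenceweakregularsolutionu0L2} verbatim:
\begin{itemize}
\item Aubin--Lions gives strong convergence in $L^{2}(\varepsilon,T;H_{0}^{1}(\Omega))$.
\item We identify $\chi=f(u)$ and pass to the limit in $a(\Vert u_{n}\Vert_{H_{0}^{1}}^{2})\Delta u_{n}$.
\item We pass to the limit in (\ref{1.6}).
\item We recover $u(0)=u_{0}$ through the time-rescaled problem (\ref{w}), which is where (\ref{7}) and Lemma \ref{ContSol} are used.
\end{itemize}
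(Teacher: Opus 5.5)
There is a genuine gap, and it is in the step you flag yourself: the bound for $\Vert u_{n}\Vert_{H_{0}^{1}}$ on $[\varepsilon,T]$, which is the only new ingredient compared with Theorem \ref{existenceweakregularsolutionu0L2}. Testing with $-\Delta u_{n}$ and using only the growth bound (\ref{1.4}) cannot give this bound in the whole range (\ref{Condp}).

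\emph{Why the $-\Delta u_n$ route fails.} Your a priori information is $\sup_{t}\Vert u_{n}\Vert_{L^{2}}$ and $\int_{0}^{T}(\Vert u_{n}\Vert_{H_{0}^{1}}^{2}+\Vert u_{n}\Vert_{L^{p}}^{p})dt$. To close a uniform Gronwall argument you would need an estimate $\Vert u\Vert_{L^{2(p-1)}}^{2(p-1)}\leq\delta\Vert\Delta u\Vert_{L^{2}}^{2}+C(1+\Vert u\Vert_{H_{0}^{1}}^{2}+\Vert u\Vert_{L^{p}}^{p})(1+\Vert u\Vert_{H_{0}^{1}}^{2})$, with $C$ depending on $\Vert u\Vert_{L^{2}}$. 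Test it on $u=\lambda^{n/2}\phi(\lambda(x-x_{0}))$, which keeps $\Vert u\Vert_{L^{2}}$ fixed. The left side grows like $\lambda^{n(p-2)}$ and the right side like $\lambda^{\max\{4,\,2+n(p-2)/2\}}$. So no choice of Gagliardo--Nirenberg exponents works unless $n(p-2)\leq4$. Under (\ref{Condp}) this holds automatically only for $n\geq4$. It excludes $10/3<p\leq4$ for $n=3$, and $p>2+4/n$ for $n=1,2$, where (\ref{Condp}) imposes no restriction at all.

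Your fallback does not rescue this. Once the effective exponent is $\beta>2$, the inequality $y^{\prime}\leq Cy^{\beta}$ together with a bound on $\int_{0}^{T}y\,dt$ does not bound $y$. An excursion along $y^{\prime}=Cy^{\beta}$ from $y=1$ to $y=N$ costs only $\int y\,dt=\frac{1}{C}\int_{1}^{N}y^{1-\beta}dy\leq\frac{1}{C(\beta-2)}$, uniformly in $N$. The dissipative $L^{2}$ estimate adds nothing beyond the quantities already used.

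\emph{The missing idea} is the gradient structure: test the Galerkin system with $u_{n}^{\prime}$. Then the nonlinearity becomes an exact derivative, $(f(u_{n}),u_{n}^{\prime})=\frac{d}{dt}\int_{\Omega}\mathcal{F}(u_{n})dx$, and with $A$ as in (\ref{funcionA}) you get
\[
\frac{1}{2}\Vert u_{n}^{\prime}\Vert_{L^{2}}^{2}+\frac{d}{dt}\left(\frac{1}{2}A(\Vert u_{n}\Vert_{H_{0}^{1}}^{2})-\int_{\Omega}\mathcal{F}(u_{n})dx\right)\leq\frac{1}{2}\Vert h(t)\Vert_{L^{2}}^{2},
\]
which has no superlinear term. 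By (\ref{1.10.1}) there is $s\in(0,\varepsilon)$ with $\Vert u_{n}(s)\Vert_{H_{0}^{1}}^{2}+\Vert u_{n}(s)\Vert_{L^{p}}^{p}\leq K/\varepsilon$, where $K$ is independent of $n$. Integrate from $s$ and use (\ref{2}), (\ref{4}) and the monotonicity of $A$. For $t\in[\varepsilon,T]$ this gives
\[
\frac{m}{2}\Vert u_{n}(t)\Vert_{H_{0}^{1}}^{2}+\widetilde{\alpha}_{1}\Vert u_{n}(t)\Vert_{L^{p}}^{p}+\frac{1}{2}\int_{\varepsilon}^{t}\Vert u_{n}^{\prime}\Vert_{L^{2}}^{2}dr\leq\frac{1}{2}A(K/\varepsilon)+\frac{\widetilde{\alpha}_{2}K}{\varepsilon}+2\widetilde{\kappa}|\Omega|+\frac{1}{2}\int_{0}^{T}\Vert h\Vert_{L^{2}}^{2}dr.
\]
Only at this point does (\ref{Condp}) enter, through (\ref{Acotacionfu}): $f(u_{n})$ is bounded in $L^{2}(\varepsilon,T;L^{2}(\Omega))$. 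The Galerkin equation then bounds $u_{n}$ in $L^{2}(\varepsilon,T;D(A))$, and your remaining steps go through.

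This is essentially the paper's argument, (\ref{43})--(\ref{51}). The difference is that the paper runs it on strong solutions with data $u_{0}^{n}\in H_{0}^{1}(\Omega)$ from Theorem \ref{existenciau0paper2014}, where the chain rule for $\int_{\Omega}\mathcal{F}(u)dx$ must be justified, and concludes with the uniform Gronwall lemma. Choosing a single good time $s$, as above, is slightly more robust. Under (\ref{7}), $A(\sigma)$ can grow like $\sigma^{2}$, so the time integral of $A(\Vert u_{n}\Vert_{H_{0}^{1}}^{2})$ is not controlled by (\ref{1.10.1}).
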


\begin{proof}
Let $u_{0}^{n}\in H_{0}^{1}(\Omega)$ be a sequence such that $u_{0}%
^{n}\rightarrow u_{0}$ in $L^{2}(\Omega).$ By Theorem
\ref{existenciau0paper2014} there exists a strong solution $u^{n}(\cdot)$ of
(\ref{1}) with $u^{n}(0)=u_{0}^{n}$. Since $u^{n}\in L^{2}\left(  0,T;D\left(
A\right)  \right)  $ and $\dfrac{du^{n}}{dt}\in L^{2}\left(  0,T;L^{2}\left(
\Omega\right)  \right)  $, from \cite[p.102]{sellyou} the equality
\[
\frac{d}{dt}\Vert u^{n}\Vert_{H_{0}^{1}}^{2}=2(-\Delta u^{n},u_{t}^{n})
\]
holds true for a.a. $t>0$.

Now, multiplying (\ref{1}) by $u^{n}$ and using (\ref{3}) it follows that
\begin{align}
&  \frac{1}{2}\frac{d}{dt}\Vert u^{n}(t)\Vert_{L^{2}}^{2}+m\Vert u^{n}%
\Vert_{H_{0}^{1}}^{2}+\alpha_{1}\Vert u^{n}(t)\Vert_{L^{p}}^{p}\label{44}\\
&  \leq\kappa|\Omega|+\Vert h(t)\Vert_{L^{2}}\Vert u^{n}(t)\Vert_{L^{2}}%
\leq\kappa|\Omega|+\frac{1}{2m\lambda_{1}}\Vert h(t)\Vert_{L^{2}}^{2}-\frac
{m}{2}\Vert u^{n}(t)\Vert_{H_{0}^{1}}^{2},\nonumber
\end{align}
so
\begin{equation}
\Vert u^{n}(t)\Vert_{L^{2}}^{2}\leq\Vert u^{n}(0)\Vert_{L^{2}}^{2}+K_{1}(T).
\label{45}%
\end{equation}
Thus, integrating in (\ref{44}) between $t$ and $t+r$ we get
\begin{equation}%
\begin{split}
&  \Vert u^{n}(t+r)\Vert_{L^{2}}^{2}+m\int_{t}^{t+r}\Vert u^{n}(s)\Vert
_{H_{0}^{1}}^{2}ds+2\alpha_{1}\int_{t}^{t+r}\Vert u^{n}(s)\Vert_{L^{p}}%
^{p}ds\\
&  \leq2\kappa|\Omega|r+\frac{1}{m\lambda_{1}}\int_{t}^{t+r}\Vert
h(s)\Vert_{L^{2}}^{2}ds+\Vert u^{n}(t)\Vert_{L^{2}}^{2}\leq\Vert u^{n}%
(0)\Vert_{L^{2}}^{2}+K_{2}(T).
\end{split}
\label{46}%
\end{equation}
Also, by (\ref{4}) and (\ref{7}) we deduce that
\begin{equation}%
\begin{split}
&  \int_{t}^{t+r}\left(  \frac{1}{2}A(\Vert u^{n}(s)\Vert_{H_{0}^{1}}%
^{2})-\int_{\Omega}\mathcal{F}(u^{n}(s))dx\right)  ds\\
&  \leq\int_{t}^{t+r}\left(  M_{1}\Vert u^{n}\left(  s\right)  \Vert
_{H_{0}^{1}}+\frac{M_{2}}{2}\Vert u^{n}(s)\Vert_{H_{0}^{1}}^{2}\right)
ds+\widetilde{\kappa}\left\vert \Omega\right\vert r+\widetilde{\alpha}_{2}%
\int_{t}^{t+r}\Vert u^{n}(s)\Vert_{L^{p}}^{p}ds\\
&  \leq K_{3}(T)\left(  1+\Vert u^{n}(0)\Vert_{L^{2}}^{2}\right)  ,
\end{split}
\label{47}%
\end{equation}
for all $n>0$ and $t\geq0$.

On the other hand, multiplying (\ref{1}) by $u_{t}^{n}$ we have
\begin{equation}
\frac{1}{2}\Vert u_{t}^{n}(t)\Vert_{L^{2}}^{2}+\frac{d}{dt}\left(  \frac{1}%
{2}A(\Vert u^{n}(t)\Vert_{H_{0}^{1}}^{2})-\int_{\Omega}\mathcal{F}%
(u^{n}(t))dx\right)  =\frac{1}{2}\Vert h(t)\Vert_{L^{2}}^{2}. \label{43}%
\end{equation}
By the uniform Gronwall lemma \cite{Temam88} we obtain
\begin{equation}
\frac{1}{2}A(\Vert u^{n}(t+r)\Vert_{H_{0}^{1}}^{2})-\int_{\Omega}%
\mathcal{F}(u^{n}(t+r))dx\leq\frac{K_{3}(T)(1+\Vert u^{n}(0)\Vert_{L^{2}}%
^{2})}{r}+K_{4}(T),\quad\text{ for all }0\leq t\leq t+r, \label{49}%
\end{equation}
so that by (\ref{2}) and (\ref{4}) we obtain that
\begin{equation}
\Vert u^{n}(t+r)\Vert_{H_{0}^{1}}^{2}+\left\Vert u^{n}\left(  t+r\right)
\right\Vert _{L^{p}}^{p}\leq\frac{K_{5}(T)(1+\Vert u^{n}(0)\Vert_{L^{2}}^{2}%
)}{r}+K_{6}(T), \label{50}%
\end{equation}
for all $t\geq0$. Therefore, the sequence $u^{n}(\cdot)$ is bounded in
$L^{\infty}(r,T;H_{0}^{1}(\Omega))$ for all $0<r<T$. Consequently, $a(\Vert
u^{n}\left(  \text{\textperiodcentered}\right)  \Vert_{H_{0}^{1}}^{2})$ is
bounded in $[r,T]$.

Integrating (\ref{43}) over $(r,T)$, from (\ref{2}), (\ref{4}) and (\ref{49})
it follows that
\begin{equation}%
\begin{split}
&  \frac{1}{2}\int_{r}^{T}\Vert\frac{d}{dt}u^{n}(t)\Vert_{L^{2}}^{2}%
dt+\frac{m}{2}\Vert u^{n}(T)\Vert_{H_{0}^{1}}^{2}+\widetilde{\alpha}_{1}\Vert
u^{n}(T)\Vert_{L^{p}}^{p}-\kappa|\Omega|\\
&  \leq\frac{1}{2}\int_{r}^{T}\Vert\frac{d}{dt}u^{n}(t)\Vert_{L^{2}}%
^{2}dt+\frac{1}{2}A(\Vert u^{n}(T)\Vert_{H_{0}^{1}}^{2})-\int_{\Omega
}\mathcal{F}(u^{n}(T))dx\\
&  \leq\frac{1}{2}\int_{r}^{T}\Vert h(t)\Vert_{L^{2}}^{2}dt+\frac{1}{2}A(\Vert
u^{n}(r)\Vert_{H_{0}^{1}}^{2})-\int_{\Omega}\mathcal{F}(u^{n}(r))dx\\
&  \leq\frac{1}{2}\int_{r}^{T}\Vert h(t)\Vert_{L^{2}}^{2}dt+\frac
{K_{3}(T)(1+\Vert u^{n}(0)\Vert_{L^{2}}^{2})}{r}+K_{4}(T).
\end{split}
\label{51}%
\end{equation}
Thus $\dfrac{du^{n}}{dt}$ is bounded in $L^{2}(r,T;L^{2}(\Omega))$ for all
$0<r<T$.

Taking into account (\ref{Acotacionfu}) and (\ref{50}) we infer that $f\left(
u^{n}\right)  $ is bounded in $L^{2}\left(  r,T;L^{2}\left(  \Omega\right)
\right)  $. By this way, the equality $a(\Vert u^{n}\Vert_{H_{0}^{1}}%
^{2})\Delta u^{n}=u_{t}^{n}-f(u^{n})+h(t)$ implies that $u^{n}$ and $a(\Vert
u^{n}\Vert_{H_{0}^{1}}^{2})\Delta u^{n}$ are bounded in $L^{2}(r,T;D(A))$ and
$L^{2}(r,T;L^{2}(\Omega))$, respectively, for all $0<r<T$.

By the compact embedding $H_{0}^{1}(\Omega)\subset L^{2}(\Omega)$, we can
apply the Ascoli-Arzel\`{a} theorem and obtain that, up to a sequence, there
exists a function $u$ such that
\begin{equation}%
\begin{split}
u^{n}\overset{\ast}{\rightharpoonup}u  &  \text{ in }L^{\infty}(r,T;H_{0}%
^{1}(\Omega)),\\
u^{n}\rightarrow u  &  \text{ in }C([r,T],L^{2}(\Omega)),\\
u^{n}\rightharpoonup u  &  \text{ in }L^{2}(r,T;D(A)),\\
\frac{du^{n}}{dt}\rightharpoonup\frac{du}{dt}  &  \text{ in }L^{2}%
(r,T;L^{2}(\Omega)),
\end{split}
\label{53}%
\end{equation}
for all $0<r<T$.

On the other hand, from (\ref{46}) we infer that $u^{n}$ is bounded in
$L^{\infty}(0,T;L^{2}(\Omega))\cap L^{2}(0,T;H_{0}^{1}(\Omega))\cap
L^{p}(0,T;L^{p}(\Omega))$, for all $T>0$. Therefore, there exists a
subsequence $u^{n}$, relabelled the same, such that
\begin{equation}%
\begin{split}
u^{n}\overset{\ast}{\rightharpoonup}u  &  \text{ in }L^{\infty}(0,T;L^{2}%
(\Omega)),\\
u^{n}\rightharpoonup u  &  \text{ in }L^{2}(0,T;H_{0}^{1}(\Omega)),\\
u^{n}\rightharpoonup u  &  \text{ in }L^{p}(0,T;L^{p}(\Omega)),
\end{split}
\label{55}%
\end{equation}
for all $T>0$. On the other hand, arguing as in the proof of Theorem
\ref{existenceweakregularsolutionu0L2} we obtain that
\begin{align*}
f(u^{n})  &  \rightharpoonup f(u)\text{ in }L^{q}(0,T;L^{q}(\Omega)),\\
u^{n}  &  \rightarrow u\text{ in }L^{2}(r,T;H_{0}^{1}(\Omega)),\\
a(\Vert u^{n}\Vert_{H_{0}^{1}}^{2})  &  \rightarrow a(\Vert u\Vert_{H_{0}^{1}%
}^{2})\text{ in }L^{1}\left(  0,T\right)  ,\\
a(\Vert u^{n}(t)\Vert_{H_{0}^{1}}^{2})\Delta u^{n}  &  \rightharpoonup a(\Vert
u(t)\Vert_{H_{0}^{1}}^{2})\Delta u\quad\text{in }L^{2}(r,T;L^{2}(\Omega)).
\end{align*}

Passing to the limit we obtain that $u\left(  \text{\textperiodcentered
}\right)  $ is a regular solution.

Finally, by a similar argument as in the proof of Theorem
\ref{existenceweakregularsolutionu0L2} we establish that $u\left(  0\right)
=u_{0}.$
\end{proof}

\begin{remark}
Under the conditions of Theorem
\ref{existenceweaksolutionu0L2sinderivadaversion2014} any regular solution
$u\left(  \text{\textperiodcentered}\right)  $ satisfies from
(\ref{Acotacionfu}) that $f\left(  u\right)  \in L^{2}\left(  [\varepsilon
,T];L^{2}\left(  \Omega\right)  \right)  $ for all $0<\varepsilon<T$, and then
$\dfrac{du}{dt}\in L^{2}\left(  [\varepsilon,T];L^{2}\left(  \Omega\right)
\right)  $ as well. Hence, $u\in C((0,T],H_{0}^{1}\left(  \Omega\right)  )$
for all $T>0.$
\end{remark}

\bigskip

We finish this section by giving a sufficient condition ensuring the
uniqueness of solutions.

\begin{theorem}
\label{uniqueness} Assume the conditions of Theorem
\ref{existenceweakregularsolutionu0L2} and additionally that the function
\begin{equation}
s\mapsto a(s^{2})s \label{1.36}%
\end{equation}
is nondecreasing. Then there can exists at most one regular solution to the
Cauchy problem (\ref{1}) for $u_{0}\in L^{2}\left(  \Omega\right)  .$

If, moreover, $M_{2}=0$ in condition (\ref{7}), then there can be at most one
weak solution.

Under the conditions of Theorem \ref{existencestrongsolutionu0H01Lp}, there
can exists at most one strong solution to the Cauchy problem (\ref{1}) for
$u_{0}\in H_{0}^{1}\left(  \Omega\right)  \cap L^{p}\left(  \Omega\right)  .$
\end{theorem}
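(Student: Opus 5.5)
The plan is an energy estimate for the difference of two solutions, closed by Gronwall. The nonlocal diffusion term is handled by a monotonicity argument that uses exactly hypothesis (\ref{1.36}).

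Let $u,v$ be two regular solutions with $u(0)=v(0)=u_{0}$, and set $w=u-v$. Write $\|\cdot\|$ for $\|\cdot\|_{H_{0}^{1}}$ and put $g(s)=a(s^{2})s$, which is nondecreasing by (\ref{1.36}).

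\textbf{Step 1: monotonicity of the diffusion term.} I would first prove the pointwise inequality
\[
a(\|u\|^{2})(\nabla u,\nabla w)-a(\|v\|^{2})(\nabla v,\nabla w)\geq\big(g(\|u\|)-g(\|v\|)\big)(\|u\|-\|v\|)\geq0 .
\]
It follows from Cauchy--Schwarz, $(\nabla u,\nabla v)\leq\|u\|\|v\|$, which gives two bounds:
\[
\|u\|^{2}-(\nabla u,\nabla v)\geq\|u\|(\|u\|-\|v\|),\qquad (\nabla u,\nabla v)-\|v\|^{2}\leq\|v\|(\|u\|-\|v\|).
\]
Multiplying the first by $a(\|u\|^{2})$, the second by $-a(\|v\|^{2})$ (which reverses it), and adding gives the claim. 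Equivalently, $u\mapsto\frac12 A(\|u\|^{2})$ is a convex functional whose derivative is $-a(\|u\|^{2})\Delta u$. For the reaction term, $f^{\prime}\leq\eta$ gives $(f(u)-f(v),w)\leq\eta\|w\|_{L^{2}}^{2}$.

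\textbf{Step 2: the energy identity and Gronwall.} Fix $0<\varepsilon<T$. On $[\varepsilon,T]$ both solutions have $\Delta u,\Delta v\in L^{2}(\varepsilon,T;L^{2}(\Omega))$. Their derivatives lie in $L^{2}+L^{q}$, and $w\in L^{2}(\varepsilon,T;H_{0}^{1})\cap L^{p}(\varepsilon,T;L^{p})$. Hence Remark \ref{Deriv} justifies
\[
\frac{d}{dt}\|w\|_{L^{2}}^{2}=2\Big(\frac{dw}{dt},w\Big)\quad\text{a.e. on }(\varepsilon,T).
\]
Subtracting the two equations and testing with $w$, Step 1 yields $\frac{d}{dt}\|w\|_{L^{2}}^{2}\leq2\eta\|w\|_{L^{2}}^{2}$, so Gronwall gives
\[
\|w(t)\|_{L^{2}}^{2}\leq e^{2\eta(t-\varepsilon)}\|w(\varepsilon)\|_{L^{2}}^{2},\qquad t\in[\varepsilon,T].
\]
By Lemma \ref{ContSol}, condition (\ref{7}) gives $u,v\in C([0,T],L^{2}(\Omega))$, so $w(\varepsilon)\to w(0)=0$ as $\varepsilon\to0$, and therefore $w\equiv0$.

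\textbf{Step 3: weak solutions when $M_{2}=0$.} Then $a\leq M_{1}$ is bounded, so by (\ref{6b}) we have $\frac{du}{dt}\in L^{2}(0,T;H^{-1})+L^{q}(0,T;L^{q})$ while $u\in L^{2}(0,T;H_{0}^{1})\cap L^{p}(0,T;L^{p})$. The Chepyzhov--Vishik result \cite[p.285]{chepvishik} then gives the identity $\frac{d}{dt}\|w\|_{L^{2}}^{2}=2\langle w_{t},w\rangle$ directly on $[0,T]$. The pairing $a(\|u\|^{2})(\nabla u,\nabla w)$ is integrable, and Step 1 only uses $H_{0}^{1}$-norms, so the same Gronwall argument works on $[0,T]$ with no $\varepsilon$ needed.

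\textbf{Step 4: strong solutions.} Condition (\ref{7}) is not assumed here. However, a strong solution has $\Delta u\in L^{2}(0,T;L^{2})$, $\frac{du}{dt}\in L^{2}(0,T;L^{2})$ and $u\in C([0,T],H_{0}^{1}(\Omega))$. So $\|u(t)\|$ is bounded on $[0,T]$, hence $a(\|u(t)\|^{2})$ is bounded by continuity of $a$. The energy identity then holds on all of $[0,T]$, and Steps 1--2 apply with $\varepsilon=0$.

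\textbf{Main obstacle.} The delicate point is justifying the energy identity and the integrability of the nonlocal term near $t=0$ for regular solutions. There $a(\|u\|^{2})$ is only known to lie in $L^{1}(0,T)$ via (\ref{7}), and $\Delta u$ is controlled only on $[\varepsilon,T]$. I would handle this by working on $[\varepsilon,T]$ and passing $\varepsilon\to0$ using the $L^{2}$-continuity from Lemma \ref{ContSol}, rather than trying to test the equation on the whole interval $[0,T]$.
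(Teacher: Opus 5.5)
Your proposal is correct and follows the paper's proof essentially step by step. The ingredients are the same: the Cauchy--Schwarz lower bound for the nonlocal diffusion term combined with (\ref{1.36}), the bound $(f(u)-f(v),u-v)\leq\eta\|u-v\|_{L^{2}}^{2}$, Gronwall on $[\varepsilon,t]$ justified by Remark \ref{Deriv}, the limit $\varepsilon\to0$ via the $L^{2}$-continuity of Lemma \ref{ContSol}, then (\ref{6b}) for weak solutions when $M_{2}=0$, and the same computation without (\ref{7}) for strong solutions. Your factorization $\bigl(a(\|u\|^{2})\|u\|-a(\|v\|^{2})\|v\|\bigr)(\|u\|-\|v\|)$ is in fact the correct form of the paper's lower bound for $I$; the paper's display mistakenly squares the $H_{0}^{1}$-norms in the cross term and in the final product, and that version would not match hypothesis (\ref{1.36}).
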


\begin{proof}
Suppose that $u$ and $v$ are two regular solutions to (\ref{1}) with the same
initial condition $u_{0}=v_{0}$, then by subtractionand multiplying by $u-v$
we get by Remark \ref{Deriv} that
\[
\frac{1}{2}\frac{d}{dt}\Vert u-v\Vert_{L^{2}}^{2}+\langle-a(\Vert u\left(
t\right)  \Vert_{H_{0}^{1}}^{2})\Delta u+a(\Vert v\left(  t\right)
\Vert_{H_{0}^{1}}^{2})\Delta v,u-v\rangle=(f(u)-f(v),u-v).
\]
Let us consider%
\[
I=\langle-a(\Vert u\left(  t\right)  \Vert_{H_{0}^{1}}^{2})\Delta u+a(\Vert
v\left(  t\right)  \Vert_{H_{0}^{1}}^{2})\Delta v,u-v\rangle.
\]
After integrating by parts, we obtain
\begin{align}
I  &  =\int_{\Omega}(a(\Vert u\left(  t\right)  \Vert_{H_{0}^{1}}^{2})|\nabla
u|^{2}-a(\Vert u\left(  t\right)  \Vert_{H_{0}^{1}}^{2})\nabla u\nabla
v-a(\Vert v\left(  t\right)  \Vert_{H_{0}^{1}}^{2})\nabla u\nabla v+a(\Vert
v\left(  t\right)  \Vert_{H_{0}^{1}}^{2})|\nabla v|^{2})dx\nonumber\\
&  \geq a(\Vert u\left(  t\right)  \Vert_{H_{0}^{1}}^{2})\Vert u\left(
t\right)  \Vert_{H_{0}^{1}}^{2}-\left(  a(\Vert u\left(  t\right)
\Vert_{H_{0}^{1}}^{2})+a(\Vert v\left(  t\right)  \Vert_{H_{0}^{1}}%
^{2})\right)  \Vert u\left(  t\right)  \Vert_{H_{0}^{1}}^{2}\Vert v\left(
t\right)  \Vert_{H_{0}^{1}}^{2}+a(\Vert v\left(  t\right)  \Vert_{H_{0}^{1}%
}^{2})\Vert v\left(  t\right)  \Vert_{H_{0}^{1}}^{2}\nonumber\\
&  =\left(  a(\Vert u\left(  t\right)  \Vert_{H_{0}^{1}}^{2})\Vert u\left(
t\right)  \Vert_{H_{0}^{1}}^{2}-a(\Vert v\left(  t\right)  \Vert_{H_{0}^{1}%
}^{2})\Vert v\left(  t\right)  \Vert_{H_{0}^{1}}^{2}\right)  \left(  \Vert
u\left(  t\right)  \Vert_{H_{0}^{1}}^{2}-\Vert v\left(  t\right)  \Vert
_{H_{0}^{1}}^{2}\right)  \geq0, \label{1.37}%
\end{align}
where we have used (\ref{1.36}) in the last inequality.

Hence, from (\ref{1.37}) and $f^{\prime}\left(  s\right)  \leq\eta$, we infer
\[
\frac{1}{2}\frac{d}{dt}\Vert u-v\Vert_{L^{2}}^{2}\leq\int_{\Omega}\left(
f(u)-f(v)\right)  (u-v)dx=\int_{\Omega}\left(  \int_{v}^{u}f^{\prime
}(s)ds\right)  (u-v)dx\leq\eta\Vert u-v\Vert_{L^{2}}^{2}.
\]
By Remark \ref{Deriv} it is correct to apply Gronwall's lemma over an
arbitrary interval $\left(  \varepsilon,t\right)  $, so
\[
\Vert u(t)-v(t)\Vert_{L^{2}}^{2}\leq\Vert u\left(  \varepsilon\right)
-v\left(  \varepsilon\right)  \Vert_{L^{2}}^{2}\ e^{2\eta(t-\varepsilon
)},\quad t\geq0.
\]
Since Lemma \ref{ContSol} implies that $u,v\in C([0,T],L^{2}\left(
\Omega\right)  ),$ we pass to the limit as $\varepsilon\rightarrow0$ to get%
\[
\Vert u(t)-v(t)\Vert_{L^{2}}^{2}\leq\Vert u\left(  0\right)  -v\left(
0\right)  \Vert_{L^{2}}^{2}\ e^{2\eta t},\quad t\geq0.
\]
Hence, the uniqueness follows.

If $M_{2}=0$ in (\ref{7}), then by (\ref{6b}) the above argument is valid for
weak solutions as well.

The proof of the last statement is the same with the only difference that
condition (\ref{7}) is not needed.
\end{proof}

\section{Existence and structure of attractors}

In this section we will prove the existence of global attractors for the
semiflows generated by regular and strong solutions under different
assumptions in the autonomous case, that is, when the function $h$ does depend
on $t$. We will also establish that the attractors is equal to the unstable
manifold of the set of stationary points.

We consider the following condition instead of (\ref{h}):%
\begin{equation}
h\in L^{2}\left(  \Omega\right)  . \label{h2}%
\end{equation}

Throughout this section, for a metric space $X$ with metric $d$ we will denote
by $dist_{X}\left(  C,D\right)  $ the \ Hausdorff semidistance from $C$ to
$D$, that is, $dist_{X}(C,D)=\sup_{c\in C}\inf_{d\in D}\rho\left(  c,d\right)
.$

\subsection{Regular solutions}

We split this part into three subsections.

\subsubsection{The case of uniqueness\label{SectionAttr1}}

If we assume conditions (\ref{Cont})-(\ref{3}), (\ref{7}), (\ref{1.36}),
(\ref{h2}), $f\in C^{1}(\mathbb{R})$ and$\ f^{\prime}(s)\leq\eta$ , then by
Theorems \ref{existenceweakregularsolutionu0L2} and \ref{uniqueness} we can
define the following continuous semigroup $T_{r}:\mathbb{R}^{+}\times
L^{2}(\Omega)\rightarrow L^{2}(\Omega):$
\begin{equation}
T_{r}(t,u_{0})=u(t), \label{Semigroup}%
\end{equation}
where $u\left(  \text{\textperiodcentered}\right)  $ is the unique regular
solution to (\ref{1}). We denote by $\mathfrak{R}$ the set of fixed points of
$T_{r}$, that is, the points $z$ such that $T_{r}(t,z)=z$ for any $t\geq0$.

We also observe that using the calculations in (\ref{47})-(\ref{50}) in the
Galerkin approximations of any regular solution $u\left(
\text{\textperiodcentered}\right)  $ one can obtain that $u\in L^{\infty
}\left(  \varepsilon,T;L^{p}\left(  \Omega\right)  \right)  ,$ for all
$0<\varepsilon<T$, and then $u\in C_{w}((0,+\infty),L^{p}\left(
\Omega\right)  ).$

Our first purpose is to obtain a global attractor. We recall that the set
$\mathcal{A}$ is a global attractor for $S$ if it is compact, invariant (which
means $T_{r}(t,\mathcal{A})=\mathcal{A}$ for any $t\geq0$) and it attracts any
bounded set $B$, that is,%
\[
dist_{L^{2}}\left(  T_{r}(t,B\right)  ,\mathcal{A})\rightarrow0\text{ as
}t\rightarrow+\infty.
\]

\begin{proposition}
\label{abosorbinginL2firstcase}Let (\ref{Cont})-(\ref{3}), (\ref{7}),
(\ref{1.36}) and (\ref{h2}) hold. Then the semigroup $S$ has a bounded
absorbing set in $L^{2}$; that is, there exists a constant $K$ such that for
any $R>0$ there is a time $t_{0}=t_{0}(R)$ such that
\begin{equation}
\Vert u(t)\Vert_{L^{2}}\leq K\quad\text{ for all }\quad t\geq t_{0},
\label{9.1}%
\end{equation}
where $\left\Vert u_{0}\right\Vert _{L^{2}}\leq R$, $u\left(  t\right)
=T_{r}(t,u_{0}).$ Moreover, there is a constant $L$ such that
\begin{equation}
\int_{t}^{t+1}\Vert u(s)\Vert_{H_{0}^{1}}^{2}ds\leq L\quad\text{ for all
}\quad t\geq t_{0}. \label{9.2}%
\end{equation}

\end{proposition}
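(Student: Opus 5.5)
The plan is to derive the standard $L^2$ energy inequality for the regular solution $u(t)=T_r(t,u_0)$, apply Gronwall's lemma to obtain (\ref{9.1}), and then integrate over a unit interval to obtain (\ref{9.2}).

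\textbf{The energy inequality.} By Remark \ref{Deriv}, the map $t\mapsto\Vert u(t)\Vert_{L^{2}}^{2}$ is absolutely continuous on every $[\varepsilon,T]$. By Lemma \ref{ContSol}, $u\in C([0,T],L^{2}(\Omega))$. Hence we may test the equation with $u$ itself. Using (\ref{2}), (\ref{3}), and the Young and Poincar\'{e} inequalities exactly as in (\ref{1.9}) (with $h$ now time independent by (\ref{h2})), we obtain for a.e. $t>0$
\[
\frac{d}{dt}\Vert u(t)\Vert_{L^{2}}^{2}+m\Vert u(t)\Vert_{H_{0}^{1}}^{2}+2\alpha_{1}\Vert u(t)\Vert_{L^{p}}^{p}\leq 2\kappa|\Omega|+\frac{1}{\lambda_{1}m}\Vert h\Vert_{L^{2}}^{2}=:C_{0}.
\]

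\textbf{Step 1: the absorbing ball.} Drop the $L^{p}$ term and use $\lambda_{1}\Vert u\Vert_{L^{2}}^{2}\leq\Vert u\Vert_{H_{0}^{1}}^{2}$. This gives $\frac{d}{dt}\Vert u\Vert_{L^{2}}^{2}+m\lambda_{1}\Vert u\Vert_{L^{2}}^{2}\leq C_{0}$. Gronwall's lemma, applied on $[\varepsilon,t]$ and followed by $\varepsilon\to0$ using continuity at $0$, yields
\[
\Vert u(t)\Vert_{L^{2}}^{2}\leq e^{-m\lambda_{1}t}R^{2}+\frac{C_{0}}{m\lambda_{1}}.
\]
Set $K^{2}=1+C_{0}/(m\lambda_{1})$. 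Choosing $t_{0}(R)$ with $e^{-m\lambda_{1}t_{0}}R^{2}\leq1$ gives (\ref{9.1}).

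\textbf{Step 2: the integral bound.} Integrate the energy inequality over $[t,t+1]$ for $t\geq t_{0}$. This gives
\[
m\int_{t}^{t+1}\Vert u(s)\Vert_{H_{0}^{1}}^{2}ds\leq \Vert u(t)\Vert_{L^{2}}^{2}+C_{0}\leq K^{2}+C_{0},
\]
so (\ref{9.2}) holds with $L=(K^{2}+C_{0})/m$.

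\textbf{The main obstacle.} The only delicate point is justifying the energy equality for regular solutions near $t=0$, where the extra regularity is only available on $(\varepsilon,T)$. This is handled by working on $[\varepsilon,T]$ via Remark \ref{Deriv} and then letting $\varepsilon\to0$ with the $C([0,T],L^{2})$ continuity from Lemma \ref{ContSol}; this is precisely where hypothesis (\ref{7}) is used. The duality pairings $(f(u),u)$ and $(\Delta u,u)$ are legitimate for a.e. $t>\varepsilon$ because $u(t)\in D(A)\cap L^{p}(\Omega)$ there.

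Alternatively, one can derive the same estimates for the Galerkin approximations and pass to the limit, using the uniqueness given by Theorem \ref{uniqueness} together with weak lower semicontinuity of the norms.
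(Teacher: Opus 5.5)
Your proposal is correct and follows essentially the same route as the paper. You test with $u$ via Remark~\ref{Deriv}, apply Gronwall on $[\varepsilon,t]$, let $\varepsilon\to0$ using the $C([0,T],L^{2}(\Omega))$ continuity from Lemma~\ref{ContSol} (which is where (\ref{7}) enters), and integrate over $[t,t+1]$ to get (\ref{9.2}); your choice of $t_{0}$ via $e^{-m\lambda_{1}t_{0}}R^{2}\leq1$ is in fact slightly cleaner than the paper's, which writes $R$ where $R^{2}$ is needed.
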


\begin{proof}
Multiplying equation (\ref{1}) by $u$ and using (\ref{3}) and Remark
\ref{Deriv} we get
\begin{equation}
\frac{1}{2}\frac{d}{dt}\Vert u(t)\Vert_{L^{2}}^{2}+\frac{m}{2}\Vert
u(t)\Vert_{H_{0}^{1}}^{2}+\alpha_{1}\Vert u(t)\Vert_{L^{P}}^{p}\leq
\kappa|\Omega|+\frac{1}{2\lambda_{1}m}\Vert h\Vert_{L^{2}}^{2}=\frac
{\kappa_{1}}{2}. \label{9.3}%
\end{equation}
Gronwall's lemma and the inequality $\Vert u(t)\Vert_{H_{0}^{1}}^{2}%
\geq\lambda_{1}\Vert u(t)\Vert_{L^{2}}^{2}$ give
\[
\Vert u(t)\Vert_{L^{2}}^{2}\leq\Vert u(\varepsilon)\Vert_{L^{2}}%
^{2}e^{-\lambda_{1}m(t-\varepsilon)}+\frac{\kappa_{1}}{\lambda_{1}m},\text{
for any }\varepsilon>0.
\]
As $u\in C([0,T],L^{2}\left(  \Omega\right)  $ by \ref{ContSol}, pasing to the
limit we have%
\begin{equation}
\Vert u(t)\Vert_{L^{2}}^{2}\leq\Vert u(0)\Vert_{L^{2}}^{2}e^{-\lambda_{1}%
mt}+\frac{\kappa_{1}}{\lambda_{1}m}. \label{9.4}%
\end{equation}
Hence, taking
\[
t\geq t_{0}\equiv\frac{1}{\lambda_{1}m}\ln\left(  {\frac{\lambda_{1}mR}%
{\kappa_{1}}}\right)
\]
we get (\ref{9.1}) for $K=\frac{2\kappa_{1}}{\lambda_{1}m}$. On the other
hand, integrating (\ref{9.3}) between $t$ and $t+1$ and using (\ref{9.4}) we
obtain
\[
m\int_{t}^{t+1}\Vert u(s)\Vert_{H_{0}^{1}}^{2}ds\leq\Vert u(t)\Vert_{L^{2}%
}^{2}+\kappa_{1}\leq
\]
and using the previous bound we get
\[
\int_{t}^{t+1}\Vert u(s)\Vert_{H_{0}^{1}}^{2}ds\leq\frac{\kappa_{1}}{m}%
+\frac{2\kappa_{1}}{\lambda_{1}m},\quad\text{ for all }t\geq t_{0},
\]
so that (\ref{9.2}) follows.
\end{proof}

\bigskip

\begin{proposition}
\label{absorbinginH1case1}Let (\ref{Cont})-(\ref{3}), (\ref{7}), (\ref{1.36})
and (\ref{h2}) hold. Then there exists a bounded absorbing set in $H_{0}%
^{1}\left(  \Omega\right)  \cap L^{p}\left(  \Omega\right)  $; that is, there
is a constant $M$ such that for any $R>0$ there is a time $t_{1}=t_{1}(R)$
such that
\[
\Vert u(t)\Vert_{H_{0}^{1}}+\left\Vert u\left(  t\right)  \right\Vert _{L^{p}%
}\leq M\quad\text{for all }t\geq t_{1},
\]
where $\left\Vert u_{0}\right\Vert _{L^{2}}\leq R$, $u\left(  t\right)
=T_{r}(t,u_{0}).$
\end{proposition}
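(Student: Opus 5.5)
We combine the $L^{2}$ absorbing estimates of Proposition \ref{abosorbinginL2firstcase} with the energy identity from multiplying by $u_{t}$, and close with the uniform Gronwall lemma \cite{Temam88}. The natural Lyapunov-type quantity is
\[
E(t)=\frac{1}{2}A(\Vert u(t)\Vert_{H_{0}^{1}}^{2})-\int_{\Omega}\mathcal{F}(u(t))dx,
\]
with $A$ as in (\ref{funcionA}). By (\ref{2}) and (\ref{4}) it controls the target norms from below:
\[
E(t)\geq\frac{m}{2}\Vert u\Vert_{H_{0}^{1}}^{2}+\widetilde{\alpha}_{1}\Vert u\Vert_{L^{p}}^{p}-\widetilde{\kappa}|\Omega|.
\]

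Since a regular solution need not have $u_{t}\in L^{2}$ near $0$ with enough regularity to justify the computation directly, I would do it on the Galerkin approximations $u_{n}$, as in (\ref{1.1118}) and (\ref{43}), and pass to the limit. By uniqueness (Theorem \ref{uniqueness}) the limit is $T_{r}(t,u_{0})$. Lower semicontinuity of the norms under $u_{n}(t)\rightharpoonup u(t)$ in $H_{0}^{1}\cap L^{p}$ (from the convergences of Theorem \ref{existenceweakregularsolutionu0L2}) transfers the bound. For the approximations, with $h$ independent of $t$, we have $\frac{d}{dt}E_{n}\leq\frac{1}{2}\Vert h\Vert_{L^{2}}^{2}$, so the uniform Gronwall lemma applies with $g=0$ and constant forcing.

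The key step is bounding $\int_{t}^{t+1}E(s)\,ds$ uniformly for $t\geq t_{0}$. Using (\ref{7}) we have $A(s)\leq M_{1}s+\frac{M_{2}}{2}s^{2}$, which gives
\[
\frac{1}{2}A(\Vert u\Vert_{H_{0}^{1}}^{2})\leq \frac{M_{1}}{2}\Vert u\Vert_{H_{0}^{1}}^{2}+\frac{M_{2}}{4}\Vert u\Vert_{H_{0}^{1}}^{4}.
\]
The quartic term is the main obstacle: (\ref{9.2}) controls only $\int_{t}^{t+1}\Vert u\Vert_{H_{0}^{1}}^{2}$, not the fourth power.

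I would handle this in two stages. First, multiply the equation by $-\Delta u$ and use $f^{\prime}\leq\eta$, as in (\ref{3.2}), to get
\[
\frac{d}{dt}\Vert u\Vert_{H_{0}^{1}}^{2}\leq2\eta\Vert u\Vert_{H_{0}^{1}}^{2}+\frac{1}{m}\Vert h\Vert_{L^{2}}^{2}.
\]
Combined with (\ref{9.2}), the uniform Gronwall lemma then yields a uniform bound $\Vert u(t)\Vert_{H_{0}^{1}}^{2}\leq M_{0}$ for all $t\geq t_{0}+1$. The term $\int_{t}^{t+1}\Vert u\Vert_{L^{p}}^{p}$ is controlled by integrating (\ref{9.3}) over $[t,t+1]$ together with (\ref{9.4}). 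So the $H_{0}^{1}$ part of the bound already follows from this first stage.

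Second, for the $L^{p}$ part, the $H_{0}^{1}$ bound together with the continuity of $a$ makes $A(\Vert u\Vert_{H_{0}^{1}}^{2})$ bounded. Then $\int_{t}^{t+1}E\leq C$ by (\ref{5}), and the uniform Gronwall lemma applied to $E$ gives $E(t)\leq C^{\prime}$ for $t\geq t_{0}+2$. The lower bound on $E$ above then gives the uniform $L^{p}$ bound, and we take $t_{1}=t_{0}(R)+2$. Justifying $\frac{d}{dt}E\leq\frac{1}{2}\Vert h\Vert_{L^{2}}^{2}$ (via Galerkin approximations together with uniqueness) is the technical point I would check most carefully.
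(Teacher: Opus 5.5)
Your proof is correct, and it takes a genuinely different route from the paper's. The paper's proof is a single step. Arguing as in (\ref{47})--(\ref{50}), it asserts a smoothing estimate $\Vert T_r(1,u_0)\Vert_{H_0^1}^2+\Vert T_r(1,u_0)\Vert_{L^p}^p\le C(1+\Vert u_0\Vert_{L^2}^2)$, and then combines it with the semigroup property and the $L^2$ absorbing ball (\ref{9.1}).

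That smoothing estimate rests on (\ref{47}), where $\frac12 A(\Vert u\Vert_{H_0^1}^2)$ is bounded by $M_1\Vert u\Vert_{H_0^1}+\frac{M_2}{2}\Vert u\Vert_{H_0^1}^2$. However, (\ref{7}) only gives $\frac{M_1}{2}\Vert u\Vert_{H_0^1}^2+\frac{M_2}{4}\Vert u\Vert_{H_0^1}^4$. So the obstacle you flagged is real. When $M_2>0$, the time integral of the energy is not controlled by (\ref{46}) or (\ref{9.2}) alone, and the paper's energy-only argument, as written, needs an extra ingredient.

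Your first stage supplies exactly that ingredient. You use $f'\le\eta$, which is legitimately available because $T_r$ is only defined in Section \ref{SectionAttr1} under that assumption. Through the $-\Delta u$ estimate and the uniform Gronwall lemma you get a pointwise $H_0^1$ bound for $t\ge t_0(R)+1$. This is essentially the mechanism of (\ref{3.2})--(\ref{absorbing}), which needs no upper bound on $a$. After that, $A(\Vert u\Vert_{H_0^1}^2)$ is bounded, and the energy functional only has to deliver the $L^p$ part.

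Were (\ref{47}) valid, the paper's route would avoid $f'\le\eta$ altogether; it is the same computation reused under (\ref{Condp}) in Theorem \ref{existenceweaksolutionu0L2sinderivadaversion2014}. Your route instead works with uniform-in-time estimates for $t\ge t_0(R)$, with no smoothing estimate on $[0,1]$. Your Galerkin-plus-uniqueness justification is sound, since (\ref{1.1118}) and the $-\Delta u$ computation are exact at the Galerkin level.

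One small point: $(f(u),-\Delta u)\le\eta\Vert u\Vert_{H_0^1}^2$ strictly requires $f(0)=0$. Otherwise, split off the constant $f(0)$ and treat it together with $h$; this only changes constants. The paper's (\ref{3.2}) makes the same implicit step.
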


\begin{proof}
The following calculations are formal but can be justified by Galerking
approximations. Using Proposition \ref{abosorbinginL2firstcase} and arguing as
in(\ref{47})-(\ref{50}) we obtain the existence of a constant $C$ such that
\[
\Vert T_{r}(1,u\left(  0\right)  )\Vert_{H_{0}^{1}}^{2}+\left\Vert
T_{r}\left(  1,u\left(  0\right)  \right)  \right\Vert _{L^{p}}^{p}\leq
C(1+\Vert u(0)\Vert_{L^{2}}^{2}).
\]
Hence, the semigroup property $T_{r}(t+1,u_{0})=T_{r}(1,T_{r}(t,u_{0}))$ and
(\ref{9.1}) imply that
\[
\Vert T_{r}(t+1,u_{0})\Vert_{H_{0}^{1}}^{2}+\left\Vert T_{r}\left(
t+1,u_{0}\right)  \right\Vert _{L^{p}}^{p}\leq C(1+K^{2})\text{ }\forall t\geq
t_{0}\left(  R\right)  ,
\]
if $\left\Vert u_{0}\right\Vert _{L^{2}}\leq R$, which proves the statement.
\end{proof}

\begin{theorem}
\label{ExistAttr2}Let (\ref{Cont})-(\ref{3}), (\ref{7}), (\ref{1.36}) and
(\ref{h2}) hold. Then the equation (\ref{1}) has a connected global attractor
$\mathcal{A}_{r}$, which is bounded in $H_{0}^{1}\left(  \Omega\right)  \cap
L^{p}\left(  \Omega\right)  $.
\end{theorem}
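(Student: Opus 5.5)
We will apply the classical existence theorem for global attractors of continuous semigroups: if $T_{r}$ is a continuous semigroup on $L^{2}(\Omega)$ possessing a compact absorbing set, then the $\omega$-limit set of that absorbing set, $\mathcal{A}_{r}=\omega(B_{0})$, is the global attractor. Moreover, it is connected because $L^{2}(\Omega)$ is connected and $T_{r}(t,\cdot)$ is continuous. So the plan has three steps: continuity of $T_{r}(t,\cdot)$, a compact absorbing set, and then the bound in $H_{0}^{1}\cap L^{p}$.

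\emph{Continuity.} Under the standing assumptions (in particular $f\in C^{1}$ with $f^{\prime}\leq\eta$, (\ref{7}) and (\ref{1.36})), the proof of Theorem \ref{uniqueness} goes through verbatim for two regular solutions $u,v$ with different initial data. The monotonicity estimate (\ref{1.37}) and Gronwall's lemma give
\[
\Vert T_{r}(t,u_{0})-T_{r}(t,v_{0})\Vert_{L^{2}}^{2}\leq e^{2\eta t}\Vert u_{0}-v_{0}\Vert_{L^{2}}^{2},
\]
so each $T_{r}(t,\cdot)$ is Lipschitz on $L^{2}(\Omega)$. The semigroup property follows from uniqueness, since a time-translate of a regular solution is again a regular solution. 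Lemma \ref{ContSol} gives continuity in $t$.

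\emph{Compact absorbing set.} By Proposition \ref{absorbinginH1case1}, the set
\[
B_{0}=\{v:\Vert v\Vert_{H_{0}^{1}}+\Vert v\Vert_{L^{p}}\leq M\}
\]
absorbs every bounded set of $L^{2}(\Omega)$. Since $H_{0}^{1}(\Omega)\subset\subset L^{2}(\Omega)$, the set $B_{0}$ is relatively compact in $L^{2}(\Omega)$. It is also closed in $L^{2}$: a sequence in $B_{0}$ converging in $L^{2}$ has a subsequence converging weakly in $H_{0}^{1}$ and in $L^{p}$, and norms are weakly lower semicontinuous. Hence $B_{0}$ is compact. The semigroup is therefore asymptotically compact and dissipative, and the abstract theorem yields a compact, invariant global attractor $\mathcal{A}_{r}=\omega(B_{0})\subset B_{0}$. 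This inclusion also gives the boundedness in $H_{0}^{1}(\Omega)\cap L^{p}(\Omega)$ directly.

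\emph{Connectedness.} We use the standard criterion that a global attractor of a continuous semigroup on a connected space (or on one with a connected bounded absorbing set) is connected. Here $B_{0}$ is convex, hence connected, so it suffices to cite that result. If we argue it directly: suppose $\mathcal{A}_{r}=U_{1}\cup U_{2}$ with $U_{1},U_{2}$ disjoint nonempty compact sets. Then $T_{r}(t,\overline{co}\,\mathcal{A}_{r})$ is connected and attracted to $\mathcal{A}_{r}$, yet contains $\mathcal{A}_{r}$, which gives a contradiction for large $t$.

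The main obstacle is the continuity step. We must make sure the energy equality used in Theorem \ref{uniqueness} is legitimate down to $t=0$ for data only in $L^{2}$. Remark \ref{Deriv} gives it on $[\varepsilon,T]$, and Lemma \ref{ContSol} gives $C([0,T],L^{2})$, so letting $\varepsilon\to0$ closes the argument. Everything else follows from earlier results.
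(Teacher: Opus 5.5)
Your proposal is correct and follows essentially the same route as the paper, which in one line invokes the abstract existence theorem for continuous semigroups with a compact absorbing set (Theorem 10.5 in Robinson's book). That theorem also gives connectedness because $L^{2}(\Omega)$ is connected, and the paper combines it with Proposition \ref{absorbinginH1case1} and the compact embedding $H_{0}^{1}(\Omega)\subset L^{2}(\Omega)$; your write-up simply makes explicit the details the paper leaves implicit: Lipschitz continuity of $T_{r}(t,\cdot)$ from the estimate in Theorem \ref{uniqueness}, closedness of the absorbing ball in $L^{2}(\Omega)$, the inclusion $\mathcal{A}_{r}\subset B_{0}$, and the convex-hull argument for connectedness.
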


\begin{proof}
Since a bounded set in $H_{0}^{1}(\Omega)$ is relatively compact in
$L^{2}(\Omega)$ which is a connected space, the result follows from Theorem
10.5 in \cite{robinson} and Proposition \ref{absorbinginH1case1}.
\end{proof}

\bigskip

In order to get a characterization of the global attractor in terms of the
unstable manifold of the set of stationary points we need to obtain its
boundedness in the spaces $L^{\infty}\left(  \Omega\right)  $ and
$H^{2}\left(  \Omega\right)  $. This is necessary to define properly a
Lyapunov function.

First, we recall that a function $\phi:\mathbb{R}\rightarrow L^{2}\left(
\Omega\right)  $ is a complete trajectory of the semigroup $S$ if $\phi\left(
t\right)  =T_{r}(t-s,\phi\left(  s\right)  )$ for any $t\geq s$. $\phi$ is
bounded if the set $\cup_{s\in\mathbb{R}}\phi\left(  s\right)  $ is bounded.
It is well known \cite{Lad5} that the global attractor is characterized by%
\begin{equation}
\mathcal{A}_{r}=\{\phi\left(  0\right)  :\phi\text{ is a bounded complete
trajectory}\}. \label{Characterization}%
\end{equation}

\begin{theorem}
\label{boundinLinfty}Let (\ref{Cont})-(\ref{3}), (\ref{7}), (\ref{1.36}) and
(\ref{h2}) hold. Then the global attractor $\mathcal{A}_{r}$ is bounded in
$L^{\infty}(\Omega)$, provided that $h\in L^{\infty}(\Omega)$.
\end{theorem}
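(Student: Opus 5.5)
We prove the $L^\infty$ bound by reducing to a local equation through the time rescaling of Lemma \ref{ContSol}, and then applying a standard maximum-principle comparison. The key tool is the characterization (\ref{Characterization}). Every point of $\mathcal{A}_r$ equals $\phi(0)$ for some bounded complete trajectory $\phi$. By Theorem \ref{ExistAttr2}, $\mathcal{A}_r$ is bounded in $H_0^1(\Omega)$, so along such trajectories $\Vert\phi(t)\Vert_{H_0^1}^2\le M^2$ for all $t\in\mathbb{R}$. By continuity of $a$ and (\ref{2}), the coefficient $b(t):=a(\Vert\phi(t)\Vert_{H_0^1}^2)$ therefore satisfies
\[
m\le b(t)\le M_a:=\max_{[0,M^2]}a .
\]
So on the attractor the equation is a linear-in-$\Delta$ parabolic equation with a time-dependent coefficient that is bounded above and below uniformly.

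Next we work on a time interval $[s,0]$ and rescale time with $\beta(t)=\int_s^t b(r)\,dr$, exactly as in Lemma \ref{ContSol}. The rescaled function $w$ solves
\[
w_t-\Delta w=\frac{f(w)+h}{b},
\]
with $1/b\in[1/M_a,1/m]$. The right-hand side satisfies a sign condition derived from (\ref{3}) and $h\in L^\infty(\Omega)$. Indeed, $f(s)s\le\kappa-\alpha_1|s|^p$ together with continuity of $f$ gives $f(s)<-\Vert h\Vert_{L^\infty}$ for $s\ge R_0$ and $f(s)>\Vert h\Vert_{L^\infty}$ for $s\le -R_0$, for some $R_0$ independent of the trajectory. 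Hence $\frac{f(w)+h}{b}\le0$ wherever $w\ge R_0$, and symmetrically for $w\le-R_0$.

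We then obtain the $L^\infty$ bound by Stampacchia truncation. We multiply the (Galerkin-justified) equation by $(w-K(t))^+$, where $K(t)$ solves a comparison ODE. The simplest choice is $K(t)=R_0+\Vert w(s)\Vert_{L^\infty}e^{-c(t-s)}$, but this needs $L^\infty$ initial data, which we do not have. Removing this dependence on initial data is the main obstacle: we need a bound depending only on the $L^2$/$H_0^1$ size of $\phi(s)$, uniformly in $s$.

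To overcome this, we use the dissipative structure $f(s)s\le\kappa-\alpha_1|s|^p$ with $p\ge2$, together with the following comparison. Let $y$ solve
\[
y'=\frac{\sup_{|r|\ge y}\bigl(f(r)+\Vert h\Vert_{L^\infty}\bigr)^+}{M_a}
\]
in the blow-down sense, i.e. the spatially homogeneous supersolution of $y'=\bigl(f(y)+\Vert h\Vert_{L^\infty}\bigr)/b$ starting from $+\infty$. For $p>2$ this solution comes down from infinity in finite time, giving a universal bound after time $1$. For $p=2$ we do not have this, so instead we use a Moser/Alikakos iteration on the $w$-equation. The truncated energies $\Vert (w-k)^+\Vert_{L^2}$ are controlled by $\Vert\phi\Vert_{L^2}$, which is uniformly bounded on $\mathcal{A}_r$. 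We then iterate $L^{2^k}$ estimates, multiplying by $|w|^{2^k-2}w$ and using the sign condition, and obtain $\sup_{t\in[s+1,0]}\Vert w(t)\Vert_{L^\infty}\le C$ with $C$ depending only on the $L^2$ bound of $\mathcal{A}_r$, $\Vert h\Vert_{L^\infty}$, $m$, $M_a$ and $\Omega$. Since the rescaled time interval has length between $m|s|$ and $M_a|s|$, taking $s=-1/m$ ensures the iteration window covers the image of $t=0$. This gives $\Vert\phi(0)\Vert_{L^\infty}\le C$ for every $\phi(0)\in\mathcal{A}_r$, which proves the theorem.
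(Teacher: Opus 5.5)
Your outline is workable, but it is a much heavier route than the paper's, and the step you call ``the main obstacle'' is not actually an obstacle.

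The paper neither rescales time nor iterates. It tests the original equation with $(u-M)_+$, where $M$ is a level beyond which $f(s)+h(x)\le 0$ (your $R_0$). It then uses only $a\ge m$ and the Poincar\'e inequality for $(u-M)_+\in H_0^1(\Omega)$ to get
\[
\Vert (u(t)-M)_+\Vert_{L^2}^2\le e^{-2m\lambda_1(t-\tau)}\Vert (u(\tau)-M)_+\Vert_{L^2}^2,\quad t\ge\tau .
\]
Here the initial datum enters only through its $L^2$ size. Take a bounded complete trajectory $\phi$ with $\phi(0)=y$. Then $\Vert (\phi(\tau)-M)_+\Vert_{L^2}\le\sup_{\sigma\in\mathbb{R}}\Vert\phi(\sigma)\Vert_{L^2}<\infty$ for every $\tau$. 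Letting $\tau\to-\infty$ at $t=0$ gives $(y-M)_+=0$. The negative part is handled the same way, so $\Vert y\Vert_{L^\infty}\le M$.

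So the infinite backward time in (\ref{Characterization}) removes the dependence on initial data for free. You need no $L^\infty$ data, no blow-down supersolution, no Moser--Alikakos iteration, and no upper bound $M_a$ on $a$. It is your rescaling that introduces $M_a$: in the original time variable a spatially constant comparison function has $\Delta y=0$, so the coefficient $a(\Vert u\Vert_{H_0^1}^2)$ drops out. The paper's bound depends only on $f$ and $\Vert h\Vert_{L^\infty}$.

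What your approach buys, if the iteration is carried out, is a genuine smoothing estimate. It gives an $L^\infty$ bound after unit time for every solution starting in a bounded set of $L^2(\Omega)$, i.e.\ an $L^\infty$-bounded absorbing set. That is stronger than the statement and does not need trajectories defined for all negative times.

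If you keep your route, fix two slips.

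\textbf{The comparison ODE.} As written, $y'=\sup_{|r|\ge y}(f(r)+\Vert h\Vert_{L^\infty})^+/M_a$ has a nonnegative right-hand side; it is in fact infinite, since $f(r)\to+\infty$ as $r\to-\infty$. So it cannot come down from $+\infty$. What you mean is $y'=-\bigl(\alpha_1y^{p-1}-\kappa/y-\Vert h\Vert_{L^\infty}\bigr)/M_a$, which does the job for $p>2$. The iteration covers every $p\ge2$ anyway, so the case split is unnecessary.

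\textbf{The justification of the test functions.} Functions like $(w-K(t))^+$ or $|w|^{2^k-2}w$ do not lie in the Galerkin spaces $V_n$, so these estimates cannot be derived on the Galerkin approximations. They must be obtained on the limit equation, via a chain rule for $\Vert (w-K)_+\Vert_{L^2}^2$ and truncated powers in the iteration. This is what the paper's ``by regularization'' refers to.
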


\begin{proof}
We define $v_{+}=\max\{v,0\},\ v_{-}=-\max\{-v,0\}.$ We multiply equation
(\ref{1}) by $(u-M)_{+}$ for some appropiate constant $M$ and integrate over
$\Omega$ to obtain
\[
\frac{1}{2}\frac{d}{dt}\int_{\Omega}|(u-M)_{+}|^{2}dx+a(\Vert u(t)\Vert
_{H_{0}^{1}}^{2})\int_{\Omega}|\nabla(u-M)_{+}|^{2}dx=\int_{\Omega
}(f(u(t))+h)(u-M)_{+}dx,
\]
where we have used the equality $\frac{1}{2}\dfrac{d}{dt}\int_{\Omega
}|(u-M)_{+}|^{2}dx=\left(  u_{t},(u-M)_{+}\right)  ,$ which is obtained by regularization.

Since $h\in L^{\infty}(\Omega)$, by (\ref{3}) we deduce that
\[
(f(u)+h)u\leq\widetilde{\kappa}-\widetilde{\alpha}|u|^{p}\text{ for all }%
u\in\mathbb{R}.
\]
It follows that
\[
f(u)+h\leq0\quad\text{ when }\quad u\geq(\frac{\widetilde{\kappa}%
}{\widetilde{\alpha}})^{1/p}=M.
\]
Therefore, taking $u\geq M$ we get
\[
(f(u)+h)(u-M)_{+}=(f(u)+h)u\frac{(u-M)_{+}}{u}=(f(u)+h)u(1-\frac{M}{u}%
)\leq(\widetilde{\kappa}-\widetilde{\alpha}|u|^{p})(1-\frac{M}{u})\leq0.
\]
Thus, by (\ref{2}) and the the Poincar\'{e} inequality, we deduce that
\[
\frac{d}{dt}\int_{\Omega}|(u-M)_{+}|^{2}dx\leq-2m\lambda_{1}\int_{\Omega
}|(u-M)_{+}|^{2}dx.
\]
Using the Gronwall inequality, we have
\[
\int_{\Omega}|(u(t)-M)_{+}|^{2}dx\leq e^{-2m\lambda(t-\tau)}\int_{\Omega
}|(u\left(  \tau\right)  -M)_{+}|^{2}dx.
\]
For any $y\in\mathcal{A}_{r}$ there is by (\ref{Characterization}) a bounded
complete trajectoy $\phi$ such that $\phi\left(  0\right)  =y$. Then taking
$t=0$ and $\tau\rightarrow-\infty$ in the last inequality, we obtain $y\left(
x\right)  =\phi(0,x)\leq M,\ $for a.a. $x\in\Omega$. The same arguments can be
applyied to $(u-M)_{-}$, which shows that
\[
\Vert y\Vert_{L^{\infty}}\leq M,\quad\forall y\in\mathcal{A}_{r}.
\]

\end{proof}

If we assume that $a\in C^{1}(\mathbb{R})$, then it is possible to show that
the global attractor is more regular.

\begin{proposition}
\label{boundinH2}Let (\ref{Cont})-(\ref{3}), (\ref{7}) and (\ref{h2}) hold.
If, additionally, $a\left(  \text{\textperiodcentered}\right)  \in
C^{1}\left(  \mathbb{R}^{+};\mathbb{R}^{+}\right)  $ and $a^{\prime}\left(
s\right)  \geq0$, then there exists an absorbing set in $H^{2}\left(
\Omega\right)  $ and the global attractor is bounded in $H^{2}(\Omega)$.
\end{proposition}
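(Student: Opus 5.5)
The plan is to get a uniform bound on $\Vert u_t\Vert_{L^2}$ for $t$ large, depending only on the initial radius $R$. Then I would read off an $H^2$ bound from the equation
\[
-a(\Vert u\Vert_{H_0^1}^2)\Delta u = f(u)+h-u_t .
\]
Since $a\ge m$, this gives $\Vert\Delta u(t)\Vert_{L^2}\le m^{-1}\left(\Vert f(u(t))\Vert_{L^2}+\Vert h\Vert_{L^2}+\Vert u_t(t)\Vert_{L^2}\right)$. The regularity of $D(A)=H^2\cap H_0^1$ then converts this into an $H^2$ bound. Two ingredients are needed.
\begin{itemize}
\item \emph{A bound on $f(u)$ in $L^2$.} On the attractor, Theorem \ref{boundinLinfty} gives an $L^\infty$ bound on $u$, hence on $f(u)$, provided $h\in L^\infty$. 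For the absorbing set in general, I would use the $H_0^1\cap L^p$ absorbing set of Proposition \ref{absorbinginH1case1}. If needed, I would invoke (\ref{Acotacionfu}) under (\ref{Condp}), or the $L^\infty$ comparison argument run forward in time.
\item \emph{A bound on $\Vert u_t\Vert_{L^2}$.} This is the core of the proof.
\end{itemize}

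For the bound on $u_t$, all computations are done formally on Galerkin approximations and justified as in Theorem \ref{existenceweakregularsolutionu0L2}.

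First, integrating the energy identity (\ref{43}) over $[t,t+1]$, with $t\ge t_1(R)$, gives a uniform bound
\[
\int_t^{t+1}\Vert u_t\Vert_{L^2}^2\,ds\le C .
\]
This uses the bound on $\frac12A(\Vert u\Vert_{H_0^1}^2)-\int_\Omega\mathcal F(u)\,dx$ on the absorbing set from Proposition \ref{absorbinginH1case1}, together with (\ref{4}) and the continuity of $a$.

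Next, I differentiate the equation in time and set $v=u_t$:
\[
v_t-a(\Vert u\Vert_{H_0^1}^2)\Delta v-a'(\Vert u\Vert_{H_0^1}^2)\,2(\nabla u,\nabla v)\,\Delta u=f'(u)v .
\]
I multiply by $v$. The first term gives $\tfrac12\frac{d}{dt}\Vert v\Vert_{L^2}^2$, and the diffusion term gives $a\Vert v\Vert_{H_0^1}^2\ge m\Vert v\Vert_{H_0^1}^2$. For the nonlocal term, note that $(-\Delta u,v)=(\nabla u,\nabla v)$. It therefore equals
\[
2a'(\Vert u\Vert_{H_0^1}^2)\,(\nabla u,\nabla v)^2\ge 0
\]
because $a'\ge0$, which is exactly why that hypothesis is imposed. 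The right-hand side is bounded by $\eta\Vert v\Vert_{L^2}^2$ when $f'\le\eta$. Without that assumption, I would instead use the $L^\infty$ bound on $u$, which bounds $f'(u)$ once $f\in C^1$.

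This yields $\frac{d}{dt}\Vert v\Vert_{L^2}^2\le 2\eta\Vert v\Vert_{L^2}^2$. The uniform Gronwall lemma \cite{Temam88}, combined with the integral bound above, then gives $\Vert u_t(t)\Vert_{L^2}\le C$ for all $t\ge t_1(R)+1$.

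Inserting this into the elliptic estimate produces a bounded absorbing set in $H^2(\Omega)$. Since $\mathcal A_r$ is invariant, it lies in this absorbing set, so it is bounded in $H^2(\Omega)$.

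The main obstacle is justifying the differentiated equation. At the Galerkin level it requires $a\in C^1$, and one must check that $\frac{d}{dt}\Vert u_n\Vert_{H_0^1}^2=2(\nabla u_n,\nabla u_{n,t})$, so that the nonlocal term has the sign-definite structure above. The uniform-in-$n$ bounds then pass to the limit by weak lower semicontinuity, using uniqueness (Theorem \ref{uniqueness}) to identify the limit with $T_r(t,u_0)$. A secondary point is controlling $\Vert f(u)\Vert_{L^2}$ uniformly off the attractor. If this cannot be done in general, I would state the absorbing set in the form needed, namely inside the $L^\infty$-bounded region under the hypotheses of Theorem \ref{boundinLinfty}.
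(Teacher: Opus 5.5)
Your estimates on $u_t$ are sound. Integrating (\ref{43}) with $\frac12A(\Vert u\Vert_{H_0^1}^2)$ on the $H_0^1\cap L^p$ absorbing set to get $\int_t^{t+1}\Vert u_t\Vert_{L^2}^2ds\le C$ is simpler than the paper's route. The paper works with the functional $\frac{a(\Vert u\Vert_{H_0^1}^2)}{2}\Vert u\Vert_{H_0^1}^2$, which forces it to control an extra $a'$ term. The differentiated equation, the sign of $2a'(\nabla u,\nabla u_t)^2$ and the uniform Gronwall step are exactly as in the paper.

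The gap is in the final elliptic step. The bound $\Vert\Delta u\Vert_{L^2}\le m^{-1}(\Vert f(u)\Vert_{L^2}+\Vert h\Vert_{L^2}+\Vert u_t\Vert_{L^2})$ needs a uniform $L^2$ bound on $f(u)$, and the proposition's hypotheses do not give one. On the $H_0^1\cap L^p$ absorbing set, (\ref{1.4}) and Sobolev only place $f(u)$ in $L^q\cap L^{2n/((n-2)(p-1))}$. For $n\ge 3$ this is contained in $L^2$ only under (\ref{Condp}), which is not assumed. Since $h$ is only in $L^2(\Omega)$, Theorem \ref{boundinLinfty} is not available either. Running the truncation argument forward in time does not rescue this. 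It only gives
\[
\Vert (u(t)-M)_+\Vert_{L^2}\le e^{-m\lambda_1(t-\tau)}\Vert (u(\tau)-M)_+\Vert_{L^2},
\]
which decays but never yields a pointwise bound at a finite time. The $L^\infty$ bound there comes from letting $\tau\to-\infty$ along a bounded complete trajectory, so it holds only on the attractor, and it still needs $h\in L^\infty$. As written, your argument proves the statement only under extra hypotheses.

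The missing idea is to use the one-sided condition $f'\le\eta$ in the elliptic step as well. You already use it for the $u_t$ equation, and it is in force because $T_r$ is defined under it. Test the equation with $-\Delta u$ instead of solving for $\Delta u$. Then $(f(u),-\Delta u)=\int_\Omega f'(u)|\nabla u|^2dx\le\eta\Vert u\Vert_{H_0^1}^2$; if $f(0)\neq0$ there is an extra term $f(0)(1,-\Delta u)$, absorbed by Young's inequality. This gives
\[
\frac m2\Vert\Delta u\Vert_{L^2}^2\le\eta\Vert u\Vert_{H_0^1}^2+\frac1m\Vert h\Vert_{L^2}^2+\frac1m\Vert u_t\Vert_{L^2}^2 .
\]
Combined with the $H_0^1$ absorbing bound of Proposition \ref{absorbinginH1case1} and your bound on $\Vert u_t\Vert_{L^2}$, this yields the $H^2$ absorbing set for every $p\ge2$ and $h\in L^2(\Omega)$, with no integrability of $f(u)$ needed. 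This is how the paper concludes.
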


\begin{remark}
$a^{\prime}\left(  s\right)  \geq0$ implies that (\ref{1.36}) holds.
\end{remark}

\begin{proof}
We will prove the existence of an absorbingset in $H^{2}\left(  \Omega\right)
$. The boundedness of the global attractor in this space follows then
immediately. We proceed formally, but the estimates can be justified via
Galerkin approximations.

Let $u(t)=S(t,u_{0})$ with $\left\Vert u_{0}\right\Vert _{L^{2}}\leq R.$
First, we differentiate the equation with respect to $t$
\[
u_{tt}-a^{\prime}(\Vert u\Vert_{H_{0}^{1}}^{2})\frac{d}{dt}\Vert u\Vert
_{H_{0}^{1}}^{2}\Delta u-a(\Vert u\Vert_{H_{0}^{1}}^{2})\Delta u_{t}%
=f^{\prime}(u)u_{t}.
\]
Multiplying by $u_{t}$ we get
\begin{equation}
\frac{1}{2}\frac{d}{dt}\Vert u_{t}\Vert_{L^{2}}^{2}+\frac{1}{2}a^{\prime
}(\Vert u\Vert_{H_{0}^{1}}^{2})(\frac{d}{dt}\Vert u\Vert_{H_{0}^{1}}^{2}%
)^{2}+a(\Vert u\Vert_{H_{0}^{1}}^{2})\Vert u_{t}\Vert_{H_{0}^{1}}^{2}%
=\int_{\Omega}f^{\prime}(u)(u_{t})^{2}dx. \label{equationmultbyut}%
\end{equation}
By (\ref{2}) we obtain
\begin{equation}
\frac{1}{2}\frac{d}{dt}\Vert u_{t}\Vert_{L^{2}}^{2}+m\Vert u_{t}\Vert
_{H_{0}^{1}}^{2}\leq\eta\Vert u_{t}\Vert_{L^{2}}^{2}. \label{h2primera}%
\end{equation}
Second, multiplying (\ref{1}) by $u_{t}$ and reordening terms, we obtain
\begin{equation}
\frac{d}{dt}\left(  \frac{a(\Vert u\Vert_{H_{0}^{1}}^{2})}{2}\Vert
u\Vert_{H_{0}^{1}}^{2}-\int_{\Omega}\mathcal{F}(u)dx-\int_{\Omega
}h(x)udx\right)  +\Vert u_{t}\Vert_{L^{2}}^{2}=\frac{a^{\prime}(\Vert
u\Vert_{H_{0}^{1}}^{2})}{2}\Vert u\Vert_{H_{0}^{1}}^{2}\frac{d}{dt}\Vert
u\Vert_{H_{0}^{1}}^{2}. \label{9.5.2}%
\end{equation}
Proposition \ref{absorbinginH1case1} implies that%
\[
a^{\prime}(\Vert z\Vert_{H_{0}^{1}}^{2})\leq\gamma:=sup_{|s|\leq M}a^{\prime
}(s^{2})
\]
if $z$ belongs to the absorbing set in $H_{0}^{1}\left(  \Omega\right)  $. On
the other hand, multiplying the equation by $-\Delta u$ and using Proposition
\ref{absorbinginH1case1}, we obtain
\[
\frac{d}{dt}\Vert u\Vert_{H_{0}^{1}}^{2}+m\Vert\Delta u(t)\Vert_{L^{2}}%
^{2}\leq\eta\Vert u(t)\Vert_{H_{0}^{1}}^{2}+\frac{1}{m}\Vert h\Vert_{L^{2}%
}^{2}\leq K_{1}\quad\forall t\geq t_{1}(R).
\]
Hence, by (\ref{9.5.2}) and Proposition \ref{absorbinginH1case1}, it follows
\begin{equation}
\frac{d}{dt}\left(  \frac{a(\Vert u\Vert_{H_{0}^{1}}^{2})}{2}\Vert
u\Vert_{H_{0}^{1}}^{2}-\int_{\Omega}\mathcal{F}(u)dx-\int_{\Omega
}h(x)udx\right)  +\Vert u_{t}\Vert_{L^{2}}^{2}\leq\frac{\gamma}{2}K_{1}%
M^{2},\quad\forall t\geq t_{1}(R). \label{h2segunda}%
\end{equation}
Multiplying both sides of the inequality $f^{\prime}(s)\leq\eta$ by $s$ and
integrating between $0$ and $s$, we obtain
\begin{equation}
sf(s)\leq\mathcal{F}(s)+\frac{s^{2}}{2}\eta,\quad\forall s\in\mathbb{R}.
\label{h2tercera}%
\end{equation}
Moreover, integrating $f^{\prime}(s)\leq\eta$ twice between $0$ and $s$, we
get
\begin{equation}
\mathcal{F}(s)\leq\frac{\eta}{2}s^{2}+Cs,\quad\forall s\in\mathbb{R}.
\label{h2cuarta}%
\end{equation}
Now, we multiply (\ref{1}) by $u$, integrate between $t$ and $t+1$ to obtain
\[
\frac{1}{2}\Vert u(t+1)\Vert_{L^{2}}^{2}+\int_{t}^{t+1}\left(  a(\Vert
u\Vert_{H_{0}^{1}}^{2})\Vert u(s)\Vert_{H_{0}^{1}}^{2}-\int_{\Omega
}f(u)udx-\int_{\Omega}h(x)udx\right)  ds=\frac{1}{2}\Vert u(t)\Vert_{L^{2}%
}^{2}.
\]
From (\ref{h2tercera}) and Proposition \ref{abosorbinginL2firstcase} it
follows
\[
\int_{t}^{t+1}\left(  \frac{a(\Vert u\Vert_{H_{0}^{1}}^{2})}{2}\Vert
u\Vert_{H_{0}^{1}}^{2}-\int_{\Omega}\mathcal{F}(u)dx-\int_{\Omega
}h(x)udx\right)  ds\leq\frac{1}{2}\Vert u(t)\Vert_{L^{2}}^{2}+\frac{\eta}%
{2}\int_{t}^{t+1}\Vert u\Vert_{L^{2}}^{2}ds\leq\widetilde{L}\quad\forall t\geq
t_{0}%
\]
The last inequality allows us to apply Uniform Gronwall Lemma to
(\ref{h2segunda}) to obtain
\begin{equation}
\frac{a(\Vert u\Vert_{H_{0}^{1}}^{2})}{2}\Vert u\Vert_{H_{0}^{1}}^{2}%
-\int_{\Omega}\mathcal{F}(u)dx-\int_{\Omega}h(x)udx\leq\widetilde{L}%
+\frac{\gamma}{2}{K}_{1}M^{2}\quad\forall t\geq t_{1}+1. \label{9.10.1}%
\end{equation}
Using (\ref{2}) and (\ref{h2cuarta}) we get
\begin{equation}
\frac{a(\Vert u\Vert_{H_{0}^{1}}^{2})}{2}\Vert u\Vert_{H_{0}^{1}}^{2}%
-\int_{\Omega}\mathcal{F}(u)dx-\int_{\Omega}h(x)udx\geq-\frac{1}{2m\lambda
_{1}}\Vert h\Vert_{L^{2}}^{2}-\frac{\eta}{2}\Vert u\Vert_{L^{2}}%
^{2}-\widetilde{C}\Vert u\Vert_{L^{2}}. \label{9.11}%
\end{equation}
Now, integrating (\ref{h2segunda}) from $t$ to $t+1$, using (\ref{9.10.1}),
(\ref{9.11}), by Proposition \ref{abosorbinginL2firstcase} we have
\begin{equation}
\int_{t}^{t+1}\Vert u_{s}\Vert_{L^{2}}^{2}ds\leq\widetilde{L}+\gamma
K_{1}M^{2}+\frac{1}{2\lambda_{1}m}\Vert h\Vert_{L^{2}}^{2}+\frac{\eta}{2}%
K^{2}+CK=\rho_{1},\quad\forall t\geq t_{1}+1. \label{9.13}%
\end{equation}
Hence, the last equation allow us to apply to (\ref{h2primera}) the Uniform
Gronwall Lemma \cite{Temam88} to obtain
\begin{equation}
\Vert\frac{du}{dt}(t)\Vert_{L^{2}}^{2}\leq\rho_{2},\quad\forall t\geq t_{1}+2.
\label{9.14}%
\end{equation}
Finally, we multiply (\ref{1}) by $-\Delta u$ and use (\ref{2}) to obtain
\[
\frac{m}{2}\Vert\Delta u\Vert_{L^{2}}^{2}\leq\eta\Vert u\Vert_{H_{0}^{1}}%
^{2}+\frac{1}{m}\Vert h\Vert_{L^{2}}^{2}+\frac{1}{m}\Vert u_{t}\Vert_{L^{2}%
}^{2}.
\]
Thus, by Proposition \ref{absorbinginH1case1} and (\ref{9.14}), we deduce
that
\[
\Vert u\Vert_{H^{2}}^{2}\leq\rho_{3}\quad\forall t\geq t_{1}+2.
\]

\end{proof}

\subsubsection{Abstract theory of attractors for multivalued
semiflows\label{Abstract}}

Prior to studying the case of non-uniqueness, we recall some well-known
results concerning the structure of attractors for multivalued semiflows.

Consider a metric space $(X,d)$ and a family of functions $\mathcal{R}%
\subset\mathcal{C}(\mathbb{R}_{+};X)$. Denote by $P(X)$ the class of nonempty
subsets of $X$. Then we define the multivalued map $G:\mathbb{R}_{+}\times
X\rightarrow P(X)$ associated with the family $\mathcal{R}$ as follows
\begin{equation}
G(t,u_{0})=\{u(t):u(\cdot)\in\mathcal{R},u(0)=u_{0}\}. \label{defG}%
\end{equation}

In this abstract setting, the multivalued map $G$ is expected to satisfy some
properties that fit in the framework of multivalued dynamical systems. The
first concept is given now.

\begin{definition}
A multivalued map $G:\mathbb{R}_{+}\times X\rightarrow P(X)$ is a multivalued
semiflow (or m-semiflow) if $G(0,x)=x$ for all $x\in X$ and $G(t+s,x)\subset
G(t,G(s,x))$ for all $t,s\geq0$ and $x\in X$. \newline If the above is not
only an inclusion, but an equality, it is said that the m-semiflow is strict.
\end{definition}

Once a multivalued semiflow is defined, we recall the following concepts.

\begin{definition}
A map $\gamma:\mathbb{R}\rightarrow X$ is called a complete trajectory of
$\mathcal{R}$ (resp. of $G$) if $\gamma(\cdot+h)\mid_{\lbrack0,\infty)}%
\in\mathcal{R}$ for all $h\in\mathbb{R}$ (resp. if $\gamma(t+s)\in
G(t,\gamma(s))$ for all $s\in\mathbb{R}$ and $t\geq0)$.

A point $z \in X$ is a fixed point of $\mathcal{R}$ if $\varphi(\cdot)\equiv z
\in\mathcal{R}$. The set of all fixed points will be denoted by $\mathfrak{R}%
_{\mathcal{R}}$.

A point $z\in X$ is a stationary point of $G$ if $z \in G(t,z)$ for all
$t\geq0$).
\end{definition}

\begin{definition}
Given an m-semiflow $G$ a set $B\subset X$ is said to be negatively
(positively) invariant if $B\subset G(t,B)$ ($G(t,B)\subset B$) for all
$t\geq0$, and strictly invariant (or, simply, invariant) if it is both
negatively and positively invariant.

The set $B$ is said to be weakly invariant if for any $x\in B$ there exists a
complete trajectory $\gamma$ of $\mathcal{R}$ contained in $B$ such that
$\gamma(0)=x$. We observe that weak invariance implies negative invariance.
\end{definition}

\begin{definition}
A set $\mathcal{A}\subset X$ is called a global attractor for the m-semiflow
$G$ if it is negatively invariant and it attracts all bounded subsets, i.e.,
$dist_{X}(G(t,B),\mathcal{A})\rightarrow0$ as $t\rightarrow+\infty$.
\end{definition}

\begin{remark}
When $\mathcal{A}$ is compact, it is the minimal closed attracting set
\cite[Remark 5]{melnikvalero}.
\end{remark}

In order to obtain a detailed characterization of the internal structure of a
global attractor, we introduce an axiomatic set of properties on the set
$\mathcal{R}$.

\begin{itemize}
\item[(K1)] For any $x \in X$ there exists at least one element $\varphi
\in\mathcal{R}$ such that $\varphi(0) =x$.

\item[(K2)] $\varphi_{\tau} (\cdot) := \varphi(\cdot+\tau) \in\mathcal{R}$ for
any $\tau\geq0$ and $\varphi\in\mathcal{R}$ (translation property).

\item[(K3)] Let $\varphi_{1}, \varphi_{2} \in\mathcal{R}$ be such that
$\varphi_{2}(0)=\varphi_{1}(s)$ for some $s>0$. Then, the function $\varphi$
defined by
\[
\varphi(t) = \left\lbrace
\begin{array}
[c]{lcc}%
\varphi_{1}(t) \quad0 \leq t \leq s, &  & \\
\varphi_{2}(t-s) \quad s \leq t, &  &
\end{array}
\right.
\]
belongs to $\mathcal{R}$ (concatenation property).

\item[(K4)] For any sequence $\{\varphi^{n}\} \subset\mathcal{R}$ such that
$\varphi^{n}(0) \rightarrow x_{0}$ in X, there exist a subsequence
$\{\varphi^{n_{k}}\}$ and $\varphi\in\mathcal{R}$ such that $\varphi^{n_{k}%
}(t) \rightarrow\varphi(t)$ for all $t \geq0$.
\end{itemize}

\begin{remark}
If in assumption $(K1)$, for every $x\in X$, there exists a unique $\varphi
\in\mathcal{R}$ such that $\varphi(0)=x$, then the set $\{\varphi
\in\mathcal{R} : \varphi(0)=x\}$ consists of a single trajectory $\varphi$,
and the equality $G(t, x) = \varphi(t)$ defines a classical semigroup
$G:\mathbb{R}^{+}\times X \rightarrow X.$
\end{remark}

It is immediate to observe \cite[Proposition 2]{caraballorubio} or \cite[Lemma
9]{kapustyanpankov} that $\mathcal{R}$ fulfilling (K1) and (K2) gives rise to
an m-semiflow $G$ through (\ref{defG}), and if besides (K3) holds, then this
m-semiflow is strict. In such a case, a global bounded attractor, supposing
that it exists, is strictly invariant \cite[Remark 8]{melnikvalero}.

Several properties concerning fixed points, complete trajectories and global
attractors are summarized in the following results \cite{kapustyankasyanov}.

\begin{lemma}
Let (K1)-(K2) be satisfied. Then every fixed point (resp. complete trajectory)
of $\mathcal{R}$ is also a fixed point (resp. complete trajectory) of $G$.

If $\mathcal{R}$ fulfills (K1)-(K4), then the fixed points of $\mathcal{R}$
and $G$ coincide. Besides, a map $\gamma:\mathbb{R}\rightarrow X$ is a
complete trajectory of $\mathcal{R}$ if and only if it is continuous and a
complete trajectory of $G$.
\end{lemma}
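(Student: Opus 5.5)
The first assertion follows directly from the definitions, and the real work is in the second. Let $z$ be a fixed point of $\mathcal{R}$. Then $\varphi\equiv z$ belongs to $\mathcal{R}$ and $\varphi(0)=z$. By (\ref{defG}) we get $z=\varphi(t)\in G(t,z)$ for every $t\geq0$, so $z$ is a stationary point of $G$. Let $\gamma$ be a complete trajectory of $\mathcal{R}$ and fix $s\in\mathbb{R}$. The function $\psi=\gamma(\cdot+s)|_{[0,\infty)}$ belongs to $\mathcal{R}$ and satisfies $\psi(0)=\gamma(s)$. Hence $\gamma(t+s)=\psi(t)\in G(t,\gamma(s))$ for all $t\geq0$. (K2) is only needed to know that $G$ is an m-semiflow.

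For the second part, assume (K1)--(K4). I would prove the two converse statements with a single dyadic-approximation argument. Let $\gamma:\mathbb{R}\rightarrow X$ be continuous and a complete trajectory of $G$, and fix $s\in\mathbb{R}$; a stationary point $z$ is the special case $\gamma\equiv z$, which is automatically continuous. Fix $n$. For each $k\geq0$ we have $\gamma(s+(k+1)2^{-n})\in G(2^{-n},\gamma(s+k2^{-n}))$. So there is $\psi_{k}^{n}\in\mathcal{R}$ with $\psi_{k}^{n}(0)=\gamma(s+k2^{-n})$ and $\psi_{k}^{n}(2^{-n})=\gamma(s+(k+1)2^{-n})$. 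Applying (K3) $N$ times, we glue $\psi_{0}^{n},\dots,\psi_{N}^{n}$, keeping $\psi_{N}^{n}$ on its whole tail. This gives $\varphi^{n,N}\in\mathcal{R}$ with $\varphi^{n,N}(k2^{-n})=\gamma(s+k2^{-n})$ for $0\leq k\leq N+1$.

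The main obstacle is that (K3) only provides finite concatenations, while we need infinitely many pieces. I would handle this with (K4). For fixed $n$, the functions $\varphi^{n,N}$ all start at $\gamma(s)$, and for $N'\geq N$ they coincide with $\varphi^{n,N}$ on $[0,(N+1)2^{-n}]$. So by (K4) a subsequence converges pointwise to some $\varphi^{n}\in\mathcal{R}$. This limit equals the infinite concatenation, and in particular $\varphi^{n}(k2^{-n})=\gamma(s+k2^{-n})$ for all $k\geq0$.

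Next let $n\rightarrow\infty$. Since $\varphi^{n}(0)=\gamma(s)$ for all $n$, (K4) gives a subsequence converging pointwise to some $\varphi\in\mathcal{R}$. For every nonnegative dyadic $d$, the values $\varphi^{n}(d)$ equal $\gamma(s+d)$ once $n$ is large, so $\varphi(d)=\gamma(s+d)$. Both $\varphi$ (because $\mathcal{R}\subset\mathcal{C}(\mathbb{R}_{+};X)$) and $\gamma(s+\cdot)$ are continuous, and dyadics are dense in $[0,\infty)$. Therefore $\varphi=\gamma(\cdot+s)|_{[0,\infty)}$, so this restriction lies in $\mathcal{R}$. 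As $s$ is arbitrary, $\gamma$ is a complete trajectory of $\mathcal{R}$; in the case $\gamma\equiv z$ this says $z$ is a fixed point of $\mathcal{R}$.

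Finally, for the remaining implication, a complete trajectory of $\mathcal{R}$ is a complete trajectory of $G$ by the first part. It is continuous because each restriction $\gamma(\cdot+h)|_{[0,\infty)}$ is continuous, and these restrictions cover $\mathbb{R}$.
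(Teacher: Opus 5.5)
Your proof is correct and complete. The paper gives no proof of its own and quotes the lemma from \cite{kapustyankasyanov}; your route (glue dyadic pieces with (K3), pass to the limit with (K4), identify the limit on a dense set by continuity) is the standard argument for this fact. Your use of (K4) to justify the infinite concatenation is a careful step that is often glossed over. You could also skip the double limit by gluing only the finitely many pieces covering $[0,n]$ with mesh $2^{-n}$ and applying (K4) once to the resulting diagonal sequence.
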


The standard well-known result in the single-valued case for describing the
attractor as the union of bounded complete trajectories (see
(\ref{Characterization})) reads in the multivalued case as follows.

\begin{theorem}
\label{structureattractor} Consider $\mathcal{R}$ satisfying (K1) and (K2),
and either (K3) or (K4). Assume also the conditions in Theorem
\ref{compactfixedpoints} and that $G$ possesses a compact global attractor
$\mathcal{A}$. Then
\begin{equation}
\mathcal{A}=\{\gamma(0):\gamma\in\mathbb{K}\}=\cup_{t\in\mathbb{R}}%
\{\gamma(t):\gamma\in\mathbb{K}\}, \label{attractor}%
\end{equation}
where $\mathbb{K}$ denotes the set of all bounded complete trajectories in
$\mathcal{R}$.
\end{theorem}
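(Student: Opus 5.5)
We prove the two inclusions separately; the second equality in (\ref{attractor}) then follows from the translation invariance of $\mathbb{K}$. If $\gamma\in\mathbb{K}$, then $\gamma(\cdot+s)\in\mathbb{K}$ for every $s\in\mathbb{R}$, by the definition of complete trajectory of $\mathcal{R}$. Hence $\{\gamma(t):\gamma\in\mathbb{K}, t\in\mathbb{R}\}=\{\gamma(0):\gamma\in\mathbb{K}\}$. The remaining claim is $\mathcal{A}=\{\gamma(0):\gamma\in\mathbb{K}\}$.

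\emph{Inclusion $\supset$.} Let $\gamma\in\mathbb{K}$ and put $B=\cup_{t\in\mathbb{R}}\gamma(t)$, which is bounded. For each $s>0$, the restriction $\gamma(\cdot-s)|_{[0,\infty)}$ lies in $\mathcal{R}$ and starts at $\gamma(-s)\in B$. Hence $\gamma(0)\in G(s,\gamma(-s))\subset G(s,B)$. It follows that $dist_X(\gamma(0),\mathcal{A})\leq dist_X(G(s,B),\mathcal{A})\rightarrow0$ as $s\rightarrow+\infty$. Since $\mathcal{A}$ is compact, hence closed, $\gamma(0)\in\mathcal{A}$. This step uses only (K1)--(K2).

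\emph{Inclusion $\subset$.} Fix $x\in\mathcal{A}$. We construct a trajectory through $x$ backwards in time using negative invariance, $\mathcal{A}\subset G(t,\mathcal{A})$.

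Under (K3): pick $y_1\in\mathcal{A}$ and $\varphi_1\in\mathcal{R}$ with $\varphi_1(0)=y_1$ and $\varphi_1(1)=x$. Inductively, pick $y_k\in\mathcal{A}$ and $\varphi_k\in\mathcal{R}$ with $\varphi_k(0)=y_k$ and $\varphi_k(1)=y_{k-1}$, where $y_0=x$. Glue these pieces into $\gamma$ on $[-k,\infty)$ by successive concatenation (K3) and translation (K2); the forward part is any $\varphi_0\in\mathcal{R}$ with $\varphi_0(0)=x$, which exists by (K1).

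The main obstacle is boundedness: we need $\gamma(t)$ to stay in a bounded set for all $t\in\mathbb{R}$. The forward part is handled since $\mathcal{A}$ is strictly invariant under (K1)--(K3), as recalled after (\ref{defG}). Hence $\varphi(t)\in G(t,\mathcal{A})=\mathcal{A}$ for every $\varphi\in\mathcal{R}$ with $\varphi(0)\in\mathcal{A}$. Applying this to every piece, which starts in $\mathcal{A}$, shows that the whole $\gamma$ lies in $\mathcal{A}$. The concatenation is well defined on all of $\mathbb{R}$ because each finite piece is consistent with the previous ones. Checking that $\gamma(\cdot+h)|_{[0,\infty)}\in\mathcal{R}$ for every $h$ uses (K2) and (K3) repeatedly.

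Under (K4) instead of (K3): choose $\varphi_k\in\mathcal{R}$ with $\varphi_k(0)\in\mathcal{A}$ and $\varphi_k(k)=x$, using $x\in G(k,\mathcal{A})$. Set $\gamma_k(t)=\varphi_k(t+k)$ on $[-k,\infty)$. By positive invariance, which follows from $G(t,\mathcal{A})\subset\mathcal{A}$ given a closed attracting set and (K4) (or from the conditions of Theorem~\ref{compactfixedpoints}), all values $\gamma_k(t)$ lie in the compact set $\mathcal{A}$. Apply (K4) at the starting points $\gamma_k(-j)\in\mathcal{A}$, which have convergent subsequences by compactness, for $j=1,2,\dots$, and use a Cantor diagonal argument. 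This yields limits $\psi_j\in\mathcal{R}$ with $\psi_j(0)\in\mathcal{A}$ that agree on overlaps, namely $\psi_j(t+1)=\psi_{j-1}(t)$. They therefore define $\gamma:\mathbb{R}\rightarrow\mathcal{A}$ with $\gamma(0)=x$, and each shifted restriction $\gamma(\cdot+h)|_{[0,\infty)}=\psi_j(\cdot+h+j)$ belongs to $\mathcal{R}$ by (K2). Since $\gamma\subset\mathcal{A}$ is bounded, $\gamma\in\mathbb{K}$. The delicate point in this case is justifying positive invariance of $\mathcal{A}$ without (K3); for this we will rely on the hypotheses of Theorem~\ref{compactfixedpoints} and on the closedness of the graph of $G$ implied by (K4).
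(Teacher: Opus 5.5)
Your proof of $\{\gamma(0):\gamma\in\mathbb{K}\}\subset\mathcal{A}$ is correct, and so are the second equality via translation invariance of $\mathbb{K}$ and the case (K3). Under (K3) the semiflow is strict, so $G(t,\mathcal{A})\subset G(t,G(s,\mathcal{A}))=G(t+s,\mathcal{A})$ for every $s\geq0$. Attraction and closedness of $\mathcal{A}$ then give $G(t,\mathcal{A})\subset\mathcal{A}$, and your backward concatenation stays in $\mathcal{A}$. The paper gives no proof of its own; it quotes the result from \cite{kapustyankasyanov}. The theorem whose hypotheses the statement invokes does not appear anywhere in the paper, so you cannot lean on it.

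The gap is in the case (K4) without (K3). There you assert positive invariance $G(t,\mathcal{A})\subset\mathcal{A}$, justified by (K4), by closedness of the graph of $G$, or by $\mathcal{A}$ being a closed attracting set. This is false in general. Take $X=\mathbb{R}$ and $\mathcal{R}=\{xe^{-t}:x\in\mathbb{R}\}\cup\{(t+\tau)e^{-(t+\tau)}:\tau\geq0\}$. This family satisfies (K1), (K2) and (K4), so $G$ has closed graph, and $\mathcal{A}=\{0\}$ is a compact global attractor. Yet $te^{-t}\in G(t,\mathcal{A})\setminus\mathcal{A}$ for $t>0$. Hence your claim that all values $\gamma_k(t)$ lie in $\mathcal{A}$ is unjustified.

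The missing idea is that positive invariance is not needed. Since $\mathcal{A}$ is bounded, it attracts itself. For fixed $j$ this gives $dist_X(\varphi_k(k-j),\mathcal{A})\leq dist_X(G(k-j,\mathcal{A}),\mathcal{A})\rightarrow0$ as $k\rightarrow\infty$. Compactness of $\mathcal{A}$ then yields a subsequence with $\varphi_k(k-j)\rightarrow z_j\in\mathcal{A}$, which is exactly the convergent-initial-data input (K4) requires. The same estimate with $k+t$ in place of $k-j$ shows that each limit value $\gamma(t)=\lim_k\varphi_k(k+t)$ lies in the closed set $\mathcal{A}$, so $\gamma$ is bounded and $\gamma\in\mathbb{K}$. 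With this replacement, the rest of your argument goes through unchanged: the diagonal extraction, the compatibility $\psi_j(t+1)=\psi_{j-1}(t)$, $\gamma(0)=x$, and membership of the shifted restrictions in $\mathcal{R}$ via (K2).
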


We finish this section by stating a general result about the existence of
attracttors. We recall that the map $t\mapsto G(t,x)$ is upper semicontinuous
if for any $x\in X$ and any neighborhood $O(G_{r}(t,x))$ in $X$ there exists
$\delta>0$ such that if $d(y,x)<\delta$, then $G(t,y)\subset O$.

\begin{theorem}
\label{AttrExist}\cite[Theorem 4 and Remark 8]{melnikvalero}Let the map
$t\mapsto G(t,x)$ be upper semicontinuous with closed values. If there exists
a compact attracting set $K$, that is,
\[
dist_{X}(G(t,B),K)\rightarrow0,\text{ as }t\rightarrow+\infty,
\]
for any bounded set $B$, then $G$ possesses a global compact attractor
$\mathcal{A}$, which is the minimal closed attracting set. If, moreover, $G$
is strict, then $\mathcal{A}$ is invariant.
\end{theorem}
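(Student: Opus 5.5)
The plan is the standard two-step argument for abstract existence of attractors. First I build the candidate set as an $\omega$-limit set of the compact attracting set. Then I show that it is negatively invariant, that it attracts every bounded set, and that it is minimal. The compact attracting set $K$ and upper semicontinuity of $x\mapsto G(t,x)$ with closed values are the only ingredients. I define
\[
\mathcal{A}=\bigcup_{B\ \text{bounded}}\omega(B),\qquad \omega(B)=\{y\in X:\ \exists\, t_n\to+\infty,\ y_n\in G(t_n,B),\ y_n\to y\}.
\]

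First I show that for each bounded $B$, $\omega(B)$ is nonempty, compact, contained in $K$, and attracts $B$. Take any sequence $y_n\in G(t_n,B)$ with $t_n\to\infty$. Since $dist_X(G(t_n,B),K)\to0$, there are $k_n\in K$ with $d(y_n,k_n)\to0$. Compactness of $K$ then yields a convergent subsequence whose limit lies in $K$. This gives precompactness, so $\omega(B)\subset K$ is closed and hence compact. Attraction follows by contradiction. If $dist(G(t_n,B),\omega(B))\ge\varepsilon$ along some $t_n\to\infty$, pick $y_n$ realizing the distance up to $\varepsilon/2$ and extract a convergent subsequence; its limit lies in $\omega(B)$, which is absurd. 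Next, $\mathcal{A}\subset K$ is attracting and its closure is compact. To show that $\mathcal{A}$ itself is closed, and hence compact, I use that $K$ is bounded. Then $\omega(K')$ for the bounded set $K'=\mathcal{O}_\delta(K)$ contains every $\omega(B)$. The reason is that $G(t,B)$ lies in $\mathcal{O}_\delta(K)$ for $t\ge T_B$, and $G(t+s,B)\subset G(t,G(s,B))$. So $\mathcal{A}=\omega(K')$, which is compact.

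The key step, which I expect to be the main obstacle, is negative invariance $\mathcal{A}\subset G(t,\mathcal{A})$, where upper semicontinuity is essential. Let $y\in\omega(B)$ with $y_n\in G(t_n,B)\to y$, and fix $t>0$. By the m-semiflow inclusion, $y_n\in G(t,z_n)$ for some $z_n\in G(t_n-t,B)$. Precompactness as above gives $z_n\to z\in\omega(B)$ along a subsequence. It remains to show $y\in G(t,z)$. Suppose not. Since $G(t,z)$ is closed, there are disjoint neighbourhoods: $y\notin\overline{O_\varepsilon(G(t,z))}$ for some $\varepsilon$. Upper semicontinuity gives $G(t,z_n)\subset O_\varepsilon(G(t,z))$ for large $n$, so $y_n$ stays at distance at least some fixed amount from $y$. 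This is a contradiction.

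Minimality comes next. If $C$ is any closed attracting set, every $y\in\omega(B)$ is a limit of points $y_n$ with $dist(y_n,C)\to0$, so $y\in C$. Finally, if $G$ is strict, I show $G(t,\mathcal{A})\subset\mathcal{A}$. Take $z\in\mathcal{A}=\omega(K')$ with $z=\lim z_n$, $z_n\in G(s_n,K')$, and any $w\in G(t,z)$. Equality $G(t,G(s_n,K'))=G(t+s_n,K')$ places $G(t,z_n)$ inside the sets whose limit points form $\omega(K')$. However, upper semicontinuity only gives $G(t,z_n)$ near $G(t,z)$, not the reverse. So I argue instead via minimality: $G(t,\mathcal{A})$ is attracting, because $G(t+s,B)\subset G(t,G(s,B))$ and upper semicontinuity on the compact set $\mathcal{A}$ applies. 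It is also compact, since an upper semicontinuous map with compact values on a compact set has compact image; I would verify that the values are compact, using $G(t,x)\subset G(t,\mathcal{A})$ and the attraction argument. Minimality then yields $\mathcal{A}\subset G(t,\mathcal{A})$, and the reverse inclusion follows from $G(t,\mathcal{A})\subset\omega(K')$ using strictness.
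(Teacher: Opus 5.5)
Your construction $\mathcal{A}=\bigcup_B\omega(B)=\omega(K')$ with $K'=\mathcal{O}_\delta(K)$ is correct, and so are your proofs of compactness, attraction, minimality and negative invariance; the last uses upper semicontinuity of $x\mapsto G(t,x)$ together with closedness of $G(t,z)$. This is the standard route of the cited result of Melnik and Valero, which the paper quotes without proof.

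The gap is in the final step, positive invariance when $G$ is strict. Your detour shows that $G(t,\mathcal{A})$ is compact and attracting and then invokes minimality. That can only give $\mathcal{A}\subset\overline{G(t,\mathcal{A})}$, which is negative invariance again, and you already have it. It never yields $G(t,\mathcal{A})\subset\mathcal{A}$. Moreover, compactness of the values $G(t,x)$ is not a hypothesis (only closedness is), and your proposed verification of it implicitly presupposes $G(t,\mathcal{A})\subset\mathcal{A}$. Your concluding claim that $G(t,\mathcal{A})\subset\omega(K')$ ``follows using strictness'' is exactly the statement to be proved. You had just observed that the direct route through $z_n\to z$ fails, because upper semicontinuity goes the wrong way.

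The missing idea is to combine the negative invariance you have already proved with strictness. For every $\tau\geq 0$,
\[
G(t,\mathcal{A})\subset G(t,G(\tau,\mathcal{A}))=G(t+\tau,\mathcal{A})\subset G(t+\tau,K').
\]
So each $w\in G(t,\mathcal{A})$ lies in $G(t+n,K')$ for every $n\in\mathbb{N}$, and the constant sequence $y_n=w$ with $t_n=t+n$ shows $w\in\omega(K')=\mathcal{A}$. Equivalently, $dist_X(G(t,\mathcal{A}),\mathcal{A})\leq dist_X(G(t+\tau,\mathcal{A}),\mathcal{A})\rightarrow 0$ as $\tau\rightarrow+\infty$, and $\mathcal{A}$ is closed. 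This is the same argument the paper uses for positive invariance in the proof of Theorem \ref{existenceatracttorcase24}. Replace the compactness-of-values detour with this and your proof is complete.
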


We observe that, although in the papers \cite{melnikvalero},
\cite{kapustyankasyanov} the space $X$ is assumed to be complete, the results
are true in a non-complete space.

\subsubsection{The case of non-uniqueness\label{SectionNonuniqueness}}

If we do not assume the additional assumptions on the function $a$ of the
previous section ensuring uniqueness of the Cauchy problem, we we have to
define a multivalued semiflow.

We have two possibilites:\ either to consider the conditions of Theorem
\ref{existenceweakregularsolutionu0L2} or to use the conditions of Theorem
\ref{existenceweaksolutionu0L2sinderivadaversion2014}.

If we assume conditions (\ref{Cont})-(\ref{3}), (\ref{7}), (\ref{h2}) and
(\ref{Condp}), then by Theorem
\ref{existenceweaksolutionu0L2sinderivadaversion2014} for any $u_{0}\in
L^{2}\left(  \Omega\right)  $ there exists at least one regular solution and
(\ref{Acotacionfu}) implies that $f(u)\in L^{2}(\varepsilon,T;L^{2}(\Omega))$
for any regular solution, so $\dfrac{du}{dt}\in L^{2}(\varepsilon
,T;L^{2}(\Omega))$ as well. In this case, as $H_{0}^{1}\left(  \Omega\right)
\subset L^{p}\left(  \Omega\right)  $, we have that $u\in C((0,+\infty
),H_{0}^{1}\left(  \Omega\right)  )\subset C\left(  \left(  0,+\infty\right)
,L^{p}\left(  \Omega\right)  \right)  .$

If we assume that $f\in C^{1}(\mathbb{R})$ is such that $f^{\prime}(s)\leq
\eta$ and conditions (\ref{Cont})-(\ref{3}), (\ref{7}) and (\ref{h2}) as well,
then we known by Theorem \ref{existenceweakregularsolutionu0L2} that for any
$u_{0}\in L^{2}\left(  \Omega\right)  $ there exists at least one regular
solution. In order to obtain the necessary estimates leading to the existence
of a global attractor, we need to ensure that
\begin{equation}
\dfrac{du}{dt}\in L^{2}(\varepsilon,T;L^{2}(\Omega)),\text{ for all
}0<\varepsilon<T, \label{CondDerivu}%
\end{equation}
hods. For this aim we will restrict the set of regular solutions to the ones
satisfying this property. We note that the set of regular solutions of that
kind is non-empty, as using inequalities (\ref{47})-(\ref{51}) in the proof of
Theorem \ref{existenceweakregularsolutionu0L2} we prove that the regular
solution satisfies (\ref{CondDerivu}). By (\ref{CondDerivu}) we use
(\ref{sellandyou1}) and argue as in (\ref{47})-(\ref{50}) to show that $u\in
L^{\infty}(\varepsilon,T;L^{p}(\Omega))$, so $u\in C_{w}\left(  \left(
0,+\infty\right)  ,L^{p}\left(  \Omega\right)  \right)  $ for any regular
solution. Also, by \cite[p.102]{sellyou} we deduce that $u\in C((0,+\infty
),H_{0}^{1}\left(  \Omega\right)  )$.

We also observe that we can to force all the regular solutions to satisfy
$\dfrac{du}{dt}\in L^{2}(\varepsilon,T;L^{2}(\Omega))$ with an additional
assumption on the constant $p$, which is weaker than (\ref{Condp}). This is
achieved by obtaining that $f(u)\in L^{2}(\varepsilon,T;L^{2}(\Omega))$, which
can be done by using an interpolation inequality. Indeed, for $u\in L^{\infty
}(\varepsilon,T;H_{0}^{1}(\Omega))\cap L^{2}(\varepsilon,T;D(A))$ we have the
interpolation inequality%
\begin{equation}
\Vert u\Vert_{L^{2(\gamma+1)}(\varepsilon,T;L^{2(\gamma+1)}(\Omega
))}^{2(\gamma+1)}\leq\Vert u\Vert_{L^{\infty}(\varepsilon,T;L^{p_{1}}%
(\Omega))}^{2\gamma}\Vert u\Vert_{L^{2}(\varepsilon,T;L^{p_{2}}(\Omega))}^{2},
\label{interpol}%
\end{equation}
where $\gamma=\frac{4}{n-2},\ p_{1}=\frac{2n}{n-2},\ p_{2}=\frac{2n}{n-4}$,
provided that $n>4$; $\gamma<2,\ p_{1}=4,\ p_{2}=\frac{4}{2-\gamma}$ if
$n=4;\ \gamma=3,\ p_{1}=6,\ p_{2}=+\infty$ if $n=3;\ $and $\gamma\geq0$ is
arbitrary for $n=1,2.$ We have used the embeddings $H_{0}^{1}\left(
\Omega\right)  \subset L^{p_{1}}\left(  \Omega\right)  ,\ H^{2}\left(
\Omega\right)  \subset L^{p_{2}}\left(  \Omega\right)  $ and \cite[Lemma
II.4.1, p. 72]{Werner}. Thus, (\ref{1.4}) implies that $f(u)\in L^{2}%
(\varepsilon,T;L^{2}(\Omega))$ if
\begin{equation}
p\leq\gamma+2 \label{Condp2}%
\end{equation}
and also that%
\begin{equation}
\Vert f(u)\Vert_{L^{2}(\varepsilon,T;L^{2}(\Omega))}^{2}=\int_{\varepsilon
}^{T}\int_{\Omega}|f(u(x,t))|^{2}dxdt\leq C_{1}+C_{2}\int_{\varepsilon}%
^{T}\int_{\Omega}|u(x,t)|^{2(\gamma+1)}dxdt. \label{Acotfu}%
\end{equation}

Let
\[
\mathcal{R}=K_{r}^{+}:=\{u(\cdot):u\text{ is a regular solution of (\ref{1})
satisfying (\ref{CondDerivu})}\}.
\]

\begin{remark}
If condition (\ref{Condp2}) is satisfied, then $K_{r}^{+}$ contains all the
regular solutions.
\end{remark}

We define the (possibly multivalued) map $G_{r}:\mathbb{R}^{+}\times
L^{2}(\Omega)\rightarrow P(L^{2}(\Omega))$ by
\[
G_{r}(t,u_{0})=\{u(t):u\in K_{r}^{+}\text{ and }u(0)=u_{0}\}.
\]
With respect to the axiomatic properties $\left(  K1\right)  -\left(
K4\right)  $ given above, we observe that obviously $\left(  K1\right)  $ is
true, and $\left(  K2\right)  $ can be proved easily using equality
(\ref{EquationRegular}). Therefore, $G_{r}$ is a multivalued semiflow by the
results of the previous section. In this case we are not able to prove
$\left(  K3\right)  $, so $G_{r}$ could be non-strict. Further we will prove
that $\left(  K4\right)  $ holds true.

\begin{lemma}
\label{lemma1} Let assume (\ref{Cont})-(\ref{3}), (\ref{7}) and (\ref{h2}).
Additionally, assume one of the following assumptions:

\begin{enumerate}
\item $f\in C^{1}(\mathbb{R})$ is such that $f^{\prime}(s)\leq\eta$;

\item (\ref{Condp}) is true.
\end{enumerate}

Given a sequence $\{u^{n}\}\subset K_{r}^{+}$ such that $u^{n}(0)\rightarrow
u_{0}$ weakly in $L^{2}(\Omega)$, there exists a subsequence of $\{u^{n}\}$
(relabeled the same) and $u\in K_{r}^{+}$, satisfying $u(0)=u_{0}$, such that
\[
u^{n}(t)\rightarrow u(t)\mathit{\ }\text{strongly in }H_{0}^{1}(\Omega
)\quad\forall t>0.
\]

\end{lemma}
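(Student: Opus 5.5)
The plan is to pass to the limit in the family $\{u^n\}$ using the uniform estimates from the existence theorems, to identify the limit as an element of $K_r^+$, and then to upgrade the convergence to strong convergence in $H_0^1(\Omega)$ at every $t>0$. For the last step I will use an energy-equality argument based on the Lyapunov-type functional $E(u)=\frac12 A(\Vert u\Vert_{H_0^1}^2)-\int_\Omega\mathcal F(u)dx-(h,u)$.

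\textbf{Step 1: uniform bounds.} Since $u^n(0)\rightharpoonup u_0$ weakly in $L^2(\Omega)$, the sequence $\Vert u^n(0)\Vert_{L^2}$ is bounded. Repeating (\ref{44})--(\ref{51}) for each $u^n$ (the computations are legitimate because $u^n\in K_r^+$ satisfies (\ref{CondDerivu}) and Remark \ref{Deriv} applies), I obtain the following bounds.
\begin{itemize}
\item $u^n$ is bounded in $L^\infty(0,T;L^2)\cap L^2(0,T;H_0^1)\cap L^p(0,T;L^p)$.
\item $u^n$ is bounded in $L^\infty(r,T;H_0^1\cap L^p)$.
\item $u^n_t$ is bounded in $L^2(r,T;L^2)$, for every $0<r<T$.
\end{itemize}
Under alternative 2, (\ref{Acotacionfu}) bounds $f(u^n)$ in $L^2(r,T;L^2)$. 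Under alternative 1, I use the $-\Delta u$ multiplier with $f'\le\eta$, as in (\ref{3.2}). In either case $u^n$ is bounded in $L^2(r,T;D(A))$. Ascoli--Arzel\`a and Aubin--Lions then give a subsequence with $u^n\to u$ in $C([r,T],L^2)$ and in $L^2(r,T;H_0^1)$, together with the weak convergences (\ref{53}) and (\ref{55}). A diagonal argument over $r=1/k$ and $T=k$ makes this hold on all compact subintervals of $(0,\infty)$. As in Theorem \ref{existenceweakregularsolutionu0L2}, the limit $u$ is a regular solution satisfying (\ref{CondDerivu}), so $u\in K_r^+$. The identity $u(0)=u_0$ follows from the time-rescaled problem (\ref{w}) and the test-function argument (\ref{In1})--(\ref{In2}); that argument only needs $w_n(0)\rightharpoonup u_0$ weakly in $L^2$.

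\textbf{Step 2: weak convergence at each time.} Since $u^n(t)\to u(t)$ in $L^2$ and $u^n(t)$ is bounded in $H_0^1$ for $t\ge r$, I get $u^n(t)\rightharpoonup u(t)$ weakly in $H_0^1$ for every $t>0$. Hence $\liminf\Vert u^n(t)\Vert_{H_0^1}\ge\Vert u(t)\Vert_{H_0^1}$, and it remains to show $\limsup\Vert u^n(t)\Vert_{H_0^1}\le \Vert u(t)\Vert_{H_0^1}$.

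\textbf{Step 3: the main obstacle, strong convergence.} Both $u^n$ and $u$ satisfy the energy equality
\[
E(v(t))+\int_s^t\Vert v_\tau\Vert_{L^2}^2d\tau=E(v(s)),\qquad 0<s\le t.
\]
This follows from multiplying by $v_t$, using $\frac{d}{dt}\Vert v\Vert_{H_0^1}^2=2(-\Delta v,v_t)$ from \cite[p.102]{sellyou}, and the chain rule for $A$ and $\mathcal F$, which is justified since $v\in L^2(r,T;D(A))$ and $v_t\in L^2(r,T;L^2)$. The function $t\mapsto E(u^n(t))$ is nonincreasing, and the family is uniformly bounded on $[r,T]$. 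Set $J_n(t)=\frac12A(\Vert u^n(t)\Vert_{H_0^1}^2)-\int_\Omega\mathcal F(u^n(t))$. The $\mathcal F$-term must be handled in both alternatives.
\begin{itemize}
\item Under alternative 2, $H_0^1\subset\subset L^p$ when $p<2n/(n-2)$, so $\int\mathcal F(u^n(t))\to\int\mathcal F(u(t))$. In the borderline case I would instead use Fatou's lemma with $\mathcal F\le\widetilde\kappa$.
\item Under alternative 1, I use $-\mathcal F(s)\ge -\frac\eta2 s^2-Cs$ together with a.e. convergence and Fatou's lemma to get $\liminf\left(-\int\mathcal F(u^n)\right)\ge-\int\mathcal F(u)$.
\end{itemize}
By Helly's selection theorem, after a further subsequence $E(u^n(t))\to \mathcal E(t)$ for all $t>0$, with $\mathcal E$ nonincreasing and $\mathcal E(t)\ge E(u(t))$. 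For a.e. $t$, strong convergence in $L^2(r,T;H_0^1)$ gives $u^n(t)\to u(t)$ in $H_0^1$, hence $\mathcal E(t)=E(u(t))$ off a null set. For arbitrary $t>0$ I pick $s<t$ from that full-measure set. Then, by the monotonicity of the $E(u^n(\cdot))$, the energy equality for $u$, and the continuity of $E(u(\cdot))$ on $(0,\infty)$ (which holds because $u\in C((0,\infty),H_0^1)$),
\[
\limsup_n E(u^n(t))\le \lim_n E(u^n(s))=E(u(s))\to E(u(t))\quad (s\uparrow t).
\]
Combining this with the $\liminf$ inequalities for the $\mathcal F$-part and for $(h,u^n(t))$, I get $\limsup A(\Vert u^n(t)\Vert^2_{H_0^1})\le A(\Vert u(t)\Vert_{H_0^1}^2)$. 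Since $A$ is strictly increasing (because $a\ge m>0$), this yields $\limsup\Vert u^n(t)\Vert_{H_0^1}\le\Vert u(t)\Vert_{H_0^1}$. With weak convergence from Step 2, this gives strong convergence in $H_0^1(\Omega)$ for every $t>0$.

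I expect the delicate points to be handling the nonlinear $\mathcal F$-term in this limsup, and justifying the energy equality for the limit solution.
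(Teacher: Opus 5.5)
Your Steps 1--2 and the identification of the limit ($u\in K_r^+$, $u(0)=u_0$) follow the paper. Under alternative 2, your Helly/energy version of Step 3 is a correct variant of the paper's argument. The paper only shows that $J_n(t)=A(\Vert u^n(t)\Vert_{H_0^1}^2)-C_\varepsilon t$ is nonincreasing and uses $J_n\to J$ a.e., so no nonlinear term has to pass to the limit. You keep the full energy and pay with semicontinuity of $\int_\Omega\mathcal F$, which is harmless when $H_0^1(\Omega)\subset\subset L^p(\Omega)$.

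\textbf{The gap: Step 3 under alternative 1.} Here $p\ge 2$ is unrestricted. You claim that $u^n(s)\to u(s)$ in $H_0^1(\Omega)$ for a.e.\ $s$ gives $\mathcal E(s)=E(u(s))$ for a.e.\ $s$. For $n\ge 3$ and $p>2n/(n-2)$ this does not follow, because $E$ is not continuous on $H_0^1(\Omega)$. This is exactly why Theorem \ref{theorem12} requires $h\in L^\infty(\Omega)$ or $p\le 2n/(n-2)$ when it uses continuity of $E$.

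Your Fatou arguments, based on $\mathcal F\le\widetilde\kappa$ or $\mathcal F(s)\le\frac{\eta}{2}s^2+Cs$, only yield $\limsup_n\int_\Omega\mathcal F(u^n(s))dx\le\int_\Omega\mathcal F(u(s))dx$. That is, they give $\mathcal E(s)\ge E(u(s))$ once more. The identification needs the opposite inequality $\liminf_n\int_\Omega\mathcal F(u^n(s))dx\ge\int_\Omega\mathcal F(u(s))dx$. Since $\mathcal F$ is only bounded below by $-\widetilde\alpha_2|s|^p-\widetilde\kappa$, this requires strong convergence of $u^n(s)$ in $L^p(\Omega)$, or equi-integrability of $|u^n(s)|^p$. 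Your bounds do not provide this: boundedness in $L^\infty(r,T;L^p(\Omega))$ plus convergence in $L^2(\Omega)$ only gives convergence in $L^{p'}(\Omega)$ for $p'<p$. Your justification of continuity of $t\mapsto E(u(t))$ via $u\in C((0,\infty),H_0^1(\Omega))$ has the same defect, but there the energy equality for $u$ already gives continuity.

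\textbf{Two ways to repair the step.}

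\emph{Keep your route and add a space--time estimate.} By Step 1, $f(u^n)=u^n_t-a(\Vert u^n\Vert_{H_0^1}^2)\Delta u^n-h$ is bounded in $L^2((r,T)\times\Omega)$. Condition (\ref{3}) gives $|f(s)|\ge\frac{\alpha_1}{2}|s|^{p-1}$ for $|s|$ large, so $u^n$ is bounded in $L^{2p-2}((r,T)\times\Omega)$. Since $2p-2>p$ (the case $p=2$ is trivial) and you have a.e.\ convergence, Vitali's theorem gives $u^n\to u$ in $L^p((r,T)\times\Omega)$. Along a subsequence, $u^n(s)\to u(s)$ in $L^p(\Omega)$ for a.e.\ $s$, and then $\int_\Omega\mathcal F(u^n(s))dx\to\int_\Omega\mathcal F(u(s))dx$.

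\emph{Avoid $\mathcal F$ entirely, in the spirit of the paper.} The paper's constant $C_\varepsilon$ comes from the $u_t$ multiplier and needs a pointwise-in-time bound on $\Vert f(u^n(t))\Vert_{L^2}$, which (\ref{Acotacionfu}) supplies only under alternative 2. Under alternative 1, use the $-\Delta u$ multiplier you already employ in Step 1. It gives $\frac{d}{dt}\Vert u^n\Vert_{H_0^1}^2\le 2\eta\Vert u^n\Vert_{H_0^1}^2+C(1+\Vert h\Vert_{L^2}^2)\le C_r$ on $[r,T]$. Hence $\Vert u^n(t)\Vert_{H_0^1}^2-C_r t$ is nonincreasing and converges to its counterpart for $u$ for a.e.\ $t$. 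Your limsup argument then runs on the norm itself, with no nonlinear term to control.
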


\begin{proof}
We take an arbitrary $T>0$. Arguing as in the proof of Theorem
\ref{existenceweakregularsolutionu0L2} we obtain the existence of a
subsequence of $u^{n}$ such that
\begin{equation}%
\begin{split}
\{u^{n}\}  &  \text{ is bounded in }L^{\infty}(0,T;L^{2}(\Omega)),\\
\{u^{n}\}  &  \text{ is bounded in }L^{p}(0,T;L^{p}(\Omega)),\\
\{f(u^{n})\}  &  \text{ is bounded in }L^{q}(0,T;L^{q}(\Omega)),
\end{split}
\label{aubinlionsaux}%
\end{equation}
The only difference is that we obtain inequality (\ref{1.10.1}) in an arbitray
interval $\left(  \varepsilon,T\right)  $ and then pass to the limit as
$\varepsilon\rightarrow0$ (see the proof of Proposition
\ref{abosorbinginL2firstcase}).

Since $\dfrac{du^{n}}{dt}\in L^{2}(\varepsilon,T;L^{2}(\Omega)),\ $for any
$\varepsilon>0$, we have by \cite[p.102]{sellyou} that
\begin{equation}
\frac{d}{dt}\Vert u^{n}\Vert_{H_{0}^{1}}^{2}=2(-\Delta u^{n},u_{t}^{n})\text{
for a.a. }t. \label{sellandyou1}%
\end{equation}
and $u\in C((0,T],H_{0}^{1}\left(  \Omega\right)  )$. Also, as $f\left(
u^{n}\right)  \in L^{2}(\varepsilon,T;L^{2}(\Omega)),$ by regularization one
can show that $\left(  F\left(  u^{n}\left(  t\right)  \right)  ,1\right)  $
an absolutely continuous function on $[\varepsilon,T]$ for any $\varepsilon>0$
and%
\[
\frac{d}{dt}\left(  F\left(  u^{n}\left(  t\right)  \right)  ,1\right)
=\left(  f\left(  u^{n}\left(  t\right)  \right)  ,\frac{du^{n}}{dt}\right)
\text{ for a.a. }t>0.
\]

Therefore, arguing as in the proofs of Theorems
\ref{existenceweakregularsolutionu0L2} and
\ref{existenceweaksolutionu0L2sinderivadaversion2014} there exists $u\in
L^{\infty}(\varepsilon,T;L^{2}(\Omega))\cap L^{2}(0,T;H_{0}^{1}(\Omega))$ and
a subsequence $u^{n}$, relabelled the same, such that%
\begin{equation}%
\begin{split}
u_{n}  &  \overset{\ast}{\rightharpoonup}u\text{ in }L^{\infty}(0,T;L^{2}%
(\Omega))\\
u_{n}  &  \overset{\ast}{\rightharpoonup}u\text{ in }L^{\infty}(\varepsilon
,T;H_{0}^{1}(\Omega))\\
u_{n}  &  \rightharpoonup u\text{ in }L^{2}(0,T;H_{0}^{1}(\Omega))\\
u_{n}  &  \rightharpoonup u\text{ in }L^{p}(0,T;L^{p}(\Omega))\\
u_{n}  &  \rightharpoonup u\text{ in }L^{2}(\varepsilon,T;D(A)),\\
\frac{du_{n}}{dt}  &  \rightharpoonup\frac{du}{dt}\text{ in }L^{2}%
(\varepsilon,T;L^{2}(\Omega))\\
f(u_{n})  &  \rightharpoonup f(u)\text{ in }L^{q}(0,T;L^{q}(\Omega)),\\
f(u_{n})  &  \rightharpoonup f(u)\text{ in }L^{2}(\varepsilon,T;L^{2}\left(
\Omega)\right)  ,\\
a(\Vert u_{n}\Vert_{H_{0}^{1}}^{2})\Delta u_{n}  &  \rightharpoonup a(\Vert
u\Vert_{H_{0}^{1}}^{2})\Delta u\text{ in }L^{2}(\varepsilon,T;L^{2}(\Omega)).
\end{split}
\label{10.2}%
\end{equation}
In view of (\ref{10.2}), the Aubin-Lions Compactness Lemma gives
\begin{equation}
u_{n}\rightarrow u\text{ in }L^{2}(\varepsilon,T;H_{0}^{1}(\Omega)).
\label{useful}%
\end{equation}
Since the sequence $\{u^{n}\}$ is equicontinuous in $L^{2}(\Omega)$ on
$[\varepsilon,T]$ and bounded in $C([\varepsilon,T],H_{0}^{1}(\Omega))$, by
the compact embedding $H_{0}^{1}(\Omega)$ $\subset L^{2}(\Omega)$ and the
Ascoli-Arzel\`{a} theorem, a subsequence fulfills
\[
u^{n}\rightarrow u\text{ in }C([\varepsilon,T],L^{2}(\Omega)),
\]%
\[
u^{n}(t)\rightharpoonup u(t)\text{ in }H_{0}^{1}(\Omega)\quad\forall
t\in\lbrack\varepsilon,T].
\]

By a similar argument as in the proof of Theorem
\ref{existenceweakregularsolutionu0L2} we establish that $u\left(  0\right)
=u_{0}.$

Finally, we shall prove that $u^{n}(t)\rightarrow u(t)$ in $H_{0}^{1}(\Omega)$
for all $t\in\lbrack\varepsilon,T]$.

Multiplying (\ref{1}) by $u_{t}^{n}$ and using (\ref{funcionA}),
(\ref{sellandyou1}) and (\ref{10.2}) we obtain
\[
\left\Vert \frac{du^{n}}{dt}\right\Vert _{L^{2}}^{2}+\frac{d}{dt}\left(
\frac{1}{2}A(\Vert u^{n}(t)\Vert_{H_{0}^{1}}^{2}\right)  \leq C_{\varepsilon
}.
\]
Thus,
\[
A(\Vert u^{n}(t)\Vert_{H_{0}^{1}}^{2})\leq A(\Vert u^{n}(s)\Vert_{H_{0}^{1}%
}^{2})+C_{\varepsilon}(t-s),\ t\geq s\geq\varepsilon.
\]
The same inequality is valid for the limit function $u(\cdot).$ Hence, the
functions $J_{n}(t)=A(\Vert u^{n}(t)\Vert_{H_{0}^{1}}^{2})-C_{\varepsilon}t,$
$J(t)=A(\Vert u(t)\Vert_{H_{0}^{1}}^{2})-C_{\varepsilon}t$ are continuous and
non-increasing in $[\varepsilon,T].$ Moreover, from (\ref{useful}) we deduce
that $J_{n}(t)\rightarrow J(t)$ for a.e. $t\in(\varepsilon,T).$ Take
$\varepsilon<t_{m}<T$ such that $t_{m}\rightarrow T$ and $J_{n}(t_{m}%
)\rightarrow J(t_{m})$ for all $m.$ Then
\[
J_{n}(T)-J(T)\leq J_{n}(t_{m})-J(T)\leq|J_{n}(t_{m})-J(t_{m})|+|J(t_{m}%
)-J(T)|.
\]
For any $\delta>0$ there exist $m(\delta)$ and $N(\delta)$ such that
$J^{n}(T)-J(T)\leq\delta$ if $n\geq N.$ Then $\limsup J_{n}(T)\leq J(T),$ so
$\limsup\Vert u^{n}(T)\Vert_{H_{0}^{1}}^{2}\leq\Vert u(T)\Vert_{H_{0}^{1}}%
^{2}.$ As $u^{n}(T)\rightarrow u(T)$ weakly in $H_{0}^{1}$ implies
$\liminf\Vert u^{n}(T)\Vert_{H_{0}^{1}}^{2}\geq\Vert u(T)\Vert_{H_{0}^{1}}%
^{2},$ we obtain
\[
\Vert u^{n}(T)\Vert_{H_{0}^{1}}^{2}\rightarrow\Vert u(T)\Vert_{H_{0}^{1}}%
^{2},
\]
so that $u^{n}(T)\rightarrow u(T)$ strongly in $H_{0}^{1}(\Omega).$ In order
to finish the proof rigorously, we have to justify that $\limsup J_{n}(T)\leq
J(T)$ implies $\limsup\Vert u^{n}(T)\Vert_{H_{0}^{1}}^{2}\leq\Vert
u(T)\Vert_{H_{0}^{1}}^{2}.$ But this follows by contradiction using the
continuity of the function $A\left(  s\right)  $.
\end{proof}

\begin{corollary}
\label{PropK4} Assume the conditions of Lemma \ref{lemma1}. Then the set
$K_{r}^{+}$ satisfies condition $(K4)$.
\end{corollary}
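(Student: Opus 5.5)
The plan is to obtain (K4) as a direct consequence of Lemma \ref{lemma1}, since in the present setting $X=L^{2}(\Omega)$. Let $\{\varphi^{n}\}\subset K_{r}^{+}$ satisfy $\varphi^{n}(0)\rightarrow x_{0}$ in $L^{2}(\Omega)$. Strong convergence in $L^{2}(\Omega)$ implies weak convergence there, so the hypothesis of Lemma \ref{lemma1} holds with $u^{n}=\varphi^{n}$ and $u_{0}=x_{0}$.

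Lemma \ref{lemma1} then gives a subsequence $\varphi^{n_{k}}$ and some $\varphi\in K_{r}^{+}$ with $\varphi(0)=x_{0}$ such that $\varphi^{n_{k}}(t)\rightarrow\varphi(t)$ strongly in $H_{0}^{1}(\Omega)$ for every $t>0$. Using the continuous embedding $H_{0}^{1}(\Omega)\subset L^{2}(\Omega)$ (Poincaré), this yields $\varphi^{n_{k}}(t)\rightarrow\varphi(t)$ in $L^{2}(\Omega)$ for all $t>0$.

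It remains to treat $t=0$. There we have $\varphi^{n_{k}}(0)\rightarrow x_{0}=\varphi(0)$ in $L^{2}(\Omega)$ directly from the hypothesis, so the convergence holds for all $t\geq0$, which is exactly (K4).

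The only point deserving care is that the subsequence in Lemma \ref{lemma1} was built for an arbitrary fixed $T$. I would read the lemma as already giving convergence for all $t>0$ on the whole half-line. If one prefers to be explicit, a diagonal argument over $T=1,2,\dots$ produces a single subsequence and a single limit $\varphi$. Consistency of the limits on nested intervals follows from uniqueness of weak limits. I expect no further obstacle.
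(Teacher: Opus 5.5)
Your argument is correct and matches the paper, which states the corollary without proof as an immediate consequence of Lemma~\ref{lemma1}. The ingredients are the same as yours: strong convergence of the initial data implies weak convergence, the embedding $H_0^1(\Omega)\subset L^2(\Omega)$ gives convergence for $t>0$, and the hypothesis itself gives convergence at $t=0$; your diagonal argument over $T$ is a harmless extra safeguard, since the lemma is already stated for all $t>0$.
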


\begin{proposition}
\label{uppersemicontinuity} Assume the conditions of Lemma \ref{lemma1}. The
multivalued semiflow $G_{r}$ is upper semicontinuous for all $t\geq0$, that
is, for any neighborhood $O(G_{r}(t,u_{0}))$ in $L^{2}(\Omega)$ there exists
$\delta>0$ such that if $\Vert u_{0}-v_{0}\Vert<\delta$, then $G_{r}%
(t,v_{0})\subset O$. Also, it has compact values.
\end{proposition}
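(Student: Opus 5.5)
The argument is by contradiction and rests on Lemma \ref{lemma1}. The compactness of the values should also come out of Lemma \ref{lemma1} almost directly, so I treat it first.

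\emph{Compact values.} Fix $t>0$ and $u_0$, and take any sequence $y_n\in G_r(t,u_0)$. Then $y_n=u^n(t)$ for some $u^n\in K_r^+$ with $u^n(0)=u_0$. Since the initial data are constant, they converge trivially. Lemma \ref{lemma1} gives a subsequence and some $u\in K_r^+$ with $u(0)=u_0$ such that $u^n(t)\to u(t)$ strongly in $H_0^1(\Omega)$, hence in $L^2(\Omega)$. So $y_n\to u(t)\in G_r(t,u_0)$, and $G_r(t,u_0)$ is sequentially compact in $L^2(\Omega)$. For $t=0$ we have $G_r(0,u_0)=\{u_0\}$, and there is nothing to prove.

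\emph{Upper semicontinuity.} Suppose it fails at some $t>0$ and $u_0$. Then there are a neighborhood $O$ of $G_r(t,u_0)$, initial data $v_0^n\to u_0$ in $L^2(\Omega)$, and points $y_n\in G_r(t,v_0^n)\setminus O$. Write $y_n=u^n(t)$ with $u^n\in K_r^+$ and $u^n(0)=v_0^n$. Strong convergence in $L^2$ implies weak convergence, so Lemma \ref{lemma1} applies. It yields a subsequence and $u\in K_r^+$ with $u(0)=u_0$ and $u^n(t)\to u(t)$ in $H_0^1(\Omega)\subset L^2(\Omega)$. The limit $u(t)$ lies in $G_r(t,u_0)\subset O$, and $O$ is open, so $y_n\in O$ for all large $n$. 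This contradicts the choice of the $y_n$.

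At $t=0$ the map is $G_r(0,v_0)=\{v_0\}$, which is trivially upper semicontinuous. Indeed, any neighborhood of $u_0$ contains a ball $B(u_0,\delta)$.

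The only delicate point is that $O$ is an arbitrary neighborhood, not necessarily a metric $\varepsilon$-neighborhood. The contradiction argument handles this without change, because it uses only that $O$ is open and contains the limit point. The substantive difficulty is the compactness at fixed $t>0$ with converging initial data. That is exactly Lemma \ref{lemma1}, so it can be invoked as stated.
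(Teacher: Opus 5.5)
Your proposal is correct and follows the paper's argument: it argues by contradiction, and Lemma \ref{lemma1} extracts from $y_n=u^n(t)$ with $u^n(0)\to u_0$ a subsequence converging to some $u(t)\in G_r(t,u_0)\subset O$, contradicting $y_n\notin O$. You also spell out the compact values (Lemma \ref{lemma1} with constant initial data) and the trivial case $t=0$, which the paper leaves implicit.
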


\begin{proof}
We argue by contradiction. Assume that there exists $t\geq0,u_{0}\in
L^{2}(\Omega)$, a neighbourhood $O(G_{r}(t,u_{0}))$ and a sequence $\{y_{n}\}$
which fulfills that each $y_{n}\in G_{r}(t,u_{0}^{n})$, where $u_{0}^{n}$
converges strongly to $u_{0}$ in $L^{2}(\Omega)$ and $y_{n}\notin
O(G_{r}(t,u_{n}))$ for all $n\in\mathbb{N}$. Since $y_{n}\in G_{r}(t,u_{0}%
^{n})$ for all $n$, there exists $u^{n}\in K_{r}(u_{0}^{n})$ such that
$y_{n}=u^{n}(t)$. Now, since $\{u_{0}^{n}\}$ is a convergent sequence of
initial data, making use of Lemma \ref{lemma1} there exists a subsequence of
$\{u^{n}(t)\}$ which converges to a function $u(t)\in G_{r}(t,u_{0})$. This is
a contradiction because $y_{n}\notin O(G_{r}(t,u_{0}))$ for any $n\in
\mathbb{N}$.
\end{proof}

\begin{proposition}
\label{absorbingset} Assume the conditions of Lemma \ref{lemma1}. There exists
an absorbing set $B_{1}$ for $G_{r}$, which is compact in $H_{0}^{1}\left(
\Omega\right)  $ and bounded in $L^{p}\left(  \Omega\right)  $.
\end{proposition}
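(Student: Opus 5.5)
Following the proofs of Propositions \ref{abosorbinginL2firstcase} and \ref{absorbinginH1case1}, the plan is first to get absorbing balls in $L^{2}(\Omega)$ and then in $H_{0}^{1}(\Omega)\cap L^{p}(\Omega)$, uniformly over all $u\in K_{r}^{+}$. Afterwards compactness in $H_{0}^{1}(\Omega)$ will come from Lemma \ref{lemma1}.

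\textbf{Step 1: absorbing ball in $L^{2}(\Omega)$.} For $u\in K_{r}^{+}$, Remark \ref{Deriv} allows us to multiply (\ref{1}) by $u$ on $(\varepsilon,T)$. This gives (\ref{9.3}). Gronwall's lemma and $u\in C([0,T],L^{2}(\Omega))$ (Lemma \ref{ContSol}) give (\ref{9.4}), and integrating over $[t,t+1]$ gives (\ref{9.1})--(\ref{9.2}), now for every trajectory in $K_{r}^{+}$. Uniqueness is not used here. Integrating (\ref{9.3}) also bounds $\int_{t}^{t+1}\Vert u\Vert_{L^{p}}^{p}ds$ for $t\geq t_{0}(R)$.

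\textbf{Step 2: absorbing ball in $H_{0}^{1}\cap L^{p}$.} Because of (\ref{CondDerivu}), equality (\ref{sellandyou1}) and the absolute continuity of $(\mathcal{F}(u(t)),1)$ established in the proof of Lemma \ref{lemma1} justify multiplying (\ref{1}) by $u_{t}$ on $[\varepsilon,T]$. This yields the energy identity (\ref{43}) for
\[
E(t)=\tfrac12 A(\Vert u(t)\Vert_{H_{0}^{1}}^{2})-\int_{\Omega}\mathcal{F}(u(t))dx-\int_{\Omega}hu(t)dx .
\]
Since $h$ is now time independent, we include $(h,u)$ in $E$, and (\ref{43}) becomes $\frac{d}{dt}E+\Vert u_{t}\Vert_{L^{2}}^{2}=0$, so $E$ is nonincreasing on $(0,\infty)$. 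Step 1 together with (\ref{7}) and (\ref{5}) controls $\int_{t}^{t+1}E(s)\,ds$, exactly as in (\ref{47}). Monotonicity then gives $E(t+1)\leq\int_{t}^{t+1}E(s)ds\leq C$ for $t\geq t_{0}(R)$; this is the uniform Gronwall lemma in its trivial form. Finally (\ref{2}) and (\ref{4}) give $\tfrac m2\Vert u\Vert_{H_{0}^{1}}^{2}+\widetilde\alpha_{1}\Vert u\Vert_{L^{p}}^{p}\leq E+C'$, with $(h,u)$ absorbed by Young's inequality. Hence there is a ball $B_{0}$ in $H_{0}^{1}(\Omega)\cap L^{p}(\Omega)$, of radius independent of $R$, absorbing $G_{r}(t,B)$ for $t\geq t_{0}(R)+1$.

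\textbf{Step 3: compactness in $H_{0}^{1}$.} Set $B_{1}=\overline{G_{r}(1,B_{0})}^{H_{0}^{1}}$. Since $G_{r}(1+s,\cdot)\subset G_{r}(1,G_{r}(s,\cdot))$, the set $B_{1}$ absorbs bounded sets after one more unit of time. To prove compactness, take $y_{n}=u^{n}(1)$ with $u^{n}\in K_{r}^{+}$ and $u^{n}(0)\in B_{0}$. The sequence $u^{n}(0)$ is bounded in $L^{2}(\Omega)$, so a subsequence converges weakly in $L^{2}(\Omega)$. Lemma \ref{lemma1}, which is stated for weak convergence of the initial data, gives $u^{n}(1)\to u(1)$ strongly in $H_{0}^{1}(\Omega)$. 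Thus $G_{r}(1,B_{0})$ is relatively compact in $H_{0}^{1}(\Omega)$. Boundedness of $B_{1}$ in $L^{p}(\Omega)$ follows from the bound in Step 2: $B_{0}$ is positively invariant for times $\geq t_{0}(\text{radius of }B_{0})+1$, or one can simply apply Step 2's estimate started from $B_{0}$. The $L^{p}$ norm is weakly lower semicontinuous under $H_{0}^{1}$ limits, so the closure keeps the bound.

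The main obstacle is Step 2 in the multivalued setting. The energy estimates must be carried out directly on an arbitrary element of $K_{r}^{+}$, not on Galerkin approximations, because a given solution need not be a limit of them. This is exactly why (\ref{CondDerivu}) was imposed: through (\ref{sellandyou1}) and the chain rule for $\mathcal F$, it makes $E$ absolutely continuous on $[\varepsilon,T]$. Under alternative 2 this is automatic from (\ref{Acotacionfu}). One also has to check that the constants do not depend on the particular solution.
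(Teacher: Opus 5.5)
Your architecture matches the paper's. You get an $L^2$ absorbing ball from (\ref{9.3}). You get an $H_0^1\cap L^p$ bound from the energy obtained by testing with $u_t$, justified directly on elements of $K_r^+$ through (\ref{CondDerivu}) and (\ref{sellandyou1}). Then $B_1=\overline{G_r(1,B_0)}$ is compact in $H_0^1(\Omega)$ by Lemma \ref{lemma1}. The paper proves a time-one bound (\ref{IneqH1Lp}) in terms of $\Vert u(0)\Vert_{L^2}$ and then uses $G_r(t+1,u_0)\subset G_r(1,G_r(t,u_0))$; working along the trajectory on $[t,t+1]$, as you do, is equivalent.

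There is, however, a gap. Your claim ``Step 1 together with (\ref{7}) and (\ref{5}) controls $\int_t^{t+1}E(s)\,ds$, exactly as in (\ref{47})'' is false when $M_2>0$.
\begin{itemize}
\item Condition (\ref{7}) only gives $A(\sigma)\le M_1\sigma+\frac{M_2}{2}\sigma^2$. So $\frac12A(\Vert u\Vert_{H_0^1}^2)$ can be of size $\frac{M_2}{4}\Vert u\Vert_{H_0^1}^4$, while Step 1 controls only $\int_t^{t+1}\Vert u\Vert_{H_0^1}^2ds$.
\item Keeping $a$ in (\ref{9.3}) yields $\int_t^{t+1}a(\Vert u\Vert_{H_0^1}^2)\Vert u\Vert_{H_0^1}^2ds$. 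This dominates $\int_t^{t+1} A(\Vert u\Vert_{H_0^1}^2)ds$ only if $a$ is nondecreasing, which Lemma \ref{lemma1} does not assume.
\item The paper's display (\ref{47}), with the bound $M_1\Vert u\Vert_{H_0^1}+\frac{M_2}{2}\Vert u\Vert_{H_0^1}^2$, contains the same slip, so citing it does not help.
\end{itemize}
As written, your argument, like the paper's, covers only bounded $a$.

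You can repair this with your own monotonicity observation: choose a good time instead of averaging.
\begin{itemize}
\item For $t\ge t_0(R)$, Step 1 gives $\int_t^{t+1}(\Vert u\Vert_{H_0^1}^2+\Vert u\Vert_{L^p}^p)\,ds\le L_1$, with $L_1$ independent of $R$.
\item By Chebyshev's inequality there is $s_0\in(t,t+1]$ with $\Vert u(s_0)\Vert_{H_0^1}^2+\Vert u(s_0)\Vert_{L^p}^p\le 2L_1$.
\item Since $E$ is nonincreasing on $(0,\infty)$, (\ref{4}) and the monotonicity of $A$ give $E(u(t+1))\le E(u(s_0))\le\frac12A(2L_1)+2\widetilde\alpha_2L_1+\widetilde\kappa|\Omega|+\Vert h\Vert_{L^2}\sup_{s\ge t_0}\Vert u(s)\Vert_{L^2}$. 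Condition (\ref{7}) is not needed here at all.
\item The same choice on $(0,1]$, with $L_1$ replaced by $C(1+\Vert u(0)\Vert_{L^2}^2)$, bounds $\Vert u(1)\Vert_{H_0^1}^2+\Vert u(1)\Vert_{L^p}^p$ in terms of $\Vert u(0)\Vert_{L^2}$ alone. The bound need not be linear as in (\ref{IneqH1Lp}), since $A$ may grow quadratically, but it is all you need for the $L^p$ bound on $B_1$.
\end{itemize}
With this change, your Step 3 goes through, including the lower semicontinuity argument for the closure.
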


\begin{proof}
Reasonging as in Proposition \ref{abosorbinginL2firstcase}, we obtain an
absorbing set $B_{0}$ in $L^{2}\left(  \Omega\right)  .$

Let $K>0$ be such that $\left\Vert y\right\Vert \leq K$ for all $y\in B_{0}$.
Since $\dfrac{du}{dt}\in L\left(  \varepsilon,T;L^{2}\left(  \Omega\right)
\right)  $ we are allowed to multiply (\ref{1}) by $u_{t}$, use
(\ref{sellandyou1}) and argue as in (\ref{47})-(\ref{50}) to obtain the
existence of a constant $C$ such that
\begin{equation}
\Vert u\left(  1\right)  \Vert_{H_{0}^{1}}^{2}+\left\Vert u\left(  1\right)
\right\Vert _{L^{p}}^{p}\leq C(1+\Vert u(0)\Vert_{L^{2}}^{2}),
\label{IneqH1Lp}%
\end{equation}
for any regular solution $u\left(  \text{\textperiodcentered}\right)  $ with
initial condition $u\left(  0\right)  $.

For any $u_{0}\in L^{2}\left(  \Omega\right)  $ with $\left\Vert
u_{0}\right\Vert _{L^{2}}\leq R$ and any $u\in K_{r}^{+}$ such that $u\left(
0\right)  =u_{0}$, the semiflow property $G_{r}(t+1,u_{0})\subset
G_{r}(1,G_{r}(t,u_{0}))$ and $G_{r}(t,u_{0})\subset B_{0}$, if $t\geq
t_{0}\left(  R\right)  ,$ imply that
\[
\Vert u\left(  t+1\right)  \Vert_{H_{0}^{1}}^{2}+\left\Vert u\left(
t+1\right)  \right\Vert _{L^{p}}^{p}\leq C(1+K^{2})\text{ }\forall t\geq
t_{0}\left(  R\right)  .
\]
Then there exists $M>0$ such that the closed ball $B_{M}$ in $H_{0}^{1}\left(
\Omega\right)  \cap L^{p}\left(  \Omega\right)  $ centered at $0$ with radius
$M$ is absorbing for $G_{r}$.

By Lemma \ref{lemma1} the set $B_{1}=\overline{G_{r}(1,B_{M})}$ is an
absorbing set which is compact in $H_{0}^{1}\left(  \Omega\right)  $. Also,
inequality (\ref{IneqH1Lp}) implies that $B_{1}$ is bounded in $L^{p}\left(
\Omega\right)  .$
\end{proof}

\begin{theorem}
\label{existenceatracttorcase2}Assume the conditions of Lemma \ref{lemma1}.
Then the multivalued semiflow $G_{r}$ possesses a global compact attractor
$\mathcal{A}_{r}$. Moreover, for any set $B$ bounded in $L^{2}(\Omega)$ we
have
\begin{equation}
dist_{H_{0}^{1}}(G_{r}(t,B),\mathcal{A}_{r})\rightarrow0\quad\text{ as
}t\rightarrow\infty. \label{distancias}%
\end{equation}
Also $\mathcal{A}_{r}$ is compact in $H_{0}^{1}(\Omega)$ and bounded in
$L^{p}\left(  \Omega\right)  $.
\end{theorem}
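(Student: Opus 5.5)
The plan is to apply the abstract existence result, Theorem \ref{AttrExist}, in $X=L^{2}(\Omega)$, and then upgrade the attraction to the $H_{0}^{1}$ topology using the compactness in Lemma \ref{lemma1}.

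\textbf{Hypotheses of Theorem \ref{AttrExist}.} Proposition \ref{uppersemicontinuity} gives that $u_{0}\mapsto G_{r}(t,u_{0})$ is upper semicontinuous with compact, hence closed, values in $L^{2}(\Omega)$. Proposition \ref{absorbingset} provides an absorbing set $B_{1}$ that is compact in $H_{0}^{1}(\Omega)$. It is therefore compact in $L^{2}(\Omega)$, and since it is absorbing it is attracting. Theorem \ref{AttrExist} then yields a global compact attractor $\mathcal{A}_{r}$ in $L^{2}(\Omega)$, equal to the minimal closed attracting set. Strictness is not needed, since we only claim negative invariance.

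\textbf{Regularity of $\mathcal{A}_{r}$.} Because $\mathcal{A}_{r}$ is the minimal closed attracting set and $B_{1}$ is closed in $L^{2}$ (being compact) and attracting, we get $\mathcal{A}_{r}\subset B_{1}$. Moreover $\mathcal{A}_{r}$ is closed in $L^{2}$ and hence closed in $H_{0}^{1}$. As a closed subset of the $H_{0}^{1}$-compact set $B_{1}$, it is compact in $H_{0}^{1}(\Omega)$. Inequality (\ref{IneqH1Lp}) shows that $B_{1}$ is bounded in $L^{p}(\Omega)$, so $\mathcal{A}_{r}$ is bounded in $L^{p}(\Omega)$ as well.

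\textbf{Attraction in $H_{0}^{1}$, by contradiction.} Suppose (\ref{distancias}) fails. Then there exist a bounded set $B$, a number $\delta>0$, times $t_{n}\to\infty$ and points $y_{n}\in G_{r}(t_{n},B)$ with $dist_{H_{0}^{1}}(y_{n},\mathcal{A}_{r})\ge\delta$.

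\begin{itemize}
\item Write $y_{n}=u^{n}(t_{n})$ with $u^{n}\in K_{r}^{+}$ and $u^{n}(0)\in B$.
\item By (K2), $v^{n}:=u^{n}(\cdot+t_{n}-1)\in K_{r}^{+}$ and $y_{n}=v^{n}(1)$.
\item For large $n$, $v^{n}(0)=u^{n}(t_{n}-1)$ lies in the $L^{2}$-bounded absorbing set, so it is bounded in $L^{2}$. Passing to a subsequence, $v^{n}(0)\rightharpoonup z$ weakly in $L^{2}$.
\item Lemma \ref{lemma1}, which precisely allows weakly convergent initial data, gives $v^{n}(1)\to v(1)$ strongly in $H_{0}^{1}$ along a further subsequence. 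Hence $y_{n}\to y:=v(1)$ in $H_{0}^{1}$, and therefore also in $L^{2}$.
\item Since $dist_{L^{2}}(G_{r}(t_{n},B),\mathcal{A}_{r})\to0$ and $\mathcal{A}_{r}$ is $L^{2}$-compact, we get $y\in\mathcal{A}_{r}$.
\end{itemize}

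This contradicts $dist_{H_{0}^{1}}(y_{n},\mathcal{A}_{r})\ge\delta$.

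\textbf{Main obstacle.} The delicate step is this last argument. The translated trajectories $v^{n}$ must still belong to $K_{r}^{+}$, which (K2) guarantees. Their initial values are only bounded in $L^{2}$, not convergent, which is exactly why the weak-convergence version of Lemma \ref{lemma1} is the tool to use.
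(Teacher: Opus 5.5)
Your proposal is correct and follows essentially the paper's route. You obtain existence from Theorem \ref{AttrExist} using Propositions \ref{uppersemicontinuity} and \ref{absorbingset}. You derive $H_0^1$-compactness and $L^p$-boundedness by placing $\mathcal{A}_r$ inside the closed absorbing set $B_1$, a minimality step the paper leaves implicit. For $H_0^1$-attraction you use the contradiction argument through Lemma \ref{lemma1} with weakly convergent initial data; the paper does not write this out and instead delegates it to Theorem 29 of \cite{kapustyankasyanov}.
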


\begin{proof}
From Propositions \ref{uppersemicontinuity} and \ref{absorbingset} we deduce
that the multivalued semiflow $G_{r}$ is upper semicontinuous with closed
values and the existence of an absorbing which is compact in $H_{0}^{1}\left(
\Omega\right)  $ and bounded in $L^{p}\left(  \Omega\right)  $. Therefore, by
Theorem \ref{AttrExist} the existence of the global attractor and its
compactness in $H_{0}^{1}\left(  \Omega\right)  $ and boundedness in
$L^{p}\left(  \Omega\right)  $ follow.

The proof of (\ref{distancias}) is analogous to that in Theorem 29 in
\cite{kapustyankasyanov}.
\end{proof}

\begin{definition}
The map $\gamma:\mathbb{R}\rightarrow L^{2}(\Omega)$ is called a complete
trajectory of $K_{r}^{+}$ if $\gamma(\cdot+h)|_{[0,\infty)}\in K_{r}^{+}$ for
any $h\in\mathbb{R}$. The set of all complete trajectories of $K_{r}^{+}$ will
be denoted by $\mathbb{F}_{r}$. Moreover, we write $\mathbb{K}_{r}$ as the set
of all complete trajectories which are bounded in $L^{2}(\Omega)$, and
$\mathbb{K}_{r}^{1}$ as the ones bounded in $H_{0}^{1}(\Omega)$.
\end{definition}

\begin{lemma}
\label{lemmaequality}Assume the conditions of Lemma \ref{lemma1}. Then the
sets defined above coincide, that is, $\mathbb{K}_{r}=\mathbb{K}_{r}^{1}$.
\end{lemma}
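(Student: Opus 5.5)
The inclusion $\mathbb{K}_{r}^{1}\subset\mathbb{K}_{r}$ is immediate from $\Vert y\Vert_{L^{2}}\leq\lambda_{1}^{-1/2}\Vert y\Vert_{H_{0}^{1}}$. So the plan is to prove $\mathbb{K}_{r}\subset\mathbb{K}_{r}^{1}$. Let $\gamma\in\mathbb{K}_{r}$ and set $R:=\sup_{t\in\mathbb{R}}\Vert\gamma(t)\Vert_{L^{2}}<\infty$. We must find a uniform bound for $\Vert\gamma(t)\Vert_{H_{0}^{1}}$ over all $t\in\mathbb{R}$.

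The key tool is the smoothing estimate (\ref{IneqH1Lp}) from the proof of Proposition \ref{absorbingset}. For every $u\in K_{r}^{+}$,
\[
\Vert u(1)\Vert_{H_{0}^{1}}^{2}+\Vert u(1)\Vert_{L^{p}}^{p}\leq C(1+\Vert u(0)\Vert_{L^{2}}^{2}),
\]
where $C$ depends only on the data $m,\kappa,\alpha_{i},M_{1},M_{2},h,\Omega$ and not on the particular solution. I would first verify this uniformity. The estimate comes from the argument in (\ref{47})--(\ref{50}): an $L^{2}$ energy bound on $[0,1]$, then multiplication by $u_{t}$ (legitimate on $[\varepsilon,1]$ by (\ref{CondDerivu}) and (\ref{sellandyou1})), then the uniform Gronwall lemma on $[\varepsilon,1]$. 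All constants there depend only on the structural assumptions and on $\Vert u(0)\Vert_{L^{2}}$. Near $t=0$ we use the continuity $u\in C([0,T],L^{2})$ from Lemma \ref{ContSol} to let $\varepsilon\to0$, exactly as in Proposition \ref{abosorbinginL2firstcase}.

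Next, fix $t\in\mathbb{R}$ and apply the estimate to the shifted function $u(\cdot):=\gamma(\cdot+t-1)|_{[0,\infty)}$. This function belongs to $K_{r}^{+}$ by the definition of a complete trajectory of $K_{r}^{+}$. We get
\[
\Vert\gamma(t)\Vert_{H_{0}^{1}}^{2}\leq C(1+\Vert\gamma(t-1)\Vert_{L^{2}}^{2})\leq C(1+R^{2}).
\]
Since $t$ is arbitrary, $\gamma$ is bounded in $H_{0}^{1}(\Omega)$, and in fact also in $L^{p}(\Omega)$. Hence $\gamma\in\mathbb{K}_{r}^{1}$.

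The only delicate point is checking that the constant in (\ref{IneqH1Lp}) does not depend on the particular solution, in particular that the uniform Gronwall step needs only the integral bounds (\ref{46})--(\ref{47}). These bounds are controlled by $\Vert u(0)\Vert_{L^{2}}$ through (\ref{7}) and (\ref{4}). In case 1 of Lemma \ref{lemma1} the estimate holds for every element of $K_{r}^{+}$ precisely because of (\ref{CondDerivu}). In case 2 it holds by (\ref{Acotacionfu}).
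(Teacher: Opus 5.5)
Your route is the paper's: shift $\gamma$ so that $\gamma(t-1)$ becomes the initial datum, then apply the smoothing estimate (\ref{500})/(\ref{IneqH1Lp}) with $\Vert u(0)\Vert_{L^2}\le R$. The reverse inclusion via Poincar\'e is fine.

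However, the step you single out as delicate is not justified by the argument you give, and the paper's (\ref{47}) contains the same slip. When $M_2>0$, condition (\ref{7}) only gives $A(s)\le M_1s+\frac{M_2}{2}s^2$. So $\frac12A(\Vert u\Vert_{H_0^1}^2)$ is of order $\Vert u\Vert_{H_0^1}^4$, while (\ref{46}) controls only $\int_t^{t+r}\Vert u\Vert_{H_0^1}^2ds$, because the $L^2$ estimate uses only $a\ge m$. The uniform Gronwall lemma needs $\int_t^{t+r}y\,ds\le a_3$ for $y=\frac12A(\Vert u\Vert_{H_0^1}^2)-\int_\Omega\mathcal{F}(u)dx$. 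That hypothesis is therefore not available, and the linear bound $C(1+\Vert u(0)\Vert_{L^2}^2)$ is unproved.

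The repair is short, since the lemma only needs some bound depending on $R$.

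First, integrate (\ref{9.3}) over $(t-1,t)$, using $\Vert\gamma(s)\Vert_{L^2}\le R$ for every $s$; no limit $\varepsilon\to0$ at the initial time is needed. This gives
\[
\int_{t-1}^{t}\big(m\Vert\gamma\Vert_{H_0^1}^2+2\alpha_1\Vert\gamma\Vert_{L^p}^p\big)ds\le R^2+\kappa_1 ,
\]
so there is some $s_0\in(t-1,t)$ with $m\Vert\gamma(s_0)\Vert_{H_0^1}^2+2\alpha_1\Vert\gamma(s_0)\Vert_{L^p}^p\le R^2+\kappa_1$.

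Second, apply the energy equality (\ref{Energy}) to $\gamma(\cdot+t-1)\in K_r^+$ between the positive times $s_0-t+1$ and $1$. This gives $E(\gamma(t))\le E(\gamma(s_0))$.

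Third, combine this with (\ref{2}), (\ref{4}) and the monotonicity of $A$. You obtain, for every $t\in\mathbb{R}$,
\[
\frac{m}{2}\Vert\gamma(t)\Vert_{H_0^1}^2\le\frac12A\Big(\frac{R^2+\kappa_1}{m}\Big)+\frac{\widetilde{\alpha}_2}{2\alpha_1}(R^2+\kappa_1)+2\widetilde{\kappa}|\Omega|+2\Vert h\Vert_{L^2}R .
\]
This bound is not linear in $R^2$, but that does not matter here. With it in place of the appeal to (\ref{IneqH1Lp}), your proof is complete.
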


\begin{proof}
Let $\gamma(\cdot)\in\mathbb{K}_{r}$. Then there is $C$ such that $\left\Vert
\gamma\left(  t\right)  \right\Vert _{L^{2}}\leq C$ for any $t\in\mathbb{R}$.
Let $u_{\tau}\left(  \text{\textperiodcentered}\right)  =\gamma\left(
\text{\textperiodcentered}+\tau\right)  $ for any $\tau$, which is a regular
solution. Since $\dfrac{du}{dt}\in L^{2}(\varepsilon,T;L^{2}(\Omega)),\ $for
any $\varepsilon>0$, the equality (\ref{sellandyou1}) holds true. Therefore,
we can multiply the equation in (\ref{1}) by $u_{t}$ and apply again similar
arguments as in Theorem \ref{existenceweaksolutionu0L2sinderivadaversion2014}
to deduce that
\begin{equation}
\Vert u(t+r)\Vert_{H_{0}^{1}}^{2}\leq\frac{K_{1}\left(  T\right)  (1+\Vert
u(0)\Vert_{L^{2}}^{2})}{r}+K_{2}\left(  T\right)  \text{ for any }0<r<T.
\label{500}%
\end{equation}
Denote $B_{\gamma}=\cup_{t\in\mathbb{R}}\gamma(t)$. Therefore,
\[
B_{\gamma}\subset G_{r}(1,B_{\gamma})
\]
and (\ref{500}) implies that $B_{\gamma}$ is bounded in $H_{0}^{1}(\Omega)$,
so $\gamma(\cdot)\in\mathbb{K}_{r}^{1}$.\newline The other inclusion is obvious.
\end{proof}

\bigskip

In view of Corollary \ref{PropK4} and Theorem \ref{structureattractor}, the
global attractor is characterized in terms of bounded complete trajectories:
\begin{equation}%
\begin{split}
\mathcal{A}_{r}  &  =\{\gamma(0):\gamma(\cdot)\in\mathbb{K}_{r}\}=\{\gamma
(0):\gamma(\cdot)\in\mathbb{K}_{r}^{1}\}\\
&  =\bigcup_{t\in\mathbb{R}}\{\gamma(t):\gamma(\cdot)\in\mathbb{K}%
_{r}\}=\bigcup_{t\in\mathbb{R}}\{\gamma(t):\gamma(\cdot)\in\mathbb{K}_{r}%
^{1}\}.
\end{split}
\label{caracteattrac}%
\end{equation}

The set $\mathfrak{R}_{K_{r}^{+}}$ was defined in the previous section as the
set of fixed points of $K_{r}^{+}$, which means that $z\in\mathfrak{R}%
_{K_{r}^{+}}$ if the function $u\left(  \text{\textperiodcentered}\right)  $
defined by $u\left(  t\right)  =z,$ for all $t\geq0,$ belongs to $K_{r}^{+}.$
This set can be characterized as follows.

\begin{lemma}
\label{FixedCharacterization}Assume the conditions of Lemma \ref{lemma1}. Let
$\mathfrak{R}$ be the set of $z\in H^{2}\left(  \Omega\right)  \cap H_{0}%
^{1}\left(  \Omega\right)  $ such that
\begin{equation}
-a(\Vert z\Vert_{H_{0}^{1}}^{2})\Delta z=f(z)+h\text{ in }L^{2}\left(
\Omega\right)  . \label{Fixed}%
\end{equation}
Then $\mathfrak{R}_{K_{r}^{+}}=\mathfrak{R}.$
\end{lemma}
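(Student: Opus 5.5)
We prove the two inclusions separately.

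\emph{Inclusion $\mathfrak{R}\subset\mathfrak{R}_{K_{r}^{+}}$.} Take $z\in\mathfrak{R}$ and set $u(t)=z$ for all $t\geq0$. Since $z\in H^{2}(\Omega)\cap H_{0}^{1}(\Omega)=D(A)$, the function $u$ is constant in time with values in $D(A)\subset H_{0}^{1}(\Omega)$. Hence $u\in L^{\infty}(0,T;H_{0}^{1}(\Omega))\cap L^{2}(0,T;D(A))$ and $u\in L^{\infty}(0,T;L^{2}(\Omega))$.

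It remains to check $u\in L^{p}(0,T;L^{p}(\Omega))$, which needs $z\in L^{p}(\Omega)$. Take the $L^{2}$ inner product of (\ref{Fixed}) with $z$. This gives $a(\Vert z\Vert_{H_{0}^{1}}^{2})\Vert z\Vert_{H_{0}^{1}}^{2}=(f(z),z)+(h,z)$. Here $f(z)=-a\Delta z-h\in L^{2}(\Omega)$, so the product $f(z)z$ is integrable. By (\ref{3}), $\alpha_{1}\int_{\Omega}|z|^{p}\leq\kappa|\Omega|-\int_{\Omega}f(z)z\,dx<\infty$.

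With this, $du/dt=0\in L^{2}(\varepsilon,T;L^{2}(\Omega))$, so (\ref{CondDerivu}) holds. Equality (\ref{equationweak}) reduces to (\ref{Fixed}) tested against $v\in H_{0}^{1}(\Omega)\cap L^{p}(\Omega)$, after integrating by parts. Also $u(0)=z$ and $u$ is continuous. So $u\in K_{r}^{+}$, i.e.\ $z\in\mathfrak{R}_{K_{r}^{+}}$.

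\emph{Inclusion $\mathfrak{R}_{K_{r}^{+}}\subset\mathfrak{R}$.} Let $z\in\mathfrak{R}_{K_{r}^{+}}$, so that $u\equiv z$ is a regular solution satisfying (\ref{CondDerivu}). By the definition of regular solution, $u\in L^{2}(\varepsilon,T;D(A))$. A constant function in that space must take a value in $D(A)$, so $z\in H^{2}(\Omega)\cap H_{0}^{1}(\Omega)$.

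Since $du/dt=0$, the remark giving (\ref{EquationRegular}) yields
\[
a(\Vert z\Vert_{H_{0}^{1}}^{2})(\nabla z,\nabla v)=(f(z),v)+(h,v)
\]
for all $v\in H_{0}^{1}(\Omega)\cap L^{p}(\Omega)$, in particular for all $v\in C_{0}^{\infty}(\Omega)$. Integrating by parts on the left, using $z\in D(A)$, gives $(-a(\Vert z\Vert_{H_{0}^{1}}^{2})\Delta z-f(z)-h,v)=0$ for all test functions. Hence (\ref{Fixed}) holds almost everywhere.

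Finally $f(z)=-a\Delta z-h\in L^{2}(\Omega)$, so the equality holds in $L^{2}(\Omega)$, and $z\in\mathfrak{R}$.

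\emph{Main obstacle.} The delicate points are integrability. In the first inclusion one must show $z\in L^{p}(\Omega)$ (and hence $f(z)\in L^{q}(\Omega)$), which is handled by the coercivity argument from (\ref{3}) above. In the second inclusion one must make sure that the fixed point inherits $D(A)$-regularity from the definition of regular solution.

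Both of these are routine, so I expect the proof to be short.
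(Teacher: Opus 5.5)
Your proof is correct and follows the paper's own two-inclusion argument: the constant function has zero time derivative, so the regular-solution formulation (\ref{EquationRegular}) reduces to the stationary equation (\ref{Fixed}), and conversely. You are in fact more careful than the paper, which leaves two points implicit. The first is that $z\in D(A)$ follows from $u\in L^{2}(\varepsilon,T;D(A))$. The second is that $z\in L^{p}(\Omega)$, which is needed for $u\in L^{p}(0,T;L^{p}(\Omega))$ when only $f^{\prime}\leq\eta$ is assumed and $p$ is unrestricted; you correctly get it from (\ref{3}) together with $f(z)z\in L^{1}(\Omega)$.
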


\begin{proof}
If $z\in\mathfrak{R}_{K_{r}^{+}}$, then $u\left(  t\right)  \equiv z\in
K_{r}^{+}$.Thus, $u\left(  \text{\textperiodcentered}\right)  $ satisfies
(\ref{EquationRegular}) and $\dfrac{du}{dt}=0$ in $L^{2}\left(  0,T;L^{2}%
\left(  \Omega\right)  \right)  $, so (\ref{Fixed}) is satisfied.

Let $z\in\mathfrak{R}$. Then the map $u\left(  t\right)  \equiv z$ satisfies
(\ref{Fixed}) for any $t\geq0$ and $\dfrac{du}{dt}=0$ in $L^{2}\left(
0,T;L^{2}\left(  \Omega\right)  \right)  $, so (\ref{EquationRegular}) holds true.
\end{proof}

\bigskip

The following result is proved exactly as Theorem \ref{boundinLinfty}.

\begin{theorem}
\label{boundinLinftyB}Assume the conditions of Lemma \ref{lemma1}. Then the
global attractor $\mathcal{A}$ is bounded in $L^{\infty}(\Omega)$, provided
that $h\in L^{\infty}(\Omega)$.
\end{theorem}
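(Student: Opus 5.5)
We follow the proof of Theorem \ref{boundinLinfty}, adapting it to the multivalued setting. The only change is that the characterization (\ref{Characterization}) for the single-valued semigroup is replaced by (\ref{caracteattrac}), which holds by Corollary \ref{PropK4} and Theorem \ref{structureattractor}. We fix $y\in\mathcal{A}_{r}$. By (\ref{caracteattrac}) there is a complete trajectory $\gamma\in\mathbb{K}_{r}$ with $\gamma(0)=y$, and we may even take $\gamma\in\mathbb{K}_{r}^{1}$ by Lemma \ref{lemmaequality}. Hence there is a constant $C_{\gamma}$ with $\Vert\gamma(t)\Vert_{L^{2}}\le C_{\gamma}$ for all $t\in\mathbb{R}$. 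For every $\tau\in\mathbb{R}$ the function $u_{\tau}(\cdot)=\gamma(\cdot+\tau)$ belongs to $K_{r}^{+}$. It is therefore a regular solution with $\frac{du_{\tau}}{dt}\in L^{2}(\varepsilon,T;L^{2}(\Omega))$.

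The first step is the truncation estimate. Since $h\in L^{\infty}(\Omega)$, (\ref{3}) gives constants $\widetilde{\kappa},\widetilde{\alpha}$ with $(f(s)+h(x))s\le\widetilde{\kappa}-\widetilde{\alpha}|s|^{p}$. We set $M=(\widetilde{\kappa}/\widetilde{\alpha})^{1/p}$, so that $f(s)+h(x)\le0$ for $s\ge M$ and $f(s)+h(x)\ge0$ for $s\le-M$. We multiply the equation for $u$ by $(u-M)_{+}$ on an interval $[\varepsilon,T]$, $\varepsilon>0$. Because $(u-M)_{+}\in H_{0}^{1}(\Omega)$ and $\nabla(u-M)_{+}=\chi_{\{u>M\}}\nabla u$, and by (\ref{2}) and Poincaré, this yields
\[
\frac{d}{dt}\Vert(u-M)_{+}\Vert_{L^{2}}^{2}\le-2m\lambda_{1}\Vert(u-M)_{+}\Vert_{L^{2}}^{2}\quad\text{for a.a. }t>\varepsilon.
\]

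The step needing care is the chain rule $\frac{1}{2}\frac{d}{dt}\Vert(u-M)_{+}\Vert_{L^{2}}^{2}=(u_{t},(u-M)_{+})$ together with absolute continuity on $[\varepsilon,T]$. Here it is actually easier than in the uniqueness case. For elements of $K_{r}^{+}$ we have $u\in W^{1,2}(\varepsilon,T;L^{2}(\Omega))$ by (\ref{CondDerivu}). The map $v\mapsto\frac{1}{2}\Vert v_{+}\Vert_{L^{2}}^{2}$ is $C^{1}$ on $L^{2}(\Omega)$ with derivative $v_{+}$. The chain rule for $W^{1,2}$ functions with values in a Hilbert space then applies directly, with no Galerkin approximation needed. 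This matters because in the nonuniqueness case we cannot pass estimates through Galerkin approximations of a \emph{given} solution.

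Finally, Gronwall on $[\tau,t]$ and the uniform bound give
\[
\Vert(\gamma(t)-M)_{+}\Vert_{L^{2}}^{2}\le e^{-2m\lambda_{1}(t-\tau)}\Vert(\gamma(\tau)-M)_{+}\Vert_{L^{2}}^{2}\le e^{-2m\lambda_{1}(t-\tau)}C_{\gamma}^{2}.
\]
We apply this to each shifted solution $u_{\tau}$, using continuity in $L^{2}$ on $(0,\infty)$ to start exactly at time $\tau$. Taking $t=0$ and letting $\tau\to-\infty$ gives $y\le M$ a.e. The same argument with $(u+M)_{-}$ gives $y\ge-M$. Hence $\Vert y\Vert_{L^{\infty}}\le M$ for all $y\in\mathcal{A}_{r}$, with $M$ independent of $y$ and $\gamma$.
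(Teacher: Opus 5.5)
Your proposal is correct and follows the paper's route: the paper proves this result by repeating the proof of Theorem \ref{boundinLinfty}, that is, testing with $(u-M)_{+}$, using the sign of $f+h$ beyond $M$, then Poincar\'e and Gronwall, and finally letting $\tau\to-\infty$ along a bounded complete trajectory, with (\ref{caracteattrac}) in place of (\ref{Characterization}). Your justification of $\frac{1}{2}\frac{d}{dt}\Vert(u-M)_{+}\Vert_{L^{2}}^{2}=(u_{t},(u-M)_{+})$ via $u\in W^{1,2}(\varepsilon,T;L^{2}(\Omega))$ from (\ref{CondDerivu}) is a cleaner substitute for the paper's ``by regularization'', and you correctly use $(u+M)_{-}$ for the lower bound, where the paper writes $(u-M)_{-}$.
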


We are now ready to obtain the corresponding characterization of the global
attractor as in (\ref{caract1}).

\begin{theorem}
\label{theorem12}Let assume (\ref{Cont})-(\ref{3}), (\ref{7}), (\ref{h2}) and
one of the following conditions:

\begin{enumerate}
\item $f\in C^{1}(\mathbb{R})$ is such that $f^{\prime}(s)\leq\eta$ and $h\in
L^{\infty}\left(  \Omega\right)  ;$

\item $f\in C^{1}(\mathbb{R})$ is such that $f^{\prime}(s)\leq\eta$ and
$p\leq\frac{2n}{n-2}$ if $n\geq3;$

\item $p$ satisfies (\ref{Condp}).
\end{enumerate}

Then it holds that
\[
\mathcal{A}_{r}=M_{r}^{u}(\mathfrak{R})=M_{r}^{s}(\mathfrak{R}),
\]
where
\begin{equation}
M_{r}^{s}(\mathfrak{R})=\{z:\exists\gamma(\cdot)\in\mathbb{K}_{r}%
,\ \gamma(0)=z,\ \text{ }dist\text{ }_{L^{2}(\Omega)}(\gamma(t),\mathfrak{R}%
)\rightarrow0,\ t\rightarrow+\infty\}, \label{StableSet}%
\end{equation}%
\begin{equation}
M_{r}^{u}(\mathfrak{R})=\{z:\exists\gamma(\cdot)\in\mathbb{F}_{r}%
,\ \gamma(0)=z,\ \text{ }dist\text{ }_{L^{2}(\Omega)}(\gamma(t),\mathfrak{R}%
)\rightarrow0,\ t\rightarrow-\infty\}. \label{UnstableSet}%
\end{equation}

\end{theorem}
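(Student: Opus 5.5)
The plan is to build a Lyapunov function on the bounded complete trajectories and then run the standard gradient-system argument through the characterization (\ref{caracteattrac}). The candidate is the energy
\[
E(u)=\frac{1}{2}A(\Vert u\Vert_{H_{0}^{1}}^{2})-\int_{\Omega}\mathcal{F}(u)dx-\int_{\Omega}hu\,dx ,
\]
with $A$ as in (\ref{funcionA}).

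\textbf{Step 1: energy equality.} For every $\gamma\in\mathbb{K}_{r}=\mathbb{K}_{r}^{1}$ (Lemma \ref{lemmaequality}), I would show that for all $s\le t$
\[
E(\gamma(t))+\int_{s}^{t}\Big\Vert\frac{d\gamma}{dr}\Big\Vert_{L^{2}}^{2}dr=E(\gamma(s)).
\]
Since $\gamma(\cdot+\tau)\in K_{r}^{+}$, (\ref{CondDerivu}) holds, so (\ref{sellandyou1}) gives $\frac{d}{dt}\frac12A(\Vert\gamma\Vert_{H_0^1}^2)=a(\Vert\gamma\Vert_{H_0^1}^2)(-\Delta\gamma,\gamma_t)$. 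The only delicate term is $\frac{d}{dt}(\mathcal{F}(\gamma),1)=(f(\gamma),\gamma_t)$, which needs $f(\gamma)\in L^{2}_{loc}(L^{2}(\Omega))$. This is where the three alternative hypotheses enter.
\begin{itemize}
\item[(3)] This follows from (\ref{Acotacionfu}), since $\gamma$ is bounded in $H_{0}^{1}$.
\item[(2)] Using $p\le 2n/(n-2)$ and $H_0^1\subset L^p$, the bound $\gamma\in L^\infty(H_0^1)\cap L^2(D(A))$ together with the interpolation (\ref{interpol}) gives $f(\gamma)\in L^2$ locally. If this falls short, the fallback is to use the one-sided bound $f'\le\eta$ with a truncation/regularization argument for $\int\mathcal{F}$.
\item[(1)] Theorem \ref{boundinLinftyB} shows that every bounded complete trajectory lies in $\mathcal{A}_r$, hence is bounded in $L^\infty(\Omega)$. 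Then $f(\gamma)$ is bounded pointwise, and the chain rule is obtained by regularization.
\end{itemize}
I expect this case analysis to be the main obstacle: justifying the energy identity for merely regular, non-unique solutions whose $f(u)$ only lives in $L^{q}$ a priori.

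\textbf{Step 2: properties of $E$.} $E$ is continuous on $\mathcal{A}_r$ in the $H_0^1$ topology. In cases (1) and (3) this uses the $L^\infty$ or $L^p\subset H_0^1$ control of $\int\mathcal{F}$. In case (2) it uses $H_0^1\subset L^p$ together with (\ref{5}). $E$ is bounded on $\mathcal{A}_r$, which is compact in $H_0^1$ by Theorem \ref{existenceatracttorcase2}. Along complete trajectories $E$ is nonincreasing, and if $E(\gamma(t))=E(\gamma(s))$ for some $s<t$, then $\gamma_t=0$ on $(s,t)$. In that case $\gamma$ is constant there, so by Lemma \ref{FixedCharacterization} its value lies in $\mathfrak{R}$.

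\textbf{Step 3: LaSalle argument on both ends.} Take $z\in\mathcal{A}_r$ and $\gamma\in\mathbb{K}_r$ with $\gamma(0)=z$. Then $E(\gamma(t))$ is monotone and bounded, so it has limits $l_{\pm}$ as $t\to\pm\infty$, and $\int_{-\infty}^{\infty}\Vert\gamma_t\Vert^2<\infty$.

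For $t_n\to+\infty$, the shifted trajectories $\gamma(\cdot+t_n)$ start in the compact set $\mathcal{A}_r$. By Lemma \ref{lemma1} and a diagonal argument, a subsequence converges, strongly in $H_0^1$ for every $t$, to some $\psi\in\mathbb{K}_r$ (extended to all of $\mathbb{R}$ diagonally). By continuity of $E$, $E(\psi(t))\equiv l_+$, and by Step 2 $\psi$ is a fixed point. Hence every $\omega$-limit point of $\gamma$ lies in $\mathfrak{R}$. Compactness then gives $dist_{L^2}(\gamma(t),\mathfrak{R})\to0$ as $t\to+\infty$, so $z\in M^s_r(\mathfrak{R})$. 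The same argument as $t\to-\infty$ gives $z\in M^u_r(\mathfrak{R})$.

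\textbf{Step 4: reverse inclusions.} $M^s_r(\mathfrak{R})\subset\mathcal{A}_r$ holds directly by (\ref{caracteattrac}), since those trajectories belong to $\mathbb{K}_r$.

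For $M^u_r(\mathfrak{R})$, take $\gamma\in\mathbb{F}_r$ with $dist(\gamma(t),\mathfrak{R})\to0$ as $t\to-\infty$. First, $\mathfrak{R}\subset\mathcal{A}_r$ is bounded, since fixed points are bounded complete trajectories. So $\gamma(t)$ is bounded for $t\le0$. For $t\ge0$, $\gamma$ is absorbed by the bounded absorbing set by Proposition \ref{absorbingset}, and on $[0,t_0]$ it is bounded by the estimate (\ref{45}). Therefore $\gamma\in\mathbb{K}_r$, and $z=\gamma(0)\in\mathcal{A}_r$.
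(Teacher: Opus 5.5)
Your proof is correct and is essentially the paper's argument. It uses the same energy $E$, the energy equality along elements of $K_{r}^{+}$, continuity of $E$ on the $H_{0}^{1}$-compact attractor, a LaSalle-type contradiction via Lemma \ref{lemma1} and Lemma \ref{FixedCharacterization}, and the reverse inclusions from (\ref{caracteattrac}); your Step 4 boundedness check for $M_{r}^{u}(\mathfrak{R})$ even supplies a detail the paper leaves implicit.

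The case analysis in Step 1 is unnecessary, and its interpolation branch for case 2 would not cover $n=3$ with $5<p\leq 6$, or $n=4$ with $p=4$. For every $u\in K_{r}^{+}$ you already have $f(u)=u_{t}-a(\Vert u\Vert_{H_{0}^{1}}^{2})\Delta u-h\in L^{2}(\varepsilon,T;L^{2}(\Omega))$, directly from (\ref{CondDerivu}) and $u\in L^{\infty}(\varepsilon,T;H_{0}^{1}(\Omega))\cap L^{2}(\varepsilon,T;D(A))$. So, beyond securing Lemma \ref{lemma1}, the alternative hypotheses are only needed for the continuity of $E$, which is exactly where the paper uses them.
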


\begin{remark}
In the definition of $M_{r}^{u}(\mathfrak{R})$ we can replace $\mathbb{F}_{r}$
by $\mathbb{K}_{r}$. Also, as the global attractor $\mathcal{A}$ is compact in
$H_{0}^{1}\left(  \Omega\right)  $, in the definitions of $M_{r}%
^{s}(\mathfrak{R})$ and $M_{r}^{u}(\mathfrak{R}),$ it is equivalent to write
$H_{0}^{1}\left(  \Omega\right)  $ instead of $L^{2}\left(  \Omega\right)  .$
\end{remark}

\begin{proof}
We consider the function $E:\mathcal{A}_{r}\rightarrow\mathbb{R}$
\begin{equation}
E(y)=\frac{1}{2}A(\Vert y\Vert_{H_{0}^{1}}^{2})-\int_{\Omega}\mathcal{F}%
(y\left(  x\right)  )dx-\int_{\Omega}h\left(  x\right)  y\left(  x\right)  dx,
\label{EnergyFunction0}%
\end{equation}
where $A(r)=\int_{0}^{r}a(s)ds$. We observe that $E(y)$ is continuous in
$H_{0}^{1}\left(  \Omega\right)  $. Indeed, the maps $y\mapsto\frac{1}%
{2}A(\Vert y\Vert_{H_{0}^{1}}^{2}),\ y\mapsto\int_{\Omega}h\left(  x\right)
y\left(  x\right)  dx$ are obviously continuous in $H_{0}^{1}\left(
\Omega\right)  $. On the other hand, if $h\in L^{\infty}\left(  \Omega\right)
$, taking into account that $\mathcal{A}$ is bounded in $L^{\infty}\left(
\Omega\right)  $ by Theorem \ref{boundinLinftyB}, it follows that
\[
\left\vert \int_{\Omega}\mathcal{F}(y_{1})-\mathcal{F}(y_{2})dx\right\vert
=\left\vert \int_{\Omega}\int_{y_{2}\left(  x\right)  }^{y_{1}\left(
x\right)  }f(s)dsdx\right\vert \leq\int_{\Omega}C_{1}|y_{1}\left(  x\right)
-y_{2}\left(  x\right)  |dx\leq C_{2}\Vert y_{1}-y_{2}\Vert_{L^{2}},
\]
so $y\mapsto\int_{\Omega}\mathcal{F}(y\left(  x\right)  )dx$ is continuous. In
the other cases, making use of the embedding $H_{0}^{1}\left(  \Omega\right)
\subset L^{p}\left(  \Omega\right)  $ and Lebesgue's theorem the continuity of
$y\mapsto\int_{\Omega}\mathcal{F}(y\left(  x\right)  )dx$ follows as well.

Since $\dfrac{du}{dt}\in L^{2}\left(  \varepsilon,T;L^{2}\left(
\Omega\right)  \right)  $ for any $u\in K_{r}^{+}$ and $0<\varepsilon<T$, we
obtain the energy equality
\begin{equation}
\int_{s}^{t}\Vert\frac{d}{dr}\Vert u(r)\Vert_{L^{2}}^{2}%
dr+E(u(t))=E(u(s))\quad\text{ for all }t\geq s>0. \label{Energy}%
\end{equation}
Hence, $E\left(  u\left(  t\right)  \right)  $ is non-increasing and, by
(\ref{2}) and (\ref{4}), bounded from below. Thus, $E(u(t))\rightarrow l$, as
$t\rightarrow+\infty$, for some $l\in\mathbb{R}$.

Let $z\in\mathcal{A}_{r}$ and $\gamma\in\mathbb{K}_{r}$. We reason by
contradiction, so let suppose that there exists $\varepsilon>0$ and a sequence
$\gamma(t_{n}),$ $t_{n}\rightarrow+\infty$, such that
\[
\text{ }dist\text{ }_{L^{2}(\Omega)}(\gamma(t_{n}),\mathfrak{R})>\varepsilon.
\]
In view of Theorem \ref{existenceatracttorcase2}, $\mathcal{A}_{r}$ is compact
in $H_{0}^{1}(\Omega)$, so we can take a converging subsequence (relabeled the
same) such that $\gamma(t_{n})\rightarrow y$ in $H_{0}^{1}(\Omega)$, where
$t_{n}\rightarrow+\infty$. Since the function $E:H_{0}^{1}(\Omega
)\rightarrow\mathbb{R}$ is continuous, it follows that $E(y)=l$. We obtain a
contradiction by proving that $y\in\mathfrak{R}$. In view of Lemma
\ref{lemma1}, there exists $v\in K_{r}^{+}$ and a subsequence $v_{n}\left(
\text{\textperiodcentered}\right)  =\gamma(\cdot+t_{n})$ such that $v(0)=y$
and $v_{n}(t)\rightarrow v(t)=z$ in $H_{0}^{1}(\Omega)$. Thus, $E(v_{n}%
(t))\rightarrow E(z)$ implies that $E(z)=l$. Also, $v(\cdot)$ satisfies the
energy equality for all $0\leq s\leq t$, so that
\[
l+\int_{0}^{t}\Vert v_{r}\Vert_{L^{2}}^{2}dr=E(z)+\int_{0}^{t}\Vert v_{r}%
\Vert_{L^{2}}^{2}dr=E(v(0))=E(y)=l.
\]
Therefore, $\dfrac{dv}{dt}(t)=0$ for a.a. $t$, and then by Lemma
\ref{FixedCharacterization} we have $y\in\mathfrak{R}_{K_{r}^{+}}%
\mathfrak{=R}$. As a consequence, $\mathcal{A}_{r}\subset M_{r}^{s}%
(\mathfrak{R})$. The converse inclusion follows from \ref{caracteattrac}.

For the second equality we observe that for any $\gamma\in\mathbb{F}_{r}$ the
energy equality (\ref{Energy}) is satisfied for all $-\infty<s\leq t.$ Let
$z\in\mathcal{A}_{r}$ and let $\gamma\in\mathbb{K}_{r}=\mathbb{K}_{r}^{1}$
(cf. Lemma \ref{lemmaequality}) be such that $\gamma(0)=z$. Since the second
term of the energy function is bounded from above by (\ref{4}), $E(\gamma
(t))\rightarrow l$, as $t\rightarrow-\infty$, for some $l\in\mathbb{R}$. We
reason as before, so let suppose that there exists $\varepsilon>0$ and a
sequence $\gamma(-t_{n})$, $t_{n}\rightarrow\infty$, such that
\[
\text{ }dist\text{ }_{L^{2}(\Omega)}(\gamma(-t_{n}),\mathfrak{R}%
)>\varepsilon,
\]
and we have that $\gamma(-t_{n})\rightarrow y$ in $H_{0}^{1}(\Omega)$,
$E(y)=l$. Moreover, for a fixed $t>0$, there exists $v\in K_{r}^{+}$ and a
subsequence of $v_{n}(\cdot)=\gamma(\cdot-t_{n})$ (relabeled the same) such
that $v(0)=y$ and $v_{n}(t)\rightarrow v(t)=z$ in $H_{0}^{1}(\Omega)$.
Therefore, $E(v_{n}(t))\rightarrow E(z)$ implies that $E(z)=l$ and reasoning
as before, we get a contradiction since it follows that $y\in\mathfrak{R}$.
Hence, $\mathcal{A}_{r}\subset M_{r}^{u}(\mathfrak{R})$ and the converse
inclusion follows from \ref{caracteattrac}.
\end{proof}

\bigskip

If we assume the conditions of Section \ref{SectionAttr1}, then $G_{r}=T_{r}$,
where $T_{r}$ is the semigroup defined by (\ref{Semigroup}). Then, we can
improve the regularity of the global attractor of $T_{r}$.

\begin{corollary}
\label{ConvergH1S}Let the conditions of Theorem \ref{ExistAttr2} hold. Then
the global attractor $\mathcal{A}_{r}$ of the semigroup $T_{r}$ is compact in
$H_{0}^{1}\left(  \Omega\right)  $ and the convergence takes place in the
topology of $H_{0}^{1}\left(  \Omega\right)  $, that is,%
\[
dist_{H_{0}^{1}\left(  \Omega\right)  }(T_{r}(t,B),\mathcal{A})\rightarrow
0,\text{ as }t\rightarrow+\infty,
\]
for any set $B$ bounded in $L^{2}\left(  \Omega\right)  .$
\end{corollary}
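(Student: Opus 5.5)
The plan is to reduce the corollary to Theorem \ref{existenceatracttorcase2} by identifying the single-valued semigroup $T_r$ with the multivalued semiflow $G_r$ built from $K_r^{+}$.

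\textbf{Step 1: checking that $G_r$ is available.} The conditions of Theorem \ref{ExistAttr2} are (\ref{Cont})-(\ref{3}), (\ref{7}), (\ref{1.36}), (\ref{h2}), together with $f\in C^{1}(\mathbb{R})$ and $f^{\prime}\leq\eta$, which are needed for $T_r$ to be defined. These include the hypotheses of Lemma \ref{lemma1} in its first alternative, so Theorem \ref{existenceatracttorcase2} applies to $G_r$.

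\textbf{Step 2: proving $G_r(t,u_0)=\{T_r(t,u_0)\}$.} By Theorem \ref{uniqueness}, for each $u_0\in L^{2}(\Omega)$ there is at most one regular solution with $u(0)=u_0$. Every element of $K_r^{+}$ is a regular solution, so $G_r(t,u_0)\subset\{T_r(t,u_0)\}$. For the reverse inclusion, $K_r^{+}$ must contain a solution starting at $u_0$. The paper notes that the regular solution constructed in Theorem \ref{existenceweakregularsolutionu0L2} satisfies (\ref{CondDerivu}), using the estimates (\ref{47})-(\ref{51}) on the Galerkin approximations. By uniqueness this solution is the regular solution $T_r(\cdot,u_0)$, so it lies in $K_r^{+}$ and $G_r(t,u_0)\neq\emptyset$. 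I expect this step to need the most care: one has to be sure that the constructed solution really satisfies (\ref{CondDerivu}). That follows from the uniform bound (\ref{51}) on $du_n/dt$ in $L^{2}(r,T;L^{2}(\Omega))$, which passes to the limit by weak lower semicontinuity, since the hypothesis $f^{\prime}\leq\eta$ gives the needed control of $\int_{\Omega}\mathcal{F}$ along Galerkin solutions.

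\textbf{Step 3: identifying the attractors.} Since $G_r=T_r$, the global compact attractor of $G_r$ from Theorem \ref{existenceatracttorcase2} is a compact set in $L^{2}(\Omega)$ attracting bounded sets for $T_r$. It is also invariant, because $T_r$ is a semigroup, so $G_r$ is strict. The global attractor of a semigroup is unique: it is the minimal closed attracting set, by Theorem \ref{AttrExist}. Hence it coincides with the attractor $\mathcal{A}_r$ of Theorem \ref{ExistAttr2}. Theorem \ref{existenceatracttorcase2} then gives compactness of $\mathcal{A}_r$ in $H_{0}^{1}(\Omega)$ and the attraction (\ref{distancias}) in $H_{0}^{1}(\Omega)$ for bounded subsets of $L^{2}(\Omega)$, which is exactly the claim.
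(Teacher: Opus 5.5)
Your proposal is correct and follows the paper's route. The paper's proof consists only of the observation that $T_r$ coincides with $G_r$, after which compactness of $\mathcal{A}_r$ in $H_0^1(\Omega)$ and attraction in the $H_0^1(\Omega)$ topology are read off from Theorem \ref{existenceatracttorcase2}; your Steps 2 and 3 just make explicit what the paper leaves implicit, namely $G_r(t,u_0)=\{T_r(t,u_0)\}$ (via Theorem \ref{uniqueness} and (\ref{CondDerivu}) for the constructed solution) and the uniqueness of the attractor.

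One correction to your justification in Step 2: $f^{\prime}\leq\eta$ is really used in the Galerkin bound (\ref{absorbing}) in $L^{\infty}(\varepsilon,T;H_0^1(\Omega))$, which keeps $A(\Vert u_n\Vert_{H_0^1}^2)$ bounded in the energy estimate obtained by multiplying by $u_n^{\prime}$. The term $-\int_{\Omega}\mathcal{F}(u_n)\,dx$ is instead controlled by (\ref{4}) together with the $L^p(0,T;L^p(\Omega))$ bound in (\ref{1.10.1}), after averaging over the initial time.
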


\begin{proof}
Since the semigroup $T_{r}$ coincides with the semiflow $G_{r}$, the results
follows from Theorem \ref{existenceatracttorcase2}.
\end{proof}

In addition, from Theorem \ref{theorem12} the characterization of the global
attractor follows.

\begin{proposition}
\label{CharacAttrUniqueness}Let the conditions of Theorem \ref{ExistAttr2}
hold. Assume that one of the following conditions hold:

\begin{enumerate}
\item $h\in L^{\infty}\left(  \Omega\right)  $;

\item $p\leq\frac{2n}{n-2}$ if $n\geq3.$
\end{enumerate}

Then the global attractor $\mathcal{A}_{r}$ can be caracterized as follows
\begin{equation}
\mathcal{A}_{r}=M_{r}^{u}(\mathfrak{R})=M_{r}^{s}(\mathfrak{R}),
\label{caract1}%
\end{equation}
where $M_{r}^{s}(\mathfrak{R}),\ M_{r}^{u}(\mathfrak{R})$ are defined in
(\ref{StableSet})-(\ref{UnstableSet}).
\end{proposition}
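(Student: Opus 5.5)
The plan is to reduce the statement to Theorem \ref{theorem12}, applied in its first or second case, by showing that under uniqueness the semigroup $T_{r}$ coincides with the multivalued semiflow $G_{r}$. The conditions of Theorem \ref{ExistAttr2} are (\ref{Cont})-(\ref{3}), (\ref{7}), (\ref{1.36}) and (\ref{h2}), together with the standing assumption of Section \ref{SectionAttr1} that $f\in C^{1}(\mathbb{R})$ with $f^{\prime}\leq\eta$. These contain the hypotheses of Lemma \ref{lemma1} in its first alternative. Adding $h\in L^{\infty}(\Omega)$ gives case 1 of Theorem \ref{theorem12}, and adding $p\leq\frac{2n}{n-2}$ for $n\geq3$ gives case 2. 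So it suffices to check that the objects in (\ref{StableSet})-(\ref{UnstableSet}), built from $K_{r}^{+}$, are the objects attached to $T_{r}$, and that $\mathcal{A}_{r}$ denotes the same set in both settings.

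First step: show $K_{r}^{+}$ is exactly the family of regular solutions, one for each initial datum. By Theorem \ref{uniqueness}, for each $u_{0}\in L^{2}(\Omega)$ there is at most one regular solution. The solution built in Theorem \ref{existenceweakregularsolutionu0L2} satisfies (\ref{CondDerivu}), as observed before the definition of $K_{r}^{+}$ using (\ref{47})-(\ref{51}) on the Galerkin approximations. Hence the unique regular solution lies in $K_{r}^{+}$, so $G_{r}(t,u_{0})=\{T_{r}(t,u_{0})\}$.

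Second step: identify the attractors. The global attractor of $T_{r}$ from Theorem \ref{ExistAttr2} is compact and invariant. The attractor of $G_{r}$ from Theorem \ref{existenceatracttorcase2} is the minimal closed attracting set by Theorem \ref{AttrExist}, and the same minimality holds for the compact attractor of the single-valued semigroup, so the two sets coincide. In the same way, complete trajectories of $K_{r}^{+}$ are exactly the complete trajectories $\phi$ of $T_{r}$: if $\phi$ is a complete trajectory of $T_{r}$, each shift restricted to $[0,\infty)$ is the unique regular solution from $\phi(h)$, hence in $K_{r}^{+}$. The fixed points of $T_{r}$ are the elements of $\mathfrak{R}$ by Lemma \ref{FixedCharacterization}. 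Thus $\mathbb{K}_{r}$ and $\mathbb{F}_{r}$ are the bounded and the arbitrary complete trajectories of $T_{r}$, and Theorem \ref{theorem12} directly gives (\ref{caract1}).

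The main obstacle is the first step: ensuring that every regular solution, not only the Galerkin limit, satisfies (\ref{CondDerivu}), so that $K_{r}^{+}$ is not a strict subset of the regular solutions. Uniqueness is what resolves this. Because Theorem \ref{uniqueness} forces the regular solution to be unique, and the constructed one satisfies (\ref{CondDerivu}), no separate regularity argument for an arbitrary regular solution is required. The only remaining check is that the Lyapunov continuity argument in Theorem \ref{theorem12} applies, and cases 1 and 2 of the hypothesis are exactly what it needs.
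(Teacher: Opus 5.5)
Your proposal is correct and follows the paper's route: the paper observes that under the hypotheses of Section \ref{SectionAttr1} one has $G_{r}=T_{r}$, and then invokes Theorem \ref{theorem12} in its case 1 or case 2. Your argument that the unique regular solution lies in $K_{r}^{+}$ (via the Galerkin estimates (\ref{47})--(\ref{51})), together with the identification of the two attractors by minimality, spells out details the paper leaves implicit.
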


\begin{remark}
This proposition also follows from \cite[p.160]{babin}, as a Lyapunov function
exists inside the global attractor.
\end{remark}

\subsection{Strong solutions}

We split this part into two cases.

\subsubsection{Attractor in the phase space $H_{0}^{1}\left(  \Omega\right)
\cap L^{p}\left(  \Omega\right)  $}

We consider the metric space $X=H_{0}^{1}\left(  \Omega\right)  \cap
L^{p}\left(  \Omega\right)  $ endowed with the induced topology of the space
$H_{0}^{1}\left(  \Omega\right)  $.

If we assume that $f\in C^{1}(\mathbb{R})$ is such that $f^{\prime}(s)\leq
\eta$ and conditions (\ref{Cont})-(\ref{3}), (\ref{h2}) as well, then we known
by Theorem \ref{existencestrongsolutionu0H01Lp} that for any $u_{0}\in X$
there exists al least one strong solution $u\left(  \text{\textperiodcentered
}\right)  $,

Let
\[
\mathcal{R}=K_{s}^{+}:=\{u(\cdot):u\text{ is a strong solution of (\ref{1}%
)}\}.
\]
We define the (possibly multivalued) map $G_{s}:\mathbb{R}^{+}\times
X\rightarrow P(X)$ by
\[
G_{s}(t,u_{0})=\{u(t):u\in K_{s}^{+}\text{ and }u(0)=u_{0}\}.
\]
With respect to the axiomatic properties $\left(  K1\right)  -\left(
K4\right)  $ given above, property $\left(  K1\right)  $ is obviously true,
and $\left(  K2\right)  -\left(  K3\right)  $ can be proved easily using
equality (\ref{EquationRegular}). Therefore, $G_{s}$ is a strict multivalued
semiflow by the results of Section \ref{Abstract}.

We shall obtain a similar result as in Lemma \ref{lemma1}.

\begin{lemma}
\label{lemma4} Let assume that $f\in C^{1}(\mathbb{R})$ is such that
$f^{\prime}(s)\leq\eta$ and conditions (\ref{Cont})-(\ref{3}), (\ref{h2}).
Given $\{u^{n}\}\subset K_{s}^{+}$ a sequence such that $u^{n}(0)\rightarrow
u_{0}$ weakly in $H_{0}^{1}(\Omega)\cap L^{p}\left(  \Omega\right)  $, there
exists a subsequence of $\{u^{n}\}$ (relabeled the same) and $u\in K_{s}^{+}$,
satisfying $u(0)=u_{0}$, such that
\begin{align*}
u^{n}(t)  &  \rightarrow u(t)\mathit{\ }\text{\textit{ in }}H_{0}^{1}%
(\Omega),\ \forall t>0,\\
u^{n}\left(  t\right)   &  \rightharpoonup u\left(  t\right)  \text{ in }%
L^{p}\left(  \Omega\right)  ,\ \forall t\geq0.
\end{align*}

\end{lemma}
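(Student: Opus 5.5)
The argument is a compactness argument modelled on Lemma \ref{lemma1}, but carried out on the whole interval $[0,T]$, since strong solutions have better regularity at $t=0$. Fix $T>0$. Because $u^{n}(0)\rightharpoonup u_{0}$ weakly in $H_{0}^{1}(\Omega)\cap L^{p}(\Omega)$, the norms $\Vert u^{n}(0)\Vert_{H_{0}^{1}}$ and $\Vert u^{n}(0)\Vert_{L^{p}}$ are bounded. Since $du^{n}/dt\in L^{2}(0,T;L^{2}(\Omega))$ and $u^{n}\in L^{2}(0,T;D(A))$, the identity (\ref{sellandyou1}) holds on $(0,T)$, and $(\mathcal{F}(u^{n}(t)),1)$ is absolutely continuous. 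Hence we may multiply (\ref{1}) by $u^{n}_{t}$ exactly as in (\ref{1.1118})--(\ref{4.55}). This bounds $u^{n}$ uniformly in $L^{\infty}(0,T;H_{0}^{1}(\Omega)\cap L^{p}(\Omega))$ and $du^{n}/dt$ in $L^{2}(0,T;L^{2}(\Omega))$. Consequently $a(\Vert u^{n}(t)\Vert_{H_{0}^{1}}^{2})$ is uniformly bounded on $[0,T]$.

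Multiplying by $-\Delta u^{n}$ and using $f'\le\eta$, as in (\ref{ultima223}), gives a bound in $L^{2}(0,T;D(A))$. The $L^{2}$ estimate (\ref{1.10.1}) controls $\int_0^T\Vert u^n\Vert_{H_0^1}^2$.

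With these bounds I extract a subsequence satisfying convergences analogous to (\ref{10.2}), now with $\varepsilon=0$. Aubin--Lions gives $u^{n}\to u$ in $L^{2}(0,T;H_{0}^{1}(\Omega))$. Ascoli--Arzel\`a (equicontinuity in $L^{2}$ from the $u_t$ bound, plus the compact embedding $H_{0}^{1}\subset\subset L^{2}$) gives $u^{n}\to u$ in $C([0,T],L^{2}(\Omega))$. Combining this with the uniform bound in $H_{0}^{1}\cap L^{p}$ yields $u^{n}(t)\rightharpoonup u(t)$ in $H_{0}^{1}(\Omega)$ and in $L^{p}(\Omega)$ for every $t\in[0,T]$, and in particular $u(0)=u_0$. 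This is the second claim.

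Next I identify the limit. Pointwise a.e.\ convergence and \cite[Lemma 1.3]{lions} give $f(u^{n})\rightharpoonup f(u)$. The a.e.\ convergence of $\Vert u^{n}(t)\Vert_{H_0^1}$, together with uniform boundedness and dominated convergence, gives $a(\Vert u^{n}\Vert_{H_{0}^{1}}^{2})\Delta u^{n}\rightharpoonup a(\Vert u\Vert_{H_{0}^{1}}^{2})\Delta u$ in $L^{2}(0,T;L^{2}(\Omega))$. Passing to the limit shows that $u$ is a strong solution, so $u\in K_{s}^{+}$.

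The main step is strong $H_{0}^{1}$ convergence for every $t>0$. I follow the end of the proof of Lemma \ref{lemma1}, now on $[0,T]$. The energy inequality $\Vert u^{n}_{t}\Vert^{2}+\frac{d}{dt}\frac12A(\Vert u^{n}\Vert_{H_{0}^{1}}^{2})\le C$ is obtained by writing $(f(u^n),u^n_t)$ and $(h,u^n_t)$ and bounding them through $\Vert f(u^{n})\Vert_{L^{2}}$ in $L^2(0,T)$ (available from the equation and the bounds above) and Young's inequality. It shows that $J_{n}(t)=A(\Vert u^{n}(t)\Vert_{H_{0}^{1}}^{2})-\int_0^t g_n$ is non-increasing, for suitable $g_n$ bounded in $L^1(0,T)$.

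Here lies the expected obstacle: the derivative bound for $\frac{d}{dt}A$ is only integrable, not uniform. If the crude bound fails, I will instead use the exact energy functional $E_n(t)=\frac12A(\Vert u^n\Vert_{H_0^1}^2)-(\mathcal{F}(u^n),1)-(h,u^n)$, which is non-increasing by the energy equality. Its non-$A$ part $-(\mathcal F(u^n(t)),1)$ is weakly-$L^p$ upper semicontinuous only in the wrong direction, but for every $t>0$ it converges. This holds because $u^n(t)\to u(t)$ in $L^2$ with a uniform $L^p$ bound at a.e.\ time, and, using the $L^2(r,T;D(A))$ bound with Sobolev/interpolation, the integrals of $\mathcal{F}$ converge at all $t>0$.

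Then, choosing $t_m\uparrow t$ with $J_n(t_m)\to J(t_m)$ (a.e.\ convergence from Aubin--Lions), monotonicity gives $\limsup J_n(t)\le J(t)$. Hence $\limsup\Vert u^{n}(t)\Vert_{H_0^1}\le\Vert u(t)\Vert_{H_0^1}$ by continuity and strict monotonicity of $A$, which with weak convergence gives strong convergence. A diagonal argument over $T\in\mathbb{N}$ completes the proof.
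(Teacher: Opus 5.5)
Your skeleton matches the paper's: the energy estimate from $u_t$, the $L^2(0,T;D(A))$ bound from $-\Delta u$ and $f'\le\eta$, Aubin--Lions and Ascoli--Arzel\`a, identification of the limit, then a time-monotonicity argument for strong $H_0^1$ convergence. Your suspicion about the last step is justified. The paper only says ``arguing as in Lemma \ref{lemma1}'', and that argument uses the pointwise bound $\Vert u^n_t\Vert^2+\frac{d}{dt}\frac12A(\Vert u^n\Vert_{H_0^1}^2)\le C_\varepsilon$. This requires $\sup_t\Vert f(u^n(t))\Vert_{L^2}<\infty$, which (\ref{Acotacionfu}) gives under (\ref{Condp}) but which does not follow from $f'\le\eta$ alone. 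Switching to the exact energy $E$ is the right repair; the paper itself writes the energy equality at the start of this proof. Your final paragraph must then be run with $E_n$, not with $J_n=A(\cdot)-\int_0^t g_n$. With $g_n$ only bounded in $L^1(0,T)$, you can neither pass to the limit in $\int_0^{t_m}g_n$ nor make $\int_{t_m}^t g_n$ small uniformly in $n$.

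There is a genuine gap in how you treat $\Phi_n(t)=\int_\Omega\mathcal F(u^n(t))\,dx$. Your reasons do not support convergence at every $t>0$. An $L^2$-in-time bound in $D(A)$ says nothing at a fixed $t$. Strong $L^2$ convergence plus an $L^p$ bound gives convergence only in $L^s$ for $s<p$, which does not control $p$-growth. Also, contrary to your remark, the available semicontinuity goes in exactly the direction you need, and you need two separate facts:
\begin{itemize}
\item \emph{At the terminal time $t$.} Since $\frac12A(\Vert u^n(t)\Vert_{H_0^1}^2)=E_n(t)+\Phi_n(t)+(h,u^n(t))$, you only need $\limsup_n\Phi_n(t)\le\int_\Omega\mathcal F(u(t))\,dx$. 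This follows from Fatou's lemma applied to $\widetilde\kappa-\mathcal F(u^n(t,x))\ge0$, cf.\ (\ref{4}), using $u^n(t)\to u(t)$ in $L^2$ (a.e.\ along subsequences, which suffices).
\item \emph{At the approximating times $t_m$.} Here you need genuine convergence, and it holds for a.e.\ $s$. Write $Q_T=(0,T)\times\Omega$. The equation gives $f(u^n)$ bounded in $L^2(Q_T)$, and (\ref{3}) gives $|f(s)|\ge\frac{\alpha_1}{2}|s|^{p-1}$ for large $|s|$. Hence $u^n$ is bounded in $L^{2(p-1)}(Q_T)$, so $|u^n|^p$ is uniformly integrable when $p>2$ (the case $p=2$ is immediate). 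Vitali's theorem then yields $\mathcal F(u^n)\to\mathcal F(u)$ in $L^1(Q_T)$, hence $\Phi_n(s)\to\Phi(s)$ for a.e.\ $s$ after a further subsequence.
\end{itemize}
With these two facts, $E_n(t)\le E_n(t_m)$ and the continuity of $s\mapsto E(u(s))$ for the limit strong solution give $\limsup_n A(\Vert u^n(t)\Vert_{H_0^1}^2)\le A(\Vert u(t)\Vert_{H_0^1}^2)$, and your argument closes. A posteriori, $\Phi_n(t)$ does converge at every $t>0$.
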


\begin{proof}
Since $\dfrac{du^{n}}{dt}\in L^{2}\left(  0,T;L^{2}\left(  \Omega\right)
\right)  $, we can use (\ref{sellandyou1}) and multiplying (\ref{1}) by
$u_{t}$ and integrating between $s$ and $t$ we obtain
\[
\int_{s}^{t}\Vert\frac{d}{dr}\Vert u(r)\Vert_{L^{2}}^{2}%
dr+E(u(t))=E(u(s))\quad\text{ for all }t\geq s\geq0,
\]
where $E$ was defined in (\ref{EnergyFunction0}). Therefore, by (\ref{2}) and
(\ref{4}) we have that%
\begin{equation}
\int_{0}^{t}\Vert\frac{d}{dr}u(r)\Vert_{L^{2}}^{2}dr+\frac{m}{4}\Vert
u(t)\Vert_{H_{0}^{1}}^{2}+\widetilde{\alpha}_{1}\Vert u(t)\Vert_{L^{p}}%
^{p}\leq\frac{1}{2}A(\Vert u(0)\Vert_{H_{0}^{1}}^{2})+\widetilde{\alpha}%
_{2}\Vert u(0)\Vert_{L^{p}}^{p}+K_{1}\left\Vert u\left(  0\right)  \right\Vert
_{L^{2}}^{2}+K_{2} \label{69}%
\end{equation}
holds for all $t>0$.
\end{proof}

On the other hand, multiplying by $-\Delta u$, integrating over $(0,T)$ and
using (\ref{69}) it follows that%
\begin{equation}
\frac{1}{2}\Vert u(T)\Vert_{H_{0}^{1}}^{2}+\frac{m}{2}\int_{0}^{T}\Vert\Delta
u(s)\Vert_{L^{2}}^{2}ds\leq\eta\int_{0}^{T}\Vert u(s)\Vert_{H_{0}^{1}}%
^{2}ds+\frac{1}{2}\Vert u(0)\Vert_{H_{0}^{1}}^{2}+K_{3}\leq K_{4}\left(
T\right)  , \label{70}%
\end{equation}
for all $T>0$.

Thus, the sequence $u^{n}$ is bounded in $L^{\infty}(0,T;H_{0}^{1}(\Omega)\cap
L^{p}\left(  \Omega\right)  )\cap L^{2}(0,T;D(A))$ and $\dfrac{du^{n}}%
{dt},\ f\left(  u^{n}\right)  $ are bounded in $L^{2}(0,T;L^{2}(\Omega))$, for
all $T>0$. Therefore,
\[
u^{n}\overset{\ast}{\rightharpoonup}u\text{ in }L^{\infty}(0,T;H_{0}%
^{1}(\Omega)\cap L^{p}\left(  \Omega\right)  ),
\]%
\[
u^{n}\rightharpoonup u\text{ in }L^{2}(0,T;D(A)),
\]%
\[
u_{t}^{n}\rightharpoonup u_{t}\text{ in }L^{2}(0,T;L^{2}(\Omega)),.
\]
Arguing in a similar way as in Theorem \ref{existenceweakregularsolutionu0L2}
we have%
\[
u_{n}\rightarrow u\text{ in }L^{2}(0,T;H_{0}^{1}(\Omega)),
\]%
\[
u_{n}(t,x)\rightarrow u(t,x)\text{ a.e. on }(0,T)\times\Omega,
\]%
\[
f\left(  u^{n}\right)  \rightharpoonup f\left(  u\right)  \text{ in }%
L^{2}(0,T;L^{2}(\Omega)),
\]

\[
a(\Vert u_{n}\Vert_{H_{0}^{1}}^{2})\Delta u_{n}\rightharpoonup a(\Vert
u\Vert_{H_{0}^{1}}^{2})\Delta u\text{ in }L^{2}(0,T;L^{2}(\Omega)).
\]
Hence, we can pass to the limit and obtain that $u\in K_{s}^{+}$. Following
the same lines of Theorem \ref{existencestrongsolutionu0H01Lp} we check that
$u\left(  0\right)  =u_{0}.$

Moreover, arguing as in Lemma \ref{lemma1} we obtain
\[
u^{n}(t)\rightarrow u(t)\text{ in }H_{0}^{1}(\Omega)\text{ for all }t>0.
\]
Finally, by the Ascoli-Arzel\`{a} theorem we deduce%
\[
u^{n}\rightarrow u\text{ in }C([0,T],L^{2}(\Omega))
\]
and combining this with (\ref{69}) we infer that%
\[
u^{n}\left(  t\right)  \rightharpoonup u\left(  t\right)  \text{ in }%
L^{p}\left(  \Omega\right)  \text{ }\forall t\geq0.
\]

\begin{corollary}
Assume the conditions of Lemma \ref{lemma4}. Then the multivalued semiflow
$G_{s}$ has compact values.
\end{corollary}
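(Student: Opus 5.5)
To show that $G_{s}(t,u_{0})$ is compact in $X$ for every $t\geq0$ and $u_{0}\in X$, I will extract subsequences and apply Lemma \ref{lemma4} with a constant initial datum. The case $t=0$ is trivial, since $G_{s}(0,u_{0})=\{u_{0}\}$. So I fix $t>0$ and $u_{0}\in X=H_{0}^{1}(\Omega)\cap L^{p}(\Omega)$, recalling that $X$ carries the topology induced by $H_{0}^{1}(\Omega)$. It then suffices to show that every sequence $y_{n}\in G_{s}(t,u_{0})$ has a subsequence converging in $H_{0}^{1}(\Omega)$ to some element of $G_{s}(t,u_{0})$. This sequential compactness gives both relative compactness and closedness at once.

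By definition of $G_{s}$, for each $n$ there is a strong solution $u^{n}\in K_{s}^{+}$ with $u^{n}(0)=u_{0}$ and $u^{n}(t)=y_{n}$. The initial data form the constant sequence $u^{n}(0)=u_{0}$, which in particular converges weakly to $u_{0}$ in $H_{0}^{1}(\Omega)\cap L^{p}(\Omega)$. Lemma \ref{lemma4} therefore applies. It yields a subsequence (relabelled) and a strong solution $u\in K_{s}^{+}$ with $u(0)=u_{0}$ such that $u^{n}(s)\rightarrow u(s)$ in $H_{0}^{1}(\Omega)$ for all $s>0$. Taking $s=t$, we get $y_{n}=u^{n}(t)\rightarrow u(t)$ in $H_{0}^{1}(\Omega)$.

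Since $u\in K_{s}^{+}$ and $u(0)=u_{0}$, we have $u(t)\in G_{s}(t,u_{0})$. Moreover $u(t)\in X$, because strong solutions lie in $L^{\infty}(0,T;H_{0}^{1}\cap L^{p})$ and, being weakly continuous in $L^{p}$, are in $L^{p}$ at every time; alternatively, Lemma \ref{lemma4} already gives $u^{n}(t)\rightharpoonup u(t)$ in $L^{p}(\Omega)$. Hence the limit lies in $G_{s}(t,u_{0})$, and this set is compact in $X$.

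All the analytic work is carried by Lemma \ref{lemma4}, in particular its upgrade from weak to strong convergence in $H_{0}^{1}(\Omega)$, and I use that lemma as stated. The only point needing care is that the topology on $X$ is that of $H_{0}^{1}(\Omega)$, so strong $H_{0}^{1}$ convergence is exactly the convergence required, with no extra $L^{p}$ compactness needed.
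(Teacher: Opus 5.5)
Your proposal is correct and matches the paper's intended argument. The corollary is stated without separate proof as an immediate consequence of Lemma \ref{lemma4} with the constant initial datum $u_{0}$, where strong $H_{0}^{1}(\Omega)$ convergence at time $t>0$ gives sequential compactness of $G_{s}(t,u_{0})$ in $X$; your handling of $t=0$ and of $u(t)\in L^{p}(\Omega)$ supplies the only details the paper leaves implicit.
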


\begin{proposition}
\label{absorbingset3}Assume the conditions of Lemma \ref{lemma4}. Then there
exists an absorbing set $B_{1}$ for $G_{s}$, which is compact in $H_{0}%
^{1}\left(  \Omega\right)  $ and bounded in $L^{p}\left(  \Omega\right)  $.
\end{proposition}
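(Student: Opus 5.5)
The plan is to mimic Proposition \ref{absorbingset}. The extra difficulty is that now the phase space is $X=H_{0}^{1}(\Omega)\cap L^{p}(\Omega)$ with the $H_{0}^{1}$ topology. Bounded sets are bounded in $H_{0}^{1}$ but not necessarily in $L^{p}$, so the absorbing estimate must be uniform in terms of the $H_{0}^{1}$ norm of the data alone. This is the step I expect to be the main obstacle.

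\textbf{Step 1 ($L^{2}$ absorption).} Multiplying (\ref{1}) by $u$ exactly as in Proposition \ref{abosorbinginL2firstcase} (strong solutions are regular enough, so no Galerkin justification is needed) gives (\ref{9.4}) and the integral bound (\ref{9.2}) for every $u\in K_{s}^{+}$. Hence there are $K$ and $t_{0}(R)$, uniform over all strong solutions with $\Vert u_{0}\Vert_{L^{2}}\leq R$, such that $\Vert u(t)\Vert_{L^{2}}\leq K$ for $t\geq t_{0}$. Moreover $\int_{t}^{t+1}(\Vert u\Vert_{H_{0}^{1}}^{2}+\Vert u\Vert_{L^{p}}^{p})ds\leq L$ for $t\geq t_{0}$, since the $\alpha_{1}\Vert u\Vert_{L^{p}}^{p}$ term is kept in (\ref{9.3}).

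\textbf{Step 2 (absorption in $H_{0}^{1}\cap L^{p}$).} Since $du/dt\in L^{2}(0,T;L^{2}(\Omega))$, the energy equality of Lemma \ref{lemma4} holds, so $E(u(t))$ is nonincreasing, where $E$ is given by (\ref{EnergyFunction0}). Here we cannot bound $A(\Vert u\Vert_{H_{0}^{1}}^{2})$ from above via (\ref{7}), since that condition is not assumed. Instead I would use the uniform Gronwall argument as in (\ref{47})--(\ref{50}), replacing the time-integrated bound of $E$ by a mean value argument, which needs only continuity of $A$.
\begin{itemize}
\item By Step 1, on each interval $[t,t+1]$ with $t\geq t_{0}$ there is a time $s$ with $\Vert u(s)\Vert_{H_{0}^{1}}^{2}+\Vert u(s)\Vert_{L^{p}}^{p}\leq L$.
\item At that time $E(u(s))\leq \frac{1}{2}\max_{[0,L]}A+\widetilde{\alpha}_{2}L+\widetilde{\kappa}|\Omega|+\Vert h\Vert_{L^{2}}K=:C_{0}$, using (\ref{4}).
\item Monotonicity of $E$ then gives $E(u(t+1))\leq C_{0}$ for all $t\geq t_{0}$.
\item The coercivity from (\ref{2}) and (\ref{4}), namely $E(y)\geq \frac{m}{2}\Vert y\Vert_{H_{0}^{1}}^{2}+\widetilde{\alpha}_{1}\Vert y\Vert_{L^{p}}^{p}-\widetilde{\kappa}|\Omega|-\Vert h\Vert\Vert y\Vert$, yields a constant $M$ with $\Vert u(t)\Vert_{H_{0}^{1}}+\Vert u(t)\Vert_{L^{p}}\leq M$ for $t\geq t_{0}(R)+1$.
\end{itemize}
Since $\Vert u_{0}\Vert_{L^{2}}\leq \lambda_{1}^{-1/2}\Vert u_{0}\Vert_{H_{0}^{1}}$, this handles any set bounded in $X$. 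Hence the ball $B_{M}=\{y\in X:\Vert y\Vert_{H_{0}^{1}}+\Vert y\Vert_{L^{p}}\leq M\}$ is absorbing.

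\textbf{Step 3 (compact absorbing set).} Put $B_{1}=\overline{G_{s}(1,B_{M})}$, with closure taken in $H_{0}^{1}$. It is absorbing, because $G_{s}$ is strict: $G_{s}(t+1,B)=G_{s}(1,G_{s}(t,B))\subset G_{s}(1,B_{M})$ for large $t$. For compactness, take $y_{n}=u^{n}(1)$ with $u^{n}(0)\in B_{M}$. Bounded sets in $H_{0}^{1}\cap L^{p}$ are weakly relatively compact, so along a subsequence $u^{n}(0)\rightharpoonup u_{0}\in B_{M}$. Lemma \ref{lemma4} then gives $u^{n}(1)\to u(1)$ in $H_{0}^{1}$ with $u(1)\in G_{s}(1,B_{M})$, so $G_{s}(1,B_{M})$ is relatively compact in $H_{0}^{1}$. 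Finally, the energy bound of Step 2 shows $\Vert y\Vert_{L^{p}}\leq M'$ on $G_{s}(1,B_{M})$. This bound is preserved under $H_{0}^{1}$ limits by weak lower semicontinuity, since $H_{0}^{1}$ convergence implies a.e. convergence along a subsequence and Fatou applies. Thus $B_{1}\subset X$ is compact in $H_{0}^{1}$ and bounded in $L^{p}$.
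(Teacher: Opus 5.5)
Your proof is correct, and its key step differs from the paper's. The paper's proof says only that this proposition follows the lines of Proposition \ref{absorbingset}, with Lemma \ref{lemma4} replacing Lemma \ref{lemma1}. There, the smoothing bound (\ref{IneqH1Lp}) comes from the uniform Gronwall argument of (\ref{47})--(\ref{50}). That argument needs a bound on $\int_t^{t+r}A(\Vert u\Vert_{H_0^1}^2)\,ds$, which it takes from the growth condition (\ref{7}). However, (\ref{7}) is not among the hypotheses of Lemma \ref{lemma4}; it is added only in Lemma \ref{AttrEq}. Moreover, even under (\ref{7}) one only has $A(s)\leq M_1s+\frac{M_2}{2}s^2$, so the integral involves $\int\Vert u\Vert_{H_0^1}^4$, which (\ref{46}) does not control unless $M_2=0$.

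Your Step 2 avoids this. Because $E(u(\cdot))$ is nonincreasing along strong solutions, a single time $s\in[t,t+1]$ with $\Vert u(s)\Vert_{H_0^1}^2+\Vert u(s)\Vert_{L^p}^p\leq L$ suffices. This uses only continuity and monotonicity of $A$, so the proposition holds under exactly the stated hypotheses. You are also right on two further points. The absorption time must depend only on $\Vert u_0\Vert_{L^2}$, since bounded sets of $X$ need not be bounded in $L^p$. And weak compactness of $B_M$ in $H_0^1\cap L^p$ is what lets Lemma \ref{lemma4} apply in Step 3. That step in fact shows $G_s(1,B_M)$ is already closed in $H_0^1$, so the Fatou argument is optional.

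Where (\ref{7}) holds with $M_2=0$, the paper's route would give something extra: an explicit bound $C(1+\Vert u_0\Vert_{L^2}^2)$ at time $1$ for arbitrary data. Your argument gives that up in exchange for validity without any growth assumption on $a$.
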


\begin{proof}
The proof follows the same lines of that in Proposition \ref{absorbingset} but
using Lemma \ref{lemma4}.
\end{proof}

\bigskip

From these results we obtain the existence of the global attractor.

\begin{theorem}
\label{existenceatracttorcase24} Assume the conditions of Lemma \ref{lemma4}.
Then the multivalued semiflow $G_{s}$ possesses a global compact invariant
attractor $\mathcal{A}_{s}$, which is bounded in $L^{p}\left(  \Omega\right)
$.
\end{theorem}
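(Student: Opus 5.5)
The plan is to apply Theorem \ref{AttrExist} in the metric space $X=H_{0}^{1}(\Omega)\cap L^{p}(\Omega)$ with the $H_{0}^{1}$ topology. This requires three ingredients: upper semicontinuity of $t\mapsto G_{s}(t,x)$ with closed values, a compact attracting set, and strictness of $G_{s}$. Strictness is already available, since $K_{s}^{+}$ satisfies (K1)--(K3), as observed before Lemma \ref{lemma4}. Invariance will therefore follow from the last assertion of Theorem \ref{AttrExist}.

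\textbf{Step 1: upper semicontinuity and closed values.} I would argue exactly as in Proposition \ref{uppersemicontinuity}, by contradiction.
Take $u_{0}^{n}\rightarrow u_{0}$ in $X$ (in the $H_{0}^{1}$ norm), and suppose $y_{n}=u^{n}(t)\in G_{s}(t,u_{0}^{n})$ stays outside a neighborhood $O$ of $G_{s}(t,u_{0})$.

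The subtle point is that convergence in $X$ only gives $H_{0}^{1}$ convergence, whereas Lemma \ref{lemma4} asks for weak convergence in $H_{0}^{1}\cap L^{p}$. I will handle this as follows.
\begin{itemize}
\item If $n\le 2n/(n-2)$-type embeddings give $H_{0}^{1}\subset L^{p}$, the $L^{p}$ bound is automatic.
\item Otherwise, I restrict to sets bounded in $L^{p}$, or note that on the absorbing set $L^{p}$-boundedness is automatic.
\item More simply, since $u_{0}^{n}\rightarrow u_{0}$ in $L^{2}$ and, passing to a subsequence, a.e., Fatou's lemma only yields a lower bound. So the genuine fix is to require $L^{p}$-boundedness of the initial data, which holds for the weak convergence hypothesis.
\end{itemize}
Given a bounded sequence in $L^{p}$ converging in $H_{0}^{1}$, it converges weakly in $L^{p}$ to $u_{0}$. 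Lemma \ref{lemma4} then yields $u\in K_{s}^{+}$ with $u(0)=u_{0}$ and $u^{n}(t)\rightarrow u(t)$ in $H_{0}^{1}$ for $t>0$ (for $t=0$ the claim is trivial). This contradicts $y_{n}\notin O$.

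Closed values, indeed compact values, follow from the Corollary after Lemma \ref{lemma4}.

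\textbf{Step 2: compact attracting set.} Proposition \ref{absorbingset3} gives an absorbing set $B_{1}$ that is compact in $H_{0}^{1}$ and bounded in $L^{p}$. An absorbing set is attracting. I need $B_{1}\subset X$ to be compact in $X$, which holds because the topology of $X$ is the $H_{0}^{1}$ one. Here I take "bounded sets" in $X$ to mean bounded in $H_{0}^{1}$, noting the $L^{p}$ bound is then controlled via the energy estimate (\ref{69}).

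\textbf{Conclusion.} Theorem \ref{AttrExist}, with the remark that completeness of $X$ is unnecessary, then gives a compact global attractor $\mathcal{A}_{s}$ that is the minimal closed attracting set. It is invariant by strictness. Since $\mathcal{A}_{s}\subset B_{1}$ by minimality, it is bounded in $L^{p}$.

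\textbf{Main obstacle.} The delicate point is the mismatch between the $H_{0}^{1}$ topology on $X$ and the $L^{p}$-weak hypothesis of Lemma \ref{lemma4}. This matters both in upper semicontinuity and in absorbing bounded sets when $p$ exceeds the Sobolev exponent. I would first try to show, via the energy inequality (\ref{69}), that solutions starting in $H_{0}^{1}$-bounded sets become $L^{p}$-bounded after positive time. That reduction would let upper semicontinuity be checked only along $L^{p}$-bounded sequences.
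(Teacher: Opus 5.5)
Your proposal has a genuine gap in Step 1: it does not establish the upper semicontinuity that Theorem \ref{AttrExist} requires.

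That upper semicontinuity is with respect to the metric of $X$, i.e.\ the $H_0^1$ norm. So you must handle every sequence $u_0^n\to u_0$ in $H_0^1$ with $u_0^n\in X$. When $p>2n/(n-2)$ such a sequence can have $\|u_0^n\|_{L^p}\to\infty$, and then Lemma \ref{lemma4} does not apply. Requiring $L^p$-boundedness of the initial data only restricts the class of sequences, so it proves a strictly weaker property.

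The reduction in your last paragraph does not repair this. Even if the $u^n(\tau)$ become $L^p$-bounded for some $\tau>0$, placing $\lim u^n(t)$ in $G_s(t,u_0)$ still needs a strong solution on $[0,\tau]$ from $u_0$ through $\lim u^n(\tau)$. That is the same limit problem on $[0,\tau]$. There the bounds on $\int_0^\tau\|u^n_t\|_{L^2}^2$ and $\sup_{[0,\tau]}\|u^n\|_{L^p}$ coming from (\ref{69}) depend on $\|u_0^n\|_{L^p}$, which may blow up.

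Relatedly, (\ref{69}) has $\widetilde{\alpha}_2\|u(0)\|_{L^p}^p$ on its right-hand side. So it gives no $L^p$ control from $H_0^1$ bounds, contrary to your Step 2; you would need a smoothing argument in the spirit of (\ref{47})--(\ref{50}). This obstruction is exactly why the paper says upper semicontinuity of $G_s$ is not known and does not invoke Theorem \ref{AttrExist}.

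The missing idea is that Lemma \ref{lemma4} is only needed along sequences taken from the absorbing set, which are automatically $L^p$-bounded. The paper's route is as follows.
\begin{enumerate}
\item Set $\mathcal{A}_s=\omega(B_1)$, with $B_1$ from Proposition \ref{absorbingset3}. It is nonempty and compact, and it attracts $B_1$, hence all bounded sets because $B_1$ is absorbing. The inclusion $\mathcal{A}_s\subset B_1$ gives the $L^p$ bound.
\item For negative invariance, take $y=\lim y_n$ with $y_n\in G_s(t_n,B_1)$, $t_n\to+\infty$, and fix $t>0$. Write $y_n=u_n(t)$ with $u_n\in K_s^+$ and $u_n(0)=x_n\in G_s(t_n-t,B_1)\subset B_1$ for $n$ large. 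The $x_n$ are bounded in $L^p$ and precompact in $H_0^1$. Along a subsequence, $x_n\to x$ strongly in $H_0^1$ and weakly in $L^p$, with $x\in\omega(B_1)$. Lemma \ref{lemma4} then gives $y=u(t)\in G_s(t,x)\subset G_s(t,\mathcal{A}_s)$.
\item Positive invariance follows from strictness and attraction:
\[
dist_X(G_s(t,\mathcal{A}_s),\mathcal{A}_s)\le dist_X(G_s(t+\tau,\mathcal{A}_s),\mathcal{A}_s)\to 0 \text{ as } \tau\to+\infty.
\]
\end{enumerate}
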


\begin{proof}
In this case we cannot apply Theorem \ref{AttrExist}, because we do not know
if the map $u_{0}\mapsto G_{s}(t.u_{0})$ is upper semicontinuous. We state
that
\[
\mathcal{A}_{s}=\omega\left(  B_{1}\right)  =\{y:\exists t_{n}\rightarrow
+\infty,\ y_{n}\in G_{s}\left(  t_{n},B_{1}\right)  \text{ such that }%
y_{n}\rightarrow y\text{ in }X\}
\]
is a global compact attractor. The set $\omega\left(  B_{1}\right)  $ is
non-empty, compact and the minimal closed set attracting $B_{1}$ \cite[Theorem
6]{CLMV03}. Since $B_{1}$ is absorbing, $\omega\left(  B_{1}\right)  $
attracts any bounded set $B$.

It remains to prove that it is invariant.

First, we prove that it is negatively invariant. Let $y\in\mathcal{A}_{s}$ and
$t>0$ be arbitrary. We take a sequence $y_{n}\in G_{s}\left(  t_{n}%
,B_{1}\right)  $ such that $y_{n}\rightarrow y$, $t_{n}\rightarrow+\infty$.
Since $G_{s}\left(  t_{n},B_{1}\right)  \subset G_{s}(1,G_{s}(t_{n}-1,B_{1}%
))$, there are $x_{n}\in G_{s}(t_{n}-1,B_{1})$ and $u_{n}\in K_{s}^{+}$ such
that $u_{n}\left(  0\right)  =x_{n}$. As for $n$ large $G_{s}(t_{n}%
-1,B_{1})\subset B_{1}$, the sequence $\{x_{n}\}$ \ is bounded in
$L^{p}\left(  \Omega\right)  $ and relatively compact in $H_{0}^{1}\left(
\Omega\right)  $. Hence, up to a subsequence $x_{n}\rightarrow x$ weakly in
$L^{p}\left(  \Omega\right)  $ and strongly in $H_{0}^{1}\left(
\Omega\right)  .$ We deduce by Lemma \ref{lemma4} the existence of a
subsequence of $\{u_{n}\}$ (relabeled the same) and $u\in K_{s}^{+}$,
satisfying $u(0)=x$, such that $u_{n}\left(  t\right)  \rightarrow u\left(
t\right)  $ weakly in $L^{p}\left(  \Omega\right)  $ and strongly in
$H_{0}^{1}\left(  \Omega\right)  $. Thus, $y=u\left(  t\right)  \in
G_{s}\left(  t,x\right)  \subset G_{s}\left(  t,\mathcal{A}_{s}\right)  .$

Second, we prove that it is positively invariant. As the semiflow $G_{s}$ is
strict and $\mathcal{A}_{s}\subset G(\tau,\mathcal{A}_{s})$ for any $\tau
\geq0$, this follows from%
\[
dist_{X}\left(  G_{s}\left(  t,\mathcal{A}_{s}\right)  ,\mathcal{A}%
_{s}\right)  \leq dist_{X}\left(  G_{s}\left(  t,G(\tau,\mathcal{A}%
_{s})\right)  ,\mathcal{A}_{s}\right)  =dist_{X}\left(  G_{s}\left(
t+\tau,\mathcal{A}_{s}\right)  ,\mathcal{A}_{s}\right)  \underset{\tau
\rightarrow+\infty}{\rightarrow}0.
\]

\end{proof}

\begin{lemma}
\label{AttrEq}Assume the conditions of Lemma \ref{lemma4} and (\ref{7}). Then
$\mathcal{A}_{s}=\mathcal{A}_{r}$, where $\mathcal{A}_{r}$ is the global
attractor in Theorem \ref{existenceatracttorcase2}.
\end{lemma}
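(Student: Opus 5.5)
The plan is to prove the two inclusions separately, each time using the characterization of an attractor as the union of bounded complete trajectories: formula (\ref{caracteattrac}) for $\mathcal{A}_r$, and its analogue for $\mathcal{A}_s$. For the latter, $K_s^+$ satisfies (K1)--(K3), and the invariance established in Theorem \ref{existenceatracttorcase24} gives, for each point of $\mathcal{A}_s$, a complete trajectory through it contained in $\mathcal{A}_s$. The key observations are two. First, since $\mathcal{A}_r$ is bounded in $H_0^1(\Omega)\cap L^p(\Omega)$ and every solution in $K_r^+$ satisfies $du/dt\in L^2(\varepsilon,T;L^2(\Omega))$, a regular solution starting from a point of $\mathcal{A}_r$ is a strong solution. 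Second, every strong solution is a regular solution lying in $K_r^+$. Indeed, under (\ref{7}) it is a weak solution with $u\in L^\infty(0,T;H_0^1)\cap L^2(0,T;D(A))$ and $u_t\in L^2(0,T;L^2)$, so it satisfies (\ref{CondDerivu}).

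\emph{Inclusion $\mathcal{A}_s\subset\mathcal{A}_r$.} Take $y\in\mathcal{A}_s$. By negative invariance, $y\in G_s(t,\mathcal{A}_s)$ for every $t$. Using the strictness of $G_s$ and concatenation (K3), I would build a complete trajectory $\gamma$ of $K_s^+$ with $\gamma(0)=y$ and $\gamma(\mathbb{R})\subset\mathcal{A}_s$: go backward step by step, choosing $x_{-k}\in\mathcal{A}_s$ and a strong solution joining $x_{-k}$ to $x_{-k+1}$, then glue the pieces. Since $\mathcal{A}_s$ is bounded in $X$, $\gamma$ is bounded in $L^2(\Omega)$. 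Every shift of $\gamma$ is a strong solution, hence lies in $K_r^+$, so $\gamma\in\mathbb{K}_r$. By (\ref{caracteattrac}), $y=\gamma(0)\in\mathcal{A}_r$.

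\emph{Inclusion $\mathcal{A}_r\subset\mathcal{A}_s$.} Take $z\in\mathcal{A}_r$ and $\gamma\in\mathbb{K}_r$ with $\gamma(0)=z$. By Lemma \ref{lemmaequality}, $\gamma$ is bounded in $H_0^1(\Omega)$. The estimate in the proof of Proposition \ref{absorbingset}, inequality (\ref{IneqH1Lp}), gives $\|\gamma(t)\|_{L^p}^p\le C(1+\|\gamma(t-1)\|_{L^2}^2)$, so $\gamma$ is also bounded in $L^p(\Omega)$. Now fix $\tau\in\mathbb{R}$ and set $u(\cdot)=\gamma(\cdot+\tau)$. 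Then $u(0)\in X$, and $u$ is a regular solution with $u_t\in L^2(\varepsilon,T;L^2)$ for all $\varepsilon>0$. It remains to get $u_t\in L^2(0,T;L^2)$ and $u\in L^2(0,T;D(A))$ up to $t=0$. For this I would use the energy equality (\ref{Energy}) on $[s,t]$, $s>0$, and let $s\to0$. The $H_0^1$-continuity of $E$ on bounded subsets of $X$ (proved in Theorem \ref{theorem12} via the $L^\infty$ bound or the embedding; in general, use the strong $H^1_0$ convergence $u(s)\to u(0)$ from Lemma \ref{lemma1}'s argument together with (\ref{4})) gives $\int_0^T\|u_t\|^2\le E(u(0))-E(u(T))<\infty$. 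With $u_t\in L^2(0,T;L^2)$ in hand, multiplying by $-\Delta u$ as in (\ref{70}) gives $u\in L^2(0,T;D(A))$. Hence $u\in K_s^+$ for every $\tau$, so $\gamma$ is a bounded complete trajectory of $G_s$ in $X$. Since $\mathcal{A}_s$ attracts bounded sets of $X$, we get $\operatorname{dist}_X(z,\mathcal{A}_s)\le \operatorname{dist}_X(G_s(t,\gamma(\mathbb{R})),\mathcal{A}_s)\to0$, and closedness of $\mathcal{A}_s$ gives $z\in\mathcal{A}_s$.

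The main obstacle is the regularity at $t=0$ in the second inclusion, namely showing that a regular solution issued from $\gamma(\tau)$ is strong on $[0,T]$. The difficulty is passing to the limit $s\to0^+$ in the energy equality, which requires continuity of $t\mapsto E(u(t))$ at $0$. My first attempt is to avoid the issue entirely by shifting: apply the argument to $u=\gamma(\cdot+\tau-1)$ restricted to $[1,\infty)$, where regularity already holds. Since $\tau$ is arbitrary, this shows $\gamma(\cdot+\tau)|_{[0,\infty)}$ is strong on every $[0,T]$ directly, because the interval $[0,T]$ for the shifted trajectory corresponds to $[1,T+1]\subset(0,\infty)$ for the original one.
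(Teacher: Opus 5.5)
Your proof is correct. For $\mathcal{A}_r\subset\mathcal{A}_s$ you follow the paper's argument. A trajectory $\gamma\in\mathbb{K}_r$ through $z$ is a complete trajectory of $K_s^+$, so $z\in G_s(t,\gamma(-t))$, and attraction plus closedness of $\mathcal{A}_s$ finish the step. You also supply what the paper only asserts. Your shift argument (the regularity of $\gamma(\cdot+\tau-1)$ on $[1,T+1]$ is the strong-solution regularity of $\gamma(\cdot+\tau)$ on $[0,T]$) is exactly the justification behind the paper's claim that $\gamma|_{[s,+\infty)}$ is a strong solution. Your use of (\ref{IneqH1Lp}) to bound $\gamma$ in $L^p(\Omega)$ makes $\gamma(\mathbb{R})$ a bounded set of $X$ in every sense, which the paper leaves implicit.

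Let the shift argument carry that step and drop two side claims. First, the energy-limit route cites Lemma \ref{lemma1}. That lemma gives strong $H_0^1$ convergence along sequences at positive times, not $H_0^1$-continuity of a single solution at $t=0$. Second, your \emph{first key observation}, that every regular solution issued from a point of $\mathcal{A}_r$ is strong, is not justified as stated. Nothing controls $\Vert u(t)\Vert_{H_0^1}$ as $t\to0^+$ for an arbitrary element of $K_r^+$, and since $G_r$ need not be strict, $\mathcal{A}_r$ need not be positively invariant. You only ever need it along complete trajectories, where the shift covers it.

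For $\mathcal{A}_s\subset\mathcal{A}_r$ your route differs from the paper's. The paper argues by minimality. Since $G_s(t,u_0)\subset G_r(t,u_0)$, the compact set $\mathcal{A}_r$ attracts bounded sets under $G_s$ in the $H_0^1$ topology, by (\ref{distancias}). And $\mathcal{A}_s=\omega(B_1)$ is the minimal closed set attracting $B_1$.

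You instead use the invariance of $\mathcal{A}_s$ (Theorem \ref{existenceatracttorcase24}) and concatenation (K3) in $K_s^+$. These give a bounded complete trajectory of strong solutions through $y$, which lies in $\mathbb{K}_r$, and you conclude with the easy half of (\ref{caracteattrac}). Your version needs only $L^2$-attraction by $\mathcal{A}_r$ rather than (\ref{distancias}). The paper's version needs neither the invariance of $\mathcal{A}_s$ nor (K3).

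Your backward gluing is in fact superfluous. Negative invariance alone gives $y\in G_s(t,\mathcal{A}_s)\subset G_r(t,\mathcal{A}_s)$ for every $t\geq0$. Since $\mathcal{A}_s$ is bounded in $L^2(\Omega)$, $dist_{L^2}(y,\mathcal{A}_r)\leq dist_{L^2}(G_r(t,\mathcal{A}_s),\mathcal{A}_r)\to0$.
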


\begin{proof}
Since $G_{s}\left(  t,u_{0}\right)  \subset G_{r}\left(  t,u_{0}\right)  $ for
all $u_{0}\in X$, it is clear that $\mathcal{A}_{r}$ is a compact attracting
set. Hence, the minimality of the global attractor gives $\mathcal{A}%
_{s}\subset\mathcal{A}_{r}$.

Let $z\in\mathcal{A}_{r}$. Since $z=\gamma\left(  0\right)  $, where
$\gamma\in\mathbb{K}_{r}^{1}$, and $\gamma\mid_{\lbrack s,+\infty)}$ is a
strong solution of (\ref{1}) for any $s\in\mathbb{R},$ we get that $z\in
G_{s}(t_{n},\gamma\left(  -t_{n}\right)  )$ for $t_{n}\rightarrow+\infty$.
Hence,%
\[
dist\left(  z,\mathcal{A}_{s}\right)  \leq dist\left(  G_{s}(t_{n}%
,\gamma\left(  -t_{n}\right)  ),\mathcal{A}_{s}\right)  \rightarrow0\text{ as
}n\rightarrow\infty,
\]
so $z\in\mathcal{A}_{s}.$
\end{proof}

\bigskip

We define a complete trajectory of $G_{s}$ as a map $\gamma:\mathbb{R}%
\rightarrow X$ that satisfies $\gamma(\cdot+h)|_{[0,\infty)}\in K_{s}^{+}$ for
any $h\in\mathbb{R}$. The set of all complete trajetories of $K_{s}^{+}$ will
be denoted by $\mathbb{F}_{s}.$ Let $\mathbb{K}_{s}$ be the set of all
complete trajectories which are bounded in $H_{0}^{1}(\Omega)$.

In view of Theorem \ref{structureattractor}, the global attractor is
characterized in terms of bounded complete trajectories:
\begin{equation}
\mathcal{A}_{s}=\{\gamma(0):\gamma(\cdot)\in\mathbb{K}_{s}\}=\bigcup
_{t\in\mathbb{R}}\{\gamma(t):\gamma(\cdot)\in\mathbb{K}_{s}\}.
\label{caracteattrac2}%
\end{equation}

In the same way as in Lemma \ref{FixedCharacterization} we obtain that
$\mathfrak{R}_{K_{s}^{+}}=\mathfrak{R.}$

Reasoning as in Theorem \ref{boundinLinfty} we obtain the following result.

\begin{theorem}
\label{boundinLinftyC}Assume the conditions of Lemma \ref{lemma4}. Then the
global attractor $\mathcal{A}$ is bounded in $L^{\infty}(\Omega)$, provided
that $h\in L^{\infty}(\Omega)$.
\end{theorem}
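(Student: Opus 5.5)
We follow the scheme of Theorem \ref{boundinLinfty}, now for strong solutions, using the characterization (\ref{caracteattrac2}). First we fix the truncation level. Since $h\in L^{\infty}(\Omega)$, condition (\ref{3}) and Young's inequality give constants $\widetilde{\kappa},\widetilde{\alpha}>0$ such that
\[
(f(s)+h(x))s\leq\widetilde{\kappa}-\widetilde{\alpha}|s|^{p}\quad\text{for all }s\in\mathbb{R}\text{ and a.a. }x\in\Omega .
\]
Set $M=(\widetilde{\kappa}/\widetilde{\alpha})^{1/p}$. Then $f(s)+h(x)\leq0$ for $s\geq M$ and $f(s)+h(x)\geq0$ for $s\leq-M$.

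Next, let $u\in K_{s}^{+}$. We test the equation with $(u-M)_{+}$. This function lies in $H_{0}^{1}(\Omega)$ for a.e. $t$, and $u\in C([0,T],L^{2}(\Omega))$. Since a strong solution has $\frac{du}{dt}\in L^{2}(0,T;L^{2}(\Omega))$, the identity
\[
\frac{1}{2}\frac{d}{dt}\Vert(u-M)_{+}\Vert_{L^{2}}^{2}=\left(u_{t},(u-M)_{+}\right)
\]
holds for a.a. $t$. We will justify it by regularization, approximating $s\mapsto(s-M)_{+}^{2}$ by $C^{1}$ convex functions with Lipschitz derivative and using the chain rule for $W^{1,2}(0,T;L^{2}(\Omega))$ functions. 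This is actually easier than in the regular case, because the time derivative is square integrable up to $t=0$. The diffusion term is handled with
\[
(-\Delta u,(u-M)_{+})=\Vert\nabla(u-M)_{+}\Vert_{L^{2}}^{2}.
\]
Combining this with (\ref{2}), the sign of $f+h$ on $\{u>M\}$, and the Poincar\'{e} inequality, we get
\[
\frac{d}{dt}\Vert(u-M)_{+}\Vert_{L^{2}}^{2}\leq-2m\lambda_{1}\Vert(u-M)_{+}\Vert_{L^{2}}^{2}.
\]
Gronwall's lemma then gives, for $t\geq\tau$,
\[
\Vert(u(t)-M)_{+}\Vert_{L^{2}}^{2}\leq e^{-2m\lambda_{1}(t-\tau)}\Vert(u(\tau)-M)_{+}\Vert_{L^{2}}^{2}.
\]

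Now take $y\in\mathcal{A}_{s}$. By (\ref{caracteattrac2}) there is $\gamma\in\mathbb{K}_{s}$ with $\gamma(0)=y$. Each shift $\gamma(\cdot+\tau)|_{[0,\infty)}$ lies in $K_{s}^{+}$, so the Gronwall estimate applies on $[\tau,0]$. Since $\gamma$ is bounded in $H_{0}^{1}(\Omega)$, hence in $L^{2}(\Omega)$, the norms $\Vert(\gamma(\tau)-M)_{+}\Vert_{L^{2}}$ stay bounded. Letting $\tau\to-\infty$ yields $(y-M)_{+}=0$, i.e. $y\leq M$ a.e.

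The same argument with $(u+M)_{-}$, using $f+h\geq0$ for $s\leq-M$, gives $y\geq-M$. Hence $\Vert y\Vert_{L^{\infty}}\leq M$ for all $y\in\mathcal{A}_{s}$. Alternatively, under (\ref{7}) one could simply invoke Lemma \ref{AttrEq} and Theorem \ref{boundinLinftyB}, but the direct argument avoids the growth assumption on $a$.

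The only step needing care is the rigorous justification of the energy identity for the truncation; everything else is a routine transcription of Theorem \ref{boundinLinfty}.
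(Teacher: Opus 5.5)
Your proof is correct and follows essentially the paper's route: the paper obtains this result by repeating the argument of Theorem \ref{boundinLinfty}, which is exactly what you do (test with $(u-M)_{+}$, use Poincar\'e and Gronwall, then let $\tau\to-\infty$ along a bounded complete trajectory given by (\ref{caracteattrac2})). Two small remarks: no approximation is needed for the chain rule, since $s\mapsto (s-M)_{+}^{2}$ is already $C^{1}$ and convex with Lipschitz derivative; and your use of $(u+M)_{-}$ for the lower bound is the correct form of what the paper writes as $(u-M)_{-}$.
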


Following the same procedure of Theorem \ref{theorem12} we can prove an
analogous characterization of the global attractor.

\begin{theorem}
\label{structureattractorStrong}Assume the conditions of Lemma \ref{lemma4}
and that one of following assumptions holds:

\begin{enumerate}
\item $h\in L^{\infty}\left(  \Omega\right)  ;$

\item $p\leq\frac{2n}{n-2}$ if $n\geq3;$
\end{enumerate}

Then it holds that
\[
\mathcal{A}_{s}=M_{s}^{u}(\mathfrak{R})=M_{s}^{s}(\mathfrak{R}),
\]
where
\begin{equation}
M_{s}^{s}(\mathfrak{R})=\{z:\exists\gamma(\cdot)\in\mathbb{K}_{s}%
,\ \gamma(0)=z,\ \text{ }dist\text{ }_{H_{0}^{1}(\Omega}(\gamma
(t),\mathfrak{R})\rightarrow0,\ t\rightarrow\infty\}, \label{StableSet2}%
\end{equation}%
\begin{equation}
M_{s}^{u}(\mathfrak{R})=\{z:\exists\gamma(\cdot)\in\mathbb{F}_{s}%
,\ \gamma(0)=z,\ \text{ }dist_{H_{0}^{1}(\Omega}(\gamma(t),\mathfrak{R}%
)\rightarrow0,\ t\rightarrow-\infty\}. \label{UnstableSet2}%
\end{equation}

\end{theorem}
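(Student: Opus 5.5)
We follow the scheme of Theorem \ref{theorem12}, using the energy functional $E$ from (\ref{EnergyFunction0}) as a Lyapunov function on $\mathcal{A}_{s}$. The argument has four steps.

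\textbf{Step 1: continuity of $E$.} We first check that $E$ is continuous on $\mathcal{A}_{s}$ in the topology of $H_{0}^{1}(\Omega)$. The terms $\frac{1}{2}A(\Vert y\Vert_{H_{0}^{1}}^{2})$ and $(h,y)$ are clearly continuous. For $\int_{\Omega}\mathcal{F}(y)dx$ there are two cases.
\begin{itemize}
\item In case 1, Theorem \ref{boundinLinftyC} gives an $L^{\infty}$ bound on $\mathcal{A}_{s}$. Hence $\mathcal{F}$ is Lipschitz on the relevant range, and the term is Lipschitz in $L^{2}$.
\item In case 2, we use the embedding $H_{0}^{1}\subset L^{p}$, the bound (\ref{5}), and Lebesgue's (Vitali's) theorem along $H_{0}^{1}$-convergent sequences.
\end{itemize}

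\textbf{Step 2: energy equality and limits along trajectories.} For strong solutions, (\ref{sellandyou1}) and the absolute continuity of $(\mathcal{F}(u(t)),1)$ (valid since $f(u),u_{t}\in L^{2}(0,T;L^{2})$) give the energy equality
\[
\int_{s}^{t}\Vert u_{r}\Vert_{L^{2}}^{2}dr+E(u(t))=E(u(s)),\quad 0\leq s\leq t,
\]
as already noted in the proof of Lemma \ref{lemma4}. For $\gamma\in\mathbb{F}_{s}$ the equality then holds for all $-\infty<s\leq t$.

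Now take $\gamma\in\mathbb{K}_{s}$. By (\ref{caracteattrac2}), $\gamma(t)\in\mathcal{A}_{s}$ for all $t$, and $\mathcal{A}_{s}$ is compact in $H_{0}^{1}$. Hence $E(\gamma(t))$ is bounded and nonincreasing, so it has limits $l_{\pm}$ as $t\rightarrow\pm\infty$.

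\textbf{Step 3: $\mathcal{A}_{s}\subset M_{s}^{s}(\mathfrak{R})$.} Given $z\in\mathcal{A}_{s}$, choose $\gamma\in\mathbb{K}_{s}$ with $\gamma(0)=z$. Suppose, for contradiction, that $dist_{H_{0}^{1}}(\gamma(t_{n}),\mathfrak{R})>\varepsilon$ with $t_{n}\rightarrow+\infty$. By compactness, extract $\gamma(t_{n})\rightarrow y$ in $H_{0}^{1}$; by Step 1, $E(y)=l_{+}$.

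The points $\gamma(t_{n})$ lie in $\mathcal{A}_{s}$, which is bounded in $L^{p}$ by Theorem \ref{existenceatracttorcase24}. So, up to a subsequence, $\gamma(t_{n})\rightharpoonup y$ weakly in $H_{0}^{1}\cap L^{p}$. Lemma \ref{lemma4} applied to $v_{n}=\gamma(\cdot+t_{n})$ then yields $v\in K_{s}^{+}$ with $v(0)=y$ and $v_{n}(t)\rightarrow v(t)$ in $H_{0}^{1}$ for every $t>0$.

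Then $E(v(t))=\lim E(\gamma(t+t_{n}))=l_{+}$ for every $t$. The energy equality for $v$ gives $\int_{0}^{t}\Vert v_{r}\Vert_{L^{2}}^{2}dr=0$, so $v$ is constant and $y\in\mathfrak{R}_{K_{s}^{+}}=\mathfrak{R}$. This contradicts $dist_{H_{0}^{1}}(y,\mathfrak{R})\geq\varepsilon$, so $\mathcal{A}_{s}\subset M_{s}^{s}(\mathfrak{R})$. The reverse inclusion is immediate from (\ref{caracteattrac2}), since elements of $\mathbb{K}_{s}$ pass through points of $\mathcal{A}_{s}$.

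\textbf{Step 4: the unstable set.} For $M_{s}^{u}$ we argue symmetrically with $t_{n}\rightarrow-\infty$, $v_{n}=\gamma(\cdot+t_{n})$ and the limit $l_{-}$, again using Lemma \ref{lemma4} to pass to the limit. This gives $\mathcal{A}_{s}\subset M_{s}^{u}(\mathfrak{R})$.

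For the converse, take $z\in M_{s}^{u}(\mathfrak{R})$ with a trajectory $\gamma\in\mathbb{F}_{s}$. We need $\gamma$ to be bounded, so that $\gamma\in\mathbb{K}_{s}$ and $z\in\mathcal{A}_{s}$ by (\ref{caracteattrac2}).
\begin{itemize}
\item As $t\rightarrow-\infty$, $\gamma(t)$ approaches $\mathfrak{R}\subset\mathcal{A}_{s}$. This set is bounded in $H_{0}^{1}$, so $\gamma(t)$ is eventually bounded in $H_{0}^{1}$.
\item We also need $L^{p}$ bounds on $\gamma(t)$ for $t$ very negative. These come from monotonicity of $E$: since $E(\gamma(t))\leq E(\gamma(s))$ for $s\leq t$, and $E$ controls $\Vert\cdot\Vert_{H_{0}^{1}}^{2}+\Vert\cdot\Vert_{L^{p}}^{p}$ from below via (\ref{2}) and (\ref{4}), it is enough to bound $E(\gamma(s_{k}))$ along a sequence $s_{k}\rightarrow-\infty$. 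For that we take $s_{k}$ with $\gamma(s_{k})$ close in $H_{0}^{1}$ to points of $\mathfrak{R}$ and use continuity of $E$ near $\mathfrak{R}$ (Step 1).
\item Forward in time, boundedness follows from the dissipative estimate (\ref{69}) together with the absorbing set (Proposition \ref{absorbingset3}).
\end{itemize}

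\textbf{Main obstacle.} The delicate point is Step 1 in case 1. There $E$ is continuous only on $L^{\infty}$-bounded sets, and this same continuity is what the converse inclusion in Step 4 relies on. If that fails, the fallback is to avoid continuity of $E$ there and use instead the minimality of the attractor: $\gamma(\mathbb{R})$ would then be a bounded invariant set, hence contained in $\mathcal{A}_{s}$.
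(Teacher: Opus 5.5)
Your proposal is correct and is essentially the paper's proof: the paper just reruns the argument of Theorem \ref{theorem12} with Lemma \ref{lemma4} in place of Lemma \ref{lemma1} (energy $E$ as Lyapunov function on the $H_0^1$-compact set $\mathcal{A}_s$, contradiction along $t_n\to\pm\infty$, energy equality forcing the limit trajectory to be stationary), and your use of the $L^p$-bound on $\mathcal{A}_s$ to obtain the weak convergence in $H_0^1(\Omega)\cap L^p(\Omega)$ that Lemma \ref{lemma4} needs is exactly the adaptation required. The $L^p$ bullet in Step 4 does fail in case 1 as written, but you can simply drop it: $\mathbb{K}_s$ only requires boundedness in $H_0^1(\Omega)$, which your first and third bullets already give, so $z\in\mathcal{A}_s$ follows directly from (\ref{caracteattrac2}), and your ``main obstacle'' never arises because $E$ is only ever evaluated at points of $\mathcal{A}_s$.
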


\begin{remark}
In the definition of $M_{s}^{u}(\mathfrak{R})$ we can replace $\mathbb{F}_{r}$
by $\mathbb{K}_{r}$.
\end{remark}

Let us consider now the particular situation when $G_{s}$ is single-valued
semigroup. If we assume conditions (\ref{Cont})-(\ref{3}), (\ref{1.36}),
(\ref{h2}), $f\in C^{1}(\mathbb{R})$ and$\ f^{\prime}(s)\leq\eta$, then by
Theorems \ref{existencestrongsolutionu0H01Lp} and \ref{uniqueness} for any
$u_{0}\in H_{0}^{1}\left(  \Omega\right)  \cap L^{p}\left(  \Omega\right)  $
there exists a unique strong solution $u\left(  \text{\textperiodcentered
}\right)  $. Then we can define the following semigroup $T_{s}:\mathbb{R}%
^{+}\times X\rightarrow X:$
\[
T_{s}(t,u_{0})=u(t),
\]
where $u\left(  \text{\textperiodcentered}\right)  $ is the unique strong
solution to (\ref{1}). We recall also that $u\in C([0,T],H_{0}^{1}\left(
\Omega)\right)  \cap C_{w}\left(  [0,T],L^{p}\left(  \Omega\right)  \right)  $
for any $T>0.$ Also, by Lemma \ref{lemma4} if $u_{0}^{n}\rightarrow u_{0}$
weakly in $H_{0}^{1}\left(  \Omega\right)  \cap L^{p}\left(  \Omega\right)  $,
then $T_{s}(t,u_{0}^{n})\rightarrow T\left(  t,u_{0}\right)  $ in $H_{0}%
^{1}\left(  \Omega\right)  $ for all $t>0.$

Since $T_{s}=G_{s}$, by Theorems \ref{existenceatracttorcase24},
\ref{boundinLinftyC}, \ref{structureattractorStrong} and Lemma \ref{AttrEq} we
obtain the following results.

\begin{theorem}
\label{ExistAttrStrongUniq}We assume conditions (\ref{Cont})-(\ref{3}),
(\ref{1.36}), (\ref{h2}), $f\in C^{1}(\mathbb{R})$ and$\ f^{\prime}(s)\leq
\eta$. Then the semigroup $T_{s}$ posseses a global invariant attractor
$\mathcal{A}_{s}$, which is compact in $X$ and bounded in $L^{p}\left(
\Omega\right)  $.
\end{theorem}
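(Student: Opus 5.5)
The plan is to derive Theorem \ref{ExistAttrStrongUniq} from Theorem \ref{existenceatracttorcase24}, using that $T_{s}=G_{s}$ under the present hypotheses. The steps, in order, are as follows.

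\textbf{Step 1: $T_{s}$ is the single-valued version of $G_{s}$.} The hypotheses include those of Lemma \ref{lemma4}: (\ref{Cont})-(\ref{3}), (\ref{h2}), $f\in C^{1}$ and $f^{\prime}\leq\eta$. Theorem \ref{existencestrongsolutionu0H01Lp} gives existence of a strong solution for every $u_{0}\in X$. Theorem \ref{uniqueness}, using (\ref{1.36}), gives uniqueness; its proof for strong solutions does not need (\ref{7}). Hence for every $u_{0}\in X$ the set $\{u\in K_{s}^{+}:u(0)=u_{0}\}$ is a singleton. By the remark after (K1)-(K4), $G_{s}(t,u_{0})=\{T_{s}(t,u_{0})\}$, and the semigroup property of $T_{s}$ follows from (K2)-(K3).

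\textbf{Step 2: transfer the attractor.} Theorem \ref{existenceatracttorcase24} gives a compact invariant global attractor $\mathcal{A}_{s}$ of $G_{s}$ in $X$ (topology of $H_{0}^{1}$), bounded in $L^{p}(\Omega)$. For single-valued maps, the multivalued notions of invariance and attraction reduce to the classical ones:
\[
T_{s}(t,\mathcal{A}_{s})=\mathcal{A}_{s},\qquad dist_{X}(T_{s}(t,B),\mathcal{A}_{s})\rightarrow0 .
\]
So $\mathcal{A}_{s}$ is the global attractor of $T_{s}$.

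\textbf{Step 3: compactness in $X$.} This is the only point needing care, since $X$ carries the $H_{0}^{1}$ topology but is not closed in $H_{0}^{1}$. Compactness of $\mathcal{A}_{s}$ in $X$ should already be part of Theorem \ref{existenceatracttorcase24}: $\omega(B_{1})$ is compact in the metric space $X$ by \cite[Theorem 6]{CLMV03}. To make this explicit, I would take a sequence $y_{n}\in\mathcal{A}_{s}$. It is bounded in $L^{p}(\Omega)$ and, up to a subsequence, converges in $H_{0}^{1}(\Omega)$, since $\mathcal{A}_{s}\subset\overline{B_{1}}$. By weak lower semicontinuity of the $L^{p}$ norm, the limit stays in $L^{p}(\Omega)$, so it lies in $X$. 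Finally, the limit belongs to $\mathcal{A}_{s}$ because $\omega(B_{1})$ is closed in $X$.
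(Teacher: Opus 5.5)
Your proposal is correct and follows the paper's route. The paper likewise obtains Theorem \ref{ExistAttrStrongUniq} directly from Theorem \ref{existenceatracttorcase24}, using that uniqueness of strong solutions (Theorem \ref{uniqueness} with (\ref{1.36}), without (\ref{7})) gives $T_{s}=G_{s}$. Your Step 3 only makes explicit the compactness in $X$ that the paper takes from the $\omega$-limit construction $\mathcal{A}_{s}=\omega(B_{1})$ in the proof of Theorem \ref{existenceatracttorcase24}.
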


\begin{lemma}
Under the conditions of Theorem \ref{ExistAttrStrongUniq} and (\ref{7}),
$\mathcal{A}_{s}=\mathcal{A}_{r}$, where $\mathcal{A}_{r}$ is the attractor of
Theorem \ref{ExistAttr2}.
\end{lemma}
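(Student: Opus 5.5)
The plan is to reduce the statement to Lemma \ref{AttrEq} by identifying the single-valued objects with the multivalued ones. Under (\ref{Cont})-(\ref{3}), (\ref{1.36}), (\ref{h2}), $f\in C^{1}$, $f^{\prime}\leq\eta$ and (\ref{7}), Theorem \ref{uniqueness} gives uniqueness of both regular and strong solutions. Consequently $G_{s}=T_{s}$ on $X$ and $G_{r}=T_{r}$ on $L^{2}(\Omega)$, as already remarked before Corollary \ref{ConvergH1S}.

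One point must be checked here. $G_{r}$ was defined through $K_{r}^{+}$, that is, regular solutions satisfying (\ref{CondDerivu}), while $T_{r}$ uses all regular solutions. The regular solution constructed in Theorem \ref{existenceweakregularsolutionu0L2} satisfies (\ref{CondDerivu}), as noted in Section \ref{SectionNonuniqueness}. Since it is the unique regular solution, $K_{r}^{+}$ contains exactly the regular solutions, so $G_{r}(t,u_{0})=\{T_{r}(t,u_{0})\}$.

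Next I identify the attractors. The attractor $\mathcal{A}_{r}$ of Theorem \ref{ExistAttr2}, built for $T_{r}$, is compact in $L^{2}$ and invariant. The attractor of $G_{r}$ from Theorem \ref{existenceatracttorcase2} is the minimal closed attracting set. Both are compact sets attracting bounded sets of $L^{2}$, and a compact invariant attracting set coincides with the minimal one, so they are the same set. In the same way, the attractor $\mathcal{A}_{s}$ of Theorem \ref{ExistAttrStrongUniq} is by construction the attractor of $G_{s}=T_{s}$ from Theorem \ref{existenceatracttorcase24}.

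The conclusion then follows from Lemma \ref{AttrEq}, whose hypotheses (those of Lemma \ref{lemma4} plus (\ref{7})) are all satisfied. For completeness I would also recall the two inclusions directly.

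\textbf{Inclusion $\mathcal{A}_{s}\subset\mathcal{A}_{r}$.} Every strong solution is regular, so $T_{s}(t,u_{0})=T_{r}(t,u_{0})$ for $u_{0}\in X$. Since $\mathcal{A}_{r}\subset X$ is compact in $H_{0}^{1}$ by Corollary \ref{ConvergH1S}, it attracts bounded sets of $X$ in the $H_{0}^{1}$ topology. Minimality of $\mathcal{A}_{s}$ then gives the inclusion.

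\textbf{Inclusion $\mathcal{A}_{r}\subset\mathcal{A}_{s}$.} Take $z\in\mathcal{A}_{r}$ and write it as $\gamma(0)$ with $\gamma$ a bounded complete trajectory. By Lemma \ref{lemmaequality}, $\gamma$ is bounded in $H_{0}^{1}$ and has $u_{t}\in L^{2}$ on every interval, so its restrictions to $[s,\infty)$ are strong solutions. Hence $z=T_{s}(t_{n},\gamma(-t_{n}))$, and letting $t_{n}\to\infty$ gives $\mathrm{dist}(z,\mathcal{A}_{s})=0$.

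The main obstacle is the last step. One must confirm that $\gamma(s)\in L^{p}(\Omega)$ and that $\gamma|_{[s,\infty)}$ is genuinely a strong solution, in particular that it lies in $L^{\infty}(s,T;L^{p})$. This should follow from the $L^{p}$ bound on $\mathcal{A}_{r}$ in Theorem \ref{ExistAttr2}, together with $\gamma(s)\in\mathcal{A}_{r}$ for all $s$ by invariance, and from the estimates (\ref{47})-(\ref{50}). I would check this point explicitly.
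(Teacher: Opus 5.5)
Your proposal is correct and follows essentially the paper's argument: the paper derives this lemma by observing $T_{s}=G_{s}$ (and $T_{r}=G_{r}$) and invoking Lemma \ref{AttrEq}, whose proof consists of exactly your two inclusions. These are minimality of $\mathcal{A}_{s}$ with respect to the compact $H_{0}^{1}$-attracting set $\mathcal{A}_{r}$, and writing $z=\gamma(0)$ with $\gamma\in\mathbb{K}_{r}^{1}$ whose restrictions to half-lines are strong solutions. Your explicit check that $\gamma$ stays in $L^{p}$, via the $L^{p}$ bound on $\mathcal{A}_{r}$ and $\gamma(t)\in\mathcal{A}_{r}$ for all $t$, fills in a detail the paper leaves implicit.
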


\begin{theorem}
Assume the conditions of Theorem \ref{ExistAttrStrongUniq}. Then the global
attractor $\mathcal{A}_{s}$ is bounded in $L^{\infty}(\Omega)$ provided that
$h\in L^{\infty}(\Omega)$.
\end{theorem}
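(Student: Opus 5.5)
The plan is to reduce the statement to Theorem \ref{boundinLinftyC} (equivalently, to repeat the truncation argument of Theorem \ref{boundinLinfty}). Under the conditions of Theorem \ref{ExistAttrStrongUniq}, the strong solutions are unique by Theorem \ref{uniqueness}, so $T_{s}=G_{s}$. Moreover, the hypotheses of Theorem \ref{ExistAttrStrongUniq} contain those of Lemma \ref{lemma4}: $f\in C^{1}$ with $f^{\prime}\leq\eta$, together with (\ref{Cont})--(\ref{3}) and (\ref{h2}). Hence the attractor $\mathcal{A}_{s}$ of $T_{s}$ coincides with the attractor of the multivalued semiflow $G_{s}$ constructed in Theorem \ref{existenceatracttorcase24}. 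Theorem \ref{boundinLinftyC} then gives the conclusion directly when $h\in L^{\infty}(\Omega)$.

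To keep the argument self-contained, I would also sketch the direct argument.

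\textbf{Step 1.} Use the characterization (\ref{caracteattrac2}): every $y\in\mathcal{A}_{s}$ equals $\gamma(0)$ for some $\gamma\in\mathbb{K}_{s}$, i.e.\ a complete trajectory bounded in $H_{0}^{1}(\Omega)$, hence bounded in $L^{2}(\Omega)$.

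\textbf{Step 2.} From (\ref{3}) and $h\in L^{\infty}(\Omega)$, obtain $(f(u)+h)u\leq\widetilde{\kappa}-\widetilde{\alpha}|u|^{p}$. Consequently $f(u)+h\leq0$ for $u\geq M:=(\widetilde{\kappa}/\widetilde{\alpha})^{1/p}$, and symmetrically $f(u)+h\geq0$ for $u\leq-M$.

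\textbf{Step 3.} Test the equation with $(u-M)_{+}$ along $u=\gamma(\cdot+\tau)$. Since $u_{t}\in L^{2}(0,T;L^{2}(\Omega))$ for strong solutions, the identity $\frac{1}{2}\frac{d}{dt}\|(u-M)_{+}\|_{L^{2}}^{2}=(u_{t},(u-M)_{+})$ holds. The diffusion term is bounded below by $m\|\nabla(u-M)_{+}\|^{2}$ thanks to (\ref{2}), and Poincar\'e's inequality then yields exponential decay:
\[
\|(\gamma(t)-M)_{+}\|_{L^{2}}^{2}\leq e^{-2m\lambda_{1}(t-\tau)}\|(\gamma(\tau)-M)_{+}\|_{L^{2}}^{2}.
\]

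\textbf{Step 4.} Set $t=0$ and let $\tau\to-\infty$. The boundedness of $\gamma$ in $L^{2}(\Omega)$ forces $y\leq M$ a.e. The same argument applied to $(u+M)_{-}$ gives $y\geq-M$, so $\|y\|_{L^{\infty}}\leq M$ for all $y\in\mathcal{A}_{s}$.

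The only delicate point is the justification of the chain rule for $\|(u-M)_{+}\|^{2}$, which the paper handles by regularization. For strong solutions this is actually easier than in the regular case, because $u\in H^{1}(0,T;L^{2}(\Omega))$ and $v\mapsto\frac{1}{2}\|(v-M)_{+}\|^{2}$ is $C^{1}$ on $L^{2}(\Omega)$ with derivative $(v-M)_{+}$. The other check, namely that $\mathcal{A}_{s}$ for $T_{s}$ is the same object as in Theorem \ref{boundinLinftyC}, is immediate from $T_{s}=G_{s}$.
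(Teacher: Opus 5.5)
Your proposal is correct and follows the paper's route: the paper deduces this statement from Theorem \ref{boundinLinftyC} through the identification $T_{s}=G_{s}$, and that theorem rests on the same $(u-M)_{+}$ truncation, Poincar\'e/Gronwall decay, and $\tau\to-\infty$ argument from Theorem \ref{boundinLinfty} that you sketch. Your justification of the chain rule is a slightly cleaner substitute for the paper's appeal to regularization; it uses $u\in H^{1}(0,T;L^{2}(\Omega))$ together with the fact that $v\mapsto\frac{1}{2}\Vert (v-M)_{+}\Vert_{L^{2}}^{2}$ is $C^{1}$ on $L^{2}(\Omega)$.
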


As before, we denote by $\mathfrak{R}$ the set of fixed points of $T$. Also,
the global attractor is the union of all boundedcomplete trajectories%
\[
\mathcal{A}_{s}=\{\phi\left(  0\right)  :\phi\text{ is a bounded complete
trajectory of }T_{s}\}.
\]

\begin{theorem}
We assume conditions (\ref{Cont})-(\ref{3}), (\ref{1.36}),\ (\ref{h2}), $f\in
C^{1}(\mathbb{R})$,$\ f^{\prime}(s)\leq\eta$ and one of the following assumptions:
\end{theorem}

\begin{proposition}
\begin{enumerate}
\item $h\in L^{\infty}\left(  \Omega\right)  $;

\item $p\leq\frac{2n}{n-2}$ if $n\geq3.$
\end{enumerate}

Then the global attractor $\mathcal{A}_{s}$ can be caracterized as follows
\[
\mathcal{A}_{s}=M_{s}^{u}(\mathfrak{R})=M_{s}^{s}(\mathfrak{R}),
\]
where the sets $M_{s}^{u}(\mathfrak{R}),\ M_{s}^{s}(\mathfrak{R})$ are defined
in (\ref{StableSet2})-(\ref{UnstableSet2}).
\end{proposition}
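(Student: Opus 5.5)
The plan is to reduce this statement to the multivalued result, Theorem \ref{structureattractorStrong}, by observing that $T_{s}=G_{s}$. Under (\ref{Cont})-(\ref{3}), (\ref{1.36}), (\ref{h2}), $f\in C^{1}(\mathbb{R})$ and $f^{\prime}\leq\eta$, Theorems \ref{existencestrongsolutionu0H01Lp} and \ref{uniqueness} give exactly one strong solution for each $u_{0}\in X$. Hence $K_{s}^{+}$ has, for each initial datum, a single element, and $G_{s}(t,u_{0})=\{T_{s}(t,u_{0})\}$. All hypotheses of Lemma \ref{lemma4} then hold, so Theorem \ref{existenceatracttorcase24} applies, and by Theorem \ref{ExistAttrStrongUniq} the attractor $\mathcal{A}_{s}$ of $T_{s}$ coincides with that of $G_{s}$.

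The next step is to check that the objects in the statement match those of Theorem \ref{structureattractorStrong}. A complete trajectory $\phi$ of the semigroup $T_{s}$ satisfies $\phi(t)=T_{s}(t-s,\phi(s))$. Thus each restriction $\phi(\cdot+h)|_{[0,\infty)}$ is the unique strong solution starting at $\phi(h)$, so $\phi\in\mathbb{F}_{s}$. Conversely, any element of $\mathbb{F}_{s}$ is a complete trajectory of $T_{s}$. Boundedness in $X$ (with the $H_{0}^{1}$ topology) is exactly the condition defining $\mathbb{K}_{s}$. The fixed points of $T_{s}$ are the $z$ with $T_{s}(t,z)=z$ for all $t$, i.e.\ the constant strong solutions, and by the analogue of Lemma \ref{FixedCharacterization} these form $\mathfrak{R}$. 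So $M_{s}^{u}(\mathfrak{R})$ and $M_{s}^{s}(\mathfrak{R})$ defined via $T_{s}$ are literally the sets in (\ref{StableSet2})-(\ref{UnstableSet2}).

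With these identifications, either alternative (1) $h\in L^{\infty}(\Omega)$ or (2) $p\leq\frac{2n}{n-2}$ is precisely the extra hypothesis of Theorem \ref{structureattractorStrong}, and the equality $\mathcal{A}_{s}=M_{s}^{u}(\mathfrak{R})=M_{s}^{s}(\mathfrak{R})$ follows directly. The role of these hypotheses is to make the energy $E$ of (\ref{EnergyFunction0}) continuous on the attractor in the $H_{0}^{1}$ topology. Under (1) this uses the $L^{\infty}$ bound of the preceding theorem; under (2) it uses $H_{0}^{1}\subset L^{p}$ and Lebesgue's theorem.

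The only point I expect to require care is that Theorem \ref{structureattractorStrong} is itself stated as obtained by "following the same procedure of Theorem \ref{theorem12}". For safety, I would re-verify its two key ingredients in the strong setting. The first is the energy equality for strong solutions, valid down to $s=0$ thanks to (\ref{sellandyou1}) and $u_{t}\in L^{2}(0,T;L^{2})$, together with its extension to $-\infty<s\leq t$ along complete trajectories. The second is the convergence $v_{n}(t)\to v(t)$ in $H_{0}^{1}$ from Lemma \ref{lemma4}. For the latter, the initial data $\gamma(\pm t_{n})$ lie in the compact set $\mathcal{A}_{s}$, which is bounded in $L^{p}$, so they converge weakly in $H_{0}^{1}\cap L^{p}$ along a subsequence. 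Once these are in place, the contradiction argument via $\omega$- and $\alpha$-limits with $E\equiv l$ goes through verbatim.
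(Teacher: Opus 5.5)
Your proposal is correct and follows the paper's route. Uniqueness from Theorems \ref{existencestrongsolutionu0H01Lp} and \ref{uniqueness} gives $T_{s}=G_{s}$, so the statement is Theorem \ref{structureattractorStrong} read in the single-valued setting, with complete trajectories, bounded complete trajectories and fixed points of $T_{s}$ identified with $\mathbb{F}_{s}$, $\mathbb{K}_{s}$ and $\mathfrak{R}$. Your extra re-check of the energy equality and of how Lemma \ref{lemma4} applies (initial data in $\mathcal{A}_{s}$, which is compact in $H_{0}^{1}$ and bounded in $L^{p}$) matches how the argument of Theorem \ref{theorem12} carries over to strong solutions.
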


In this case we can obtain additionally that the attractor is bounded in
$H^{2}\left(  \Omega\right)  .$

\begin{proposition}
Assume the conditions of Theorem \ref{ExistAttrStrongUniq} and also that $a\in
C^{1}(\mathbb{R},\mathbb{R})$, $a^{\prime}\left(  s\right)  \geq0$. Then
$\mathcal{A}_{s}$ is bounded in $H^{2}\left(  \Omega\right)  .$
\end{proposition}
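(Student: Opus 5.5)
We follow the scheme of Proposition \ref{boundinH2}, now starting from the bounds that come with strong solutions. By Theorem \ref{existenceatracttorcase24}, the attractor $\mathcal{A}_{s}$ is compact in $X$ and bounded in $H_{0}^{1}(\Omega)\cap L^{p}(\Omega)$. By the characterization (\ref{caracteattrac2}), every $y\in\mathcal{A}_{s}$ equals $\gamma(0)$ for some $\gamma\in\mathbb{K}_{s}$, and $\gamma(t)\in\mathcal{A}_{s}$ for all $t$. It therefore suffices to prove a bound on $\Vert\Delta\gamma(t)\Vert_{L^{2}}$ that is uniform over $t\in\mathbb{R}$ and over all such $\gamma$.

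\textbf{Step 1: a uniform $H_{0}^{1}$ bound and the derivative of $\Vert u\Vert_{H_0^1}^2$.} Set $R:=\sup_{y\in\mathcal{A}_{s}}\Vert y\Vert_{H_{0}^{1}}<\infty$. Along $\gamma$ the coefficient satisfies $m\leq a(\Vert\gamma\Vert_{H_{0}^{1}}^{2})\leq\max_{[0,R^{2}]}a$, and $0\leq a^{\prime}\leq\gamma_{0}:=\max_{[0,R^{2}]}a^{\prime}$. Multiplying the equation by $-\Delta u$, as in the proof of Proposition \ref{boundinH2}, gives
\[
\frac{d}{dt}\Vert u\Vert_{H_{0}^{1}}^{2}+m\Vert\Delta u\Vert_{L^{2}}^{2}\leq K_{1}.
\]
Integrating from $t$ to $t+1$ and using the $H_{0}^{1}$ bound, we get $\int_{t}^{t+1}\Vert\Delta u\Vert_{L^{2}}^{2}\leq C$ uniformly in $t$.

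\textbf{Step 2: a local-in-time bound on $\Vert u_t\Vert^2_{L^2}$.} We use the energy equality (\ref{Energy}) on $\mathcal{A}_{s}$ (valid by (\ref{sellandyou1})). Since $E$ is bounded on $\mathcal{A}_{s}$ (by continuity on the compact set, or directly by the $H_{0}^{1}\cap L^{p}$ bound together with (\ref{4})), we obtain $\int_{t}^{t+1}\Vert u_{t}\Vert_{L^{2}}^{2}\,ds\leq\rho_{1}$ for every $t\in\mathbb{R}$. This replaces (\ref{9.13}) and removes the need for the absorbing-time argument.

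Next we differentiate the equation in time and multiply by $u_{t}$. Because $a^{\prime}\geq0$, the term $\frac12 a^{\prime}\bigl(\frac{d}{dt}\Vert u\Vert_{H_{0}^{1}}^{2}\bigr)^{2}$ is nonnegative, and $f^{\prime}\leq\eta$ handles the right-hand side. This yields (\ref{h2primera}):
\[
\frac{d}{dt}\Vert u_{t}\Vert_{L^{2}}^{2}\leq2\eta\Vert u_{t}\Vert_{L^{2}}^{2}.
\]
The uniform Gronwall lemma, applied on $[t-2,t]$ with the integral bound $\rho_{1}$, then gives $\Vert u_{t}(t)\Vert_{L^{2}}^{2}\leq\rho_{2}$ for all $t\in\mathbb{R}$.

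\textbf{Step 3: elliptic estimate.} Rewrite the equation as
\[
-a(\Vert u\Vert_{H_{0}^{1}}^{2})\Delta u=f(u)+h-u_{t}.
\]
Multiply by $-\Delta u$ and use $(f(u),-\Delta u)\leq\eta\Vert u\Vert_{H_{0}^{1}}^{2}$, which follows from $f^{\prime}\leq\eta$. This gives
\[
\frac{m}{2}\Vert\Delta u\Vert_{L^{2}}^{2}\leq\eta R^{2}+\frac{1}{m}\Vert h\Vert_{L^{2}}^{2}+\frac{1}{m}\rho_{2}.
\]
Combined with $D(A)=H^{2}\cap H_{0}^{1}$ and elliptic regularity, this bounds $\Vert y\Vert_{H^{2}}$ uniformly for $y\in\mathcal{A}_{s}$.

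\textbf{Main obstacle: rigor of the time-differentiated equation.} Step 2 requires $u_{t}\in L^{\infty}_{loc}(L^{2})\cap L^{2}_{loc}(H_{0}^{1})$, which strong solutions are not known to satisfy a priori. The plan is to carry out the estimates on Galerkin approximations $u_{n}$ started from $\gamma(s)$, where all computations are legitimate. The $a^{\prime}\geq0$ term has the good sign for the Galerkin system as well, since $\frac{d}{dt}\Vert u_{n}\Vert_{H_{0}^{1}}^{2}$ is a scalar factor. By uniqueness of strong solutions (Theorem \ref{uniqueness}, valid because $a^{\prime}\geq0$ implies (\ref{1.36})), the limit of the approximations is $\gamma(\cdot+s)$. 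We then pass to the limit in the bounds by weak lower semicontinuity. The remaining technical point is that the initial value $u_{n}(0)=P_{n}\gamma(s)$ is bounded only in $H_{0}^{1}\cap L^{p}$, not in $H^{2}$. This is exactly why the uniform Gronwall lemma, which needs only integral bounds, is used instead of pointwise bounds at the initial time.
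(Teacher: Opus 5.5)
Your proposal is correct and follows the same scheme as the paper, whose proof just refers back to Proposition \ref{boundinH2}: an integral bound on $\Vert u_t\Vert_{L^2}^2$; then the time-differentiated equation, where $a'\geq0$ gives $\frac12 a'(\cdot)\bigl(\frac{d}{dt}\Vert u\Vert_{H_0^1}^2\bigr)^2$ a good sign, with the uniform Gronwall lemma applied to (\ref{h2primera}) giving a pointwise bound on $\Vert u_t\Vert_{L^2}$; and finally the elliptic estimate obtained by multiplying by $-\Delta u$. The one difference is that you get the integral bound on $\Vert u_t\Vert_{L^2}^2$ in one line from the energy equality for $E$ (with the primitive $A$) along bounded complete trajectories in $\mathcal{A}_s$, rather than through the functional $\frac{a}{2}\Vert u\Vert_{H_0^1}^2-\int_\Omega\mathcal{F}(u)-\int_\Omega hu$ with its extra $a'$-term and an absorbing-time argument; this is a legitimate simplification, since the statement only asks for boundedness of $\mathcal{A}_s$ in $H^2(\Omega)$ and not for an absorbing set there.
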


\begin{proof}
The proof follows the same lines as in Proposition \ref{boundinH2}, so we omit it.
\end{proof}

\subsubsection{Attractor in the phase space $H_{0}^{1}\left(  \Omega\right)
$}

We consider now the metric space $X=H_{0}^{1}\left(  \Omega\right)  $.

If we assume that conditions (\ref{Cont})-(\ref{3}), (\ref{h2}) and
(\ref{Condp}), then by Theorem \ref{existenciau0paper2014} and the argument of
Section \ref{SectionNonuniqueness} for any $u_{0}\in X$ there exists al least
one strong solution $u\left(  \text{\textperiodcentered}\right)  $ and every
strong solution satisfies that $\dfrac{du}{dt},f\left(  u\right)  \in
L^{2}\left(  0,T;L^{2}\left(  \Omega\right)  \right)  $, for all $T>0$.

As in the previous section for
\[
\mathcal{R}=K_{s}^{+}:=\{u(\cdot):u\text{ is a strong solution of (\ref{1}%
)}\}
\]
we define the (possibly multivalued) map $G_{s}:\mathbb{R}^{+}\times
X\rightarrow P(X)$ by
\[
G_{s}(t,u_{0})=\{u(t):u\in K_{s}^{+}\text{ and }u(0)=u_{0}\},
\]
which satisfies $\left(  K1\right)  -\left(  K3\right)  $ and then it is a
strict multivalued semiflow.

We obtain the same results as in the previous section.

\begin{lemma}
\label{lemma4B} Let assume (\ref{Cont})-(\ref{3}), (\ref{h2}) and
(\ref{Condp}). Given $\{u^{n}\}\subset K_{s}^{+}$ a sequence such that
$u^{n}(0)\rightarrow u_{0}$ weakly in $H_{0}^{1}(\Omega)\cap L^{p}\left(
\Omega\right)  $, there exists a subsequence of $\{u^{n}\}$ (relabeled the
same) and $u\in K_{s}^{+}$, satisfying $u(0)=u_{0}$, such that
\begin{align*}
u^{n}(t)  &  \rightarrow u(t)\mathit{\ }\text{\textit{ in }}H_{0}^{1}%
(\Omega),\ \forall t>0,\\
u^{n}\left(  t\right)   &  \rightharpoonup u\left(  t\right)  \text{ in }%
L^{p}\left(  \Omega\right)  ,\ \forall t\geq0.
\end{align*}

\end{lemma}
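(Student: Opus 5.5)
We follow the proof of Lemma \ref{lemma4}; the only change is that the bound on $-\Delta u$ can no longer use $f^{\prime}\leq\eta$. Instead we use the growth control (\ref{Acotacionfu}), which is available thanks to (\ref{Condp}).

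\textbf{Step 1 (energy equality and first bounds).} For each $u^{n}\in K_{s}^{+}$ we have $\frac{du^{n}}{dt}\in L^{2}(0,T;L^{2}(\Omega))$ and $u^{n}\in L^{2}(0,T;D(A))$. Hence (\ref{sellandyou1}) holds. Since $f(u^{n})\in L^{2}(0,T;L^{2}(\Omega))$ by (\ref{valordepParau0enH1}), regularization shows that $t\mapsto(\mathcal{F}(u^{n}(t)),1)$ is absolutely continuous. Multiplying (\ref{1}) by $u_{t}^{n}$ then gives the energy equality with $E$ from (\ref{EnergyFunction0}) on $[0,t]$. Combining (\ref{2}), (\ref{4}) and Young's inequality for the term $(h,u)$, we obtain (\ref{69}) for every $u^{n}$.

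Weak convergence of $u^{n}(0)$ in $H_{0}^{1}\cap L^{p}$ gives uniform bounds on $\Vert u^{n}(0)\Vert_{H_{0}^{1}}$ and $\Vert u^{n}(0)\Vert_{L^{p}}$, and therefore on $A(\Vert u^{n}(0)\Vert_{H_{0}^{1}}^{2})$ by continuity of $a$. So $u^{n}$ is bounded in $L^{\infty}(0,T;H_{0}^{1}\cap L^{p})$ and $u_{t}^{n}$ is bounded in $L^{2}(0,T;L^{2})$. Consequently $a(\Vert u^{n}(t)\Vert_{H_{0}^{1}}^{2})$ is uniformly bounded on $[0,T]$.

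\textbf{Step 2 ($D(A)$ bound).} By (\ref{Acotacionfu}), $f(u^{n})$ is bounded in $L^{2}(0,T;L^{2})$. There are two routes to a bound on $\Delta u^{n}$:
\begin{itemize}
\item Use the equation directly: $a(\Vert u^{n}\Vert_{H_{0}^{1}}^{2})\Delta u^{n}=u_{t}^{n}-f(u^{n})-h$. Since $a\geq m$, this gives $\Delta u^{n}$ bounded in $L^{2}(0,T;L^{2})$.
\item Alternatively, multiply by $-\Delta u^{n}$ as in (\ref{ultima2}).
\end{itemize}
Either way, $u^{n}$ is bounded in $L^{2}(0,T;D(A))$.

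\textbf{Step 3 (passage to the limit).} Extract weak-star and weak limits as in Lemma \ref{lemma4}. Aubin--Lions gives $u^{n}\to u$ in $L^{2}(0,T;H_{0}^{1})$, hence a.e. convergence. This yields $f(u^{n})\rightharpoonup f(u)$ in $L^{2}(0,T;L^{2})$ by \cite[Lemma 1.3]{lions}, and $a(\Vert u^{n}\Vert^{2})\to a(\Vert u\Vert^{2})$ a.e. and boundedly. Therefore $a(\Vert u^{n}\Vert_{H_{0}^{1}}^{2})\Delta u^{n}\rightharpoonup a(\Vert u\Vert_{H_{0}^{1}}^{2})\Delta u$. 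Passing to the limit shows that $u$ is a strong solution. Ascoli--Arzel\`a, using the equicontinuity from the $u_{t}$ bound and the compact embedding $H_{0}^{1}\subset L^{2}$, gives $u^{n}\to u$ in $C([0,T],L^{2})$. In particular $u(0)=\lim u^{n}(0)=u_{0}$, since weak $H_{0}^{1}$ convergence plus compactness gives strong $L^{2}$ convergence. The uniform $L^{\infty}(0,T;H_{0}^{1}\cap L^{p})$ bound then upgrades this to $u^{n}(t)\rightharpoonup u(t)$ in $H_{0}^{1}$ and in $L^{p}$ for every $t\geq0$.

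\textbf{Step 4 (strong $H_{0}^{1}$ convergence for $t>0$).} This is the main obstacle, and we handle it with the monotonicity trick of Lemma \ref{lemma1}. From the energy equality and the bound $\Vert u_{t}^{n}\Vert^{2}+\frac{d}{dt}\frac12A(\Vert u^{n}\Vert^{2})=(f(u^{n}),u_{t}^{n})+(h,u_{t}^{n})$, the uniform $L^{2}$ bounds on $f(u^{n})$ and $u_{t}^{n}$ give $A(\Vert u^{n}(t)\Vert^{2})\leq A(\Vert u^{n}(s)\Vert^{2})+\int_{s}^{t}g_{n}$, with $g_{n}\geq0$ uniformly integrable.

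Here the pointwise constant $C_{\varepsilon}$ of Lemma \ref{lemma1} is unavailable. We therefore choose one of the following:
\begin{itemize}
\item Replace it by $\int_{s}^{t}(\Vert f(u^{n})\Vert^{2}+\Vert h\Vert^{2})$, which is bounded pointwise in time thanks to (\ref{Acotacionfu}) and the $L^{\infty}(H_{0}^{1})$ bound. This gives a constant $C$ with $\Vert f(u^{n}(t))\Vert_{L^{2}}\leq C$ for all $t$.
\item Equivalently, pass to the limit in the energy equality itself.
\end{itemize}
Then $J_{n}(t)=A(\Vert u^{n}(t)\Vert^{2})-Ct$ and the corresponding $J$ are continuous and nonincreasing, and $J_{n}\to J$ a.e. by strong $L^{2}(H_{0}^{1})$ convergence. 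The argument of Lemma \ref{lemma1} gives $\limsup J_{n}(t)\leq J(t)$, hence $\limsup\Vert u^{n}(t)\Vert\leq\Vert u(t)\Vert$. Together with weak convergence, this gives strong convergence in $H_{0}^{1}$ for all $t>0$.
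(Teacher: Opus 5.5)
Your proposal is correct and follows essentially the same route as the paper. The paper also obtains (\ref{69}) from the energy equality, bounds $f(u^{n})$ in $L^{2}(0,T;L^{2}(\Omega))$ via (\ref{Acotacionfu}), reads off the $L^{2}(0,T;D(A))$ bound directly from the equation and (\ref{2}) (your first route in Step 2), and then defers to Lemma \ref{lemma4}. Your Step 4 just makes explicit what the paper leaves implicit: under (\ref{Condp}), the constant in the monotonicity argument of Lemma \ref{lemma1} comes from the uniform bound $\Vert f(u^{n}(t))\Vert_{L^{2}}\leq C$ for all $t\in[0,T]$.
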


\begin{proof}
Arguing as in Lemma \ref{lemma4} we obtain inequality (\ref{69}) and combining
it with (\ref{Acotacionfu}) the boundedness of $f\left(  u^{n}\right)  $ in
$L^{2}\left(  0,T;L^{2}\left(  \Omega\right)  \right)  $ follows for any
$T>0$. Hence, the equality
\[
a\left(  \left\Vert u\right\Vert _{H_{0}^{1}}^{2}\right)  \Delta
u=\frac{du^{n}}{dt}-f\left(  u^{n}\right)  -h
\]
and (\ref{2}) imply that $u^{n}$ is bounded in $L^{2}\left(  0,T;D(A)\right)
.$

The rest of the proof follows the same lines as in Lemma \ref{lemma4}.
\end{proof}

\bigskip

The proofs of the other results remain unchanged with respect to the same ones
in the previous section, so we omit them.

\begin{corollary}
Assume the conditions of Lemma \ref{lemma4B}. Then the set $K_{s}^{+}$
satisfies condition $(K4)$.
\end{corollary}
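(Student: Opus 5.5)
The plan is to verify (K4) directly from Lemma \ref{lemma4B}, in the same way that Corollary \ref{PropK4} was obtained from Lemma \ref{lemma1}. In the present setting $X=H_{0}^{1}(\Omega)$, so (K4) reads as follows. For any sequence $\{\varphi^{n}\}\subset K_{s}^{+}$ with $\varphi^{n}(0)\rightarrow x_{0}$ in $H_{0}^{1}(\Omega)$, there must exist a subsequence and some $\varphi\in K_{s}^{+}$ such that $\varphi^{n_{k}}(t)\rightarrow\varphi(t)$ in $H_{0}^{1}(\Omega)$ for every $t\geq0$.

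The first step is to check the hypotheses of Lemma \ref{lemma4B}. Strong convergence $\varphi^{n}(0)\rightarrow x_{0}$ in $H_{0}^{1}(\Omega)$ gives weak convergence in $H_{0}^{1}(\Omega)$. Under (\ref{Condp}) we have $p\leq\frac{2n-2}{n-2}<\frac{2n}{n-2}$ for $n\geq3$, and $p$ is arbitrary for $n=1,2$. So the embedding $H_{0}^{1}(\Omega)\subset L^{p}(\Omega)$ is continuous, and $\varphi^{n}(0)\rightarrow x_{0}$ strongly, hence weakly, in $L^{p}(\Omega)$. Therefore $\varphi^{n}(0)\rightharpoonup x_{0}$ weakly in $H_{0}^{1}(\Omega)\cap L^{p}(\Omega)$. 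Lemma \ref{lemma4B} then yields a subsequence and $u\in K_{s}^{+}$ with $u(0)=x_{0}$ and $\varphi^{n_{k}}(t)\rightarrow u(t)$ in $H_{0}^{1}(\Omega)$ for all $t>0$.

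It remains to treat $t=0$, which the lemma only controls weakly in $L^{p}$. This case is immediate: $\varphi^{n_{k}}(0)\rightarrow x_{0}=u(0)$ in $H_{0}^{1}(\Omega)$ by assumption. Taking $\varphi=u$ gives convergence for every $t\geq0$, which is exactly (K4).

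I expect no real obstacle, since all the analytic work (the energy estimates and the passage to strong $H_{0}^{1}$ convergence through the monotone functions $J_{n}$) is already contained in Lemma \ref{lemma4B}. The only points that need care are the embedding $H_{0}^{1}\subset L^{p}$, which is what makes the weak $L^{p}$ hypothesis available in the phase space $X=H_{0}^{1}(\Omega)$, and the separate check at $t=0$.
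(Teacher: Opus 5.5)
Your proposal is correct and follows the paper's route: the paper omits the proof, but the corollary is read off directly from Lemma \ref{lemma4B}, exactly as Corollary \ref{PropK4} follows from Lemma \ref{lemma1}. Your explicit checks are the details the paper leaves implicit. These are that strong $H_{0}^{1}$ convergence of the initial data gives weak convergence in $H_{0}^{1}(\Omega)\cap L^{p}(\Omega)$ through the embedding $H_{0}^{1}(\Omega)\subset L^{p}(\Omega)$ available under (\ref{Condp}), and that the case $t=0$ is covered by the hypothesis itself.
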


\begin{proposition}
Assume the conditions of Lemma \ref{lemma4B}. Then the multivalued semiflow
$G_{s}$ is upper semicontinuous for all $t\geq0$ with compact values.
\end{proposition}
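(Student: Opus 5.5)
The proof will follow Proposition \ref{uppersemicontinuity}, now in the phase space $X=H_{0}^{1}\left(  \Omega\right)$, with Lemma \ref{lemma4B} replacing Lemma \ref{lemma1}. Compactness of the values comes first. Fix $t\geq0$ and $u_{0}\in X$, and take any sequence $y_{n}\in G_{s}(t,u_{0})$. Each $y_{n}=u^{n}(t)$ for some $u^{n}\in K_{s}^{+}$ with $u^{n}(0)=u_{0}$. The initial data are constant, so they converge trivially. Lemma \ref{lemma4B} gives a subsequence and some $u\in K_{s}^{+}$ with $u(0)=u_{0}$ and $u^{n}(t)\rightarrow u(t)$ in $H_{0}^{1}(\Omega)$ when $t>0$. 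When $t=0$ there is nothing to prove, since $G_{s}(0,u_{0})=\{u_{0}\}$. The limit $u(t)$ lies in $G_{s}(t,u_{0})$, so the values are relatively compact, and closedness follows the same way. Hence they are compact in $X$.

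For upper semicontinuity I will argue by contradiction. Suppose there are $t>0$, $u_{0}\in X$, a neighbourhood $O$ of $G_{s}(t,u_{0})$ in $X$, and $u_{0}^{n}\rightarrow u_{0}$ in $H_{0}^{1}(\Omega)$ with $y_{n}=u^{n}(t)\in G_{s}(t,u_{0}^{n})\setminus O$, where $u^{n}\in K_{s}^{+}$ and $u^{n}(0)=u_{0}^{n}$. Under (\ref{Condp}) we have the embedding $H_{0}^{1}(\Omega)\subset L^{p}(\Omega)$, so $u_{0}^{n}\rightarrow u_{0}$ also in $L^{p}(\Omega)$, and in particular weakly in $H_{0}^{1}(\Omega)\cap L^{p}(\Omega)$. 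This is exactly the hypothesis of Lemma \ref{lemma4B}. It yields $u\in K_{s}^{+}$ with $u(0)=u_{0}$ and $u^{n}(t)\rightarrow u(t)\in G_{s}(t,u_{0})\subset O$ in $H_{0}^{1}(\Omega)$. Since $O$ is open, $y_{n}\in O$ for all large $n$, which is a contradiction. The case $t=0$ is trivial.

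The only delicate point is checking that Lemma \ref{lemma4B} really applies, namely that its bounds (\ref{69}) hold uniformly along the sequence. Those bounds depend on $A(\Vert u^{n}(0)\Vert_{H_{0}^{1}}^{2})$ and $\Vert u^{n}(0)\Vert_{L^{p}}^{p}$. Both stay bounded because $u_{0}^{n}$ is bounded in $H_{0}^{1}(\Omega)$, $a$ is continuous, and $H_{0}^{1}(\Omega)\subset L^{p}(\Omega)$. With this in place, the strong $H_{0}^{1}$ convergence at each $t>0$ supplied by the lemma (obtained through the energy-monotonicity argument of Lemma \ref{lemma1}) is precisely what the contradiction needs.
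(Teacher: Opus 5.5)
Your proposal is correct and follows the paper's route. The paper omits this proof as identical to Proposition \ref{uppersemicontinuity}, i.e.\ the same contradiction argument with Lemma \ref{lemma4B} in place of Lemma \ref{lemma1}, and with compactness of values coming from that lemma applied to constant initial data. Your observation that (\ref{Condp}) gives $H_{0}^{1}(\Omega)\subset L^{p}(\Omega)$, so that $H_{0}^{1}$-convergent initial data meet the weak $H_{0}^{1}(\Omega)\cap L^{p}(\Omega)$ hypothesis of Lemma \ref{lemma4B}, is exactly the detail the paper leaves implicit.
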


\begin{proposition}
Assume the conditions of Lemma \ref{lemma4B}. Then there exists an absorbing
set $B_{1}$ for $G_{s}$, which is compact in $H_{0}^{1}\left(  \Omega\right)
$ and bounded in $L^{p}\left(  \Omega\right)  $.
\end{proposition}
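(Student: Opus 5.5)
We will follow the proof of Proposition \ref{absorbingset}, with the phase space now $X=H_{0}^{1}\left( \Omega\right)$ and Lemma \ref{lemma4B} replacing Lemma \ref{lemma1}. The argument has three steps: a dissipative estimate in $L^{2}$, a smoothing estimate into $H_{0}^{1}\cap L^{p}$, and a compactness step.

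\textbf{Step 1 ($L^{2}$ absorption).} For any strong solution $u\in K_{s}^{+}$ we multiply (\ref{1}) by $u$ and use (\ref{3}) together with the Poincar\'{e} inequality. This gives (\ref{9.3}), which is justified since $\frac{du}{dt}\in L^{2}(0,T;L^{2}(\Omega))$. Gronwall's lemma then yields (\ref{9.4}). Consequently, for $\Vert u_{0}\Vert_{H_{0}^{1}}\leq R$ (and hence $\Vert u_{0}\Vert_{L^{2}}\leq R/\sqrt{\lambda_{1}}$), we get $\Vert u(t)\Vert_{L^{2}}\leq K$ for $t\geq t_{0}(R)$, with $K$ independent of $R$.

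\textbf{Step 2 (bound in $H_{0}^{1}\cap L^{p}$).} By (\ref{sellandyou1}) we may multiply by $u_{t}$. Integrating the resulting identity (\ref{43}) over time and combining it with (\ref{46})--(\ref{47}), we apply the uniform Gronwall lemma exactly as in (\ref{47})--(\ref{50}). This produces (\ref{IneqH1Lp}):
\[
\Vert u(1)\Vert_{H_{0}^{1}}^{2}+\Vert u(1)\Vert_{L^{p}}^{p}\leq C(1+\Vert u(0)\Vert_{L^{2}}^{2}).
\]
The main obstacle is here. Estimate (\ref{47}) used the growth bound (\ref{7}) to control $\int A(\Vert u\Vert_{H_{0}^{1}}^{2})$ by $\int\Vert u\Vert_{H_{0}^{1}}^{2}$, and (\ref{7}) is not among the hypotheses of Lemma \ref{lemma4B}.

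I would first try to avoid (\ref{7}) by noting that $A$ is continuous and nondecreasing. The time average of $\Vert u\Vert_{H_{0}^{1}}^{2}$ over $[t,t+1]$ is bounded by (\ref{9.2}), so there exists $s\in\lbrack t,t+1]$ with $\Vert u(s)\Vert_{H_{0}^{1}}^{2}\leq L$. The energy inequality derived from (\ref{43}) is nonincreasing up to the term $\frac{1}{2}\Vert h\Vert^{2}(t-s)$. Applying it from $s$ to $t+2$ gives
\[
\frac{1}{2}A(\Vert u(t+2)\Vert^{2})-\int_{\Omega}\mathcal{F}(u(t+2))\leq\frac{1}{2}A(L)+\widetilde{\alpha}_{2}\Vert u(s)\Vert_{L^{p}}^{p}+C .
\]
To control $\Vert u(s)\Vert_{L^{p}}$ I would use the embedding $H_{0}^{1}\subset L^{p}$ provided by (\ref{Condp}). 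The right-hand side is then bounded independently of $R$ for $t\geq t_{0}(R)$. Using (\ref{2}) and (\ref{4}), this shows that the closed ball $B_{M}$ of $H_{0}^{1}\cap L^{p}$ is absorbing for $G_{s}$; the semiflow property $G_{s}(t+2,u_{0})\subset G_{s}(2,G_{s}(t,u_{0}))$ converts the estimate above into absorption.

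\textbf{Step 3 (compactness).} We set $B_{1}=\overline{G_{s}(1,B_{M})}$, with the closure taken in $H_{0}^{1}$. Since $G_{s}$ is strict, $B_{1}$ is absorbing: $G_{s}(t+1,B)\subset G_{s}(1,G_{s}(t,B))\subset G_{s}(1,B_{M})$ for large $t$. For compactness, take any sequence $y_{n}=u^{n}(1)$ with $u^{n}(0)\in B_{M}$. Since $B_{M}$ is bounded in $H_{0}^{1}\cap L^{p}$, the initial data have a weakly convergent subsequence in $H_{0}^{1}\cap L^{p}$. Lemma \ref{lemma4B} then gives $u^{n}(1)\rightarrow u(1)$ strongly in $H_{0}^{1}$, so $G_{s}(1,B_{M})$ is relatively compact and $B_{1}$ is compact in $H_{0}^{1}$. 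Finally, the bound from Step 2 at time $1$ with initial data in $B_{M}$, together with the weak lower semicontinuity of the $L^{p}$ norm under closure, shows that $B_{1}$ is bounded in $L^{p}(\Omega)$.
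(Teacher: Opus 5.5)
Your proof is correct. Steps 1 and 3 match the paper, but Step 2 takes a genuinely different route.

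\textbf{The paper's route.} The paper gives no separate argument for this proposition. It refers back to Proposition \ref{absorbingset3}, and hence to Proposition \ref{absorbingset}, where the smoothing estimate (\ref{IneqH1Lp}) comes from applying the uniform Gronwall lemma to the energy as in (\ref{47})--(\ref{50}). That step needs a bound on $\int_t^{t+r}A(\Vert u(s)\Vert_{H_0^1}^2)\,ds$, which is exactly where (\ref{7}) is used. As you observed, (\ref{7}) is not among the hypotheses of Lemma \ref{lemma4B}.

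Even under (\ref{7}), one only has $A(\Vert u\Vert_{H_0^1}^2)\leq M_1\Vert u\Vert_{H_0^1}^2+\frac{M_2}{2}\Vert u\Vert_{H_0^1}^4$. When $M_2>0$, the time integral of the quartic term is not controlled by (\ref{46}). So (\ref{47}) as written really only handles bounded $a$.

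\textbf{Your route.} Your argument has three parts:
\begin{itemize}
\item pick $s\in[t,t+1]$ with $\Vert u(s)\Vert_{H_0^1}^2\leq L$, using (\ref{9.2});
\item bound the energy at $s$ using only that $A$ is continuous and nondecreasing, together with $H_0^1(\Omega)\subset L^p(\Omega)$ from (\ref{Condp});
\item propagate forward using that $\Phi(t)=\frac12A(\Vert u(t)\Vert_{H_0^1}^2)-\int_\Omega\mathcal{F}(u(t))\,dx$ satisfies $\Phi(t_2)\leq\Phi(t_1)+\frac12\Vert h\Vert_{L^2}^2(t_2-t_1)$ by (\ref{43}).
\end{itemize}
This needs no growth condition on $a$, so it proves the statement under exactly the stated hypotheses.

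\textbf{Comparison.} Where the paper's route applies, it gives a quantitative smoothing estimate at a fixed time in terms of $\Vert u(0)\Vert_{L^2}^2$. For absorption alone, your pointwise selection of a good time suffices and is more robust.

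\textbf{One loose end.} In Step 3 you bound $B_1$ in $L^p$ via ``the bound from Step 2 at time $1$'', i.e.\ (\ref{IneqH1Lp}). You have just argued that this estimate is unavailable without (\ref{7}). You do not need it, for either of two reasons:
\begin{itemize}
\item for data in $B_M$, the same energy inequality on $[0,1]$ bounds $G_s(1,B_M)$ in $H_0^1(\Omega)\cap L^p(\Omega)$, using only continuity of $A$;
\item $B_1$ is compact, hence bounded, in $H_0^1(\Omega)$, which embeds continuously into $L^p(\Omega)$ under (\ref{Condp}).
\end{itemize}
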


\begin{theorem}
Assume the conditions of Lemma \ref{lemma4B}. Then the multivalued semiflow
$G_{s}$ possesses a global compact invariant attractor $\mathcal{A}_{s}$,
which is bounded in $L^{p}\left(  \Omega\right)  $.
\end{theorem}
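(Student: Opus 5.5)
The plan is to follow exactly the scheme of Theorem \ref{existenceatracttorcase24}, now in the phase space $X=H_{0}^{1}(\Omega)$, with Lemma \ref{lemma4B} playing the role of Lemma \ref{lemma4}. The upper semicontinuity proved in the preceding proposition also makes Theorem \ref{AttrExist} directly applicable, and I will use that as the main route.

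\textbf{Step 1: a compact absorbing set.} By the preceding proposition there is an absorbing set $B_{1}$ for $G_{s}$, compact in $H_{0}^{1}(\Omega)$ and bounded in $L^{p}(\Omega)$. It attracts every bounded set of $X$, so it is a compact attracting set in $X$.

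\textbf{Step 2: apply Theorem \ref{AttrExist}.} By the previous proposition, $G_{s}$ is upper semicontinuous with compact, hence closed, values. Together with Step 1, Theorem \ref{AttrExist} gives a global compact attractor $\mathcal{A}_{s}$, and it is the minimal closed attracting set. Since $K_{s}^{+}$ satisfies (K1)--(K3), $G_{s}$ is strict, and the same theorem yields invariance. Alternatively, one can define $\mathcal{A}_{s}=\omega(B_{1})$ and repeat the proof of Theorem \ref{existenceatracttorcase24}:
\begin{itemize}
\item negative invariance comes from Lemma \ref{lemma4B} applied to $x_{n}\in G_{s}(t_{n}-1,B_{1})\subset B_{1}$, which converge strongly in $H_{0}^{1}(\Omega)$ along a subsequence;
\item positive invariance comes from strictness.
\end{itemize}

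\textbf{Step 3: boundedness in $L^{p}(\Omega)$.} By minimality, $\mathcal{A}_{s}\subset\overline{B_{1}}=B_{1}$, since $B_{1}$ is closed and attracting. So $\mathcal{A}_{s}$ is bounded in $L^{p}(\Omega)$. Under (\ref{Condp}) this also follows from $H_{0}^{1}(\Omega)\subset L^{p}(\Omega)$.

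\textbf{Main obstacle.} The delicate point is that Lemma \ref{lemma4B} requires convergence of the initial data weakly in $H_{0}^{1}(\Omega)\cap L^{p}(\Omega)$. With $X=H_{0}^{1}(\Omega)$, strong $H_{0}^{1}$ convergence implies this through the embedding $H_{0}^{1}(\Omega)\subset L^{p}(\Omega)$, which is valid by (\ref{Condp}). This embedding is what makes Steps 2--3 work in the larger phase space.
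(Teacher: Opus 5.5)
Your proposal is correct and follows the paper's proof, which combines the upper semicontinuity with compact values and the $H_{0}^{1}$-compact absorbing set $B_{1}$ from the preceding propositions with Theorem \ref{AttrExist}, with invariance coming from strictness of $G_{s}$. Your additional points are all consistent with that argument: the $\omega(B_{1})$ alternative, boundedness in $L^{p}(\Omega)$ via $\mathcal{A}_{s}\subset B_{1}$ by minimality, and the use of $H_{0}^{1}(\Omega)\subset L^{p}(\Omega)$ under (\ref{Condp}) to meet the hypothesis of Lemma \ref{lemma4B}.
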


\begin{proof}
The result follows from the previous results and Theorem \ref{AttrExist}.
\end{proof}

\begin{lemma}
Assume the conditions of Lemma \ref{lemma4B} and (\ref{7}). Then
$\mathcal{A}_{s}=\mathcal{A}_{r}$, where $\mathcal{A}_{r}$ is the global
attractor in Theorem \ref{existenceatracttorcase2}.
\end{lemma}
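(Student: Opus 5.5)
We prove the two inclusions separately, following the proof of Lemma \ref{AttrEq} but now in the phase space $X=H_{0}^{1}\left(  \Omega\right)  $ and under (\ref{Condp}). Two facts drive the argument. First, every strong solution is a regular solution. Second, when (\ref{Condp}) holds, every regular solution satisfies (\ref{CondDerivu}); this follows from (\ref{Acotacionfu}), as noted at the beginning of Section \ref{SectionNonuniqueness}. Hence $K_{s}^{+}\subset K_{r}^{+}$ and therefore
\[
G_{s}(t,u_{0})\subset G_{r}(t,u_{0})\quad\text{for all }u_{0}\in X,\ t\geq0.
\]

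\emph{Inclusion $\mathcal{A}_{s}\subset\mathcal{A}_{r}$.} Let $B$ be bounded in $H_{0}^{1}\left(  \Omega\right)  $. Then $B$ is bounded in $L^{2}\left(  \Omega\right)  $. By (\ref{distancias}) in Theorem \ref{existenceatracttorcase2},
\[
dist_{H_{0}^{1}}(G_{s}(t,B),\mathcal{A}_{r})\leq dist_{H_{0}^{1}}(G_{r}(t,B),\mathcal{A}_{r})\rightarrow0.
\]
Since $\mathcal{A}_{r}$ is compact in $H_{0}^{1}\left(  \Omega\right)  $, it is a compact attracting set for $G_{s}$ in $X$. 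The attractor $\mathcal{A}_{s}$ is compact, so it is the minimal closed attracting set (Theorem \ref{AttrExist}). This gives $\mathcal{A}_{s}\subset\mathcal{A}_{r}$.

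\emph{Inclusion $\mathcal{A}_{r}\subset\mathcal{A}_{s}$.} Take $z\in\mathcal{A}_{r}$. By (\ref{caracteattrac}) and Lemma \ref{lemmaequality} there is $\gamma\in\mathbb{K}_{r}^{1}$ with $\gamma(0)=z$, and $\cup_{t}\gamma(t)$ is bounded in $H_{0}^{1}\left(  \Omega\right)  $.

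The key step is to check that each restriction $\gamma(\cdot+s)|_{[0,\infty)}$ is a strong solution. Its initial value $\gamma(s)$ lies in $H_{0}^{1}\left(  \Omega\right)  $. Using the translation property together with (\ref{CondDerivu}) on $[s-1,s+T]$, we get $\frac{d\gamma}{dt}\in L^{2}(s,s+T;L^{2}(\Omega))$ and $\gamma\in L^{2}(s,s+T;D(A))\cap L^{\infty}(s,s+T;H_{0}^{1}(\Omega))$. By the embedding $H_{0}^{1}\subset L^{p}$ we also get $\gamma\in L^{\infty}(s,s+T;L^{p})$. Thus the restriction belongs to $K_{s}^{+}$.

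Consequently $z\in G_{s}(t_{n},\gamma(-t_{n}))$ for any $t_{n}\rightarrow+\infty$, with $\{\gamma(-t_{n})\}$ bounded in $X$. Since $\mathcal{A}_{s}$ attracts bounded subsets of $X$,
\[
dist_{H_{0}^{1}}(z,\mathcal{A}_{s})\leq dist_{H_{0}^{1}}\Bigl(G_{s}\bigl(t_{n},\cup_{t}\gamma(t)\bigr),\mathcal{A}_{s}\Bigr)\rightarrow0.
\]
Because $\mathcal{A}_{s}$ is closed, $z\in\mathcal{A}_{s}$.

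The main obstacle is this regularity upgrade of bounded complete regular trajectories to strong solutions. It works because we can shift the starting time inside the trajectory, where the $\varepsilon>0$ regularity of regular solutions is available. At the initial time itself, only the $H_{0}^{1}$ data is available, which is exactly the hypothesis of Theorem \ref{existenciau0paper2014}.
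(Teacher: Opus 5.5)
Your proposal is correct and takes the same route as the paper. The paper omits this proof and refers back to Lemma \ref{AttrEq}: one inclusion comes from minimality of $\mathcal{A}_{s}$ against the compact attracting set $\mathcal{A}_{r}$, and the other from writing $z=\gamma(0)$ with $\gamma\in\mathbb{K}_{r}^{1}$ and noting that $z\in G_{s}(t_{n},\gamma(-t_{n}))$. You also make explicit several points the paper leaves implicit: the use of (\ref{distancias}) to get attraction in the $H_{0}^{1}$ metric, the shift to $[s-1,s+T]$ that gives each restriction of $\gamma$ strong-solution regularity, and the boundedness of $\{\gamma(-t_{n})\}$ in $X$.
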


The definitions of complete trajectories are the same as in the previous
section. The global attractor is characterized in terms of bounded complete
trajectories:
\[
\mathcal{A}_{s}=\{\gamma(0):\gamma(\cdot)\in\mathbb{K}_{s}\}=\bigcup
_{t\in\mathbb{R}}\{\gamma(t):\gamma(\cdot)\in\mathbb{K}_{s}\}.
\]
Also, as in the previous section, $\mathfrak{R}_{K_{s}^{+}}=\mathfrak{R.}$

\begin{theorem}
Assume the conditions of Lemma \ref{lemma4B}. Then the global attractor
$\mathcal{A}$ is bounded in $L^{\infty}(\Omega)$ provided that $h\in
L^{\infty}(\Omega)$.
\end{theorem}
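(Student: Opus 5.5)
The argument will follow the proof of Theorem \ref{boundinLinfty}. It rests on the characterization of $\mathcal{A}_{s}$ through bounded complete trajectories, namely $\mathcal{A}_{s}=\{\gamma(0):\gamma\in\mathbb{K}_{s}\}$, combined with a truncation estimate that forces decay of the part of a solution above a fixed level $M$. For any $y\in\mathcal{A}_{s}$ we choose $\gamma\in\mathbb{K}_{s}$ with $\gamma(0)=y$. For each $\tau<0$, the restriction of $\gamma$ to $[\tau,+\infty)$ is a strong solution, so $\frac{du}{dt}\in L^{2}(\tau,T;L^{2}(\Omega))$ and $u\in L^{2}(\tau,T;D(A))$. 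This regularity is stronger than in the regular case and makes the truncation step legitimate.

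First step: fix the level. From (\ref{3}) and $h\in L^{\infty}(\Omega)$ we get $(f(s)+h(x))s\leq\widetilde{\kappa}-\widetilde{\alpha}|s|^{p}$ for all $s$ and a.a. $x$, by Young's inequality applied to $h s$. It follows that $f(s)+h(x)\leq0$ for $s\geq M:=(\widetilde{\kappa}/\widetilde{\alpha})^{1/p}$, and symmetrically $f(s)+h(x)\geq0$ for $s\leq-M$.

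Second step: test the equation with $(u-M)_{+}$. Since $u(t)\in H_{0}^{1}(\Omega)$ and $M>0$, we have $(u-M)_{+}\in H_{0}^{1}(\Omega)$ and $\nabla(u-M)_{+}=\chi_{\{u>M\}}\nabla u$. Because $u_{t}\in L^{2}(\tau,T;L^{2}(\Omega))$, the identity
\[
\tfrac{1}{2}\tfrac{d}{dt}\|(u-M)_{+}\|_{L^{2}}^{2}=(u_{t},(u-M)_{+})
\]
holds a.e. It follows from the chain rule for Lipschitz truncations in $W^{1,2}(\tau,T;L^{2}(\Omega))$, and this is the only point that would need justification. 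Using $\Delta u\in L^{2}$ we integrate by parts in $(-a\Delta u,(u-M)_{+})$ and obtain $a(\|u\|_{H_{0}^{1}}^{2})\|\nabla(u-M)_{+}\|^{2}\geq m\lambda_{1}\|(u-M)_{+}\|^{2}$ by (\ref{2}) and Poincaré. The right-hand side $((f(u)+h),(u-M)_{+})$ is nonpositive by the first step. This gives
\[
\frac{d}{dt}\|(u-M)_{+}\|^{2}\leq-2m\lambda_{1}\|(u-M)_{+}\|^{2}.
\]

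Third step: pass to the limit. Gronwall on $[\tau,0]$ gives
\[
\|(y-M)_{+}\|^{2}\leq e^{2m\lambda_{1}\tau}\|(\gamma(\tau)-M)_{+}\|^{2}\leq e^{2m\lambda_{1}\tau}\sup_{s}\|\gamma(s)\|_{L^{2}}^{2}.
\]
The supremum is finite because $\gamma$ is bounded in $H_{0}^{1}(\Omega)$. Letting $\tau\to-\infty$ yields $y\leq M$ a.e. The same argument with $(u+M)_{-}$ gives $y\geq-M$, so $\|y\|_{L^{\infty}}\leq M$ uniformly on $\mathcal{A}_{s}$.

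The main obstacle is the chain-rule justification in the second step, and it is milder here than in the regular case because strong solutions already have $u_{t}\in L^{2}L^{2}$. If needed, we would instead regularize $(\cdot-M)_{+}$ by smooth convex functions $\psi_{k}$ with $\psi_{k}'$ bounded, apply the identity $\frac{d}{dt}\int\psi_{k}(u)=(u_{t},\psi_{k}'(u))$, and pass to the limit.
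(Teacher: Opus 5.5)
Your proposal is correct and follows essentially the paper's argument: the paper omits this proof and refers back to Theorem \ref{boundinLinfty}. That argument tests with $(u-M)_{+}$, uses $f(u)+h\leq 0$ for $u\geq M$, applies Gronwall along a bounded complete trajectory given by $\mathcal{A}_{s}=\{\gamma(0):\gamma\in\mathbb{K}_{s}\}$, and lets $\tau\to-\infty$. Your justification of $\frac{1}{2}\frac{d}{dt}\|(u-M)_{+}\|^{2}=(u_{t},(u-M)_{+})$ via $u\in W^{1,2}(\tau,T;L^{2}(\Omega))$ is more explicit than the paper's appeal to regularization, and your use of $(u+M)_{-}$ for the lower bound is the correct form of the symmetric step.
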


\begin{theorem}
Assume the conditions of Lemma \ref{lemma4B}. It holds
\[
\mathcal{A}_{s}=M_{s}^{u}(\mathfrak{R})=M_{s}^{s}(\mathfrak{R}),
\]
where $M_{s}^{u}(\mathfrak{R}),\ M_{s}^{s}(\mathfrak{R})$ are defined in
(\ref{UnstableSet2})-(\ref{StableSet2}).
\end{theorem}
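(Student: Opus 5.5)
We will follow the scheme of Theorem \ref{theorem12}, working in the phase space $X=H_{0}^{1}(\Omega)$ with the strong solutions $K_{s}^{+}$. The Lyapunov function is the energy $E$ from (\ref{EnergyFunction0}). First we check that $E$ is continuous on $H_{0}^{1}(\Omega)$.
\begin{itemize}
\item The terms $\frac{1}{2}A(\Vert y\Vert_{H_{0}^{1}}^{2})$ and $\int_{\Omega}hy\,dx$ are clearly continuous.
\item For $\int_{\Omega}\mathcal{F}(y)dx$, condition (\ref{Condp}) gives $p\leq\frac{2n-2}{n-2}\leq\frac{2n}{n-2}$, so $H_{0}^{1}(\Omega)\subset L^{p}(\Omega)$ continuously. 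Then (\ref{5}) together with Lebesgue's (or Vitali's) theorem gives continuity along convergent sequences.
\end{itemize}
So, unlike Theorem \ref{theorem12}, no $L^{\infty}$ bound on the attractor is needed.

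Next we establish the energy equality
\[
\int_{s}^{t}\Vert u_{r}\Vert_{L^{2}}^{2}dr+E(u(t))=E(u(s)),\qquad 0\leq s\leq t,
\]
for every $u\in K_{s}^{+}$. The observation in this subsection gives $u_{t},f(u)\in L^{2}(0,T;L^{2}(\Omega))$ for every strong solution. Hence (\ref{sellandyou1}) holds, and by regularization $(\mathcal{F}(u(t)),1)$ is absolutely continuous with derivative $(f(u),u_{t})$, exactly as in Lemma \ref{lemma4}. Multiplying (\ref{1}) by $u_{t}$ then yields the identity. By (\ref{2}) and (\ref{4}), $E$ is bounded from below on all of $X$. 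On the attractor, which is bounded in $H_{0}^{1}$, $E$ is also bounded from above. Therefore $E(\gamma(t))$ is monotone and converges, as $t\to+\infty$ and as $t\to-\infty$, for every $\gamma\in\mathbb{K}_{s}$.

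To prove $\mathcal{A}_{s}\subset M_{s}^{s}(\mathfrak{R})$:
\begin{enumerate}
\item Take $z\in\mathcal{A}_{s}$ and use (\ref{caracteattrac2}) to choose $\gamma\in\mathbb{K}_{s}$ with $\gamma(0)=z$ and $\gamma(t)\in\mathcal{A}_{s}$ for all $t$.
\item Argue by contradiction: suppose $\mathrm{dist}_{H_{0}^{1}}(\gamma(t_{n}),\mathfrak{R})>\varepsilon$ along some $t_{n}\to\infty$.
\item Compactness of $\mathcal{A}_{s}$ in $H_{0}^{1}$ gives $\gamma(t_{n})\to y$ in $H_{0}^{1}$, and continuity of $E$ gives $E(y)=l$.
\item Boundedness of $\mathcal{A}_{s}$ in $L^{p}$ lets us take the convergence weak in $H_{0}^{1}\cap L^{p}$ as well. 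Lemma \ref{lemma4B} applied to $v_{n}=\gamma(\cdot+t_{n})$ then yields $v\in K_{s}^{+}$ with $v(0)=y$ and $v_{n}(t)\to v(t)$ in $H_{0}^{1}$ for each $t>0$.
\item Hence $E(v(t))=l=E(v(0))$, and the energy equality for $v$ forces $v_{t}=0$.
\item By the analogue of Lemma \ref{FixedCharacterization}, $y\in\mathfrak{R}_{K_{s}^{+}}=\mathfrak{R}$, which is a contradiction.
\end{enumerate}
The reverse inclusion $M_{s}^{s}(\mathfrak{R})\subset\mathcal{A}_{s}$ is immediate from (\ref{caracteattrac2}).

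For $M_{s}^{u}$ we use the same argument with $t_{n}\to-\infty$, taking $v_{n}=\gamma(\cdot-t_{n})$ and using the limit of $E$ as $t\to-\infty$. This gives $\mathcal{A}_{s}\subset M_{s}^{u}(\mathfrak{R})$.

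The converse inclusion $M_{s}^{u}(\mathfrak{R})\subset\mathcal{A}_{s}$ is the main obstacle: a trajectory in $\mathbb{F}_{s}$ that converges to $\mathfrak{R}$ as $t\to-\infty$ must be shown to be bounded. We plan to handle it in three steps.
\begin{itemize}
\item Show that $\mathfrak{R}$ is bounded in $H_{0}^{1}$. Multiplying (\ref{Fixed}) by $z$ and using (\ref{2}), (\ref{3}) gives $m\Vert z\Vert_{H_{0}^{1}}^{2}\leq\kappa|\Omega|+\Vert h\Vert\Vert z\Vert$, and the bound follows.
\item Deduce that $\gamma(t)$ stays bounded in $H_{0}^{1}$ for $t\leq t_{0}$.
\item For $t\geq t_{0}$, use the dissipative $L^{2}$ estimate (\ref{9.4}) and then the $H_{0}^{1}$ smoothing bound (\ref{500}). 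This makes $\gamma$ bounded on all of $\mathbb{R}$, so $\gamma\in\mathbb{K}_{s}$ and $\gamma(0)\in\mathcal{A}_{s}$ by (\ref{caracteattrac2}).
\end{itemize}
The same boundedness argument justifies replacing $\mathbb{F}_{s}$ by $\mathbb{K}_{s}$ in (\ref{UnstableSet2}).
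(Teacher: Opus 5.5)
Your proof is correct and follows the paper's route. The paper only says to repeat the Lyapunov-function argument of Theorem~\ref{theorem12} with Lemma~\ref{lemma4B} in place of Lemma~\ref{lemma1}. Your ingredients are exactly what that requires:
\begin{itemize}
\item continuity of $E$ on $H_0^1(\Omega)$, via $H_0^1(\Omega)\subset L^p(\Omega)$ from (\ref{Condp});
\item the energy equality for strong solutions down to $s=0$;
\item the compactness/contradiction step.
\end{itemize}

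Your explicit treatment of $M_s^u(\mathfrak{R})\subset\mathcal{A}_s$ is an improvement. The paper just invokes (\ref{caracteattrac2}), although $\mathbb{F}_s$ may contain unbounded trajectories.

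One citation in that step should be changed. The smoothing bound (\ref{500}) is derived through (\ref{47}), which uses the growth condition (\ref{7}) on $a$. Condition (\ref{7}) is not among the hypotheses of Lemma~\ref{lemma4B}. You do not need it, for either of two reasons:
\begin{itemize}
\item The energy equality you already proved gives $E(\gamma(t))\le E(\gamma(t_0))$ for $t\ge t_0$. Also $E(y)\ge\frac{m}{4}\Vert y\Vert_{H_0^1}^2-C$ by (\ref{2}) and (\ref{4}). Together these bound $\gamma$ on $[t_0,\infty)$ in $H_0^1(\Omega)$.
\item More simply, $\{\gamma(t):t\le t_0\}$ is bounded and $\gamma(0)\in G_s(-t,\gamma(t))$ for $t\le\min\{t_0,0\}$. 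Attraction of bounded sets and closedness of $\mathcal{A}_s$ then give $\gamma(0)\in\mathcal{A}_s$ directly.
\end{itemize}

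Also, in the $M_s^u$ step, with $t_n\to-\infty$ the shifted trajectories should be $v_n=\gamma(\cdot+t_n)$, not $\gamma(\cdot-t_n)$.
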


\bigskip

\textbf{Acknowledgments.}

This work has been partially supported by Spanish Ministry of Economy and
Competitiveness and FEDER, projects MTM2015-63723-P and MTM2016-74921-P.
{\Large \textquestiondown Proyectos de la Junta?}

\end{document}